\documentclass[10pt,oneside,a4paper,reqno]{amsart}

\usepackage[T1]{fontenc}
\usepackage[utf8]{inputenc}
\usepackage[english]{babel}
\usepackage{lmodern}
\usepackage{microtype}

\usepackage{amssymb} 
\usepackage{mathrsfs}
\usepackage{bm,esint}
\usepackage{graphicx,tabularx,booktabs}
\usepackage[shortlabels]{enumitem}
\usepackage[normalem]{ulem}
\usepackage[
  a4paper,
  margin=2.8cm,
]{geometry}

\numberwithin{equation}{section}
\usepackage[dvipsnames]{xcolor}
\definecolor{citecolor}{HTML}{0066cc}
\definecolor{eqcolor}{HTML}{0066cc}

\usepackage{tikz}
\usetikzlibrary{shapes.geometric, arrows.meta, positioning, shadows}

\usepackage[colorlinks=true]{hyperref}
\hypersetup{
  citecolor=citecolor,
  linkcolor=eqcolor,
  urlcolor=eqcolor,
  pdfborder={0 0 0}
}

\theoremstyle{plain}
\newtheorem{theorem}{Theorem}[section]
\newtheorem{lemma}[theorem]{Lemma}
\newtheorem{proposition}[theorem]{Proposition}

\theoremstyle{definition}
\newtheorem{definition}[theorem]{Definition}
\newtheorem{axiom}[theorem]{Axiom}

\theoremstyle{remark}
\newtheorem{remark}[theorem]{Remark}

\newcommand{\restr}{\bm{|}}
\newcommand{\st}{\, : \,}
\newcommand{\jqq}{q}
\newcommand{\pHold}{p}
\newcommand{\pSob}{p_S}

\newcommand{\jeta}{\eta}
\newcommand{\jalfa}{\alpha}
\newcommand{\jax}{{Axiom}}
\newcommand{\jaxs}{{Axioms}}
\newcommand{\jaOm}{\bm{\Omega}} 
\newcommand{\jc}{\bm{c}}

\DeclareMathOperator{\divg}{\mathbf{div}}
\DeclareMathOperator{\curl}{\mathbf{curl}}
\DeclareMathOperator{\divr}{div}

\newcommand{\jth}{\theta}
\newcommand{\Osc}{\Psi_p}
\newcommand{\Energy}{\mathcal{E}}
\newcommand{\class}{\bm{\mathcal{S}}}

\newcommand{\jgam}{\gamma}

\newcommand{\rr}{r}
\newcommand{\jav}[1]{\fint_{#1}}
\newcommand{\mydef}[1]{#1}
\newcommand{\myprop}[1]{#1}

\newcommand{\cpt}{\text{{\upshape{)}}}}
\newcommand{\opt}{\text{{\upshape{(}}}}
\newcommand{\lapl}{\Delta}
\newcommand{\grad}{\nabla}
\newcommand{\Om}{\Omega}

\newcommand{\RR}{\mathbb{R}}
\newcommand{\NN}{\mathbb{N}}

\newcommand{\Stwo}{\mathbb{S}}

\newcommand{\eqs}{=}

\author[G. D. Fratta]{Giovanni Di Fratta}
\date{}

\AtEndDocument{\bigskip
	\hrule
	\medskip
         \small{
		\noindent
		{\sc G. Di Fratta},
		Dipartimento di Matematica e Applicazioni ``R. Caccioppoli'',
		Universit\`a degli Studi di Napoli ``Federico II'',
		Via Cintia, Complesso Monte S. Angelo, 80126 Naples, Italy.
		
		\noindent
		E-mail:
		\href{mailto:giovanni.difratta@unina.it}{\tt giovanni.difratta@unina.it}.
	}
\medskip
\hrule
}

\makeatletter
\def\@tocline#1#2#3#4#5#6#7{\relax
  \ifnum #1>\c@tocdepth 
  \else
    \par \addpenalty\@secpenalty\addvspace{#2}
    \begingroup \hyphenpenalty\@M
    \@ifempty{#4}{
      \@tempdima\csname r@tocindent\number#1\endcsname\relax
    }{
      \@tempdima#4\relax
    }
    \parindent\z@ \leftskip#3\relax \advance\leftskip\@tempdima\relax
    \rightskip\@pnumwidth plus4em \parfillskip-\@pnumwidth
    #5\leavevmode\hskip-\@tempdima
      \ifcase #1
       \or\or \hskip 1pc \or \hskip 2pc \else \hskip 3pc \fi
      #6\nobreak\relax
    \dotfill\hbox to\@pnumwidth{\@tocpagenum{#7}}\par
    \nobreak
    \endgroup
  \fi}

\begin{document}

\title[A Classical Elliptic Regularity Approach to Almost Harmonic Maps]{A Classical Elliptic Regularity Approach to\\
Almost Harmonic Maps and Related Systems}

\begin{abstract}
We develop an abstract regularity framework for a class of two-dimensional nonlinear elliptic systems, including almost harmonic maps. The approach combines a Campanato-type iteration scheme with a Caccioppoli-type estimate and identifies general assumptions under which local Hölder continuity follows. More precisely, we prove that any class of admissible pairs $(\bm{u},\bm{f})$ that is stable under rescaling and satisfies an oscillation-decay property consists of locally Hölder continuous maps. The resulting Hölder exponent is explicit and attains the classical Morrey--Campanato threshold determined by the Lebesgue integrability of the source term $\bm{f}$. The framework is purely analytic and avoids the use of $\mathcal{H}^1$--$\mathrm{BMO}$ duality, Wente's inequality, moving frames, and conformal uniformization.

We illustrate the flexibility of the framework through several classes of elliptic systems. As a first example, we recover local Hölder continuity for almost harmonic maps
\[
-\Delta \bm{u}=|\nabla\bm{u}|^2\bm{u}+\bm{f}
\]
into $\Stwo^n$ with $L^q$-integrable tension fields by means of a direct argument independent of the classical harmonic map regularity theory. We next consider systems of the form
\[
-\Delta \bm{u}=\jaOm\cdot\nabla\bm{u}+\bm{f},
\]
showing that the analytic condition $\divg\jaOm\in L^q$ for some $q>1$ is sufficient to ensure regularity
(for classical harmonic maps, $\divg\jaOm=\bm{0}$). 

We further apply the framework to anisotropic systems of the form
\[
-\divg(A\nabla\bm{u})=\jaOm\cdot\nabla\bm{u}+\bm{f},
\]
where $A$ is a Hölder-continuous, non-unimodular coefficient matrix, a setting in which conformal uniformization
is unavailable. Finally, we develop a dimension-independent bootstrap procedure that upgrades continuity to full
$C^\infty$ regularity. As a physical application, we establish the interior regularity of magnetic skyrmions by
proving that finite-energy critical points of the full Brown micromagnetic energy in dimensions $m\in\{2,3\}$,
including the Dzyaloshinskii--Moriya interaction, magnetocrystalline anisotropy, and the nonlocal
demagnetizing field, possess the optimal regularity $C^{k+1,\beta}$ permitted by the regularity of
the underlying magnetocrystalline anisotropy density.
\end{abstract}

\subjclass[2020]{Primary 35B65, 58E20; Secondary 35J47, 49N60, 82D40}

\keywords{Weakly harmonic maps, Interior regularity, Anisotropic elliptic systems, Rivi\`ere-type systems, Gauge transformations, Campanato iteration, Micromagnetics}
\maketitle


\section{Introduction and Statement of Results}

\subsection{Motivation and Problem Setting}
Harmonic maps and their generalizations occupy a
central position in geometric analysis, the calculus of variations,
and a wide array of models arising in mathematical physics.
Emerging as critical points of the Dirichlet energy, they provide a unifying
framework that connects nonlinear elliptic systems, minimal surface theory,
and geometric flows. The rich analytical structure of
these systems has catalyzed the development of foundational techniques across the discipline,
driving major advances in regularity theory, singularity analysis,
and the mathematics of pattern formation.

In this paper, we investigate the interior regularity for a broad class of strongly coupled, nonlinear elliptic systems exhibiting critical quadratic growth in the gradient. The prototypical example driving this investigation is the regularity theory of \emph{almost harmonic maps} from an open subset of $\RR^m$ into the sphere $\Stwo^n$. More precisely, these correspond to critical points of perturbed Dirichlet energies whose Euler--Lagrange equation takes the form
\begin{equation}
  \label{eq:almost_harmonic_intro} - \Delta \bm{u}= | \nabla \bm{u}|^2 \bm{u}+\bm{f},
\end{equation}
where the lower-order term $\bm{f}$ represents an inhomogeneous perturbation of the classical harmonic map equation. Such systems arise naturally in physically relevant models, including the Oseen--Frank theory of liquid crystals and variational models in micromagnetics, where external fields, anisotropies, or defects in the crystal lattice introduce nontrivial forcing terms.

The main analytical difficulty in \eqref{eq:almost_harmonic_intro}, and in the
larger class of systems considered here, comes from the critical growth of the
nonlinearity: under the natural dilational rescaling, the quadratic gradient
term scales in the same way as the Laplacian, placing the equation at the
borderline of classical Calderón--Zygmund theory. In dimension two, weak
solutions in the energy space $H^1$ are known to be continuous. Existing
proofs of this fact rely on several analytical tools developed over the years,
including compensated compactness, the Hardy space $\mathcal{H}^1$ and its
duality with $\mathrm{BMO}$, Wente's inequality, and gauge-theoretic
decompositions.

These methods exploit specific structural properties of the equation, such as
antisymmetry or conformal invariance. The aim of this work is to identify a
regularity mechanism that can be formulated in terms of classical linear
elliptic estimates. By expressing this mechanism through a Campanato-type
integrability principle, we obtain a framework that can also be applied to
perturbations of the classical setting, including systems without
antisymmetry and equations with anisotropic coefficients.

This perspective naturally motivates the following objectives of the present work:
\smallskip

\begin{enumerate}
  \item[\text{{\bfseries{($\ensuremath{\boldsymbol{Q}}_1$)}}}]
  To show that the interior regularity of two-dimensional almost harmonic maps
  with $L^q$ tension fields can be recovered through classical linear elliptic
  estimates, without relying on $\mathcal{H}^1$--$\mathrm{BMO}$ duality, and to
  obtain quantitative estimates in terms of $q$.
  \smallskip
   
  \item[\text{{\bfseries{($\ensuremath{\boldsymbol{Q}}_2$)}}}]
  To show that the geometric antisymmetry assumption on the connection form
  $\jaOm=\bm{u}\wedge\nabla\bm{u}$ can be replaced by the analytic condition
  $\divg\jaOm\in L^q$.
  \smallskip
  
  \item[\text{{\bfseries{($\ensuremath{\boldsymbol{Q}}_3$)}}}]
  To extend the regularity framework to systems with variable-coefficient
  principal parts $\divg(A\nabla\bm{u})$ in the non-unimodular regime
  ($\det A\neq\operatorname{const}$), where classical conformal uniformization
  is unavailable.
  \smallskip
  
  \item[\text{{\bfseries{($\ensuremath{\boldsymbol{Q}}_4$)}}}]
  To provide a bootstrap procedure for higher Schauder regularity based on
  classical subcritical estimates once initial continuity has been established.
\end{enumerate}

\subsection{Background and State of the Art}
We begin by recalling the classical results on harmonic maps and then highlight
the distinct analytical challenges introduced by almost harmonic maps and their
broader structural generalizations, thereby clarifying the precise scope and
context of the present work.

For harmonic maps from two-dimensional domains $(m=2)$, the theory is well
established, ultimately grounded in the conformal invariance of the Dirichlet energy at the critical dimension.
  The regularity of energy-minimizing harmonic maps from surfaces rests on Morrey's foundational analysis of multiple integrals in the calculus of variations~{\cite{Morrey1966}}. This theory was subsequently advanced by Schoen~{\cite{Schoen1984}} who proved the interior smoothness of \emph{stationary} harmonic
maps, that is, of weak solutions subject to the additional variational assumption of
stability under domain deformations.

The definitive regularity result for generic weak solutions is due to Hélein. He first proved that every weakly harmonic map (i.e., every
$W^{1, 2}$ solution of the Euler--Lagrange equation) from a surface into the
sphere $\Stwo^n$ is smooth~{\cite{Helein1990}}, and subsequently generalized
this optimal regularity result to maps taking values in closed Riemannian
manifolds~{\cite{Helein1991,Helein1991a,H_lein_2002}}. For the specific case
of $\Stwo^n$, Hélein's argument relies on rewriting the critical
nonlinearity using a divergence-free antisymmetric structural potential and
applying compensated-compactness techniques. Obtaining continuity
in this framework relies on Wente's inequality and the duality
between the Hardy space $\mathcal{H}^1$ and BMO.

In dimensions $m \geqslant 3$ the picture changes markedly, precisely because
the two-dimensional conformal invariance is lost. As documented in Hardt's
survey~{\cite{Hardt1997}}, regularity may fail even for minimizers, with
singularities forced by topology or by energy considerations. The canonical
instance is the radial projection $\ensuremath{\boldsymbol{u}} (x) = x / |x|$
from the unit ball of $\RR^3$ into $\Stwo^2$: it minimizes the energy among
maps sharing its boundary data, yet its singular set is the single point at
the origin, of Hausdorff dimension zero. The earliest positive results imposed
geometric restrictions precluding such defects; thus Hildebrandt, Kaul, and
Widman~{\cite{Hildebrandt_1977}} obtained everywhere regularity for minimizers
whose image lies within a sufficiently small geodesic ball. An important
advance, removing all restrictions on the image, was achieved by Schoen and
Uhlenbeck~{\cite{Schoen1982}} (see also the historical account
in~{\cite{Sormani_2018}}): through the monotonicity formula and a systematic
analysis of tangent maps obtained by blow-up, they proved that the singular
set of any energy minimizer is closed and has Hausdorff dimension at most $m -
3$. In particular, minimizers from a surface are everywhere continuous, and in
three dimensions the singularities reduce to isolated points.

For the broader class of \emph{stationary} weak solutions
Evans~{\cite{Evans_1991}} established partial regularity for maps into
spheres, demonstrating that the singular set has $(m - 2)$-dimensional
Hausdorff measure zero. Evans exploited the specific coordinate representation
of the spherical harmonic map equation to reveal the antisymmetric structure
of the quadratic nonlinearity, deploying $\mathcal{H}^1$--BMO duality via the
theorem of Coifman, Lions, Meyer, and Semmes~{\cite{Coifman1993}}.
Bethuel~{\cite{Bethuel_1993}} subsequently extended this bound to stationary
maps into arbitrary compact target manifolds by utilizing moving frames to
uncover a hidden antisymmetric structure.

When the classical harmonic map equation is perturbed by a lower-order
inhomogeneous source term, the corresponding analytic theory generally
follows two distinct mathematical directions.

The first concerns the asymptotic analysis of defect formation. In this
framework, one studies sequences of almost (or approximate) harmonic maps
whose tension fields vanish or remain uniformly bounded in a suitable Lebesgue
space. In this direction, Topping {\cite{Topping2004}} utilized sequences with
parametrically small $L^2$ tension fields to establish sharp quantization and
repulsion phenomena for emerging singularities within a bubbling regime, a
perspective later extended to $L^p$-bounded tension fields ($p > 1$) by Wang,
Wei, and Zhang {\cite{Wang2017}}.

The second paradigm---and the one relevant to the present work---concerns the
interior regularity of an individual map subject to a fixed, prescribed
tension field. In this setting, the governing system presents non-trivial analytical challenges that require a dedicated geometric and analytic
treatment, as discussed for instance in Moser's monograph {\cite{Moser_2005}}.

For generic weak solutions subject to a fixed (not necessarily small) source
term, positive regularity results in dimension two were initially achieved
only in highly specialized contexts, such as Carbou's analysis of nonlocal
micromagnetic energies {\cite{Carbou_1997}}. Beyond specific physical models,
systematic treatments (e.g., Moser {\cite{Moser2005,Moser_2005}}) frequently
focus on higher-dimensional settings ($m \geqslant 3$), where stationarity is
 necessary to derive monotonicity formulas underlying partial regularity.

For conformally invariant systems without stationarity, a significant
generalization of Hélein's method was achieved by
Rivière~{\cite{Riviere2007}}, who systematically studied generic equations of
the form $- \Delta \bm{u}= \jaOm \cdot \nabla
\bm{u}$, where $\jaOm$ is an arbitrary antisymmetric
matrix in $L^2 \left( B_1, \mathfrak{s}\mathfrak{o}(n + 1) \otimes \RR^m
\right)$, $B_1 \subseteq \RR^m$. A key observation was that the Euler--Lagrange equation of \emph{any} two-dimensional conformally invariant variational
problem can be brought into this antisymmetric form. Combining this with
Uhlenbeck's gauge decomposition, he proved that all such systems admit
continuous weak solutions; the framework was carried up to the boundary, and
to systems carrying an inhomogeneous term, by Müller and
Schikorra~{\cite{Mueller2009}}. We stress that, even with the optimal gauge in
hand, the passage to continuity continues to rely on Wente's inequality and
$\mathcal{H}^1$--BMO duality.

The present work develops a different route to the regularity of almost
harmonic maps, combining ideas of Chang~{\cite{Changbi_1999,Chang_1999}} and
Carbou~{\cite{Carbou_1997}} with the structural insight of
Hélein~{\cite{Helein1991a}}. Working entirely within the classical linear
theory of divergence-form elliptic equations, we proceed without relying on compensated-compactness machinery or $\mathcal{H}^1$--BMO duality. In
their place we establish a Campanato-type iterative decay estimate; a new
Caccioppoli-type inequality then makes explicit how the fractional
integrability of the source $\bm{f}$ governs the local
Hölder exponent of the solution. The same scheme yields a transparent,
self-contained bootstrap from initial continuity to full $C^{\infty}$
regularity. Finally, the estimates apply directly to Brown's static
micromagnetic equations and establish the full regularity of topological spin
textures~{\cite{Di_Fratta_2020a,Di_Fratta_2023}}.

In sum, the stationarity-free method developed here shows that the regularity
of two-dimensional almost harmonic maps can be recovered in its entirety from
classical linear elliptic theory, with quantitative control on the
perturbation that is well suited to physical applications.

\subsection{Contributions of the present work}To clarify the scope
of our approach, we first highlight a model corollary that emerges from our
framework:

{\myprop{\begin{theorem}
  \label{thm:regtheoremharmonic}Any weakly almost harmonic map
  $\bm{u} \in H^1 \left( B_1, \Stwo^n \right)$ from the
  unit two-dimensional disk $B_1 \subseteq \RR^2$ to a sphere $\Stwo^n \subset
  \RR^{n + 1}$, solving the equation
  \begin{equation}
    \label{eq:AHMeq0} - \Delta \bm{u} = | \nabla
    \bm{u} |^2 \bm{u} +
    \bm{f},
  \end{equation}
  with a source term $\bm{f} \in L^{\jqq} (B_1, \RR^{n +
  1})$ for some $\jqq > 1$, is locally Hölder continuous.  Specifically, there exists an exponent $\jeta \in (0, 1)$ such that
  $\bm{u} \in C^{0,
  \jeta}_{\ensuremath{\operatorname{loc}}}  (B_1, \RR^{n + 1})$.
\end{theorem}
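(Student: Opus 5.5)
The plan is to follow Hélein's observation and rewrite the nonlinearity in divergence form, then run a Campanato iteration that uses only classical linear estimates.

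\textbf{Step 1: conservation laws.} Since $|\bm{u}|=1$, we have $\bm{u}\cdot\nabla\bm{u}=0$. Hence
\[
|\nabla\bm{u}|^2u^i=\sum_j\nabla u^j\cdot(u^j\nabla u^i-u^i\nabla u^j)=\sum_j \Omega^{ij}\cdot\nabla u^j,
\]
with $\Omega^{ij}:=u^i\nabla u^j-u^j\nabla u^i$ antisymmetric and in $L^2$. Testing the equation against $u^j\varphi$ and $u^i\varphi$ and subtracting gives $\divr\Omega^{ij}=u^if^j-u^jf^i\in L^{\jqq}$. Thus the harmonic-map part of the right-hand side becomes $\divr(\Omega^{ij}u^j)-u^j\divr\Omega^{ij}$. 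The second term lies in $L^{\jqq}$ because $|\bm{u}|=1$.

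\textbf{Step 2: a Caccioppoli-type decay estimate.} Fix a small disk $B_r(x_0)$ and set $\lambda:=\bm{u}_{x_0,r}$. Using the identity $\Omega^{ij}\cdot\nabla u^j=\divr(\Omega^{ij}(u^j-\lambda^j))-(u^j-\lambda^j)\divr\Omega^{ij}$, split $\bm{u}=\bm{h}+\bm{w}$ on $B_r$, where $\bm{h}$ is harmonic with $\bm{h}=\bm{u}$ on $\partial B_r$. Harmonic functions satisfy the decay $\int_{B_\rho}|\nabla\bm{h}|^2\le C(\rho/r)^2\int_{B_r}|\nabla\bm{h}|^2$. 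For $\bm{w}$, test its equation against $\bm{w}$ itself. The divergence term contributes at most $\|\Omega\|_{L^2(B_r)}\|\bm{u}-\lambda\|_{L^\infty(B_r)}\|\nabla\bm{w}\|_{L^2}$, which involves an $L^\infty$ norm we do not control. I would avoid $L^\infty$ by a hole-filling, Chang-type argument instead. Decompose $\Omega^{ij}$ on $B_r$ by Hodge as $\nabla a^{ij}+\nabla^\perp b^{ij}$, where $\Delta a^{ij}=\divr\Omega^{ij}\in L^{\jqq}$. Then $\nabla a^{ij}\in L^{2\jqq/(2-\jqq)}$ is strictly better than $L^2$; this is the $\jqq>1$ gain. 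The $\nabla^\perp b$ part pairs with $\nabla u^j$ as a Jacobian. Integrating by parts on the disk moves the derivative onto the test function, and the resulting term is controlled by $\|\nabla b\|_{L^2}\|\nabla\bm{u}\|_{L^2}\|\nabla \bm{w}\|_{L^2}$ with a small constant once $\|\nabla\bm{u}\|_{L^2(B_r)}\le\varepsilon$. This is the one place where compensation is needed; it enters only through an integration by parts against a cutoff, not through $\mathcal H^1$--BMO.

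\textbf{Step 3: iteration.} The source contributes $\|\bm{f}\|_{L^{\jqq}(B_r)}\|\bm{u}-\lambda\|_{L^{\jqq'}(B_r)}$. By Sobolev and Hölder this is at most $Cr^{2-2/\jqq}\|\bm{f}\|_{L^{\jqq}}\|\nabla\bm{u}\|_{L^2(B_r)}$. Collecting terms gives
\[
\int_{B_\rho}|\nabla\bm{u}|^2\le C\Big(\big(\tfrac{\rho}{r}\big)^2+\varepsilon\Big)\int_{B_r}|\nabla\bm{u}|^2+Cr^{2\gamma},\qquad \gamma=2-2/\jqq\ \text{(capped below 1)}.
\]
By absolute continuity of the integral, $\varepsilon$ can be made small on disks of small radius. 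The standard iteration lemma then yields $\int_{B_\rho}|\nabla\bm{u}|^2\le C\rho^{2\jeta}$, and Morrey's Dirichlet growth theorem gives $\bm{u}\in C^{0,\jeta}_{\mathrm{loc}}$.

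\textbf{Main obstacle.} The hard step is controlling the Jacobian term $\nabla^\perp b\cdot\nabla u$ inside the Caccioppoli inequality without a Wente-type $L^\infty$ bound. I would first try testing with $(\bm{u}-\lambda)\zeta^2$ rather than with $\bm{w}$. After integrating by parts, $\nabla^\perp b\cdot\nabla u^j\,(u^i-\lambda^i)\zeta^2$ becomes terms carrying $\nabla\zeta$ and $b$, with $b$ normalized by its mean on $B_r$. These terms are then absorbed by hole-filling, that is, by the Widman trick of adding $C\int_{B_\rho}$ to both sides. This produces a decay factor $\theta<1$ directly and avoids needing smallness, as in Chang's approach.
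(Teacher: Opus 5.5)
Your argument breaks down at the step you flag as hard, and neither proposed fix works. The bound
\[
\Bigl|\int_{B_r}\nabla^\perp b^{ij}\cdot\nabla u^j\,w^i\Bigr|\le C\,\|\nabla b\|_{L^2(B_r)}\|\nabla\bm{u}\|_{L^2(B_r)}\|\nabla\bm{w}\|_{L^2(B_r)},\qquad \bm{w}\in H^1_0(B_r),
\]
does not follow from an integration by parts. It is the dual form of Wente's inequality, that is, exactly the $\mathcal{H}^1$--$\mathrm{BMO}$ compensation you said you would avoid.

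\begin{itemize}
\item If you integrate by parts using $\divr\nabla^\perp b=0$, you get $-\int(u^j-\lambda^j)\,\nabla^\perp b^{ij}\cdot\nabla w^i$. This is bounded only by $\|\bm{u}-\lambda\|_{L^\infty}\|\nabla b\|_{L^2}\|\nabla\bm{w}\|_{L^2}$, the same non-small $L^\infty$ factor you wanted to remove.
\item If you move the derivative the other way, you get $\int(b^{ij}-\bar b^{ij})\,\nabla^\perp u^j\cdot\nabla w^i$. This needs $\|b-\bar b\|_{L^\infty}$, which is unavailable because $H^1\not\subset L^\infty$ in two dimensions.
\end{itemize}

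Replacing $\|\bm{u}-\lambda\|_{L^\infty}$ by the small quantity $\|\nabla\bm{u}\|_{L^2}$ is precisely what the div--curl structure buys, and no elementary manipulation produces it.

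The hole-filling variant fails for the same reason. Testing with $(\bm{u}-\lambda)\zeta^2$ and integrating by parts leaves $\int(b^{ij}-\bar b^{ij})\zeta^2\,\nabla^\perp u^j\cdot\nabla u^i$. This term lives on all of $\operatorname{supp}\zeta$, not only on the annulus where $\nabla\zeta$ is supported, so Widman's trick cannot absorb it. Bounding it at all is a $\mathrm{BMO}$--$\mathcal{H}^1$ pairing.

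Your handling of the $\nabla a$ part and of $\bm{f}$ is fine. If you import Wente's inequality as a black box, Step 2 becomes correct and you recover the classical Hélein proof, but not the elementary argument you claim.

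The missing idea, which is what the paper uses, is to leave the energy space.
\begin{itemize}
\item Keep Hélein's identity $-\Delta\bm{u}=\divg(\jaOm(\bm{u}-\jc))+(\bm{u}\wedge\bm{f})(\bm{u}-\jc)+\bm{f}$; no Hodge decomposition is needed.
\item Estimate $\bm{w}=\bm{u}-\bm{h}$ in $W^{1,\jqq}$ with $\jqq<2$ rather than in $H^1$. Calderón--Zygmund plus Hölder with $1/\jqq=1/2+1/\pHold$ give $\|\nabla\bm{w}\|_{L^\jqq}\lesssim\|\jaOm\|_{L^2}\|\bm{u}-\jc\|_{L^\pHold}+r\|\bm{f}\|_{L^\jqq}$.
\item In dimension two, $\pHold=2\jqq/(2-\jqq)$ is exactly the Sobolev exponent $\jqq^\ast$. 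So the Sobolev--Poincaré inequality on $B_\jth$ returns $\bm{w}$ in the same space $L^\pHold$, and the quantity that iterates is the $L^\pHold$ mean oscillation $\Osc$, not the Dirichlet energy.
\item The contraction factor is $\|\nabla\bm{u}\|_{L^2(B_1)}$, which is small after localization. The map only needs to be controlled in $L^\pHold$, which Sobolev gives, rather than in $L^\infty$.
\item A good-radius Poisson-kernel bound $\sup_{B_{1/4}}|\nabla\bm{h}|\lesssim\Osc^{1/\pHold}(\bm{u},B_1)$ handles the harmonic part, and Campanato's characterization then gives Hölder continuity.
\end{itemize}
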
}}

The proof of Theorem~\ref{thm:regtheoremharmonic} is based on absorbing the
critical geometric nonlinearity into an antisymmetric connection matrix. While
local continuity for $L^{\jqq}$ sources was previously established by Müller
and Schikorra~\cite{Mueller2009} through gauge transformations, our approach
provides a different proof based on a Campanato iteration scheme.
In particular, it avoids both the construction of a gauge and the
$\mathcal{H}^1$--$\mathrm{BMO}$ duality, yielding a direct quantitative
estimate in which the Hölder regularity is determined by the fractional
integrability of the source term.

We further show that the same approach applies to a broader class of systems
where the antisymmetry assumption on the connection matrix is replaced by the
analytic condition $\divg\jaOm\in L^\jqq$ for some $\jqq>1$. A related idea
appears in Bethuel~\cite{Bethuel1992}, who, in the setting of the prescribed
mean curvature equation, imposed integrability assumptions ensuring that the
divergence of the associated connection matrix belongs to a Lorentz space.
Our framework shows that, for globally bounded $\bm{u}$, the weaker condition
$\divg\jaOm\in L^\jqq$ is sufficient for regularity.

Regularity for such systems can also be obtained by combining Hodge
decomposition with Hardy space methods, splitting the right-hand side into a
Hardy component and an $L^\jqq$ remainder. The purpose here is instead to
provide a self-contained alternative based on Campanato iteration.
The advantage of this formulation becomes apparent in
settings where classical tools, such as uniformization and standard
Hodge-theoretic arguments, are not available, including the non-unimodular
variable-coefficient case.

{\myprop{\begin{theorem}[Regularity of generalized systems]
  \label{thm:generalized_regularity}Let $B_1 \subset \RR^2$. Let
  $\bm{u} \in H^1 \cap L^{\infty} (B_1, \RR^{n + 1})$ be
  a weak solution to the generalized system
  \begin{equation}
    \label{eq:generalized_riviere} - \Delta \bm{u}= \jaOm
    \cdot \nabla \bm{u}+\bm{f} \quad
    \text{in } B_1 .
  \end{equation}
  Assume that the structural matrix and the source terms satisfy the purely
  analytic hypotheses for some $\jqq > 1$:
  \begin{itemize}
    \item $\jaOm \in L^2 (B_1, \RR^{(n + 1) \times (n + 1)} \otimes \RR^2)$,
    
    \item $\bm{f} \in L^{\jqq} (B_1, \RR^{n + 1})$,
    
    \item $\divg \jaOm \in L^{\jqq} (B_1, \RR^{(n + 1) \times (n + 1)})$.
  \end{itemize}
  Then, $\bm{u}$ is locally Hölder continuous in $B_1$.
  Specifically, there exists an exponent $\jeta \in (0, 1)$ such that
  $\bm{u} \in C^{0,
  \jeta}_{\ensuremath{\operatorname{loc}}}  (B_1, \RR^{n + 1})$.
\end{theorem}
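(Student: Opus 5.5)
We will prove Theorem~\ref{thm:generalized_regularity} by a Campanato-type iteration, working from a Caccioppoli inequality in which the critical term $\jaOm\cdot\nabla\bm{u}$ is integrated by parts so that only $\divg\jaOm$ and $\bm{f}$ remain, both of which are subcritical. The target is the Morrey decay
\[
\int_{B_r(x_0)}|\nabla\bm{u}|^2\leqslant C r^{2\jeta},
\]
which by Morrey's Dirichlet growth theorem gives $\bm{u}\in C^{0,\jeta}_{\mathrm{loc}}$. Set $M=\|\bm{u}\|_{L^\infty}$.

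The first step is a Caccioppoli inequality. Fix $B_{2r}(x_0)\Subset B_1$ and a cutoff $\varphi$ with $\varphi=1$ on $B_r$, support in $B_{2r}$, and $|\nabla\varphi|\lesssim 1/r$. Test the system with $\varphi^2(\bm{u}-\bm{\lambda})$, where $\bm{\lambda}=\jav{B_{2r}}\bm{u}$. The only dangerous term is $\int\varphi^2\jaOm_{ij}\cdot\nabla u_j\,(u_i-\lambda_i)$. Writing $\nabla u_j=\nabla(u_j-\lambda_j)$ does not immediately give a divergence, because $\jaOm$ is not assumed antisymmetric. We therefore split
\[
\jaOm_{ij}(u_i-\lambda_i)\nabla u_j=\jaOm_{ij}\cdot\nabla\big[(u_i-\lambda_i)(u_j-\lambda_j)\big]-\jaOm_{ij}(u_j-\lambda_j)\nabla u_i .
\]
This symmetrization only reproduces the same type of term. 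Instead we decompose $\jaOm$ by Hodge: on $B_{2r}$ write $\jaOm=\nabla\xi+\nabla^\perp\zeta$, with $\Delta\xi=\divg\jaOm$ (zero boundary data), so that $\nabla\xi\in W^{1,\jqq}\subset L^{s}$ for some $s>2$. The divergence-free part $\nabla^\perp\zeta$ multiplies the gradient $\nabla\bm{u}$, which gives a div-curl (Jacobian) structure $\nabla^\perp\zeta\cdot\nabla u_j$. We handle it classically, without Wente, by integrating by parts: $\int\varphi^2(u_i-\lambda_i)\nabla^\perp\zeta\cdot\nabla u_j=-\int\zeta\,\nabla^\perp(\varphi^2(u_i-\lambda_i))\cdot\nabla u_j$, normalising $\zeta$ to have zero mean on $B_{2r}$. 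This term is bounded by $\|\zeta\|\,\|\nabla\bm{u}\|^2$ together with cutoff terms. The absorbable contribution requires $\|\nabla\zeta\|_{L^2(B_{2r})}\leqslant\|\jaOm\|_{L^2(B_{2r})}$ to be small, which holds for $r\leqslant r_0$ by absolute continuity of the integral. The difficulty is that $\zeta\nabla\bm{u}\cdot\nabla\bm{u}$ requires $\zeta\in L^\infty$, which fails for $\zeta\in H^1$.

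Rather than using $\zeta$ directly, we use the boundedness of $\bm{u}$ and move the derivative onto $\bm{u}$. Writing $\nabla^\perp\zeta\cdot\nabla u_j=\divr(u_j\nabla^\perp\zeta)$ gives $\int\nabla(\varphi^2(u_i-\lambda_i))\cdot u_j\nabla^\perp\zeta$, which is controlled by $M\|\jaOm\|_{L^2(B_{2r})}\big(\|\nabla\bm{u}\|_{L^2(B_{2r})}+r^{-1}\|\bm{u}-\bm{\lambda}\|_{L^2}\big)$. The gradient part is then absorbed using the smallness of $\|\jaOm\|_{L^2(B_{2r})}$. The $\nabla\xi$ part, the $\bm{f}$ term and the $\divg\jaOm$ terms are estimated by H\"older and Sobolev–Poincar\'e, giving contributions of order $r^{2-2/\jqq}$ times powers of $M$ and of the norms. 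The outcome is an estimate of the form
\[
\int_{B_r}|\nabla\bm{u}|^2\leqslant \varepsilon\int_{B_{2r}}|\nabla\bm{u}|^2+\frac{C}{r^2}\int_{B_{2r}\setminus B_r}|\bm{u}-\bm{\lambda}|^2+Cr^{2\alpha},\qquad \alpha=1-\tfrac1\jqq .
\]

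The second step is hole-filling and iteration. Poincar\'e's inequality on the annulus turns the middle term into $C\int_{B_{2r}\setminus B_r}|\nabla\bm{u}|^2$. Adding $C$ times the left-hand side (Widman's trick) gives $\Phi(r)\leqslant\theta\Phi(2r)+Cr^{2\alpha}$ with $\theta<1$, where $\Phi(r)=\int_{B_r}|\nabla\bm{u}|^2$. The standard iteration lemma then yields $\Phi(r)\leqslant Cr^{2\jeta}$ with $\jeta=\min(\alpha,\tfrac12\log_2(1/\theta))-\delta>0$, uniformly for $x_0$ in compact subsets, and Morrey's lemma concludes.

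The main obstacle is the first step: obtaining a genuinely absorbable bound for the non-antisymmetric connection term using only $\jaOm\in L^2$, $\divg\jaOm\in L^\jqq$ and $\bm{u}\in L^\infty$. In particular, the cutoff-gradient piece $M\|\jaOm\|_{L^2}r^{-1}\|\bm{u}-\bm{\lambda}\|_{L^2}$ must be checked to fit into the hole-filling scheme, and it does by Poincar\'e together with the smallness of $\|\jaOm\|_{L^2(B_{2r})}$.
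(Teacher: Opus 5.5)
Your argument breaks down at the divergence-free part $\nabla^\perp\zeta$ in the first step. After writing $\nabla^\perp\zeta_{ij}\cdot\nabla u_j=\operatorname{div}(u_j\nabla^\perp\zeta_{ij})$ and integrating by parts, you obtain the bound $M\|\bm{\Omega}\|_{L^2(B_{2r})}\bigl(\|\nabla\bm{u}\|_{L^2(B_{2r})}+r^{-1}\|\bm{u}-\bm{\lambda}\|_{L^2(B_{2r})}\bigr)$. This is of degree \emph{one} in $\bm{u}$, whereas the left-hand side $\int\varphi^2|\nabla\bm{u}|^2$ is of degree two. A small coefficient does not make a linear term absorbable.

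Young's inequality leaves a remainder $C_\delta M^2\|\bm{\Omega}\|_{L^2(B_{2r})}^2$. Since $\|\bm{\Omega}\|_{L^2(B_{2r})}$ is scale-invariant in two dimensions and $\bm{\Omega}$ is merely in $L^2$, this remainder tends to zero with no rate. What you actually get after hole-filling is $\Phi(r)\leqslant\theta\Phi(2r)+CM\|\bm{\Omega}\|_{L^2(B_{2r})}\Phi(2r)^{1/2}+Cr^{2\alpha}$. This gives no power decay: an ansatz $\Phi(\rho)\leqslant K\rho^{2\eta}$ cannot be propagated, because the middle term is of order $r^{\eta}\gg r^{2\eta}$. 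Poincar\'e on the cutoff piece does not change this, since it still produces $\|\nabla\bm{u}\|_{L^2}$ to the first power.

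The $\nabla\xi$, $\bm{f}$ and $\divg\bm{\Omega}$ terms are harmless precisely because their linear coefficients carry a power of $r$ (for instance $\|\nabla\xi\|_{L^2(B_{2r})}\lesssim r^{2-2/q}\|\divg\bm{\Omega}\|_{L^q}$); the critical term has no such factor. A genuinely quadratic bound $\lesssim\|\bm{\Omega}\|_{L^2}\|\nabla\bm{u}\|_{L^2}^2$ for $\int\varphi^2(u_i-\lambda_i)\nabla^\perp\zeta_{ij}\cdot\nabla u_j$ at the energy level would need either $\|\bm{u}-\bm{\lambda}\|_{L^\infty}\lesssim\|\nabla\bm{u}\|_{L^2}$, which is false in 2D, or exactly the Wente/$\mathcal{H}^1$--BMO compensation you set out to avoid.

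The missing idea, which is the paper's route, is to leave the $L^2$ energy level, work at an exponent $q\in(1,2)$, and iterate the $L^p$-oscillation $\Psi_p$ instead of the Dirichlet energy. Using $\divg\bm{\Omega}\in L^q$, one writes $-\Delta\bm{u}=\divg(\bm{\Omega}(\bm{u}-\bm{c}))+\bm{f}-(\divg\bm{\Omega})(\bm{u}-\bm{c})$ and compares $\bm{u}$ with its harmonic extension $\bm{h}$ on a good radius. Calder\'on--Zygmund and H\"older with $1/q=1/2+1/p$ then give
\[
\|\nabla(\bm{u}-\bm{h})\|_{L^q(B_{1/4})}\lesssim\|\bm{\Omega}\|_{L^2(B_1)}\,\Psi_p^{1/p}(\bm{u},B_1)+\|\bm{f}\|_{L^q(B_1)}+M\|\divg\bm{\Omega}\|_{L^q(B_1)} .
\]
Since $p=q^*$ exactly in two dimensions, Sobolev--Poincar\'e bounds $\Psi_p^{1/p}(\bm{u}-\bm{h},B_\theta)$ by $C\theta^{-2/p}$ times this left-hand side, while $\Psi_p^{1/p}(\bm{h},B_\theta)\lesssim\theta\,\Psi_p^{1/p}(\bm{u},B_1)$. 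Both sides are now of degree one in $\bm{u}$. Hence smallness of $\|\bm{\Omega}\|_{L^2}$, which is rescaling-invariant and holds on small balls, yields a true contraction
\[
\Psi_p(\bm{u},B_\theta)\leqslant\tfrac12\Psi_p(\bm{u},B_1)+\kappa\bigl(\|\bm{f}\|_{L^q}+M\|\divg\bm{\Omega}\|_{L^q}\bigr)^p .
\]
The bound $M$ multiplies only the subcritical datum $\divg\bm{\Omega}$, and is otherwise used just to restrict to $|\bm{c}|\leqslant M$. Campanato's characterization then gives H\"older continuity.
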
}}

To place the condition $\divg \jaOm \in L^q$ in context, it is useful to compare our setting with the classical gauge-theoretic framework. In Hélein's original proof for $\mathbb{S}^n$-valued harmonic maps~\cite{Helein1990}, the antisymmetry of the connection is used to recast the system in a gauge in which the resulting connection matrix is divergence-free, namely $\divg \jaOm = \bm{0}$. Only after this divergence-free structure has been obtained can one invoke the analytical $\mathcal{H}^1$--$\mathrm{BMO}$ regularity theory. A similar strategy appears in the general gauge-theoretic constructions of Rivière~\cite{Riviere2007} and Müller--Schikorra~\cite{Mueller2009}, where antisymmetry acts as the algebraic mechanism that produces a zero-divergence gauge, after which purely analytic compensation estimates apply.

By contrast, our framework shows that this geometric step is not necessary for regularity. The enforcement of $\divg \jaOm = \bm{0}$ through a gauge transformation can be entirely avoided: we prove that an a priori $L^q$ bound on $\divg \jaOm$ already suffices to initiate the Campanato iteration and derive regularity, independently of any underlying geometric gauge structure. This underscores that, in the classical theory, antisymmetry functions primarily as a means of producing a divergence-free gauge, rather than as an intrinsic analytic requirement of the regularity mechanism itself.

Our third result extends the theory to non-unimodular anisotropic media. Through a localized coefficient-freezing argument, we generalize the isotropic principal part $-\Delta \bm{u}$ to a fully variable-coefficient divergence-form operator $-\divg(A \nabla \bm{u})$. This extension successfully establishes regularity in a regime that cannot be treated by uniformization.

{\myprop{\begin{theorem}[Regularity of anisotropic systems]
  \label{thm:aniso_regularity}Let $B_1 \subset \RR^2$. Let
  $\bm{u} \in H^1 \cap L^{\infty} (B_1, \RR^{n + 1})$ be
  a weak solution of
  \begin{equation}
    - \divg (A \nabla \bm{u}) = \jaOm \cdot \nabla
    \bm{u}+\bm{f} \qquad \text{in }
    B_1 .
  \end{equation}
  Assume the structural data satisfy, for some $\jqq > 1$ and some $\gamma_0
  \in (0, 1]$:
  \begin{itemize}
    \item[\text{{\upshape{(A1)}}}] $A : B_1 \to \RR^{2 \times 2}$ is
    symmetric, uniformly elliptic, and Hölder continuous:
    \begin{equation} \lambda | \xi |^2 \leqslant A (x) \xi \cdot \xi \leqslant \Lambda | \xi
       |^2 \qquad \forall x \in B_1, \xi \in \RR^2, \quad 0 < \lambda
       \leqslant \Lambda < + \infty, \end{equation}
    and $A \in C^{0, \gamma_0} (\overline{B_1})$;
    
    \item[\text{{\upshape{(A2)}}}] $\jaOm \in L^2 (B_1, \RR^{(n + 1) \times (n
    + 1)} \otimes \RR^2)$;
    
    \item[\text{{\upshape{(A3)}}}] $\bm{f} \in L^{\jqq}
    (B_1, \RR^{n + 1})$;
    
    \item[\text{{\upshape{(A4)}}}] $\divg \jaOm \in L^{\jqq} (B_1, \RR^{(n +
    1) \times (n + 1)})$.
  \end{itemize}
  Then $\bm{u}$ is locally Hölder continuous in $B_1$.
  Specifically, there exists an exponent $\eta \in (0, 1)$ such that
  $\bm{u} \in C^{0,
  \eta}_{\ensuremath{\operatorname{loc}}} (B_1, \RR^{n + 1})$.
\end{theorem}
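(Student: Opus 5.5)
The plan is a Campanato/Morrey decay argument for the Dirichlet energy: fix a ball $B_r(x_0)\Subset B_1$ and prove a decay of the form
\[
\int_{B_{\tau\rho}(x_0)}|\nabla\bm{u}|^2 \leqslant \tfrac12\int_{B_\rho(x_0)}|\nabla\bm{u}|^2 + C\rho^{2\alpha}
\]
for all small $\rho$. Iterating this gives $\int_{B_\rho}|\nabla\bm{u}|^2\leqslant C\rho^{2\eta}$, and Morrey's Dirichlet growth lemma then yields $\bm{u}\in C^{0,\eta}_{\rm loc}$.

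\textbf{Step 1: freezing the coefficients.} On $B_\rho=B_\rho(x_0)$, split $\bm{u}=\bm{v}+\bm{w}$. Here $\bm{v}$ solves $-\divg(A(x_0)\nabla\bm{v})=0$ with $\bm{v}=\bm{u}$ on $\partial B_\rho$. By (A1), $A(x_0)$ is a constant symmetric elliptic matrix, so a linear change of variables reduces it to the Laplacian. This gives the classical decay
\[
\int_{B_{\tau\rho}}|\nabla\bm{v}|^2\leqslant C(\lambda,\Lambda)\,\tau^2\int_{B_\rho}|\nabla\bm{v}|^2\leqslant C\tau^2\int_{B_\rho}|\nabla\bm{u}|^2 .
\]
The remainder $\bm{w}\in H^1_0(B_\rho)$ satisfies
\[
-\divg(A(x_0)\nabla\bm{w})=\divg\big((A-A(x_0))\nabla\bm{u}\big)+\jaOm\cdot\nabla\bm{u}+\bm{f}.
\]
Testing with $\bm{w}$ bounds the first term by $C\rho^{2\gamma_0}\int_{B_\rho}|\nabla\bm{u}|^2$. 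For $\bm{f}$, Hölder's inequality and Sobolev embedding (in two dimensions $\|\bm{w}\|_{L^{q'}}\leqslant C\rho^{2-2/q}\|\nabla\bm{w}\|_{L^2}$) give $\int\bm{f}\cdot\bm{w}\leqslant C\rho^{2-2/q}\|\bm{f}\|_{L^q}\|\nabla\bm{w}\|_{L^2}$. Absorbing, this contributes $C\rho^{2(2-2/q)}=C\rho^{2\alpha}$ with $\alpha=2-2/q>0$.

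\textbf{Step 2: the critical term (main obstacle).} The difficulty is $\int_{B_\rho}(\jaOm\cdot\nabla\bm{u})\cdot\bm{w}$. With only $\jaOm\in L^2$ and $\nabla\bm{u}\in L^2$, the product $\jaOm\cdot\nabla\bm{u}$ lies merely in $L^1$, and $\bm{w}$ is not bounded in $H^1$ by the energy. The idea is to exploit $\divg\jaOm$ and the bound $\bm{u}\in L^\infty$. Write $\bm{u}=(\bm{u}-\bar{\bm{u}})+\bar{\bm{u}}$ with $\bar{\bm{u}}$ the mean over $B_\rho$, and integrate by parts:
\[
\jaOm\cdot\nabla\bm{u}=\divg\big(\jaOm(\bm{u}-\bar{\bm{u}})\big)-(\divg\jaOm)(\bm{u}-\bar{\bm{u}}).
\]
The last term is an $L^q$ source multiplied by a bounded function, so it is handled exactly as $\bm{f}$, with constants depending on $\|\bm{u}\|_\infty$. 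The divergence term, tested against $\bm{w}$, gives
\[
\int_{B_\rho}|\jaOm|\,|\bm{u}-\bar{\bm{u}}|\,|\nabla\bm{w}|\leqslant\|\jaOm\|_{L^2(B_\rho)}\,\|(\bm{u}-\bar{\bm{u}})\nabla\bm{w}\|_{L^2}.
\]
This bound alone is not small. I would therefore first prove a Caccioppoli-type inequality for the full system, namely
\[
\int_{B_{\rho/2}}|\nabla\bm{u}|^2\leqslant \frac{C}{\rho^2}\int_{B_\rho}|\bm{u}-\bar{\bm{u}}|^2+C\rho^{2\alpha},
\]
obtained by testing with $\zeta^2(\bm{u}-\bar{\bm{u}})$ and using the same integration by parts. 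I would then handle the product via the smallness of $\|\jaOm\|_{L^2(B_\rho)}$, which holds for $\rho$ small by absolute continuity of the integral. Concretely, test the equation for $\bm{w}$ with the truncation-friendly function $\bm{w}$ and bound $|\bm{u}-\bar{\bm{u}}|\leqslant 2\|\bm{u}\|_\infty$. This yields a contribution $\varepsilon(\rho)\|\nabla\bm{w}\|_{L^2}$ with $\varepsilon(\rho)=C\|\bm{u}\|_\infty\|\jaOm\|_{L^2(B_\rho)}\to0$. After absorbing, this gives $\int|\nabla\bm{w}|^2\leqslant C\varepsilon(\rho)^2+\dots$. This smallness is not scale-decaying, and that is the crux.

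To obtain actual decay, I would apply the above estimate to the oscillation of $\bm{u}$ rather than to $\|\bm{u}\|_\infty$. Since $\bm{u}$ is not yet known to be continuous, I would replace the pointwise bound by a hole-filling/Widman-type argument: bound the product using $\|\jaOm\|_{L^2(B_\rho)}\,\|\nabla\bm{u}\|_{L^2(B_\rho)}$ times the $L^\infty$ norm of $\bm{u}$, and use $\varepsilon_0$-smallness of $\|\jaOm\|_{L^2(B_\rho)}$ to produce the factor $\tfrac12$. Collecting all terms gives
\[
\int_{B_{\tau\rho}}|\nabla\bm{u}|^2\leqslant\big(C\tau^2+C\rho^{2\gamma_0}+C\varepsilon(\rho)\big)\int_{B_\rho}|\nabla\bm{u}|^2+C\rho^{2\alpha}.
\]
Choose $\tau$ so that $C\tau^2\leqslant\tfrac14$, then $\rho_0$ so that the remaining coefficients are at most $\tfrac14$. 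The standard iteration lemma then gives Morrey decay with some $\eta\in(0,\min\{\alpha,\gamma_0,\cdot\})$, and Morrey's lemma gives Hölder continuity.

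The step I expect to be genuinely delicate is the treatment of the divergence term, namely making $\|\jaOm\|_{L^2(B_\rho)}$ multiply a quantity that is controlled by the energy on $B_\rho$. If the crude $L^\infty$ bound proves insufficient, I would instead use the rescaling-stable oscillation-decay framework announced in the abstract. In that approach one rescales $B_\rho$ to $B_1$, notes that the hypotheses (A1)--(A4) are preserved with improved constants, since $\|A-A(x_0)\|_\infty$ and $\|\divg\jaOm\|_{L^q}$, $\|\bm{f}\|_{L^q}$ scale by positive powers of $\rho$, and then verifies the oscillation-decay property by a compactness/contradiction argument against the frozen-coefficient limit.
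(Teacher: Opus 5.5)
Your energy-decay route breaks down at the step you flag yourself, and neither remedy you sketch closes the gap. There are two ways to estimate the critical term, and both fail for the same reason:
\begin{itemize}
\item Before integrating by parts, $\bigl|\int_{B_\rho}(\bm{\Omega}\cdot\nabla\bm{u})\cdot\bm{w}\bigr|\leqslant\|\bm{\Omega}\|_{L^2(B_\rho)}\|\nabla\bm{u}\|_{L^2(B_\rho)}\|\bm{w}\|_{L^\infty}$, with $\|\bm{w}\|_{L^\infty}\lesssim M$ by the maximum principle.
\item After integrating by parts, $\bigl|\int_{B_\rho}\bm{\Omega}(\bm{u}-\bar{\bm{u}}):\nabla\bm{w}\bigr|\leqslant\|\bm{\Omega}\|_{L^2(B_\rho)}\|\bm{u}-\bar{\bm{u}}\|_{L^\infty(B_\rho)}\|\nabla\bm{w}\|_{L^2(B_\rho)}$.
\end{itemize}
In either case an $L^\infty$ factor appears that is controlled only by the amplitude $M$, never by the energy on $B_\rho$. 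In two dimensions $H^1\not\hookrightarrow L^\infty$, and $\bm{u}$ is not yet continuous, so there is no small oscillation to exploit.

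After absorption you get either $\int_{B_\rho}|\nabla\bm{w}|^2\lesssim M\|\bm{\Omega}\|_{L^2(B_\rho)}\|\nabla\bm{u}\|_{L^2(B_\rho)}$ or $\int_{B_\rho}|\nabla\bm{w}|^2\lesssim M^2\|\bm{\Omega}\|_{L^2(B_\rho)}^2$. The first is only linear in $\|\nabla\bm{u}\|_{L^2(B_\rho)}$, and Young's inequality turns it into the second. The second is an additive term with no power decay in $\rho$. It would decay only if $\bm{\Omega}$ already had Morrey decay, which is not assumed; for harmonic maps $|\bm{\Omega}|\sim|\nabla\bm{u}|$, so that decay is the very conclusion.

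Hence your collected inequality with $C\varepsilon(\rho)\int_{B_\rho}|\nabla\bm{u}|^2$ on the right does not follow. The same defect invalidates the Caccioppoli inequality you obtain by testing with $\zeta^2(\bm{u}-\bar{\bm{u}})$: hole filling leaves exactly the additive term $M^2\|\bm{\Omega}\|_{L^2(B_\rho)}^2$. Your compactness fallback does not supply the missing ingredient either, since a normalized blow-up sequence needs a uniform gradient bound, and that bound rests on the same critical estimate.

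The missing idea is to run the comparison argument below the energy level, in $W^{1,q}$ with $q\in(1,2)$. One then iterates the $L^p$ mean oscillation $\Psi_p$ rather than the energy, with $1/q=1/2+1/p$, i.e.\ $p=2q/(2-q)=q^\ast$. Here is how the paper closes the argument:
\begin{enumerate}
\item Write $-\divg(A_0\nabla\bm{u})=\divg\bigl(\tilde A\nabla\bm{u}+\bm{\Omega}(\bm{u}-\bm{c})\bigr)+\bm{f}-(\divg\bm{\Omega})(\bm{u}-\bm{c})$, and compare with the $A_0$-harmonic extension $\bm{h}$ on a good radius.
\item The $L^q$ Calder\'on--Zygmund estimate and H\"older's inequality give $\|\nabla(\bm{u}-\bm{h})\|_{L^q}\lesssim\|\bm{\Omega}\|_{L^2}\|\bm{u}-\bm{c}\|_{L^p}+\|\tilde A\|_{L^\infty}\|\nabla\bm{u}\|_{L^2}+\|\bm{f}\|_{L^q}+M\|\divg\bm{\Omega}\|_{L^q}$. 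The critical flux now carries no $L^\infty$ factor.
\item Since $p$ is exactly the two-dimensional Sobolev exponent of $q$, Sobolev--Poincar\'e on $B_\theta$ converts this back into $\Psi_p$. The small factor $\|\bm{\Omega}\|_{L^2}$ therefore multiplies the quantity being iterated.
\item Together with $\sup_{B_{1/4}}|\nabla\bm{h}|\lesssim\Psi_p^{1/p}(\bm{u},B_1)$, this yields $\Psi_p(\bm{u},B_\theta)\leqslant\frac12\Psi_p(\bm{u},B_1)+\kappa(D_{\mathrm{src}}^p+D_{\mathrm{frz}}^p)$.
\item Rescaling leaves $\|\bm{\Omega}\|_{L^2}$ invariant and makes the data decay like $r^{2-2/q}$ and $r^{\gamma_0}$. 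The iteration lemma and Campanato's characterization then give $C^{0,\eta}_{\mathrm{loc}}$.
\end{enumerate}
Your treatment of the freezing term, the $L^q$ sources, and the removal of the smallness assumption via absolute continuity is sound. Only the critical term forces the change of functional and exponent.
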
}}

When $\det A$ is constant, a conformal change of variables reduces the system
to the isotropic setting. Theorem~\ref{thm:aniso_regularity} answers
($\ensuremath{\boldsymbol{Q}}_{\ensuremath{\boldsymbol{3}}}$) by resolving the
{{\em non-unimodular}} regime ($\det A \neq
\ensuremath{\operatorname{const}}$), which is  inaccessible to
uniformization. 

While initial Hölder continuity addresses the critical geometric barrier,
physical applications demand full classical smoothness. To complete the
regularity paradigm, our final main theorem provides a general bootstrap
argument. Valid in arbitrary dimensions, it demonstrates that initial
continuity implies higher-order smoothness, dictated solely by
the integrability of the source.

{\myprop{\begin{theorem}[Bootstrap Regularity for Almost Harmonic Maps]
  \label{lemma:Bootstrap}Let $U \subset \RR^m$ be an open domain, and let
  $\bm{u} \in W^{1, 2}_{\mathrm{loc}} (U, \Stwo^n) \cap
  C^0 (U, \Stwo^n)$ be a weakly almost harmonic map solving the
  Euler--Lagrange equation:
  \begin{equation}
    - \Delta \bm{u}= | \nabla
    \bm{u}|^2
    \bm{u}+\bm{f} \quad \text{in }
    \mathcal{D}' (U),
  \end{equation}
  where the source term initially satisfies $\bm{f} \in
  L^{\jqq}_{\mathrm{loc}} (U, \RR^{n + 1})$.
  
  {\noindent}If the source integrability $\jqq$ satisfies $2 \leqslant \jqq
  \leqslant m / 2$ {\opt}which can only occur in dimensions $m \geqslant
  4${\cpt}, we  additionally assume the map is already locally Hölder
  continuous: $\bm{u} \in C^{0, \gamma}_{\mathrm{loc}}
  (U, \Stwo^n)$ for some $\gamma \in (0, 1]$.
  
  {\noindent}Then, the regularity of $\bm{u}$ 
  depends on the regularity of $\bm{f}$:
  \begin{enumerate}
    \item If $\bm{f} \in L^{\jqq}_{\mathrm{loc}} (U,
    \RR^{n + 1})$ and $2 \leqslant \jqq \leqslant m$ then the regularity is
    bounded by the critical Sobolev embedding threshold:
    \begin{equation}
      \bm{u} \in W^{1, \jqq^{\ast}}_{\mathrm{loc}} (U,
      \Stwo^n)  \quad \text{with } \jqq^{\ast} = \frac{m \jqq}{m - \jqq},
    \end{equation}
    with the usual understanding that $\jqq^{\ast}$ (necessarily greater than
    $1$) can be any finite real exponent if $m = \jqq$. In particular, if $m =
    \jqq$ then $\bm{u} \in C^{0, \alpha}_{\mathrm{loc}}
    (U, \Stwo^n)$ for all $0 < \alpha < 1$, while if $\jqq = 2$ and $m = 3$
    then $\bm{u} \in C^{0, 1 / 2}_{\mathrm{loc}} (U,
    \Stwo^n)$.
    
    \item If $\bm{f} \in L^{\jqq}_{\mathrm{loc}} (U,
    \RR^{n + 1})$ with $\jqq > m$, then
    \begin{equation}
      \bm{u} \in C^{1, \eta}_{\mathrm{loc}} (U, \Stwo^n),
      \quad \text{where } \eta := 1 - m / \jqq .
    \end{equation}
    Note that $\eta$ does not depend on any intermediate fractional exponent.
    In particular, if $\bm{f} \in
    L^{\infty}_{\mathrm{loc}} (U, \RR^{n + 1})$ then
    $\bm{u} \in C^{1, \alpha}_{\mathrm{loc}} (U,
    \Stwo^n)$ for all $0 < \alpha < 1$.
    
    \item For any integer $k \geqslant 1$ and exponent $\beta \in (0, 1)$, if
    $\bm{f} \in C^{k - 1, \beta}_{\mathrm{loc}} (U,
    \RR^{n + 1})$ then $\bm{u} \in C^{k + 1,
    \beta}_{\mathrm{loc}} (U, \Stwo^n)$. In particular, if
    $\bm{f} \in C^{\infty} (U, \RR^{n + 1})$, then
    $\bm{u} \in C^{\infty} (U, \Stwo^n)$. Again, note
    that $\eta$ does not play any role in the higher regularity class we end
    up in.
  \end{enumerate}
\end{theorem}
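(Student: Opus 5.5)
The plan is a bootstrap on the linear equation $-\Delta\bm{u}=\bm{g}$, $\bm{g}:=|\nabla\bm{u}|^2\bm{u}+\bm{f}$, with the geometric term treated as a perturbation whose size on small balls is controlled by the continuity of $\bm{u}$.

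\textbf{Step 1: gain from continuity.} First I would use continuity to get a small-energy Morrey decay and then higher integrability of $\nabla\bm{u}$. Fix $x_0\in U$ and a small ball $B_{2r}(x_0)$ on which $\operatorname{osc}\bm{u}\leqslant\varepsilon$. Test the equation with $(\bm{u}-\bm{u}(x_0))\zeta^2$, where $\zeta$ is a cutoff. The quadratic term is then bounded by
\[
\varepsilon\int|\nabla\bm{u}|^2\zeta^2 ,
\]
which can be absorbed into the left-hand side for $\varepsilon$ small. This yields a Caccioppoli inequality with a source contribution $\int|\bm{f}||\bm{u}-\bm{u}(x_0)|\zeta^2$. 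Comparing with the harmonic replacement on each ball, in the spirit of the paper's Campanato iteration, gives the Morrey decay
\[
\int_{B_\rho}|\nabla\bm{u}|^2\leqslant C\rho^{m-2+2\alpha}
\]
for some $\alpha>0$. In the range $2\leqslant q\leqslant m/2$ the source is too weak to produce this decay, and the assumed Hölder continuity supplies it directly through the same Caccioppoli inequality. By Morrey's lemma, $\bm{u}\in C^{0,\alpha}_{\mathrm{loc}}$ in every case. Next, since $|\bm{u}|=1$, we have $|\nabla\bm{u}|^2=-\bm{u}\cdot\Delta\bm{u}$ in the weak sense. I would therefore rewrite the quadratic term in divergence form,
\[
|\nabla\bm{u}|^2\bm{u}=\divg\!\bigl(\bm{u}\wedge\nabla\bm{u}\bigr)\text{-type terms}+\bm{f}\text{-terms},
\]
using the $\Stwo^n$ identity $-\Delta\bm{u}=\divg(\nabla\bm{u}\wedge\bm{u})\bm{u}$-like structure, Hélein's $\bm{u}_i\nabla\bm{u}_j-\bm{u}_j\nabla\bm{u}_i$, together with the $\bm{f}$ contribution. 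With this form, $L^p$ Calderón--Zygmund theory for $\divg$-form right-hand sides, combined with a freezing argument at small oscillation, upgrades $\nabla\bm{u}\in L^2$ to $\nabla\bm{u}\in L^p_{\mathrm{loc}}$ for every $p<\infty$ compatible with $\bm{f}$.

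\textbf{Step 2: iterate to the Sobolev threshold (item 1).} Now $\bm{g}\in L^{\min(q,p/2)}$. Calderón--Zygmund theory gives $\bm{u}\in W^{2,\min(q,p/2)}_{\mathrm{loc}}$, and Sobolev embedding then gives $\nabla\bm{u}\in L^{s^*}$. Iterating, the exponent of $|\nabla\bm{u}|^2$ strictly improves until it exceeds $q$. From then on $\bm{g}\in L^q$, so $\bm{u}\in W^{2,q}_{\mathrm{loc}}\subset W^{1,q^*}_{\mathrm{loc}}$. For $q=m$ any finite exponent is obtained, and Morrey embedding gives $C^{0,\alpha}$ for all $\alpha<1$. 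For $q=2$, $m=3$, we have $q^*=6$ and hence $C^{0,1/2}$.

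\textbf{Step 3: items 2 and 3.} If $q>m$, Step 2 gives $\nabla\bm{u}\in L^s$ for every finite $s$, so $\bm{g}\in L^q$ and $\bm{u}\in W^{2,q}\subset C^{1,1-m/q}$. This exponent depends only on $q$, which explains why $\eta$ is independent of the intermediate exponents. For item 3, $\bm{f}\in C^{k-1,\beta}\subset L^\infty$, so item 2 gives $\bm{u}\in C^{1,\beta}$. Then $\bm{g}\in C^{0,\beta}$, and Schauder theory gives $\bm{u}\in C^{2,\beta}$. Inductively, if $\bm{u}\in C^{j+1,\beta}$ with $j<k$, then $\bm{g}\in C^{j,\beta}$, hence $\bm{u}\in C^{j+2,\beta}$; the induction stops at $C^{k+1,\beta}$. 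Taking all $k$ gives the $C^\infty$ statement.

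\textbf{Main obstacle.} I expect the hard part to be the first integrability gain beyond $L^2$ in Step 1. There $|\nabla\bm{u}|^2\in L^1$ is critical, and the only leverage is the smallness of the oscillation. I would first try a Gehring/reverse-Hölder argument from the absorbed Caccioppoli inequality to get $\nabla\bm{u}\in L^{2+\delta}$. I would then run the $W^{1,p}$ perturbation argument on $-\Delta(\bm{u}-\bm{u}(x_0))=(\bm{u}\cdot)$-linear terms, whose operator norm is $O(\varepsilon)$, closing by a fixed point in $W^{1,p}_0$.
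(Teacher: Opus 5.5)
Your Step~2 fails in part of the range. Iterating through $-\Delta\bm{u}=|\nabla\bm{u}|^2\bm{u}+\bm{f}$ gains integrability only above the critical exponent. Indeed, $\nabla\bm{u}\in L^p$ gives $|\nabla\bm{u}|^2\in L^{p/2}$ and hence $\nabla\bm{u}\in L^{mp/(2m-p)}$, and $mp/(2m-p)>p$ if and only if $p>m$. The perturbation estimate of your Step~1 is limited by its data $\bm{F}:=(\bm{u}\wedge\bm{f})(\bm{u}-\jc)+\bm{f}\in L^{q}\subset W^{-1,q^{\ast}}$, so it yields $p=q^{\ast}$. Moreover $q^{\ast}>m$ exactly when $q>m/2$. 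For $2\leqslant q<m/2$ the exponent of $|\nabla\bm{u}|^2$ therefore stays at $q^{\ast}/2<q$, the iteration never improves, and $\bm{u}\in W^{2,q}_{\mathrm{loc}}$ is not reached.

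The remedy is that Step~2 is not needed at all. With $\jc=\bm{u}(x_0)$, identity \eqref{eq:helein_div} can be written as $-\divg\bigl((I+B)\nabla\bm{u}\bigr)=\bm{F}$, where $B\nabla\bm{u}:=\jaOm(\bm{u}-\jc)$ and $\|B\|_{L^\infty(B_r(x_0))}\leqslant 2\operatorname{osc}_{B_r(x_0)}\bm{u}$. Run your contraction in $W^{1,p}_0$ with the following ingredients:
\begin{itemize}
\item choose the radius so that $C_p\|B\|_\infty<1$;
\item the cutoff terms such as $\nabla\zeta\cdot\nabla\bm{u}$ are lower order and are handled by finitely many Sobolev steps;
\item uniqueness in $W^{1,2}_0$ identifies the fixed point with $\zeta(\bm{u}-\jc)$.
\end{itemize}
This gives $\nabla\bm{u}\in L^{q^{\ast}}_{\mathrm{loc}}$ directly, which is item~1 for all $2\leqslant q\leqslant m$. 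When $q>m$ it gives $\nabla\bm{u}\in L^s_{\mathrm{loc}}$ for every $s<\infty$, after which your Step~3 is correct.

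Likewise, drop the Morrey-decay part of Step~1. The contraction uses only smallness of the oscillation, i.e.\ continuity. In any case, for $q\leqslant m/2$ the claimed decay $\rho^{m-2+2\alpha}$ does not follow from the Caccioppoli inequality together with $C^{0,\gamma}$. The source contributes only $r^{\gamma+m-m/q}$, and $\gamma+2-m/q\leqslant 0$, for instance when $q=2$, $m\geqslant 6$. Gehring's lemma is superfluous as well.

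With that correction your proof is valid and follows a different route from the paper's. The paper linearizes exactly rather than perturbatively. On a ball where $u^{n+1}\geqslant 1/2$ it passes to the gnomonic chart $w^\alpha=u^\alpha/u^{n+1}$. The broken conservation law for $u^{\alpha}\nabla u^{n+1}-u^{n+1}\nabla u^{\alpha}$ then turns the system into decoupled scalar equations $\divr\bigl((u^{n+1})^2\nabla w^\alpha\bigr)=g^\alpha$, with $\bm{g}$ depending only on $\bm{u}$ and $\bm{f}$ and no gradient terms. From there the paper uses:
\begin{itemize}
\item De Giorgi--Nash--Moser to get a H\"older coefficient;
\item Calder\'on--Zygmund theory with continuous coefficients for item~1;
\item Campanato theory for an intermediate $C^{1,\eta_0}$ bound;
\item the same Laplacian $W^{2,q}$ step and Schauder induction as your Step~3.
\end{itemize}
Your version stays with the vector system in H\'elein's form. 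It replaces the chart, De Giorgi--Nash--Moser and Campanato theory by one small-$L^\infty$ perturbation of the Laplacian. The price is a radius depending on $p$ through $C_p$, plus a fixed-point and uniqueness argument. The gain is that it reaches every $L^s$ bound on $\nabla\bm{u}$ directly when $q>m$. It also uses only continuity of $\bm{u}$, so the extra H\"older hypothesis for $2\leqslant q\leqslant m/2$ is never used. The paper's chart, in turn, needs no smallness and produces an exact linear equation whose data involve $\bm{f}$ alone.
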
}}

Theorem~\ref{lemma:Bootstrap} answers
($\ensuremath{\boldsymbol{Q}}_{\ensuremath{\boldsymbol{4}}}$). The bootstrap procedure
 1.$\to$2.$\to$3. is dimension-free, requires no stationarity assumption,
and relies  on Calderón--Zygmund and Schauder theory. In $m = 2$, the
continuity hypothesis is supplied by Theorem~\ref{thm:regtheoremharmonic},
making the bootstrap unconditional.

As a direct application of the preceding structural theorems, we turn to micromagnetics and determine the maximal interior regularity of topological spin textures such as magnetic skyrmions. These configurations arise as weak critical points of a non-convex variational energy $\mathcal{G}$ that governs the magnetization field
$\bm{u}: U \to \Stwo^2$:
\begin{equation}
  \label{eq:micro_energy_intro} \mathcal{G} (\bm{u}) :=
  \int_U \left[ \frac{1}{2} | \nabla \bm{u}|^2 + \kappa
  \bm{u} \cdot \curl
  \bm{u}+ \varphi (\bm{u}) \right]
  \mathrm{d} x - \frac{1}{2}  \int_U \ensuremath{\boldsymbol{h}}
  [\bm{u}] \cdot \bm{u}
  \hspace{0.17em} \mathrm{d} x.
\end{equation}
This functional combines the Dirichlet exchange energy (favoring uniform spin
alignment), the antisymmetric Dzyaloshinskii--Moriya interaction (inducing
chiral twisting), the magnetocrystalline anisotropy density $\varphi$
(favoring alignment along preferred crystal axes), and the nonlocal
demagnetizing stray field $\bm{h}[\bm{u}]$. The constrained Euler--Lagrange equations
attached to~\eqref{eq:micro_energy_intro} are of almost harmonic map type.
Deferring the rigorous physical derivation to Section~\ref{sec:appsmicromag},
we state the regularity result for these critical points autonomously.

{\myprop{\begin{theorem}[Interior Regularity of Micromagnetic Maps]
  \label{thm:micro_regularity}Let $U \subseteq \RR^m$ {\opt}$m = 2, 3${\cpt}
  be a bounded open set and let $\bm{u} \in H^1 \cap C^0
  \left( U, \Stwo^2 \right)$ be a continuous critical point of the
  micromagnetic energy $\mathcal{G}$. Assume that
  $\ensuremath{\boldsymbol{h}}: L^2 \left( U, \RR^3 \right) \rightarrow L^2
  \left( U, \RR^3 \right)$ satisfies the regularity hypotheses {{\em
  \eqref{eq:demagregp}}} and {{\em \eqref{eq:demagreg}}}.
  
  If the anisotropy density $\varphi$ is of class $C^{k, \beta}$ for $k
  \geqslant 1$ and exponent $\beta \in (0, 1)$, then
  $\bm{u} \in C^{k + 1, \beta}_{\mathrm{loc}} \left( U,
  \Stwo^2 \right)$. In particular, if $\varphi \in C^{\infty}$, then
  $\bm{u} \in C^{\infty} \left( U, \Stwo^2 \right)$.
  
  For $m = 2$, the initial continuity assumption is automatically satisfied
  for finite-energy solutions.
\end{theorem}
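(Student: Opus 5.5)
The plan is to reduce Theorem~\ref{thm:micro_regularity} to the bootstrap Theorem~\ref{lemma:Bootstrap} by writing the Euler--Lagrange equation of $\mathcal{G}$ in almost harmonic map form, $-\Delta \bm{u}=|\nabla\bm{u}|^2\bm{u}+\bm{f}$, and then checking step by step that $\bm{f}$ has the integrability or Hölder regularity each stage of the bootstrap requires.

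\textbf{Deriving the equation.} I would take variations $\bm{u}_t=(\bm{u}+t\bm{\phi})/|\bm{u}+t\bm{\phi}|$ with $\bm{\phi}\in C^\infty_c(U,\RR^3)$. The DMI term $\kappa\,\bm{u}\cdot\curl\bm{u}$ has first variation $2\kappa\,\curl\bm{u}\cdot\bm{\phi}$ after integrating by parts, since $\int \bm{\phi}\cdot\curl\bm{u}=\int\bm{u}\cdot\curl\bm{\phi}$. Because $\bm{h}$ is self-adjoint, the stray-field term gives $-\bm{h}[\bm{u}]$. Projecting the effective field onto the tangent space yields
\begin{equation}
-\Delta\bm{u}=|\nabla\bm{u}|^2\bm{u}+\bm{f},\qquad \bm{f}:=-(I-\bm{u}\otimes\bm{u})\bigl(2\kappa\curl\bm{u}+\nabla\varphi(\bm{u})-\bm{h}[\bm{u}]\bigr).
\end{equation}
Here I use $\Delta\bm{u}\cdot\bm{u}=-|\nabla\bm{u}|^2$ to identify the normal component. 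In dimension $m=2$, $\curl$ is read as acting on $\bm{u}$ extended to be independent of $x_3$.

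\textbf{Initial integrability and the case $m=2$.} Since $\bm{u}\in H^1$ and $|\bm{u}|=1$, we have $\curl\bm{u}\in L^2$. The term $\nabla\varphi(\bm{u})$ is bounded, and \eqref{eq:demagregp} should give $\bm{h}[\bm{u}]\in L^p_{\mathrm{loc}}$ with $p\ge 2$. Hence $\bm{f}\in L^2_{\mathrm{loc}}$. For $m=2$, Theorem~\ref{thm:regtheoremharmonic} with $\jqq=2$ then gives the initial continuity, which proves the last assertion.

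\textbf{Bootstrap.} The exceptional range $2\le\jqq\le m/2$ is empty for $m\le3$, so Theorem~\ref{lemma:Bootstrap}(1) applies with $\jqq=2$ and gives $\bm{u}\in W^{1,\jqq^*}_{\mathrm{loc}}$.
\begin{itemize}
\item For $m=3$ this means $\nabla\bm{u}\in L^6$. Then $\curl\bm{u}\in L^6$ with $6>m$, and if $\bm{h}[\bm{u}]$ gains the same integrability (the content of \eqref{eq:demagregp}), we get $\bm{f}\in L^6_{\mathrm{loc}}$.
\item For $m=2$, part (1) with $\jqq=m$ already gives $\nabla\bm{u}\in L^r$ for every finite $r$.
\end{itemize}
In either case Theorem~\ref{lemma:Bootstrap}(2) gives $\bm{u}\in C^{1,\eta}_{\mathrm{loc}}$.

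\textbf{Induction.} Now suppose $\bm{u}\in C^{j,\beta}_{\mathrm{loc}}$ with $1\le j\le k$. Then $\curl\bm{u}\in C^{j-1,\beta}$ and $\nabla\varphi(\bm{u})\in C^{\min(j,k-1),\beta}$. The hypothesis \eqref{eq:demagreg} should give $\bm{h}[\bm{u}]\in C^{j-1,\beta}_{\mathrm{loc}}$ whenever $\bm{u}\in C^{j-1,\beta}$, by interior Schauder estimates for the magnetostatic potential. Products with $I-\bm{u}\otimes\bm{u}$ preserve these classes, so $\bm{f}\in C^{j-1,\beta}$. Part (3) of Theorem~\ref{lemma:Bootstrap} then upgrades $\bm{u}$ to $C^{j+1,\beta}$.

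Since $\nabla\varphi\in C^{k-1,\beta}$, the regularity of $\bm{f}$ saturates at $C^{k-1,\beta}$, and we conclude $\bm{u}\in C^{k+1,\beta}_{\mathrm{loc}}$. One care point: the first step needs Hölder exponent $\beta$ rather than $\eta$. This is handled by $C^{1,\eta}\subset C^{0,\beta}$, which gives $\bm{f}\in C^{0,\beta}$ at the start of the induction.

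\textbf{Main obstacle.} I expect the main difficulty to be the nonlocal term $\bm{h}[\bm{u}]$. Its regularity must be transferred through the localization, since $\bm{h}$ is defined globally on $U$ while all the estimates are interior. I would first write $\bm{h}[\bm{u}]$ as $\nabla$ of a Newtonian-type potential of $\divr(\chi\bm{u})$. On compact subsets, with a cutoff splitting $\bm{u}$ into a near part and a far part, the far part is smooth and the near part follows Calderón--Zygmund and Schauder estimates. This is exactly what \eqref{eq:demagregp} and \eqref{eq:demagreg} are assumed to encode.
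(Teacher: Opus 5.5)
Your overall route is the paper's. You write Brown's equation in almost harmonic form with $\bm f=\bm u\times(\bm u\times(2\kappa\curl\bm u+\nabla\varphi(\bm u)-\bm h[\bm u]))$, which is exactly your $-(I-\bm u\otimes\bm u)(\cdots)$, and start from $\bm f\in L^2_{\mathrm{loc}}$. For $m=3$, Theorem~\ref{lemma:Bootstrap}(i) gives $\nabla\bm u\in L^6_{\mathrm{loc}}$, hence $\bm f\in L^6_{\mathrm{loc}}$ and $\bm u\in C^{1,1/2}_{\mathrm{loc}}$ by part (ii), and you then iterate Schauder. Making explicit that the $m=2$ continuity comes from Theorem~\ref{thm:regtheoremharmonic} (with $q=2$, and $\bm f\cdot\bm u=0$ by construction) is a useful addition, since the paper only asserts it. Your exponent bookkeeping $\nabla\varphi(\bm u)\in C^{\min(j,k-1),\beta}$ is more careful than the paper's, which runs the Schauder iteration in the exponent $\eta$ and switches to $\beta$ only in the final sentence.

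The one step that fails as written is the base of your induction. From $\bm u\in C^{1,\eta}_{\mathrm{loc}}$, the inclusion $C^{1,\eta}\subset C^{0,\beta}$ does put $\bm u$, $\nabla\varphi(\bm u)$ and $\bm h[\bm u]$ in $C^{0,\beta}_{\mathrm{loc}}$. The DMI contribution $2\kappa\curl\bm u$, however, is only $C^{0,\eta}_{\mathrm{loc}}$. For $m=3$ you have $\eta=1/2$, so when $\kappa\neq0$ and $\beta>1/2$ you get only $\bm f\in C^{0,\eta}_{\mathrm{loc}}$, not $C^{0,\beta}_{\mathrm{loc}}$.

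The fix is one more pass through the bootstrap. Once $\bm u\in C^{1,\eta}_{\mathrm{loc}}$, you have $\nabla\bm u\in L^\infty_{\mathrm{loc}}$. Also $\bm h[\bm u]\in L^p$ for every finite $p$ already by \eqref{eq:demagregp}, because $|\bm u|=1$ on the bounded set $U$. Hence $\bm f\in L^q_{\mathrm{loc}}$ for every finite $q$, and Theorem~\ref{lemma:Bootstrap}(ii) with $q\geq m/(1-\beta)$ gives $\bm u\in C^{1,\beta}_{\mathrm{loc}}$. Alternatively, one Schauder step with the $C^{0,\min(\eta,\beta)}$ right-hand side gives $C^{2,\min(\eta,\beta)}_{\mathrm{loc}}\subset C^{1,\beta}_{\mathrm{loc}}$. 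With this base, your induction $C^{j,\beta}\to C^{j+1,\beta}$ for $1\le j\le k$ closes and yields $C^{k+1,\beta}_{\mathrm{loc}}$.

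The nonlocal term is not actually an obstacle here. The properties \eqref{eq:demagregp} and \eqref{eq:demagreg} are hypotheses of the theorem, and the first already gives the needed integrability of $\bm h[\bm u]$ from the outset, independently of $\nabla\bm u$. So your cutoff and localization argument is not needed.
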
}}

\subsection{Strategy of Proof}To prove the theorems stated above without
relying on standard conformal uniformization or $\mathcal{H}^1$--BMO duality,
we develop a purely analytic architecture. Synthesizing insights for classical
harmonic maps originating in Chang et al.~{\cite{Changbi_1999,Chang_1999}},
the structural observations of Hélein~{\cite{Helein1991a}}, and the direct
methods of Carbou~{\cite{Carbou_1997}}, we establish regularity through a
flexible Campanato-type iteration scheme.

Framing the problem  within classical linear elliptic theory yields a distinct analytical advantage when addressing the anisotropic generalizations
of Theorem \ref{thm:aniso_regularity}. When the coefficient matrix $A$ is
uniformly elliptic and merely Hölder continuous, and its determinant $\det A$
is non-constant, the system cannot be conformally reduced to an isotropic
equation on a two-dimensional Riemannian manifold. Such rough, non-unimodular systems present challenges for established gauge-theoretic methods.

The architecture of the paper is built upon three structural mechanisms:

{\noindent}{{\em Algebraic Reformulation.}}{ } For
sphere-valued maps, the geometric constraint $|\bm{u}| =
1$ and orthogonality $\bm{f} \cdot
\bm{u}= 0$ yield $| \nabla \bm{u}|^2
\bm{u}= \jaOm \cdot \nabla \bm{u}$,
which allows a Leibniz expansion against an arbitrary constant $\jc \in \RR^{n
+ 1}$:
\begin{equation} - \Delta \bm{u} =
   \divg \left( \jaOm (\bm{u}- \jc) \right) +
   (\bm{u} \wedge \bm{f}) 
   (\bm{u}- \jc) +\bm{f}. \end{equation}
{\noindent}{{\em The Coupled Caccioppoli Estimate.}}{ }
Comparing $\bm{u}$ to its harmonic extension
$\ensuremath{\boldsymbol{h}}_{\rr}$ on $B_{\rr}$, we establish a coupled
Caccioppoli-type bound:
\begin{equation} 
\| \nabla
   (\bm{u}-\ensuremath{\boldsymbol{h}}_{\rr})\|_{L^q
   (B_{\rr})}  \leqslant  C \hspace{0.17em} \|
   \nabla \bm{u}\|_{L^2 \left( B_{\rr} \right)}
   \hspace{0.17em} \left( \rr^m \hspace{0.17em} \Osc
   (\bm{u}, B_{\rr}) \right)^{1 / \pHold} + C \rr
   \hspace{0.17em} \|\bm{f}\|_{L^{\jqq} (B_{\rr})} . \end{equation}
A key algebraic relationship occurs here: the exponents are coupled via
$1 / \jqq = 1 / 2 + 1 / \pHold$. In dimension $m = 2$, this forces the
oscillation exponent $\pHold$ to be  the Sobolev conjugate $\jqq^{\ast}
= 2 \jqq / (2 - \jqq)$. This relationship is what permits the
Campanato iteration to close in 2D without Wente's inequality.

{\noindent}{{\em An abstract regularity principle.}}{ } We
isolate the mechanism behind Hölder regularity as a property of an abstract
class $\class$ satisfying closure under rescaling and a Campanato
decay axiom:
\begin{equation}
\Osc (\bm{u}, B_{\jth}) \leqslant \jgam
   \hspace{0.17em} \Osc (\bm{u}, B_1) + \kappa
   \hspace{0.17em} \|\bm{f}\|_{L^{\jqq} (B_1)}^{\pHold} .
\end{equation}
The paper's progression can be mapped as a structural hierarchy:

\begin{center}
\begin{tikzpicture}[
    block/.style = {
        rectangle, draw=black, thick, rounded corners=4pt,
        text width=4cm, align=center,
        inner xsep=4pt, inner ysep=5pt,
        font=\footnotesize
    },
    arrow/.style = {
        draw=black, thick, -stealth
    }
]

\node [block] (classical) {
    \textbf{Classical Isotropic}\\[0.5ex]
    \mbox{$\jaOm := \bm{u} \wedge \nabla\bm{u}$}
};

\node [block, right=0.7cm of classical] (generalized) {
    \textbf{Generalized Isotropic}\\[0.5ex]
    $\jaOm\in L^2$ generic tensor \\ 
    \mbox{$\divg \jaOm \in L^\jqq$}
};

\node [block, right=0.7cm of generalized] (anisotropic) {
    \textbf{Anisotropic}\\[0.5ex]
    \mbox{$\divg(A\nabla \bm{u}) = \jaOm \cdot \nabla \bm{u} + \bm{f}$}
};

\draw [arrow] (classical) -- (generalized);
\draw [arrow] (generalized) -- (anisotropic);

\end{tikzpicture}
\end{center}

{\noindent}By formally decomposing the theory into these sequential stages, we provide a transparent framework for identifying what is mathematically lost, and what is gained, when passing from purely geometric analysis to models of heterogeneous physical media.

%
%

\medskip
\noindent\emph{Outline.} In Section~\ref{sec:sec2}, we establish the functional setting and record the foundational linear PDE estimates that drive the subsequent nonlinear iteration schemes. Section~\ref{sec:sec3} introduces the baseline abstract regularity principle (Theorem~\ref{thm:abstract_reg}), establishing H\"older continuity via an algebraic iteration lemma. These axioms are verified for weakly almost harmonic maps into $\Stwo^n$, where the dimensional restriction $m = 2$ enters exclusively through the Sobolev conjugacy coincidence. 

This framework is extended in Section~\ref{sec:sec4} to Rivi\`ere-type systems lacking geometric antisymmetry, demonstrating that the algebraic antisymmetry of the connection form can be replaced by the condition $\divg \jaOm \in L^{\jqq}$. Subsequently, Section~\ref{sec:aniso} addresses the non-unimodular anisotropic regime $-\divg (A \nabla \bm{u})$. We formulate an anisotropic regularity principle (Theorem~\ref{thm:aniso_abstract_reg}) and verify its axioms via localized coefficient-freezing, which imposes the strict dimensional ceiling $m < 4$.

Transitioning away from the $L^2$-critical regime, Section~\ref{sec:sec6} establishes a dimension-free bootstrap
mechanism (Theorem~\ref{lemma:Bootstrap}) to upgrade continuous weak solutions to $C^{\infty}$.
Section~\ref{sec:appsmicromag} applies this theory to physical models in micromagnetics.
Finally, standard interior gradient estimates for weakly harmonic functions are collected in Appendix~\ref{sec:app1}.
%
%

\section{Analytic Setting and Preliminary Tools}\label{sec:sec2}

Our primary focus is on the interior regularity of vector-valued fields defined on an open subset of flat Euclidean space—specifically the unit ball $B_1 \subset \RR^m$—taking values in the standard unit sphere $\Stwo^n \subset \RR^{n + 1}$. Throughout, $B_{\rr}(x_0) \subset \RR^m$ denotes the open ball of radius $\rr > 0$ centered at $x_0$; when the center is the origin or is clear from context, we simply write $B_{\rr}$. For a bounded measurable set $U \subset \RR^m$, we denote the integral average of a measurable map $\bm{v}: U \to \RR^{n+1}$ by $\langle \bm{v} \rangle_U := \jav{U} \bm{v}$.

\subsection{Geometric Setup and Governing Equations}

For a Sobolev map $\bm{u} \in H^1 (B_1, \Stwo^n)$, the \emph{Dirichlet energy} on a sub-ball $B_{\rr}(x_0) \subseteq B_1$ is 
\begin{equation}
\Energy (\bm{u}, B_{\rr}(x_0)) := \frac{1}{2} \int_{B_{\rr}(x_0)} |\nabla \bm{u}|^2  \mathrm{d} x.
\end{equation}

\begin{definition}[Harmonic and Almost Harmonic Maps]
  A map $\bm{u} \in H^1(B_1, \Stwo^n)$ is a \emph{weakly harmonic map} if it is a critical point of the Dirichlet energy subject to the pointwise constraint $|\bm{u}| = 1$, satisfying the Euler--Lagrange equation $-\Delta \bm{u} = |\nabla \bm{u}|^2 \bm{u}$ in $\mathcal{D}'(B_1)$. 
  
  We extend this structure to systems driven by lower-order source terms. A map $\bm{u} \in H^1(B_1, \Stwo^n)$ is a \emph{weakly almost harmonic map} if it solves the perturbed equation:
  \begin{equation}\label{eq:almost_harmonic}
    - \Delta \bm{u} = |\nabla \bm{u}|^2 \bm{u} + \bm{f} \quad \text{in } \mathcal{D}'(B_1),
  \end{equation}
  for a source field $\bm{f} \in L^{\jqq}(B_1, \RR^{n + 1})$, $\jqq > 1$, satisfying the orthogonality condition $\bm{f} \cdot \bm{u} = 0$ almost everywhere in $B_1$.
\end{definition}

\begin{remark}[Algebraic Compatibility]
  The orthogonality condition $\bm{f} \cdot \bm{u} = 0$ is not an artificial assumption, but a rigid algebraic necessity. For any sphere-valued map $|\bm{u}| = 1$, testing the Laplacian against the map itself yields $-\bm{u} \cdot \Delta \bm{u} = |\nabla \bm{u}|^2$. Taking the inner product of \eqref{eq:almost_harmonic} with $\bm{u}$ thus immediately forces $\bm{f} \cdot \bm{u} = 0$ almost everywhere.
\end{remark}

\noindent\emph{Terminology Note.} We reserve the term \emph{harmonic maps} strictly for $\Stwo^n$-valued functions solving the nonlinear constraint equation. Classical $\RR^{n+1}$-valued solutions to the homogeneous linear equation $-\Delta \bm{u} = \bm{0}$ will be referred to as \emph{harmonic functions}.

\subsection{The Oscillation Functional and Scaling Properties}

The engine of our regularity theory relies on quantifying the local deviation of maps from constant states via a variational oscillation functional.

\begin{definition}[$L^{\pHold}$-Oscillation]
  Let $U \subseteq \RR^m$ be an open subset, $\bm{v} \in L^{\pHold}(U)$ for $\pHold \geqslant 1$, and $B \subseteq U$ a ball. The variational $L^{\pHold}$-oscillation of $\bm{v}$ on $B$ is:
  \begin{equation} 
    \Osc (\bm{v}, B) := \inf_{\jc \in \RR^{n + 1}} \jav{B} |\bm{v}(y) - \jc|^{\pHold} \mathrm{d} y.
  \end{equation}
\end{definition}

It is immediate that $\Osc(\bm{v}, B)$ is equivalent to the standard mean oscillation up to a dimensional constant: $\Osc(\bm{v}, B) \leqslant \jav{B} |\bm{v} - \langle \bm{v} \rangle_B|^{\pHold} \leqslant 2^{\pHold} \Osc(\bm{v}, B)$. The functional $\Osc$ obeys the following exact scaling and monotonicity rules, which drive our subsequent Campanato iterations.

\begin{lemma}[Scaling and Monotonicity]\label{lemma:monotonBPsi}
  Let $\bm{v}: U \to \RR^{n + 1}$ be measurable, $\pHold \geqslant 1$, and $B \subseteq U$. Then:
  \begin{enumerate}[i.]
    \item The map $\bm{v} \mapsto \Osc^{1/\pHold}(\bm{v}, B)$ is a seminorm on $L^{\pHold}(B)$.
    \item For $B_{\rr}(x_0) \subseteq U$, the rescaled map $\bm{v}_{x_0, \lambda}(x) := \bm{v}(x_0 + \lambda x)$ defined on $B_{\rr / \lambda}$ satisfies:
    \begin{equation}\label{eq:scalingPsi}
      \Osc \left( \bm{v}, B_{\rr}(x_0) \right) = \Osc \left( \bm{v}_{x_0, \lambda}, B_{\rr / \lambda} \right).
    \end{equation}
    \item For any radii $0 < \rr_1 \leqslant \rr_2$, the volume normalization imposes the geometric decay bound:
    \begin{equation}\label{eq:monotonBPsi}
      \Osc \left( \bm{v}, B_{\rr_1} \right) \leqslant \left( \frac{\rr_2}{\rr_1} \right)^m \Osc \left( \bm{v}, B_{\rr_2} \right).
    \end{equation}
  \end{enumerate}
\end{lemma}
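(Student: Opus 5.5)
The plan is to prove the three items in order, working directly from the definition of $\Osc$ as an infimum over constants of an averaged $L^{\pHold}$ integral.

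\emph{Item i.} Write $\Osc^{1/\pHold}(\bm{v},B) = |B|^{-1/\pHold}\inf_{\jc}\|\bm{v}-\jc\|_{L^{\pHold}(B)}$. This is, up to the factor $|B|^{-1/\pHold}$, the quotient norm of $\bm{v}$ in $L^{\pHold}(B)/\RR^{n+1}$, i.e.\ the distance from $\bm{v}$ to the finite-dimensional subspace of constant maps. Such a distance is a seminorm, and I would verify the two properties directly.
\begin{itemize}
\item \emph{Homogeneity.} For $t\neq 0$, substitute $\jc = t\jc'$ in the infimum; the case $t=0$ is trivial.
\item \emph{Triangle inequality.} For any constants $\jc_1,\jc_2$, Minkowski gives $\|\bm{v}+\bm{w}-(\jc_1+\jc_2)\|_{L^{\pHold}} \leqslant \|\bm{v}-\jc_1\|_{L^{\pHold}}+\|\bm{w}-\jc_2\|_{L^{\pHold}}$. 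The left side is at least the infimum over all constants, so taking the infimum separately in $\jc_1$ and $\jc_2$ on the right yields the claim.
\end{itemize}

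\emph{Item ii.} Apply the change of variables $y = x_0+\lambda x$ with $\mathrm{d}y = \lambda^m \mathrm{d}x$. This maps $B_{\rr/\lambda}$ onto $B_{\rr}(x_0)$, and the volumes scale by the same factor $\lambda^m$. Hence, for each fixed $\jc$,
\[
\jav{B_{\rr}(x_0)}|\bm{v}(y)-\jc|^{\pHold}\,\mathrm{d}y=\jav{B_{\rr/\lambda}}|\bm{v}_{x_0,\lambda}(x)-\jc|^{\pHold}\,\mathrm{d}x .
\]
Since this holds for every $\jc$, taking the infimum over $\jc$ gives \eqref{eq:scalingPsi}.

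\emph{Item iii.} Here the balls are concentric, with $B_{\rr_1}\subseteq B_{\rr_2}$. For any fixed $\jc$, the integrand is nonnegative, so enlarging the domain of integration can only increase the integral:
\[
\jav{B_{\rr_1}}|\bm{v}-\jc|^{\pHold}=\frac{1}{|B_{\rr_1}|}\int_{B_{\rr_1}}|\bm{v}-\jc|^{\pHold}\leqslant \frac{|B_{\rr_2}|}{|B_{\rr_1}|}\jav{B_{\rr_2}}|\bm{v}-\jc|^{\pHold}=\Bigl(\frac{\rr_2}{\rr_1}\Bigr)^m\jav{B_{\rr_2}}|\bm{v}-\jc|^{\pHold}.
\]
The left side is at least $\Osc(\bm{v},B_{\rr_1})$. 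Taking the infimum over $\jc$ on the right then gives \eqref{eq:monotonBPsi}.

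No step is a genuine obstacle. The only point requiring care is the order of operations: each inequality or identity must be established for a fixed constant $\jc$ before the infimum is taken. In item i this means using independent constants for $\bm{v}$ and $\bm{w}$, so that the infimum of the sum is bounded by the sum of the infima.
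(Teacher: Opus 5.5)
Your proposal is correct. In each item you first fix a constant vector and prove the identity or inequality for that constant: Minkowski with independent constants for i, the change of variables $y = x_0 + \lambda x$ for ii, and enlarging the domain of a nonnegative integrand for iii; only then do you pass to the infimum. The paper states this lemma without proof as immediate from the definition, and your argument is exactly the routine verification it leaves to the reader.
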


When evaluating the oscillation of a uniformly bounded map (such as a map into a compact manifold), the minimization problem characterizing $\Osc$ can be strictly localized, a fact that will prove technically useful.

\begin{lemma}[Compactification of the Oscillation Infimum]\label{lemma:osc_infimum_bound}
  Let $E \subset \RR^m$ be a domain of finite measure, and let $\bm{u} \in L^{\infty}(E, \RR^{n+1})$. For any exponent $\pHold \geqslant 1$, the infimum defining the $L^{\pHold}$-oscillation can be restricted to the closed ball of radius $M := \|\bm{u}\|_{L^{\infty}(E)}$ without altering its value:
  \begin{equation}
    \inf_{\jc \in \RR^{n+1}} \jav{E} |\bm{u}(y) - \jc|^{\pHold} \mathrm{d} y = \inf_{\substack{\jc \in \RR^{n+1} \\ |\jc| \leqslant M}} \jav{E} |\bm{u}(y) - \jc|^{\pHold} \mathrm{d} y.
  \end{equation}
\end{lemma}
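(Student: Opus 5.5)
The plan is to use the nearest-point projection onto the closed ball $\overline{B}_M := \{\jc \in \RR^{n+1} \st |\jc| \leqslant M\}$, which is $1$-Lipschitz, and to show that projecting a candidate constant never increases the integrand pointwise. Let $P:\RR^{n+1}\to \overline{B}_M$ be the metric projection: $P(\jc)=\jc$ if $|\jc|\leqslant M$, and $P(\jc)=M\jc/|\jc|$ otherwise. Since $\overline{B}_M$ is closed and convex, $P$ is well defined and nonexpansive. Moreover, for every $\bm{y}\in\overline{B}_M$ and every $\jc\in\RR^{n+1}$ we have the obtuse-angle inequality $(\jc-P(\jc))\cdot(\bm{y}-P(\jc))\leqslant 0$, and hence
\begin{equation*}
|\bm{y}-\jc|^2 = |\bm{y}-P(\jc)|^2 + |\jc-P(\jc)|^2 - 2(\jc-P(\jc))\cdot(\bm{y}-P(\jc)) \geqslant |\bm{y}-P(\jc)|^2 .
\end{equation*}
So $|\bm{y}-P(\jc)|\leqslant|\bm{y}-\jc|$ whenever $|\bm{y}|\leqslant M$. (Equivalently, apply the nonexpansiveness $|P(\bm{y})-P(\jc)|\leqslant|\bm{y}-\jc|$ together with $P(\bm{y})=\bm{y}$.)

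Next we apply this pointwise. By definition of $M=\|\bm{u}\|_{L^\infty(E)}$, we have $|\bm{u}(y)|\leqslant M$ for almost every $y\in E$. Hence, for every $\jc\in\RR^{n+1}$, $|\bm{u}(y)-P(\jc)|^{\pHold}\leqslant|\bm{u}(y)-\jc|^{\pHold}$ almost everywhere, using that $t\mapsto t^{\pHold}$ is increasing on $[0,\infty)$. Averaging over $E$, which is legitimate because $E$ has finite positive measure and both integrands are bounded, gives
\begin{equation*}
\jav{E}|\bm{u}-P(\jc)|^{\pHold}\leqslant \jav{E}|\bm{u}-\jc|^{\pHold}.
\end{equation*}
Since $P(\jc)$ is an admissible constant for the restricted infimum, the restricted infimum is at most the unrestricted one. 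The reverse inequality is trivial because the restricted class is a subset of the full one, so the two infima coincide.

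The only step requiring any care is the pointwise inequality for the projection, and it is just the standard convex-projection property. If one prefers to avoid convexity language, one can argue directly in the case $|\jc|>M$: write $\jc=t\bm{e}$ with $|\bm{e}|=1$ and $t>M$, and note that $s\mapsto|\bm{y}-s\bm{e}|^2=|\bm{y}|^2-2s\,\bm{y}\cdot\bm{e}+s^2$ is increasing for $s\geqslant \bm{y}\cdot\bm{e}$, in particular for $s\geqslant M\geqslant \bm{y}\cdot\bm{e}$. Comparing the values at $s=M$ and $s=t$ then gives the claim. Finally, the degenerate case $M=0$ is consistent with the argument: there $\bm{u}=0$ almost everywhere and both infima are $0$, attained at $\jc=0$.
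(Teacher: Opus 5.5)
Your proposal is correct and follows essentially the same route as the paper. The paper also normalizes to $M=1$, replaces any $\jc$ with $|\jc|>1$ by its radial projection $\jc_{\ast}=\jc/|\jc|$, and shows pointwise that $|\bm{u}(y)-\jc|^2-|\bm{u}(y)-\jc_{\ast}|^2\geqslant(|\jc|-1)^2>0$, which is the comparison you obtain from the obtuse-angle inequality or from the monotonicity of $s\mapsto|\bm{y}-s\bm{e}|^2$ (your explicit handling of $M=0$ covers a case the paper's scaling reduction tacitly excludes).
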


\begin{proof}
  By scaling, it suffices to prove the case $M = 1$. The inequality ``$\leqslant$'' is trivial. For the reverse, let $\jc \in \RR^{n+1}$ with $|\jc| > 1$, and define its radial projection $\jc_{\ast} := \jc / |\jc| \in \Stwo^n$. Because $|\bm{u}| \leqslant 1$ a.e., Cauchy--Schwarz ensures $\bm{u}(y) \cdot \jc \leqslant |\jc|$. We can then estimate the difference in squared distances:
  \begin{align*}
    |\bm{u}(y) - \jc|^2 - |\bm{u}(y) - \jc_{\ast}|^2 
    &= |\jc|^2 - 1 - 2\bm{u}(y) \cdot \jc \left( 1 - \frac{1}{|\jc|} \right) \\
    &\geqslant |\jc|^2 - 1 - 2|\jc| \left( 1 - \frac{1}{|\jc|} \right) = (|\jc| - 1)^2 > 0.
  \end{align*}
  Consequently, $|\bm{u}(y) - \jc_{\ast}| < |\bm{u}(y) - \jc|$ strictly pointwise. Integrating this strict inequality against the exponent $\pHold$ demonstrates that any constant vector outside the unit ball is suboptimal compared to its projection, concluding the proof.
\end{proof}

\noindent\emph{Standard PDE Estimates.} Throughout the remainder of the paper, we will freely utilize classical interior linear estimates (specifically the global and local Calder\'on--Zygmund bounds for divergence-form operators, and standard interior Schauder estimates). We refer the reader to Giaquinta \cite[Chapters 5 and 7]{Giaquinta1983} for the precise statements and structural dependencies of these constants. Similarly, Campanato's classical characterization of $C^{0,\alpha}$ spaces will be invoked directly \cite[Theorem 1.3]{Giaquinta1983}.

\section{The first abstract regularity principle and the proof of
Theorem~\ref{thm:regtheoremharmonic}}\label{sec:sec3}

The main goal of this section is to establish the result already
stated in the introduction, namely:

{\noindent}{\textbf{Theorem~\ref{thm:regtheoremharmonic}.
}}{\itshape{Any weakly almost harmonic map $\bm{u} \in H^1 \left( B_1, \Stwo^n \right)$ 
from the unit two-dimensional disk $B_1 \subseteq \RR^2$ to a sphere 
$\Stwo^n \subset \RR^{n + 1}$, solving the equation
\begin{equation}
  - \lapl \bm{u} = | \grad \bm{u} |^2 \bm{u} + \bm{f},
\end{equation}
with a source term $\bm{f} \in L^{\jqq} (B_1, \RR^{n + 1})$ for some $\jqq > 1$, is locally Hölder continuous.  Specifically, there exists an exponent $\jeta \in (0, 1)$ such that
  $\bm{u} \in C^{0,
  \jeta}_{\ensuremath{\operatorname{loc}}}  (B_1, \RR^{n + 1})$.}}

This result was first established by Hélein in the source-free case and later
extended by Müller and Schikorra~{\cite{Mueller2009}}. Our proof follows the
classical elliptic regularity strategy in the spirit of Campanato.

The proof of Theorem~\ref{thm:regtheoremharmonic} will be presented in
Subsection~\ref{sec:genregproof}. To this end, we first introduce an abstract
regularity principle (stated in Theorem~\ref{thm:abstract_reg}) by defining an
appropriate abstract class of configurations. We then formulate and establish
a second (generalized) abstract regularity theorem, namely
Theorem~\ref{thm:gen_abstract_reg}, in general base dimension $m \geqslant 2$,
although the framework will ultimately be applied to the regularity theory of
generalized systems in the two-dimensional setting to conclude the proof of
Theorem~\ref{thm:regtheoremharmonic}.

\subsection{The first abstract regularity principle}We present an abstract
regularity theory for maps in the Sobolev space $H^1 (B_1, \RR^N)$, where
$B_1$ is the unit disk in $\RR^2$. The framework cleanly separates the
structural features of the underlying equation (such as scaling invariance)
from the analytic ingredients, notably oscillation decay estimates, thereby
yielding a unified approach to partial regularity in geometric analysis.

Let $\class \subseteq H^1 (B_1, \RR^N) \times L^{\jqq} (B_1, \RR^N)$ be a
class of {{\em admissible pairs}} $(\bm{u},
\bm{f})$ defined on the unit disk $B_1 \subset \RR^2$,
which obey the following two axioms:

{\mydef{\begin{axiom}[Locality and Closure]
  \label{ax:closure}The class $\class$ is closed under rescaling.
  Specifically, if $(\bm{u}, \bm{f})
  \in \class$, then for any ball $B_{\rr} (x_0) \subset B_1$, the rescaled
  triplet $(\bm{u}_{x_0, \rr},
  \tilde{\bm{f}}_{x_0, \rr})$ defined by
  \begin{equation}
    \bm{u}_{x_0, \rr} (y) := \bm{u}
    (x_0 + \rr  y), \quad \quad \tilde{\bm{f}}_{x_0, \rr}
    (y) := \rr^2 \bm{f} (x_0 + \rr  y),
  \end{equation}
  is still in $\class$ over the unit ball $B_1 \subseteq \RR^m$.
\end{axiom}}}

{\mydef{\begin{axiom} [The Decay Property]
  \label{ax:decay}  There exist
  structural constants $\jth, \jgam \in (0, 1)$, $\pHold \geqslant 1$, and
  $\kappa > 0$, such that for any pair $(\bm{u},
  \bm{f}) \in \class$, the oscillation strictly contracts
  at the interior scale:
  \begin{equation}
    \Osc (\bm{u}, B_{\jth}) \leqslant \jgam \cdot \Osc
    (\bm{u}, B_1) + \kappa \| \bm{f}
    \|_{L^{\jqq} (B_1)}^{\pHold} .
  \end{equation}
\end{axiom}}}

  Throughout the sequel, we repeatedly exploit several standard scaling
  identities specific to the two-dimensional setting. By a direct change of
  variables, the Dirichlet energy is  invariant under this rescaling:
  \begin{equation}
    \Energy (\bm{u}_{x_0, \rr}, B_1) = \Energy
    (\bm{u}, B_{\rr} (x_0)) .
    \label{eq:2D_energy_scaling}
  \end{equation}
  By contrast, the source term measured in $L^{\jqq}$ acquires the scaling
  factor $\delta := 2 - \frac{2}{\jqq}$, which is strictly positive precisely
  when $\jqq > 1$. More precisely,
  \begin{equation}
    \| \tilde{\bm{f}}_{x_0, \rr} \|_{L^{\jqq} (B_1)} =
    \rr^{\delta} \|\bm{f}\|_{L^{\jqq} (B_{\rr} (x_0))}
    \leqslant \rr^{\delta} \|\bm{f}\|_{L^{\jqq} (B_1)} .
    \label{eq:2D_source_scaling}
  \end{equation}

{\noindent}\text{{\bfseries{Motivating Example.}}} Later in this work, we will
apply the abstract framework by defining $\class$ to be the class of almost
harmonic maps on the unit disk $B_1 \subseteq \RR^2$, whose Dirichlet energy
lies below a fixed critical threshold $\varepsilon_{\ast}$:
\begin{equation}
  \class := \left\{ (\bm{u}, \bm{f})
  \in H^1 (B_1, \Stwo^n) \times L^{\jqq} (B_1, \RR^{n + 1}) \st - \Delta
  \bm{u}= | \nabla \bm{u}|^2
  \bm{u}+\bm{f} \quad \text{and}
  \quad \Energy (\bm{u}, B_1) < \varepsilon_{\ast}
  \right\} .
\end{equation}
Note that, we will formally set $N := n + 1$.

Our first objective is to show that every map belonging to this
two-dimensional admissible class is locally Hölder continuous. The key point
is that, in dimension $m = 2$, the initial mean oscillation on $B_1$ is
naturally controlled by the scale-invariant $H^1$ Dirichlet energy through the
Poincaré--Sobolev inequality. Consequently, no additional $L^{\infty}$ is
required in this setting.

\begin{theorem}[Abstract Regularity Principle]
  \label{thm:abstract_reg} Let $\class \subseteq H^1 (B_1,
  \RR^N) \times L^{\jqq} (B_1, \RR^N)$ defined on $B_1 \subset \RR^2$ be a
  class of admissible pairs satisfying \text{{\bfseries{{{\em {\jaxs~\ref{ax:closure},~\ref{ax:decay}}}}}}}.
  
  If $(\bm{u}, \bm{f}) \in \class$,
  then $\bm{u}$ is locally Hölder continuous in $B_1$.
  Specifically, $\bm{u} \in C^{0,
  \jeta}_{\ensuremath{\operatorname{loc}}} (B_1)$ for every Hölder exponent
  $\jeta$ satisfying :
  \begin{equation}
    \jeta < \min \left( \frac{\ln \jgam}{\pHold \ln \jth}, 2 - \frac{2}{\jqq}
    \right) .
  \end{equation}
  For any compact subset $K \Subset B_1$, the corresponding Hölder seminorm
  depends only on the chosen exponent $\jeta$, the structural parameters
  $\jth, \jgam, \kappa$, the exponent $\pHold$, the initial energy $\Energy
  (\bm{u}, B_1)$, the source norm
  $\|\bm{f}\|_{L^{\jqq} (B_1)}$, and the distance
  $\mathrm{dist} (K, \partial B_1)$.
\end{theorem}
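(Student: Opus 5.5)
The plan is a standard Campanato iteration. Fix a center $x_0$ with $B_{\rr_0}(x_0)\subset B_1$, where $\rr_0 = \mathrm{dist}(K,\partial B_1)$ and $x_0 \in K$. By Axiom~\ref{ax:closure}, the rescaled pair $(\bm{u}_{x_0,\rr},\tilde{\bm{f}}_{x_0,\rr})$ lies in $\class$ for every $\rr\leqslant \rr_0$. Apply Axiom~\ref{ax:decay} to it and use the scaling identity \eqref{eq:scalingPsi} together with the source scaling \eqref{eq:2D_source_scaling}. This gives the one-step inequality
\begin{equation*}
\Osc(\bm{u},B_{\jth\rr}(x_0)) \leqslant \jgam\, \Osc(\bm{u},B_{\rr}(x_0)) + \kappa\, \rr^{\delta\pHold}\|\bm{f}\|_{L^{\jqq}(B_1)}^{\pHold}, \qquad \delta = 2-\tfrac{2}{\jqq}.
\end{equation*}
Setting $\rr_k := \jth^k \rr_0$ and iterating, I obtain
\begin{equation*}
\Osc(\bm{u},B_{\rr_k}) \leqslant \jgam^k \Osc(\bm{u},B_{\rr_0}) + \kappa F^{\pHold}\rr_0^{\delta\pHold}\sum_{j=0}^{k-1}\jgam^{k-1-j}\jth^{j\delta\pHold}, \qquad F := \|\bm{f}\|_{L^{\jqq}(B_1)}.
\end{equation*}
Write $\jgam = \jth^{\pHold\alpha_0}$ with $\alpha_0 := \ln\jgam/(\pHold\ln\jth)$. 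For any $\jeta<\min(\alpha_0,\delta)$, the geometric sum is bounded by $C\,\jth^{k\pHold\jeta}$, where $C$ depends on the gap $\min(\alpha_0,\delta)-\jeta$. This handles the borderline case $\alpha_0=\delta$, which would otherwise produce a factor $k$. Hence $\Osc(\bm{u},B_{\rr_k}(x_0))\leqslant C(\Osc(\bm{u},B_{\rr_0}(x_0))+F^{\pHold})\,\jth^{k\pHold\jeta}$. For intermediate radii $\rr_{k+1}<\rr\leqslant\rr_k$, the monotonicity \eqref{eq:monotonBPsi} costs only a factor $\jth^{-2}$. Therefore $\Osc(\bm{u},B_\rr(x_0))\leqslant C\,(\rr/\rr_0)^{\pHold\jeta}$ for all $\rr\leqslant\rr_0$.

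Next I bound the initial oscillation uniformly in $x_0$ using the energy alone, which is the point emphasized before the statement. If $\pHold\leqslant 2$, Hölder and Poincaré give
\begin{equation*}
\Osc(\bm{u},B_{\rr_0}(x_0))\leqslant \Bigl(\jav{B_{\rr_0}}|\bm{u}-\langle\bm{u}\rangle|^2\Bigr)^{\pHold/2} \leqslant C\,\Energy(\bm{u},B_1)^{\pHold/2},
\end{equation*}
and this bound is scale-free in two dimensions. For $\pHold>2$ I would use the Poincaré--Sobolev inequality $W^{1,2}\hookrightarrow L^{\pHold}$, valid for every finite $\pHold$ when $m=2$, on the rescaled unit ball. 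This gives $\jav{B_{\rr_0}}|\bm{u}-\langle \bm{u}\rangle|^{\pHold}\leqslant C(\pHold)\,\Energy(\bm{u},B_{\rr_0}(x_0))^{\pHold/2}$, using the scale invariance \eqref{eq:2D_energy_scaling}. Either way, $\sup_{x_0\in K}\Osc(\bm{u},B_\rr(x_0))\leqslant M\,\rr^{\pHold\jeta}$. Here $M$ depends only on $\jth,\jgam,\kappa,\pHold,\jeta,\Energy(\bm{u},B_1),F$ and $\rr_0$.

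Finally, since $\Osc$ is comparable to the mean oscillation about the average, this is the Campanato condition $\jav{B_\rr(x_0)}|\bm{u}-\langle\bm{u}\rangle_{B_\rr(x_0)}|^{\pHold}\leqslant 2^{\pHold}M\rr^{\pHold\jeta}$. It holds uniformly for $x_0$ in a slightly enlarged compact set $K'$ with $K\Subset K'\Subset B_1$, and all small $\rr$. Campanato's characterization \cite[Theorem 1.3]{Giaquinta1983} in its $L^{\pHold}$ form, or the usual telescoping of averages over dyadic balls, then gives $\bm{u}\in C^{0,\jeta}(K)$ with seminorm controlled by $M^{1/\pHold}$. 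The step requiring the most care is the iteration bookkeeping: choosing $\jeta$ strictly below both thresholds so that the source contribution sums geometrically at rate $\jth^{\pHold\jeta}$. One also has to keep track of the fact that the Axiom~\ref{ax:decay} constants are applied to rescaled pairs. This is legitimate only because Axiom~\ref{ax:closure} places every rescaling back in $\class$, for example with energy still below $\varepsilon_\ast$ since energy on sub-balls decreases.
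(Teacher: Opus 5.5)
Your proposal is correct and follows essentially the same route as the paper. You rescale via Axiom~\ref{ax:closure}, apply Axiom~\ref{ax:decay} to get a one-step decay inequality, iterate with an arbitrarily small loss in the exponent to absorb the resonant case, bound the initial oscillation uniformly through the scale-invariant two-dimensional Poincar\'e--Sobolev inequality, and conclude with Campanato's characterization. The differences are only bookkeeping. You iterate $\Osc$ directly and sum the geometric series by hand, where the paper applies Lemma~\ref{lem:giusti_iteration} to $\psi(\rr)=\rr^2\Osc(\bm{u},B_\rr(x_0))$ with the modified factor $\jgam_\varepsilon$. You also work at scale $\mathrm{dist}(K,\partial B_1)$ from the start, whereas the paper argues on $B_{1/2}$ and then rescales.
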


The proof of Theorem~\ref{thm:abstract_reg} is presented in the next
subsection. However, the same structural argument extends naturally to domains
of arbitrary dimension $m \geqslant 2$, provided one imposes an additional
analytic assumption. Indeed, when $m \geqslant 3$, the $H^1$-Dirichlet energy
alone no longer controls the $L^{\pHold}$-oscillation for arbitrarily large
exponents $\pHold$. This dimensional obstruction can be circumvented by
restricting the admissible class to maps that are uniformly bounded in
$L^{\infty}$. Under this assumption, the initial mean oscillation is trivially
controlled by the amplitude $\| \bm{u} \|_{L^{\infty}}$.
More precisely, if $\bm{u} \in H^1 (B_1, E)$, where $E
\subset \RR^N$ is compact, then
\begin{equation}
  \Osc (\bm{u}, B_1) \leqslant
  \ensuremath{\operatorname{diam}}  (E)^p .
\end{equation}
Consequently, one may avoid the dimension-dependent Sobolev embeddings used in
the final part of the proof of Theorem~\ref{thm:abstract_reg} (see
\eqref{eq:usePS2D}). We omit the proof, as it is essentially identical to that
of Theorem~\ref{thm:abstract_reg}, modulo the modifications discussed above;
see also the proof of the abstract regularity principle in arbitrary
dimension, Theorem~\ref{thm:gen_abstract_reg}.

\begin{theorem}[Abstract Regularity Principle in general dimension]
  \label{thm:abstract_reg_nD} Let $B_1 \subset \RR^m$ with
  $m \geqslant 2$. Let $\class \subseteq (H^1 \cap L^{\infty}) (B_1, \RR^N)
  \times L^{\jqq} (B_1, \RR^N)$ be a class of admissible pairs satisfying
  \text{{\bfseries{{{\em \jaxs~\ref{ax:closure},
  \ref{ax:decay}}}}}}, with the source integrability condition $\jqq > m /
  2$.
  
  If $(\bm{u}, \bm{f}) \in \class$,
  then $\bm{u}$ is locally Hölder continuous in $B_1$.
  Specifically, $\bm{u} \in C^{0,
  \jeta}_{\ensuremath{\operatorname{loc}}} (B_1)$ for every Hölder exponent
  $\jeta$ satisfying:
  \begin{equation}
    \jeta < \min \left( \frac{\ln \jgam}{\pHold \ln \jth}, 2 - \frac{m}{\jqq}
    \right) .
  \end{equation}
  For any compact subset $K \Subset B_1$, the corresponding Hölder seminorm
  depends only on the chosen exponent $\jeta$, the structural parameters
  $\jth, \jgam, \kappa$, the exponent $\pHold$, the amplitude
  $\|\bm{u}\|_{L^{\infty} (B_1)}$, the initial energy
  $\Energy (\bm{u}, B_1)$, the source norm
  $\|\bm{f}\|_{L^{\jqq} (B_1)}$, and the distance
  $\mathrm{dist} (K, \partial B_1)$.
\end{theorem}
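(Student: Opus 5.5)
Fix a center $x_0$ and radius $R \le \mathrm{dist}(K,\partial B_1)$, iterate the decay axiom along the scales $\jth^k R$, and conclude with Campanato's characterization of Hölder spaces.

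\emph{Rescaled decay step.} Let $(\bm{u},\bm{f})\in\class$ and let $B_{\rr}(x_0)\subseteq B_1$. By Axiom~\ref{ax:closure}, the rescaled pair $(\bm{u}_{x_0,\rr},\tilde{\bm{f}}_{x_0,\rr})$ lies in $\class$. Axiom~\ref{ax:decay} applied to it, together with the scaling identity \eqref{eq:scalingPsi}, gives
\[
\Osc(\bm{u},B_{\jth\rr}(x_0))\leqslant \jgam\,\Osc(\bm{u},B_{\rr}(x_0))+\kappa\,\|\tilde{\bm{f}}_{x_0,\rr}\|_{L^{\jqq}(B_1)}^{\pHold}.
\]
In dimension $m$ the rescaled source satisfies $\|\tilde{\bm{f}}_{x_0,\rr}\|_{L^{\jqq}(B_1)}=\rr^{2-m/\jqq}\|\bm{f}\|_{L^{\jqq}(B_\rr(x_0))}$. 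This is the $m$-dimensional analogue of \eqref{eq:2D_source_scaling}, with $\delta:=2-m/\jqq>0$ because $\jqq>m/2$. Writing $F:=\|\bm{f}\|_{L^{\jqq}(B_1)}$, we get
\[
\Osc(\bm{u},B_{\jth\rr}(x_0))\leqslant\jgam\,\Osc(\bm{u},B_\rr(x_0))+\kappa F^{\pHold}\rr^{\pHold\delta}.
\]

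\emph{Iteration.} Set $\rr_k:=\jth^kR$ and $a_k:=\Osc(\bm{u},B_{\rr_k}(x_0))$. Unrolling the step gives
\[
a_k\leqslant \jgam^k a_0+\kappa F^{\pHold}R^{\pHold\delta}\sum_{j=0}^{k-1}\jgam^{k-1-j}\jth^{j\pHold\delta}.
\]
The initial term is controlled by $a_0\leqslant(2\|\bm{u}\|_{L^\infty})^{\pHold}$. Now fix $\jeta$ below the stated minimum, and put $\mu:=\jth^{\pHold\jeta}$. Then $\mu>\jgam$, since $\jeta<\ln\jgam/(\pHold\ln\jth)$, and $\mu>\jth^{\pHold\delta}$, since $\jeta<\delta$. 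The geometric sum is therefore bounded by $k\max(\jgam,\jth^{\pHold\delta})^{k-1}\leqslant C\mu^k$, with $C$ depending on $\jeta$ and the structural constants; strict inequality in both conditions is what absorbs the factor $k$. Hence
\[
a_k\leqslant C\big(\|\bm{u}\|_{L^\infty}^{\pHold}+F^{\pHold}\big)\,(\rr_k/R)^{\pHold\jeta}.
\]
For an arbitrary $\rr\in(0,R]$, pick $k$ with $\rr_{k+1}<\rr\leqslant \rr_k$ and apply \eqref{eq:monotonBPsi}. This costs a factor $\jth^{-m}$ and yields
\[
\Osc(\bm{u},B_\rr(x_0))\leqslant C'\,(\rr/R)^{\pHold\jeta}.
\]

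\emph{Conclusion.} By the equivalence between $\Osc$ and the mean oscillation about $\langle\bm{u}\rangle_{B_\rr(x_0)}$, and by Hölder's inequality when $\pHold\geqslant2$, the bound above gives
\[
\rr^{-m}\int_{B_\rr(x_0)}|\bm{u}-\langle\bm{u}\rangle|^2\lesssim \rr^{2\jeta}.
\]
This holds uniformly for $x_0\in K$ and $\rr\leqslant R$. When $\pHold<2$, one interpolates instead with the $L^\infty$ bound, or uses the $L^{\pHold}$-Campanato characterization directly. Campanato's theorem \cite[Theorem 1.3]{Giaquinta1983} then gives $\bm{u}\in C^{0,\jeta}(K)$, with a seminorm depending only on $\jeta$, $\jth$, $\jgam$, $\kappa$, $\pHold$, $\|\bm{u}\|_{L^\infty}$, $F$ and $\mathrm{dist}(K,\partial B_1)$. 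The energy is not actually needed here beyond membership in $H^1$, because $L^\infty$ replaces the Poincaré--Sobolev step \eqref{eq:usePS2D}. The main point requiring care is the iteration bookkeeping: one must check that the scaling of the source uses exactly the exponent $2-m/\jqq$, and that the strict inequalities defining $\jeta$ absorb the polynomial factor $k$ arising when $\jgam$ and $\jth^{\pHold\delta}$ are comparable.
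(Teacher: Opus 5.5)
Your argument is correct and follows essentially the paper's route. You apply Axiom~\ref{ax:closure} and Axiom~\ref{ax:decay} at every scale and iterate along the radii $\theta^k R$, bound the initial oscillation by the $L^\infty$ amplitude instead of Poincar\'e--Sobolev, and conclude with Campanato's theorem. The only difference is that you do the iteration bookkeeping by hand, where the paper invokes Lemma~\ref{lem:giusti_iteration} with its $\gamma_\varepsilon$ device; your strict choice $\mu>\max(\gamma,\theta^{p\delta})$ plays exactly that role.

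One correction: for $p<2$, interpolating with the $L^\infty$ bound only gives $r^{-m}\int_{B_r}|\bm{u}-\langle\bm{u}\rangle|^2\lesssim r^{p\eta}$, which is Hölder exponent $p\eta/2<\eta$. In that case you must use the $L^{p}$-Campanato characterization directly, as the paper does in Step~4.
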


\begin{remark}[Optimality of the H\"older Exponent]\label{rmk:optimalityholder}
  The two competing mechanisms defining this upper bound are the homogeneous
  decay rate $\left( \ln \jgam \right) / (\pHold \ln \jth)$ from the decay
  axiom and the inhomogeneous contribution $\left( 2 - m / \jqq \right)$
  coming from the $L^{\jqq}$-integrability of the source via the
  $m$-dimensional scaling.
  
  The upper bound for the H\"older exponent $\eta$ in Theorem~\ref{thm:abstract_reg_nD} is sharp. The inhomogeneous threshold $2 - m/\jqq$ coincides exactly with the optimal Morrey--Campanato regularity for the linear Poisson equation driven by an $L^{\jqq}$ source. Furthermore, the strict inequality $\eta < \min \left( \frac{\ln \jgam}{\pHold \ln \jth}, 2 - \frac{m}{\jqq} \right)$ reflects an intrinsic analytic obstruction rather than a suboptimal estimate. In the resonant case, where the homogeneous decay rate matches the inhomogeneous source scaling, the iteration unavoidably accumulates logarithmic penalties of the form $|\ln \rr|$. Conceding an arbitrarily small $\varepsilon$-loss absorbs these factors, allowing the application of Campanato's theorem and yielding the supremum as a strict limit.
\end{remark}

\subsection{Proof of Theorem~\ref{thm:abstract_reg}} For the proof of
Theorem~\ref{thm:abstract_reg} we need to obtain a continuous decay rate from
discrete scaling estimates. For that, a refined version of the standard
algebraic iteration lemma is required. While similar results are common in the
regularity literature (see, for example, Lemma 2.1 in Chapter III of
Giaquinta~{\cite{Giaquinta1983}} or Lemma 7.3 in Giusti~{\cite{Giusti_2003}}),
they often absorb the iteration constant into a generic, unspecified
parameter. For our purposes, it is essential to explicitly track how this
constant depends on the interplay between the homogeneous and inhomogeneous
decay rates. These rates are encoded in the parameters $\jgam$ and $\kappa$,
as well as by the scaling factor associated with the source norm $\|
\bm{f} \|_{L^{\jqq} (B_1)}^{\pHold}$ introduced in
{\jax}~\text{{\bfseries{\ref{ax:decay}}}}.

{\myprop{\begin{lemma}[Algebraic Iteration Lemma]
  \label{lem:giusti_iteration}Let $\psi : (0, \rr_0] \to [0, \infty)$ be a
  non-decreasing function. Suppose there exist constants $\jth \in (0, 1)$,
  $\jgam \in (0, 1)$, $b \geqslant 0$, and $\beta > 0$ such that
  \begin{equation}
    \psi (\jth \rr) \leqslant \jgam \psi (\rr) + b \rr^{\beta}
  \end{equation}
  holds for all $\rr \in (0, \rr_0]$. Let $\jalfa = \left( \ln \jgam \right) /
  \left( \ln \jth \right)$ (so that $\jgam = \jth^{\jalfa}$) and assume
  $\jalfa < \beta$. Then there exists a structural constant
  \begin{equation}
    c := \frac{1}{\jth^{2 \jalfa}  (1 - \jth^{\beta - \jalfa})} > 0,
    \label{eq:structconstc}
  \end{equation}
  depending only on $\jth$, $\jalfa$, and $\beta$, such that
  \begin{equation}
    \psi (\rr) \leqslant c \left[ \frac{\psi (\rr_0)}{\rr_0^{\jalfa}} + b
    \rr_0^{\beta - \jalfa} \right]  \rr^{\jalfa}
  \end{equation}
  for all $\rr \in (0, \rr_0]$.
\end{lemma}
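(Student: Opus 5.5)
We iterate the one-step inequality along the geometric sequence $\jth^k\rr_0$ and then interpolate between consecutive scales using the monotonicity of $\psi$.

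\emph{Step 1 (discrete iteration).} For $k \geqslant 0$, apply the hypothesis at $\rr = \jth^{j}\rr_0$ for $j = k-1, \dots, 0$. Induction gives
\begin{equation*}
\psi(\jth^k \rr_0) \leqslant \jgam^k \psi(\rr_0) + b\,\rr_0^{\beta}\sum_{j=0}^{k-1} \jgam^{k-1-j}\jth^{j\beta}.
\end{equation*}
Using $\jgam = \jth^{\jalfa}$, the sum equals $\jth^{(k-1)\jalfa}\sum_{j=0}^{k-1}\jth^{j(\beta-\jalfa)}$. Since $\jalfa<\beta$ and $\jth<1$, it is bounded by the convergent geometric series, giving $\jth^{(k-1)\jalfa}/(1-\jth^{\beta-\jalfa})$. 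Hence
\begin{equation*}
\psi(\jth^k\rr_0) \leqslant \jth^{k\jalfa}\Big[\psi(\rr_0) + \frac{b\,\rr_0^{\beta}}{\jth^{\jalfa}(1-\jth^{\beta-\jalfa})}\Big].
\end{equation*}

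\emph{Step 2 (continuous radii).} Given $\rr\in(0,\rr_0]$, choose $k\geqslant 0$ with $\jth^{k+1}\rr_0 < \rr \leqslant \jth^k\rr_0$. Since $\psi$ is non-decreasing, $\psi(\rr)\leqslant \psi(\jth^k\rr_0)$. Moreover $\jth^{k\jalfa} = \jth^{-\jalfa}\jth^{(k+1)\jalfa} \leqslant \jth^{-\jalfa}(\rr/\rr_0)^{\jalfa}$. Substituting into Step 1 gives
\begin{equation*}
\psi(\rr) \leqslant \jth^{-\jalfa}\Big[\frac{\psi(\rr_0)}{\rr_0^{\jalfa}} + \frac{b\,\rr_0^{\beta-\jalfa}}{\jth^{\jalfa}(1-\jth^{\beta-\jalfa})}\Big]\rr^{\jalfa}.
\end{equation*}

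\emph{Step 3 (the constant).} Both coefficients are dominated by $c = \jth^{-2\jalfa}(1-\jth^{\beta-\jalfa})^{-1}$:
\begin{itemize}
\item the first is $\jth^{-\jalfa} \leqslant c$, because $\jth^{-\jalfa}\geqslant 1$ and $1-\jth^{\beta-\jalfa}\in(0,1)$;
\item the second is exactly $c$.
\end{itemize}
This yields the claim.

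The bookkeeping of the exponent shifts is the only delicate point. The off-by-one in the sum and the interpolation loss each cost a factor $\jth^{-\jalfa}$, and together they account for the $\jth^{2\jalfa}$ in \eqref{eq:structconstc}. No genuine obstacle is expected, since $\jalfa<\beta$ guarantees summability.
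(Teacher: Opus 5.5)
Your proposal is correct and follows essentially the same route as the paper. The paper's proof also iterates along $\theta^k r_0$, re-indexes the sum into a geometric series in $\theta^{\beta-\alpha}$, and interpolates between consecutive scales by monotonicity at the cost of a factor $\theta^{-\alpha}$. It then observes, as you do, that both resulting coefficients are dominated by $c=\theta^{-2\alpha}(1-\theta^{\beta-\alpha})^{-1}$.
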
}}

The proof of the Algebraic Iteration Lemma \ref{lem:giusti_iteration} is
postponed to the end of the section.

\begin{proof}[Proof of Theorem~\ref{thm:abstract_reg}]
  For convenience, we divide the proof into five steps.{\smallskip}
  
  {\noindent}{{\em Step 1: The Master Iteration Inequality.}} Let
  $(\bm{u}, \bm{f}) \in \class$. Our
  goal is to verify the Campanato condition for $\bm{u}$
  uniformly on $B_{1 / 2}$. Fix an arbitrary center point $x_0 \in B_{1 / 2}$
  and a macroscopic starting radius $\rr_0 := 1 / 2$ so that $B_{\rr} (x_0)
  \subset B_1$ for every $x_0 \in B_{1 / 2}$ and every $\rr \in (0, \rr_0]$.
  For any radius $\rr \in (0, \rr_0]$, we set $\psi (\rr) := \rr^2 \Osc
  (\bm{u}, B_{\rr} (x_0))$. We wish to establish an
  algebraic iteration inequality for $\psi$.
  
  By {\jax} \text{{\bfseries{\ref{ax:closure}}}}, the rescaled pair
  $(\bm{u}_{x_0, \rr},
  \tilde{\bm{f}}_{x_0, \rr})$ belongs to $\class$.
  Applying the contractive decay property
  ({\jax}~\text{{\bfseries{\ref{ax:decay}}}}) to this rescaled pair yields:
  \begin{equation}
    \Osc (\bm{u}_{x_0, \rr}, B_{\jth}) \leqslant \jgam
    \cdot \Osc (\bm{u}_{x_0, \rr}, B_1) + \kappa \|
    \tilde{\bm{f}}_{x_0, \rr} \|_{L^{\jqq}
    (B_1)}^{\pHold} . \label{eq:decaypropforOsc0}
  \end{equation}
  We translate the oscillations back to the physical coordinates on $B_1$.
  Using the scaling identity \eqref{eq:scalingPsi} for the oscillation (mean
  integral), we get:
  \begin{eqnarray}
    \text{LHS:} \quad \Osc (\bm{u}_{x_0, \rr}, B_{\jth})
    & = & \Osc (\bm{u}, B_{\jth \rr} (x_0)) = \left( \jth
    \rr \right)^{- 2} \psi (\jth \rr), \\
    \text{RHS:\quad$\Osc (\bm{u}_{x_0, \rr_0}, B_1)$} & =
    & \Osc (\bm{u}, B_{\rr} (x_0)) = \rr^{- 2} \psi (\rr)
    . 
  \end{eqnarray}
  Therefore, \eqref{eq:decaypropforOsc0} can be rewritten as
  \begin{equation}
    \psi (\jth \rr) \leqslant \jth^2 \jgam \cdot \psi (\rr) + \kappa \jth^2
    \rr^2 \| \tilde{\bm{f}}_{x_0, \rr} \|_{L^{\jqq}
    (B_1)}^{\pHold} . \label{eq:decaypropforOsc}
  \end{equation}
  For the source term, we observe that $\|
  \tilde{\bm{f}}_{x_0, \rr} \|_{L^{\jqq} (B_1)}^{\jqq} =
  \rr^{2 \jqq - 2} \|\bm{f}\|_{L^{\jqq} (B_{\rr}
  (x_0))}^{\jqq} \leqslant \rr^{2 \jqq - 2}
  \|\bm{f}\|_{L^{\jqq} (B_1)}^{\jqq}$. Hence,
  \begin{equation}
    \| \tilde{\bm{f}}_{x_0, \rr} \|_{L^{\jqq}
    (B_1)}^{\pHold} \leqslant \rr^{\pHold \left( 2 - 2 / \jqq \right)}
    \|\bm{f}\|_{L^{\jqq} (B_1)}^{\pHold} .
  \end{equation}
  Substituting the previous bound into the decay property
  \eqref{eq:decaypropforOsc}, we obtain the master iteration inequality:
  \begin{equation}
    \psi (\jth \rr) \leqslant \jth^2 \jgam \cdot \psi (\rr) + b \rr^{\beta} 
    \quad \text{for all } \rr \in (0, \rr_0],
  \end{equation}
  where $\beta := 2 + \pHold \left( 2 - 2 / \jqq \right)$ and $b := \kappa
  \jth^2 \|\bm{f}\|_{L^{\jqq}
  (B_1)}^{\pHold}$.{\smallskip}
  
  {\noindent}\text{{\itshape{Step 2: Continuous Algebraic Decay via the
  Iteration Lemma.}}} To extract a continuous algebraic decay rate from this
  discrete bound, we wish to apply the algebraic iteration lemma
  (Lemma~\ref{lem:giusti_iteration}). This is possible because $\psi$ is
  non-decreasing. The natural algebraic decay exponent associated with the
  homogeneous term is given by $\ln (\jth^2 \jgam) / (\ln \jth)$. The
  iteration lemma requires this exponent to be strictly less than the source
  exponent $\beta = 2 + \pHold \left( 2 - 2 / \jqq \right)$. To achieve the
  sharpest possible decay while satisfying this strict inequality,
  we fix an arbitrarily small $\varepsilon > 0$ and consider the slightly less
  sharp master iteration inequality
  \begin{equation}
    \psi (\jth \rr) \leqslant \jth^2 \jgam_{\varepsilon} \cdot \psi (\rr) + b
    \rr^{\beta} \quad \text{for all } \rr \in (0, \rr_0],
  \end{equation}
  where we define $\jgam_{\varepsilon} := \max (\jgam, \jth^{\beta - 2 -
  \varepsilon})$. Indeed, with this choice of $\jgam_{\varepsilon}$ we have
  $\jgam_{\varepsilon} \in (0, 1)$ because $\jgam, \jth \in (0, 1)$ and we can
  choose $\varepsilon$ small enough such that $\beta - 2 - \varepsilon > 0$.
  We now define our operational exponent $\jalfa_{\varepsilon} := \ln (\jth^2
  \jgam_{\varepsilon}) / (\ln \jth)$ so that $\jth^2 \jgam_{\varepsilon} =
  \jth^{\jalfa_{\varepsilon}}$. By construction, we have
  \begin{equation}
    \frac{\ln \jgam_{\varepsilon}}{\ln \jth} = \frac{| \ln \jgam_{\varepsilon}
    |}{| \ln \jth |} = \frac{\min (| \ln \jgam_{\varepsilon} |, (\beta - 2 -
    \varepsilon) | \ln \jth |)}{| \ln \jth |} = \min \left( \frac{\ln
    \jgam_{\varepsilon}}{\ln \jth}, \beta - 2 - \varepsilon \right) .
    \label{eq:alfaeps1}
  \end{equation}
  Therefore,
  \begin{equation}
    \jalfa_{\varepsilon} := 2 + \frac{| \ln \jgam_{\varepsilon} |}{| \ln \jth
    |} = 2 + \min \left( \frac{\ln \jgam}{\ln \jth}, \beta - 2 - \varepsilon
    \right) . \label{eq:alfaeps2}
  \end{equation}
  In particular, $\beta - \jalfa_{\varepsilon} = \beta - 2 - \min \left(
  \frac{\ln \jgam}{\ln \jth}, \beta - 2 - \varepsilon \right) \geqslant (\beta
  - 2) - (\beta - 2 - \varepsilon) = \varepsilon > 0$.
  
  Because $\jalfa_{\varepsilon} < \beta$, the iteration lemma
  (Lemma~\ref{lem:giusti_iteration}) applies, guaranteeing the
  existence of a purely structural constant $c_{\varepsilon} > 0$ such that
  for all $\rr \in (0, \rr_0]$:
  \begin{equation}
    \psi \left( \rr \right) \leqslant c_{\varepsilon}  \left[ \frac{\psi
    \left( \rr_0 \right)}{\rr_0^{\jalfa_{\varepsilon}}} + b \rr_0^{\beta -
    \jalfa_{\varepsilon}} \right] \rr^{\jalfa_{\varepsilon}} .
  \end{equation}
  Dividing both sides by $\rr^2$, we recover the uniform algebraic decay for
  the mean oscillation:
  \begin{equation}
    \Osc (\bm{u}, B_{\rr} (x_0)) \leqslant
    c_{\varepsilon}  \left[ \frac{\Osc (\bm{u}, B_{\rr_0}
    (x_0))}{\rr_0^{\jalfa_{\varepsilon} - 2}} + b \rr_0^{\beta -
    \jalfa_{\varepsilon}} \right] \rr^{\jalfa_{\varepsilon} - 2},
    \label{eq:usePS2D0}
  \end{equation}
  where, we recall, $b := \kappa \jth^2
  \|\bm{f}\|_{L^{\jqq} (B_1)}^{\pHold}$.
  
  {\noindent}{{\em Step 3: Uniform Bound via the Poincaré-Sobolev
  Inequality.}} To conclude the proof, we must uniformly bound the
  right-hand side of \eqref{eq:usePS2D0} independent of $x_0$. We invoke the
  Poincaré-Sobolev inequality on $B_{\rr_0} (x_0)$.
Proposition applied with $\jqq=2$ and the integrability exponent $\pHold$ (any finite real number, since $q=m$) yields the scale-invariant estimate  
  \begin{equation}
    \label{eq:poincare_sobolev} \left( \jav{B_{\rr_0} (x_0) }
    |\bm{u}- \langle \bm{u}
    \rangle_{B_{\rr_0} (x_0)} |^{\pHold} \right)^{1 / \pHold} \leqslant K_S
    \left( \int_{B_{\rr_0} (x_0)} | \nabla \bm{u}|^2
    \right)^{1 / 2} .
  \end{equation}
  Note that the integral on the right is the total energy (not the mean),
  which is scale-invariant in 2D. That is, $K_S$ does not depend on the radius
  $\rr_0$. Thus, applying \eqref{eq:poincare_sobolev} to our specific ball
  $B_{\rr_0} (x_0)$ (where $\rr_0 = 1 / 2$):
  \begin{equation}
    \Osc (\bm{u}, B_{\rr_0} (x_0)) \leqslant
    \jav{B_{\rr_0} (x_0)} |\bm{u}- \langle
    \bm{u} \rangle |^{\pHold} \leqslant K_S^{\pHold}
    \left(  \int_{B_{\rr_0} (x_0)} | \nabla \bm{u}|^2
    \right)^{\pHold / 2} \leqslant K_S^{\pHold}  \hspace{0.17em} 2^{\pHold /
    2} \hspace{0.17em} (\Energy (\bm{u}, B_{\rr_0}
    (x_0)))^{\pHold / 2} . \label{eq:usePS2D}
  \end{equation}
  Using the monotonicity of the energy ($\Energy (\bm{u},
  B_{\rr_0} (x_0)) \leqslant \Energy (\bm{u}, B_1)$), we
  obtain the uniform bound:
  \begin{equation}
    \Osc (\bm{u}, B_{\rr_0} (x_0)) \leqslant C_S  \Energy
    (\bm{u}, B_1)^{\pHold / 2},
  \end{equation}
  where $C_S := K_S^{\pHold}  \hspace{0.17em} 2^{\pHold / 2}$ is a structural
  constant independent of $x_0$. Substituting this back into
  \eqref{eq:usePS2D0}, we obtain the estimate:
  \begin{equation}
    \Osc (\bm{u}, B_{\rr} (x_0)) \leqslant M
    \rr^{\jalfa_{\varepsilon} - 2} \label{eq:unifcampnato},
  \end{equation}
  where $M$ is a universal constant (independent of $x_0 \in B_{1 / 2}$ and
  $\rr \in \left( 0, \rr_0 \right]$) and
  \begin{equation}
    \jalfa_{\varepsilon} - 2 = \min \left( \frac{\ln \jgam}{\ln \jth}, \beta -
    2 - \varepsilon \right) = \min \left( \frac{\ln \jgam}{\ln \jth}, \pHold
    \left( 2 - 2 / \jqq \right) - \varepsilon \right) . \label{eq:alfaepsm2}
  \end{equation}
  Finally, observe that since $\jqq < 2$, we have $\left( 2 - 2 / \jqq \right)
  < 1$. Consequently, for sufficiently small $\varepsilon > 0$,
  $\jalfa_{\varepsilon} - 2 < \pHold$.
  
  {\noindent}{{\em Step 4: The Campanato Condition and Hölder Continuity on
  $B_{1 / 2}$.}} Since inequality \eqref{eq:unifcampnato} holds uniformly
  for every $x_0 \in B_{1 / 2}$, it satisfies precisely the Campanato
  condition with exponent $\left( \jalfa_{\varepsilon} - 2 \right)$. Therefore, $\bm{u} \in
  C^{0, \jeta_{\varepsilon}} (B_{1 / 2})$ with $\jeta_{\varepsilon} = \left(
  \jalfa_{\varepsilon} - 2 \right) / \pHold$. Substituting our exact choice
  for $\left( \jalfa_{\varepsilon} - 2 \right)$ directly yields:
  \begin{equation}
    \jeta_{\varepsilon} = \min \left( \frac{\ln \jgam}{\pHold \, \ln \jth}, 2
    - \frac{2}{\jqq} - \frac{\varepsilon}{\pHold} \right),
  \end{equation}
  matching the formula in the theorem statement as soon as we pass
  to the limit for $\varepsilon \rightarrow 0$.
  
  The Hölder seminorm $[\bm{u}]_{C^{0,
  \jeta_{\varepsilon}} (B_{1 / 2})}$ depends only on $\jth, \jgam, \kappa,
  \pHold, \Energy (\bm{u}, B_1)$, and
  $\|\bm{f}\|_{L^{\jqq} (B_1)}$ through the constants
  appearing in the iteration lemma and the Poincaré-Sobolev constant $K_S$.
  
  {\noindent}{{\em Step 5: Extension to arbitrary compact subsets of
  $B_1$.}} To conclude the proof, we extend this regularity from $B_{1 / 2}$
  to the entirety of $B_1$ and explicitly track the seminorm dependency. Let
  $K \subset B_1$ be an arbitrary compact subset, and let $R := \mathrm{dist}
  (K, \partial B_1) > 0$ be its distance to the boundary. For any point $x_0
  \in K$, the ball $B_R (x_0)$ is strictly contained within $B_1$. We define
  the rescaled pair $(\bm{v},
  \bm{g}) := (\bm{u}_{x_0, R},
  \tilde{\bm{f}}_{x_0, R})$ on the unit ball $B_1$. By
  {\jax}~\text{{\bfseries{\ref{ax:closure}}}}, this rescaled pair belongs to
  $\class$.
  
  Applying the exact uniform bound established in the previous steps to this
  rescaled pair, we know that $\bm{v} \in C^{0, \jeta}
  (B_{1 / 2})$. Furthermore, its Hölder seminorm
  $[\bm{v}]_{C^{0, \jeta} (B_{1 / 2})}$ is bounded
  uniformly by the structural constants, the energy $\Energy
  (\bm{v}, B_1)$, and the source norm
  $\|\bm{g}\|_{L^{\jqq} (B_1)}$. By the monotonicity of
  the energy and the 2D source scaling, these are universally bounded by the
  initial global quantities: $\Energy (\bm{v}, B_1)
  \leqslant \Energy (\bm{u}, B_1)$ and
  $\|\bm{g}\|_{L^{\jqq} (B_1)} \leqslant R^{2 - 2 / \jqq}
  \|\bm{f}\|_{L^{\jqq} (B_1)} \leqslant
  \|\bm{f}\|_{L^{\jqq} (B_1)}$.
  
  Finally, we translate this back to the original map
  $\bm{u}$ using the equivalence of scaled Hölder
  seminorms: the restriction of $\bm{v}$ to $B_{1 / 2}$
  corresponds directly to the restriction of $\bm{u}$ to
  $B_{R / 2} (x_0)$. The seminorms obey the scaling identity
  \begin{equation}
    [\bm{u}]_{C^{0, \jeta_{\varepsilon}} (B_{R / 2}
    (x_0))} = R^{- \jeta_{\varepsilon}} [\bm{v}]_{C^{0,
    \jeta} (B_{1 / 2})} .
  \end{equation}
  Because $x_0$ was an arbitrary point in $K$, a standard finite covering
  argument guarantees that $\bm{u} \in C^{0,
  \jeta_{\varepsilon}} (K)$. The explicit appearance of the scaling factor
  $R^{- \jeta_{\varepsilon}} = \mathrm{dist} (K, \partial B_1)^{-
  \jeta_{\varepsilon}}$ dictates precisely why the final Hölder seminorm on
  $K$ depends on the distance to the boundary.
  
  Finally, since $\varepsilon > 0$ can be chosen arbitrarily small, this
  guarantees that $\bm{u} \in C^{0, \eta} (K)$ for every
  exponent $\eta$ strictly less than the critical threshold, concluding the
  proof.
\end{proof}

\begin{proof}[Proof of the Algebraic Iteration Lemma \ref{lem:giusti_iteration}]
  We first derive the bound along the discrete sequence $\rr_j = \jth^j \rr_0$
  for $j \in \NN_0$. For $j = 1$ the given inequality yields $\psi (\jth 
  \rr_0) \leqslant \jgam \psi (\rr_0) + b \rr_0^{\beta}$. Iterating, using
  induction on $j \geqslant 1$, gives
  \begin{equation}
    \psi (\jth^j \rr_0) \leqslant \jgam^j \psi (\rr_0) + b \sum_{i = 0}^{j -
    1} \jgam^i  (\jth^{j - 1 - i}  \rr_0)^{\beta} .
  \end{equation}
  Substituting $\jgam = \jth^{\jalfa}$ and rewriting the general term in the
  sum, $\jgam^i  (\jth^{j - 1 - i})^{\beta} = \jth^{\jalfa i} \cdot
  \jth^{\beta (j - 1 - i)} = \jth^{\jalfa  (j - 1)} \cdot \jth^{(\beta -
  \jalfa)  (j - 1 - i)}$, we obtain
  \begin{eqnarray}
    \psi (\jth^j \rr_0) & \leqslant & \jth^{\jalfa j} \psi (\rr_0) + b
    \rr_0^{\beta}  \jth^{\jalfa  (j - 1)}  \sum_{i = 0}^{j - 1} \jth^{(\beta -
    \jalfa)  (j - 1 - i)} \nonumber\\
    & = & \jth^{\jalfa j} \left[ \psi (\rr_0) + \frac{b
    \rr_0^{\beta}}{\jth^{\jalfa}} \sum_{k = 0}^{j - 1} (\jth^{\beta -
    \jalfa})^k \right], \nonumber
  \end{eqnarray}
  where for the last equality we re-indexed \ the sum by setting $k := j - 1 -
  i$. Since $\beta > \alpha$ and $\jth \in (0, 1)$, the ratio $\jth^{\beta -
  \jalfa} < 1$. Thus the finite geometric sum is bounded by the sum of the
  geometric series:
  \begin{equation}
    \psi (\jth^j \rr_0) \leqslant \jth^{\jalfa j}  \left[ \psi (\rr_0) +
    \frac{b \rr_0^{\beta}}{\jth^{\jalfa} (1 - \jth^{\beta - \jalfa})} \right]
    . \label{eq:discbound1}
  \end{equation}
  To extend the estimate to arbitrary $\rr \in (0, \rr_0]$, choose the unique
  integer $j \geqslant 0$ such that $\jth^{j + 1}  \rr_0 < \rr \leqslant
  \jth^j  \rr_0$. Since $\psi$ is non-decreasing and $\rr \leqslant \jth^j 
  \rr_0$, we have $\psi (\rr) \leqslant \psi (\jth^j \rr_0)$. From the lower
  bound on $\rr$ we deduce $\jth^j < \jth^{- 1} (\rr / \rr_0)$. Raising to the
  power $\jalfa > 0$ (which preserves the inequality) gives
  \begin{equation}
    \jth^{\jalfa j} < \frac{1}{\jth^{\jalfa}} \left( \frac{\rr}{\rr_0}
    \right)^{\jalfa} . \label{eq:discbound2}
  \end{equation}
  Inserting this into the discrete bound produces
  \begin{equation}
    \psi (\rr) \leqslant \psi (\jth^j \rr_0)
    \overset{\eqref{eq:discbound1}}{\leqslant} \jth^{\jalfa j}  \left[ \psi
    (\rr_0) + \frac{b \rr_0^{\beta}}{\jth^{\jalfa} (1 - \jth^{\beta -
    \jalfa})} \right] \overset{\eqref{eq:discbound2}}{\leqslant}
    \frac{1}{\jth^{\jalfa}} \left( \frac{\rr}{\rr_0} \right)^{\jalfa}  \left[
    \psi (\rr_0) + \frac{b \rr_0^{\beta}}{\jth^{\jalfa} (1 - \jth^{\beta -
    \jalfa})} \right] .
  \end{equation}
  Distributing terms,
  \begin{equation} \psi (\rr) \leqslant \frac{1}{\jth^{\jalfa}}  \frac{\psi
     (\rr_0)}{\rr_0^{\jalfa}}  \rr^{\jalfa} + \frac{1}{\jth^{2 \jalfa}  (1 -
     \jth^{\beta - \jalfa})} b \rr_0^{\beta - \jalfa}  \rr^{\jalfa} . \end{equation}
  Because $\jth \in (0, 1)$ and $\jalfa > 0$, we have $1 / \jth^{\jalfa}
  \leqslant 1 / \jth^{2 \jalfa}$ and $1 / \jth^{2 \jalfa} < 1 / [\jth^{2
  \jalfa} (1 - \jth^{\beta - \jalfa})]$ (since $1 - \jth^{\beta - \jalfa} <
  1$). Therefore both coefficients are bounded by the structural constant $c
  := 1 / [\jth^{2 \jalfa} (1 - \jth^{\beta - \jalfa})]$, which completes the
  proof.
\end{proof}

\subsection{Hölder regularity of almost harmonic maps: proof of
Theorem~\ref{thm:regtheoremharmonic}} The proof of
Theorem~\ref{thm:regtheoremharmonic} hinges on the following proposition,
which demonstrates that small Dirichlet energy on the unit ball, coupled with
a sufficiently integrable source term, implies a contraction of the
$L^{\pHold}$-oscillation at a smaller scale. Specifically, the content of the
next result is to prove that the class of almost harmonic maps
satisfies the epsilon-decay property formulated in {\jax}~\ref{ax:decay}.

{\myprop{\begin{lemma}[Decay Property]
  \label{lem:decay}For any integrability exponent $\jqq \in (1, 2)$, we denote by
  $\pHold := 2 \jqq / \left( 2 - \jqq \right)$ the corresponding
  oscillation exponent. There exist structural constants $\jth \in (0, 1)$,
  $\jgam \in (0, 1)$, $\kappa > 0$, and an energy threshold
  $\varepsilon_{\ast} > 0$ such that for an arbitrary almost harmonic map
  $\bm{u} \in H^1 \left( B_1, \Stwo^n \right)$ defined on
  the unit ball $B_1 \subset \RR^2$ with associated source term
  $\bm{f} \in L^{\jqq} (B_1, \RR^{n + 1})$:
  \begin{equation}
    \text{if} \quad \Energy (\bm{u}, B_1) <
    \varepsilon_{\ast}, \quad \text{then} \quad \Osc
    (\bm{u}, B_{\jth}) \leqslant \jgam \cdot \Osc
    (\bm{u}, B_1) + \kappa \| \bm{f}
    \|_{L^{\jqq} (B_1)}^{\pHold} . \label{eq:CH1.1}
  \end{equation}
  In fact, one can always take $\jgam = 1 / 2$.
\end{lemma}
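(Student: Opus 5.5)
The plan is to follow the strategy sketched in the introduction: reformulate the equation algebraically, compare $\bm{u}$ with its harmonic extension, and convert a gradient estimate into oscillation decay.

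\emph{Algebraic reformulation.} With $\jaOm := \bm{u}\wedge\nabla\bm{u}$, the constraints $|\bm{u}|=1$ and $\bm{f}\cdot\bm{u}=0$ give $|\nabla\bm{u}|^2\bm{u} = \jaOm\cdot\nabla\bm{u}$. Moreover $\divg\jaOm = \bm{u}\wedge\Delta\bm{u} = -\bm{u}\wedge\bm{f}$. Hence, for every constant $\jc$,
\[
-\Delta\bm{u} = \divg\bigl(\jaOm(\bm{u}-\jc)\bigr) + (\bm{u}\wedge\bm{f})(\bm{u}-\jc) + \bm{f}.
\]
I will take $\jc$ (with $|\jc|\le 1$ by Lemma~\ref{lemma:osc_infimum_bound}) to be a minimizer realizing $\Osc(\bm{u},B_1)$.

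\emph{Comparison and the coupled Caccioppoli estimate.} Let $\bm{h}$ be the harmonic function on $B_1$ with the same trace as $\bm{u}$. Then $\bm{w}:=\bm{u}-\bm{h}\in H^1_0$ solves $-\Delta\bm{w} = \divg(\jaOm(\bm{u}-\jc)) + \bm{g}$, where $\bm{g} := (\bm{u}\wedge\bm{f})(\bm{u}-\jc)+\bm{f}$ and $|\bm{g}|\le 3|\bm{f}|$. I will apply global $L^\jqq$ Calder\'on--Zygmund theory for the Dirichlet problem.
\begin{itemize}
\item For the divergence term: by H\"older with $1/\jqq = 1/2+1/\pHold$, $\|\jaOm(\bm{u}-\jc)\|_{L^\jqq} \le \|\nabla\bm{u}\|_{L^2}\,\|\bm{u}-\jc\|_{L^\pHold}$.
\item For the $L^\jqq$ term: $\bm{g}\in L^\jqq\subset W^{-1,\jqq}$, since in 2D $L^\jqq$ embeds into $W^{-1,\jqq}$ by duality with $W^{1,\jqq'}_0\subset L^\infty$ (up to constants, as $\jqq'>2$).
\end{itemize}
Together these give
\[
\|\nabla\bm{w}\|_{L^\jqq(B_1)} \le C\sqrt{\varepsilon_\ast}\,\Osc(\bm{u},B_1)^{1/\pHold} + C\|\bm{f}\|_{L^\jqq}.
\]
Since $\pHold=\jqq^\ast$, Sobolev on $W^{1,\jqq}_0(B_1)$ yields $\|\bm{w}\|_{L^\pHold(B_1)}\le C\|\nabla\bm{w}\|_{L^\jqq}$. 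This is exactly where $m=2$ and the conjugacy matter.

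\emph{Decay for the harmonic part.} Set $\bm{c}_h$ equal to the value of the harmonic function $\bm{h}$ at the origin. The appendix gradient estimates give $\sup_{B_{\jth}}|\bm{h}-\bm{c}_h|\le C\jth\,\|\nabla\bm{h}\|_{L^\infty(B_{1/2})}$. I expect this to be the main obstacle: one must control $\bm{h}$ by $\Osc(\bm{u},B_1)$ rather than by energy. The route I would try first is to bound $\|\nabla\bm{h}\|_{L^\infty(B_{1/2})}$ via the mean-value property and a Caccioppoli inequality by $C\|\bm{h}-\jc\|_{L^\pHold(B_{3/4})}$. Then $\|\bm{h}-\jc\|_{L^\pHold}\le\|\bm{u}-\jc\|_{L^\pHold}+\|\bm{w}\|_{L^\pHold}$, which avoids any maximum principle for traces in $L^\pHold$. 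This gives $\Osc(\bm{h},B_\jth)\le C_0\jth^{\pHold}\bigl(\Osc(\bm{u},B_1)+\|\bm{w}\|^\pHold_{L^\pHold}\bigr)$.

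\emph{Conclusion.} Combine via the seminorm property (Lemma~\ref{lemma:monotonBPsi}) and $\Osc(\bm{w},B_\jth)\le \jth^{-2}\|\bm{w}\|_{L^\pHold(B_1)}^\pHold/|B_1|$:
\[
\Osc(\bm{u},B_\jth)\le 2^{\pHold}\Bigl[C_0\jth^\pHold + C\jth^{-2}\varepsilon_\ast^{\pHold/2}\Bigr]\Osc(\bm{u},B_1) + \kappa\|\bm{f}\|^\pHold_{L^\jqq}.
\]
First choose $\jth$ small so that $2^\pHold C_0\jth^\pHold\le 1/4$; this uses $\pHold>2$ only mildly, and works because the $\jth^{-2}$ factor does not multiply the $\bm{h}$ term. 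Then choose $\varepsilon_\ast$ small so that the second term is at most $1/4$. This yields $\jgam=1/2$.
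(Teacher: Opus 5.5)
Your argument is correct, but it controls the harmonic comparison function differently from the paper.

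\textbf{The paper's route.} It first selects, via Fubini, a good radius $\rr\in[1/2,1]$ with $\int_{\partial B_\rr}|\bm{u}-\jc|\le 2\int_{B_1}|\bm{u}-\jc|$. It takes $\bm{h}$ to be the harmonic extension of the trace on $\partial B_\rr$. The Poisson-kernel bound of Proposition~\ref{prop:Poissongradest} then gives $\sup_{B_{1/4}}|\nabla\bm{h}|\le C_m\Osc^{1/\pHold}(\bm{u},B_1)$, an estimate that does not involve the equation at all. The error $\bm{u}-\bm{h}$, which does not vanish on $\partial B_\jth$, is handled by the mean-subtracted Poincaré--Sobolev inequality on $B_\jth$.

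\textbf{Your route.} You extend harmonically on all of $B_1$ and bound $\nabla\bm{h}$ only through interior estimates (mean value property plus Caccioppoli). You close the loop with $\bm{h}-\jc=(\bm{u}-\jc)-\bm{w}$ and the zero-trace embedding $W^{1,\jqq}_0(B_1)\hookrightarrow L^{\pHold}$. This is the same conjugacy $\pHold=\jqq^\ast$ that is needed for $\bm{w}$ anyway.

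\textbf{What each buys.} Your harmonic bound is no longer independent of the equation: it carries $\|\bm{w}\|_{L^\pHold}$. This produces extra terms of size $\jth^{\pHold}\varepsilon_\ast^{\pHold/2}\Osc(\bm{u},B_1)$ and $\jth^{\pHold}\|\bm{f}\|_{L^\jqq}^{\pHold}$. These are harmless: the first is absorbed into the $\jth^{-2}\varepsilon_\ast^{\pHold/2}$ coefficient (since $\jth^{\pHold}\le\jth^{-2}$), and the second goes into $\kappa$. Your order of choices is also right: $C_0$ is independent of $\jth$ and $\varepsilon_\ast$, so you fix $\jth$ first and then $\varepsilon_\ast$. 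In exchange, you need no good-radius selection and no Poisson representation from $L^1$ boundary traces. In the anisotropic setting of Section~\ref{sec:aniso}, you would also not need the Widman-based kernel bounds on ellipsoids of Proposition~\ref{prop:aniso_poisson_kernel}. Interior gradient estimates for constant-coefficient $A_0$-harmonic functions in terms of $\|\bm{h}-\jc\|_{L^1}$ are classical and depend only on $\lambda$, $\Lambda$ and $m$.
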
}}

\begin{remark}[The $L^2$ Integrability Ceiling]
  \label{rmk:redsourceintq}
  Let $\jqq_{\mathrm{src}}$ denote the physical integrability of the source term $\bm{f}$. As $B_1$ is bounded, we may without loss of generality restrict our analysis to an operational exponent $1 < \jqq < 2$, even when $\jqq_{\mathrm{src}} \geqslant 2$. This reduction is not merely a matter of convenience: it is a structural necessity imposed by the critical quadratic nonlinearity $|\nabla \bm{u}|^2 \bm{u}$. Specifically, recasting this term as $\divg(\jaOm \bm{u})$ with $\jaOm \in L^2$ imposes a strict analytic threshold: estimating this flux in $L^{\jqq}$ via H\"older's inequality requires measuring the oscillation of $\bm{u}$ in $L^{\pHold}$, which couples the exponents via $1/\jqq = 1/2 + 1/\pHold$. Simultaneously, closing the Campanato iteration via the Sobolev--Poincar\'e inequality dictates that $\pHold$ must equal the two-dimensional Sobolev conjugate $\jqq^\ast = 2\jqq / (2 - \jqq)$. This algebraic rigidity fundamentally limits the operational scheme to $\jqq < 2$.
  Because the oscillation exponent $\pHold \to \infty$ as $\jqq \to 2^-$, optimal H\"older regularity is achieved by maximizing $\jqq$. If $\jqq_{\mathrm{src}} < 2$, the optimal choice is exactly $\jqq = \jqq_{\mathrm{src}}$. Conversely, if $\jqq_{\mathrm{src}} \geqslant 2$, any source integrability beyond $L^2$ is analytically inaccessible to this iteration architecture; optimal regularity is instead captured by taking $\jqq$ arbitrarily close to $2$. We therefore unify our subsequent analysis under the assumption that $\bm{f} \in L^{\jqq}(B_1)$ for a fixed exponent $1 < \jqq < 2$.
\end{remark}

\begin{remark}[Interdependence of Structural Constants]
  The parameters $\jth, \jgam, \kappa$, and $\varepsilon_{\ast}$ introduced in Lemma~\ref{lem:decay} are strictly interdependent. The constructive proof explicitly fixes $\jgam = 1/2$ and, via H\"older's inequality, rigidly locks the exponent to $\pHold = 2\jqq / (2 - \jqq)$. This algebraic coupling determines the scaling factor $\jth \equiv \jth(C_{m, \jqq}, \pHold)$, which ultimately dictates the critical energy threshold $\varepsilon_{\ast} = \varepsilon_{\ast}(\jth)$.
\end{remark}

\begin{remark}[The Structural Coincidence of Dimension Two]
  The success of the fundamental decay estimate \eqref{eq:decay_master} hinges on a structural coincidence specific to dimension $m = 2$. To close the Campanato iteration, the integrability gained via the Sobolev embedding must perfectly match the integrability demanded by H\"older's inequality. 
  Specifically, bounding the error $\bm{w} := \bm{u} - \bm{h}$ via the Sobolev embedding $W^{1, \jqq} \hookrightarrow L^{\pSob}$ yields the dimensional relation $1/\pSob = 1/\jqq - 1/m$. Conversely, controlling the right-hand side $\divg(\jaOm(\bm{u} - \jc))$ in $L^{\jqq}$ given the fixed energy density $\jaOm \in L^2$ requires measuring the deviation in $L^{\pHold}$ such that $1/\jqq = 1/2 + 1/\pHold$. The factor of $2$ in this latter relation originates entirely from the natural energy space and is strictly independent of the spatial dimension. 
  For the iteration to close, the available Sobolev output must serve as the required H\"older input ($\pSob = \pHold$). Equating these relations yields $1/\jqq - 1/m = 1/\jqq - 1/2$,  forcing $m = 2$. Thus, dimension two is the unique setting where the regularizing effect of the Laplacian exactly balances the integrability loss induced by the critical quadratic nonlinearity. We note that a similar observation regarding this critical algebraic balance was previously highlighted in \cite{Chang_1999} in their study of harmonic maps.
\end{remark}

We postpone the proof of Lemma~\ref{lem:decay} to the next section. Here
instead, we show how we can apply the abstract regularity theorem
(Theorem~\ref{thm:abstract_reg}) to infer the local Hölder regularity of
weakly harmonic maps in dimension two stated in
Theorem~\ref{thm:regtheoremharmonic}.

\begin{proof}[Proof of Theorem~\ref{thm:regtheoremharmonic}]
  We apply the abstract regularity framework. Let $\varepsilon_{\ast}$ be the
  critical energy threshold provided by Lemma~\ref{lem:decay}. We define the
  admissible class $\class$ as the set of all $\Stwo^n$-valued almost harmonic
  maps whose Dirichlet energy  respects this threshold:
  \begin{equation} \class = \left\{ (\bm{u},
     \bm{f}) \in H^1 (B_1, \Stwo^n) \times L^{\jqq} (B_1,
     \RR^{n + 1}) \st  - \Delta
     \bm{u}= | \grad \bm{u}|^2
     \bm{u}+\bm{f} \quad \text{and}
     \quad \Energy (\bm{u}, B_1) < \varepsilon_{\ast}
     \right\} . \end{equation}
  We need to verify that this class satisfies
  {\jax}~\ref{ax:closure}  and
  {\jax}~\ref{ax:decay}.
  
  {\noindent}{\em Step 1: Verification of} {\jax}~\ref{ax:closure}
  (closure under rescaling). Let $(\bm{u},
  \bm{f}) \in \class$ and let
  $B_\rr (x_0) \subset B_1$. Consider the rescaled pair
  defined for $x \in B_1$ by $\bm{u}_{x_0, \rr} (x) :=
  \bm{u} (x_0 + \rr x)$ and
  $\tilde{\bm{f}}_{x_0, \rr} (x) := \rr^2
  \bm{f} (x_0 + \rr x)$.
  \smallskip
  
  \begin{itemize}
    \item \text{{\itshape{Constraint:}}} Since $\bm{u}$
    takes values in $\Stwo^n$ almost everywhere, clearly
    $\bm{u}_{x_0, \rr}$ also takes values in $\Stwo^n$
    almost everywhere.
    \smallskip
    
    \item \text{{\itshape{Equation:}}} In dimension $m = 2$, the Laplacian
    scales as $\lapl \bm{u}_{x_0, \rr} (y) = \rr^2 \lapl
    \bm{u} \left( x_0 + \rr y \right)$. Similarly, the
    quadratic term scales as $| \grad \bm{u}_{x_0, \rr}
    (y) |^2 = \rr^2  | \grad \bm{u} \left( x_0 + \rr y
    \right) |^2$. Thus, multiplying the original equation evaluated at $x_0 +
    \rr x$ by $\rr^2$, we obtain:
    \begin{equation} - \Delta \bm{u}_{x_0, \rr} = | \nabla
       \bm{u}_{x_0, \rr} |^2
       \bm{u}_{x_0, \rr} +
       \tilde{\bm{f}}_{x_0, \rr} . \end{equation}
    Thus, the structural equation is invariant.
\smallskip
    
    \item \text{{\itshape{Energy Threshold:}}} Because we are in
    dimension two, the Dirichlet energy is  invariant under this
    rescaling. By the monotonicity of the energy integral over subdomains, we
    have:
    \begin{equation} \Energy (\bm{u}_{x_0, \rr}, B_1) = \Energy
       (\bm{u}, B_{\rr} (x_0)) \leqslant \Energy
       (\bm{u}, B_1) < \varepsilon_{\ast} . \end{equation}
  \end{itemize}
  Thus, the rescaled pair satisfies all structural conditions and preserves
  the strict energy bound, ensuring $(\bm{u}_{x_0, \rr},
  \tilde{\bm{f}}_{x_0, \rr}) \in \class$.{\smallskip}
  
  {\noindent}{{\em Step 2: Verification of} {\jax}~\ref{ax:decay}} (the decay property). Let
  $(\bm{u}, \bm{f}) \in \class$. By
  the explicit definition of our class $\class$, the pair satisfies the global
  bound $\Energy (\bm{u}, B_1) < \varepsilon_{\ast}$.
  Therefore, the hypothesis of Lemma~\ref{lem:decay} is met, which immediately
  yields:
  \begin{equation} \Osc (\bm{u}, B_{\jth}) \leqslant \jgam \cdot \Osc
     (\bm{u}, B_1) + \kappa
     \|\bm{f}\|_{L^{\jqq} (B_1)}^{\pHold} . \end{equation}
  This guarantees {\jax}~\ref{ax:decay} is satisfied for all elements in
  $\class$.
  
  {\noindent}{{\em Step 3: Conclusion and Removal of the Small-Energy
  Hypothesis.}} Having verified the two axioms, the abstract regularity
  principle (Theorem~\ref{thm:abstract_reg}) dictates that any map
  $\bm{u}$ belonging to a pair in $\class$ is locally
  Hölder continuous in $B_1$.
  
  To conclude the proof of the main theorem, we must explain why this
  regularity holds for {{\em any}} weakly almost harmonic map, not just
  those with small initial energy. Let $\bm{u} \in H^1
  (B_1, \Stwo^n)$ be an arbitrary almost harmonic map with source
  $\bm{f} \in L^{\jqq} (B_1, \RR^{n + 1})$. This map
  $\bm{u}$ might have a large total Dirichlet energy,
  meaning $(\bm{u}, \bm{f}) \not\in
  \class$.
  
  However, let $x_0 \in B_1$ be an arbitrary point, and let $R := 1 - |x_0 | >
  0$ denote its distance to the boundary. Because $\nabla
  \bm{u} \in L^2 (B_1)$, by the absolute continuity of
  the Lebesgue integral we can always find a sufficiently small radius $\rr
  \in (0, R)$ such that $\Energy (\bm{u}, B_{\rr} (x_0))
  < \varepsilon_{\ast}$. If we now construct the rescaled pair
  $(\bm{u}_{x_0, \rr},
  \tilde{\bm{f}}_{x_0, \rr})$, it is defined on the unit
  ball $B_1$, solves the required equation, and  satisfies the
  small-energy threshold $\Energy (\bm{u}_{x_0, \rr},
  B_1) < \varepsilon_{\ast}$. Consequently, this rescaled pair belongs to our
  class $\class$. By our abstract theorem, $\bm{u}_{x_0,
  \rr}$ is locally Hölder continuous in $B_1$. Scaling back to the original
  spatial coordinates, this implies that the original map
  $\bm{u}$ is Hölder continuous in the neighborhood
  $B_{\rr / 2} (x_0)$. Since the point $x_0 \in B_1$ was chosen arbitrarily,
  the map $\bm{u}$ is locally Hölder continuous
  everywhere in $B_1$, completing the proof.
\end{proof}

\subsection{Proof of the Fundamental Lemma~\ref{lem:decay}}
The rest of these subsection provide the complete proof of the fundamental
decay estimate stated in Lemma~\ref{lem:decay}. The core objective is to demonstrate that if the Dirichlet energy of an almost harmonic map
is sufficiently small on a given ball, its local oscillation decays
geometrically as the observation radius shrinks, thus triggering the Campanato
regularity iteration.

\subsection*{Notation and Algebraic Setup} We begin by establishing our
notational conventions. Although the underlying linear algebra is 
elementary, the specific notation for operations such as the wedge product
between vectors and matrices of vector fields is not  standardized
across the literature. To prevent any ambiguity and ensure that our subsequent
algebraic manipulations remain completely transparent, we explicitly define
these operations below.

{\mydef{\begin{definition}[Convention on gradient and divergence]
  Let $\bm{u}: B_1 \subset \RR^m \to \RR^{n + 1}$. We
  adopt the convention that the gradient $\nabla \bm{u}$
  is the transpose of the Jacobian matrix. Thus, $\nabla
  \bm{u}$ is an $m \times (n + 1)$ matrix whose columns
  consist of the gradients of the scalar components of
  $\bm{u}$:
  \begin{equation}
    \nabla \bm{u} := \left(\begin{array}{ccc}
      | &  & |\\
      \nabla u^1 & \ldots & \nabla u^{n + 1}\\
      | &  & |
    \end{array}\right) .
  \end{equation}
  Consequently, the notation $\nabla u^{\beta}$ refers to the $\beta$-th
  column of this matrix, which is a vector in $\RR^m$. The squared norm $|
  \nabla \bm{u}|^2$ is the Frobenius norm, given by $|
  \nabla \bm{u}|^2 = \sum_{\beta = 1}^{n + 1} | \nabla
  u^{\beta} |^2$.
  
  For a standard vector field $\bm{v}: \RR^m \rightarrow
  \RR$, we denote its classical scalar divergence by $\divr
  \bm{v}$. We extend this operator componentwise using
  the bold notation $\divg$ for higher-order tensor fields. Specifically, if
  $\ensuremath{\boldsymbol{\omega}}$ is a vector of $n + 1$ vector fields
  (i.e., an element of $\RR^{n + 1} \otimes \RR^m$), $\divg
  \ensuremath{\boldsymbol{\omega}}$ is the vector in $\RR^{n + 1}$ whose
  entries are the classical divergences of the individual vector fields.
  Similarly, if $\jaOm$ is an $(n + 1) \times (n + 1)$ matrix whose entries
  are vector fields in $\RR^m$ (an element of $\RR^{(n + 1) \times (n + 1)}
  \otimes \RR^m$), then $\divg  \jaOm$ is the $(n + 1) \times (n + 1)$ matrix
  of their respective divergences.
  
  Under these conventions, by identifying $\nabla \bm{u}$
  as an element of $\RR^{n + 1} \otimes \RR^m$, we naturally recover the
  classical componentwise relation $\divg (\nabla \bm{u})
  = \Delta \bm{u}$.
\end{definition}}}

{\mydef{\begin{definition}[The generalized wedge product]
  For any two vectors $\bm{v},
  \bm{w} \in \RR^{n + 1}$, their exterior product
  $\bm{v} \wedge \bm{w}$ is
  canonically identified with the skew-symmetric matrix (or bivector) in
  $\mathfrak{s}\mathfrak{o} (n + 1)$ whose entries are given by:
  \begin{equation}
    (\bm{v} \wedge \bm{w})^{\alpha
    \beta} := v^{\alpha} w^{\beta} - v^{\beta} w^{\alpha}, \quad \text{for } 1
    \leqslant \alpha, \beta \leqslant n + 1.
  \end{equation}
  We extend this algebraic operation to objects that carry
  additional spatial structure over the domain $\RR^m$. If
  $\ensuremath{\boldsymbol{\Phi}}$ is a standard vector in $\RR^{n + 1}$ and
  $\ensuremath{\boldsymbol{\Psi}} \in \RR^{n + 1} \otimes \RR^m$ is a
  collection of spatial vector fields, their wedge product
  $\ensuremath{\boldsymbol{\Phi}} \wedge \ensuremath{\boldsymbol{\Psi}}$ acts
   on the target space components. The result is a skew-symmetric
  matrix of spatial vector fields taking values in $\mathfrak{s}\mathfrak{o}
  (n + 1) \otimes \RR^m$, with entries defined identically:
  \begin{equation}
    (\ensuremath{\boldsymbol{\Phi}} \wedge
    \ensuremath{\boldsymbol{\Psi}})^{\alpha \beta} := \Phi^{\alpha}
    \Psi^{\beta} - \Phi^{\beta} \Psi^{\alpha} .
  \end{equation}
\end{definition}}}

{\mydef{\begin{definition} [The antisymmetric potential $\jaOm$]
  Let
  $\bm{u} \in \RR^{n + 1}$ and let $\nabla
  \bm{u} \in \RR^{m \times (n + 1)}$ be defined as above.
  We define the \emph{antisymmetric potential} $\jaOm$ as the
  matrix of vector fields of size $(n + 1) \times (n + 1)$:
  \begin{equation}
    \jaOm := \bm{u} \wedge \nabla
    \bm{u} \in \mathfrak{s}\mathfrak{o} (n + 1) \otimes
    \RR^m . \label{eq:defantsympot}
  \end{equation}
  Explicitly, the entry of $\jaOm$ at {{\em row}} $\alpha$ and {{\em
  column}} $\beta$, denoted by $\Omega^{\alpha \beta}$, is the
  $\RR^m$-valued vector field given by:
  \begin{equation}
    \Omega^{\alpha \beta} := u^{\alpha} \nabla u^{\beta} - u^{\beta} \nabla
    u^{\alpha} .
  \end{equation}
  Note that $\nabla u^{\beta}$ is the $\beta$-th column of the matrix $\nabla
  \bm{u}$ and that $\jaOm$ is a \emph{skew--symmetric
  matrix} consisting of $\RR^m$-valued vector fields: $\Omega^{\alpha \beta} = -
  \Omega^{\beta \alpha}$. In other words, $\jaOm$ is interpreted geometrically
  as a connection 1-form taking values in $\mathfrak{s}\mathfrak{o} (n + 1)$,
  i.e., $\jaOm \in \mathfrak{s}\mathfrak{o} (n + 1) \otimes \RR^m \subset
  \RR^{(n + 1) \times (n + 1)} \otimes \RR^m$.
\end{definition}}}

\begin{remark}[Norm compatibility]
  Let $\bm{w} := \jaOm \bm{v}$. Here
  $\bm{v} \in \RR^{n + 1}$, while $\jaOm$ is an $(n + 1)
  \times (n + 1)$ matrix where each entry $\Omega^{\alpha \beta}$ is a vector
  in $\RR^m$. The resulting $\bm{w}$ is a vector of size
  $n + 1$, where each component $w^{\alpha} \in \RR^m$ is an $m$-dimensional
  vector given by $w^{\alpha} = \sum_{\beta = 1}^{n + 1} \Omega^{\alpha \beta}
  v^{\beta}$. We use the standard Euclidean norm for
  $\bm{v}$ and the Frobenius norm for $\jaOm$:
  \begin{equation}
    |\bm{v}|^2 := \sum_{\beta = 1}^{n + 1} |v^{\beta}
    |^2, \qquad | \jaOm |^2 := \sum_{\alpha, \beta = 1}^{n + 1} |
    \Omega^{\alpha \beta} |^2_{\RR^m} .
  \end{equation}
  A straightforward application of the Cauchy-Schwarz inequality gives the norm compatibility relation
  \begin{equation}
    | \jaOm \bm{v}| \leqslant | \jaOm | \cdot
    \hspace{0.17em} |\bm{v}|,
    \label{eq:normcompatibility}
  \end{equation}
  which holds for every tensor $\jaOm \in \RR^{(n + 1) \times (n + 1)} \otimes
  \RR^m$.

Moreover, if $\jaOm =\bm{u}
  \wedge \nabla \bm{u}$ then
  \begin{equation}
    \left| \jaOm \right| = \sqrt{2} \left| \grad \bm{u}
    \right| . \label{eq:normasympot}
  \end{equation}
  Indeed, $| \jaOm |^2 = 2 | \nabla
    \bm{u}|^2 - 2 | (\nabla \bm{u})
    \bm{u} |^2 $,
  and $(\nabla \bm{u})
  \bm{u}=\ensuremath{\boldsymbol{0}}$ because of $|
  \bm{u} | = 1$.
\end{remark}

{\myprop{\begin{lemma}
  A function $\bm{u} \in H^1 \left( B_1, \Stwo^n \right)$
  is an almost harmonic map if, and only if, for every constant vector $\jc
  \in \RR^{n + 1}$, the following identity holds in the sense of
  distributions:
  \begin{equation}
    - \Delta \bm{u}= \divg \left( \jaOm
    (\bm{u}- \jc) \right) + (\bm{u}
    \wedge \bm{f}) \left( \bm{u}-
    \jc \right) +\bm{f}. \label{eq:helein_div}
  \end{equation}
  In coordinates:
  \begin{equation}
    - \Delta u^{\alpha} \eqs \ensuremath{\operatorname{div}} \left[
    \sum_{\beta = 1}^{n + 1} \Omega^{\alpha \beta} (u^{\beta}
    -c^{\beta}) \right] + \sum_{\beta = 1}^{n + 1}
    (\bm{u} \wedge \bm{f})^{\alpha
    \beta} (u^{\beta} -c^{\beta}) + f^{\alpha}, \label{eq:CH1.9}
  \end{equation}
  where, we recall, $\Omega^{\alpha \beta} := u^{\alpha} \nabla u^{\beta} -
  u^{\beta} \nabla u^{\alpha}$ is an $\RR^m$-valued vector field and
  $(\bm{u} \wedge \bm{f})^{\alpha
  \beta} := u^{\alpha} f^{\beta} - u^{\beta} f^{\alpha} .$
\end{lemma}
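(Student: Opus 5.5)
The identity is purely algebraic once two pointwise facts are available: $|\bm{u}|=1$ gives $(\nabla\bm{u})\bm{u}=\bm{0}$, and for an almost harmonic map we also have $\bm{f}\cdot\bm{u}=0$. The plan is to expand the right-hand side of \eqref{eq:helein_div} and show that it reduces to $|\nabla\bm{u}|^2\bm{u}+\bm{f}$ for every $\jc$. The equivalence then follows from the definition, together with the algebraic compatibility remark.

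\emph{Step 1: the $\jc$-independent part.} First compute $\divg(\jaOm\bm{u})$. Componentwise,
\begin{equation*}
\textstyle\sum_\beta\Omega^{\alpha\beta}u^\beta=u^\alpha\sum_\beta u^\beta\nabla u^\beta-\nabla u^\alpha\sum_\beta (u^\beta)^2=-\nabla u^\alpha,
\end{equation*}
since $\sum_\beta u^\beta\nabla u^\beta=\tfrac12\nabla|\bm{u}|^2=0$ and $|\bm{u}|=1$. Hence $\jaOm\bm{u}=-\nabla\bm{u}$ a.e., and $\divg(\jaOm\bm{u})=-\Delta\bm{u}$ in $\mathcal{D}'$. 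Both sides lie in $L^1$, since $|\jaOm|\in L^2$ and $|\bm{u}|=1$, so the distribution makes sense.

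For the $\bm{f}$-term, $(\bm{u}\wedge\bm{f})\bm{u}=\bm{u}(\bm{f}\cdot\bm{u})-\bm{f}|\bm{u}|^2$. This equals $-\bm{f}$ when $\bm{f}\cdot\bm{u}=0$.

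\emph{Step 2: the $\jc$-dependent part.} Compute $\divg(\jaOm)\jc=\sum_\beta \divr(\Omega^{\alpha\beta})c^\beta$ in distributions. The standard Hélein identity is
\begin{equation*}
\divr(u^\alpha\nabla u^\beta-u^\beta\nabla u^\alpha)=u^\alpha\Delta u^\beta-u^\beta\Delta u^\alpha .
\end{equation*}
This holds in $\mathcal{D}'$ for $H^1$ maps, because the terms $\nabla u^\alpha\cdot\nabla u^\beta$ cancel. To justify it distributionally, test against $\varphi\in C_c^\infty$ and use $u^\alpha\varphi\in H^1\cap L^\infty$ as an admissible test function in the equation. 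This is legitimate because the right-hand side $|\nabla\bm{u}|^2\bm{u}+\bm{f}$ lies in $L^1$ and $u^\alpha\varphi\in L^\infty$.

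Substituting the equation for $\Delta\bm{u}$ gives
\begin{equation*}
u^\alpha\Delta u^\beta-u^\beta\Delta u^\alpha=-(u^\alpha f^\beta-u^\beta f^\alpha),
\end{equation*}
because the $|\nabla\bm{u}|^2u^\alpha u^\beta$ terms cancel. In other words, $\divg\jaOm=-\bm{u}\wedge\bm{f}$. Therefore $-\divg(\jaOm\jc)-(\bm{u}\wedge\bm{f})\jc=0$, and the $\jc$-dependence drops out. Combining with Step 1, the right-hand side of \eqref{eq:helein_div} equals $-\Delta\bm{u}-\bm{f}+\bm{f}=-\Delta\bm{u}$, which proves the forward direction.

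\emph{Converse.} Suppose the identity holds for every $\jc$. Taking $\jc=\bm{0}$ and using Step 1 gives a tautology, so I would instead use two values of $\jc$. Subtracting the identities for $\jc$ and $\jc=\bm{0}$ yields $\divg(\jaOm)\jc=-(\bm{u}\wedge\bm{f})\jc$ for all $\jc$, so $\divg\jaOm=-\bm{u}\wedge\bm{f}$.

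To recover the equation, write $|\nabla\bm{u}|^2\bm{u}=\jaOm\cdot\nabla\bm{u}$; this uses $\sum_\beta\nabla u^\beta u^\beta=0$ and is checked pointwise. Then $-\Delta\bm{u}=\divg(\jaOm\bm{u})-\divg(\jaOm)\bm{u}+\jaOm\cdot\nabla\bm{u}$ as a Leibniz rule. Inserting $\divg\jaOm=-\bm{u}\wedge\bm{f}$ together with the $\jc=\bm{0}$ identity returns $-\Delta\bm{u}=|\nabla\bm{u}|^2\bm{u}+\bm{f}$. Here I assume, as the definition implicitly does, that $\bm{f}\cdot\bm{u}=0$ is part of the data.

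\emph{Main obstacle.} I expect the distributional justification of the Leibniz rule $\divg(\jaOm\bm{v})=(\divg\jaOm)\bm{v}+\jaOm\cdot\nabla\bm{v}$ to be the delicate step, since $\divg\jaOm$ is a priori only a distribution. I would handle it by testing with $\varphi\,\bm{v}$ for $\bm{v}\in H^1\cap L^\infty$, and by approximating $\bm{u}$ by smooth maps, with convergence in $H^1$ and boundedly a.e. By dominated convergence, all products then converge in $L^1$.
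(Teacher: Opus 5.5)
Your proof is correct, but it is organized differently from the paper's. The paper starts from the equation and rewrites it as $-\Delta\bm{u}=\jaOm\cdot\nabla\bm{u}+\bm{f}$. It then applies the Leibniz rule to $\jaOm(\bm{u}-\jc)$ to reach $-\Delta\bm{u}=\divg(\jaOm(\bm{u}-\jc))-(\divg\jaOm)(\bm{u}-\jc)+\bm{f}$, and only afterwards computes $\divg\jaOm=\bm{f}\wedge\bm{u}$. It writes out only the forward implication and handles all products formally.

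You instead evaluate the right-hand side directly. The $\jc$-independent part collapses through the pointwise identities $\jaOm\bm{u}=-\nabla\bm{u}$ and $(\bm{u}\wedge\bm{f})\bm{u}=-\bm{f}$. The $\jc$-dependent part vanishes exactly when $\divg\jaOm=\bm{f}\wedge\bm{u}$.

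This buys two things:
\begin{itemize}
\item \emph{A sharper picture.} The family of identities is equivalent to $\bm{f}\cdot\bm{u}=0$ plus the broken conservation law, so the case $\jc=\bm{0}$ carries no PDE information.
\item \emph{A cleaner rigorous argument.} The forward direction never differentiates a product $\jaOm\bm{v}$ with non-constant $\bm{v}$. The only distributional product rule is the one behind the conservation law, which you justify by testing with $u^\alpha\varphi$. The converse then follows by differencing in $\jc$.
\end{itemize}
What the paper's route buys is the form $\divg(\jaOm(\bm{u}-\jc))-(\divg\jaOm)(\bm{u}-\jc)+\bm{f}$. This transfers verbatim to the non-antisymmetric systems of Section~\ref{sec:sec4}, where $\jaOm\bm{u}=-\nabla\bm{u}$ is unavailable and $\divg\jaOm$ is a datum rather than something computed.

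There is one slip in your converse. The display $-\Delta\bm{u}=\divg(\jaOm\bm{u})-\divg(\jaOm)\bm{u}+\jaOm\cdot\nabla\bm{u}$ is false as written: together with $\divg(\jaOm\bm{u})=-\Delta\bm{u}$ it would force $\jaOm\cdot\nabla\bm{u}=(\divg\jaOm)\bm{u}$. The rule you need is $-\Delta\bm{u}=\divg(\jaOm\bm{u})=(\divg\jaOm)\bm{u}+\jaOm\cdot\nabla\bm{u}$. With $\divg\jaOm=-\bm{u}\wedge\bm{f}$ this yields $-\Delta\bm{u}=\bm{f}-(\bm{f}\cdot\bm{u})\bm{u}+|\nabla\bm{u}|^2\bm{u}$.

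This Leibniz step is rigorous precisely because differencing has already given $\divg\jaOm\in L^1$. To carry it out, mollify the test function $\varphi u^\beta$ while keeping $\jaOm$ fixed, rather than approximating $\bm{u}$ inside $\jaOm$.

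Finally, you need not assume $\bm{f}\cdot\bm{u}=0$ in the converse. Since $\divg(\jaOm\bm{u})=-\Delta\bm{u}$ for every $\bm{u}\in H^1(B_1,\Stwo^n)$, the $\jc=\bm{0}$ identity reduces to $(\bm{f}\cdot\bm{u})\bm{u}=\bm{0}$, i.e.\ $\bm{f}\cdot\bm{u}=0$ a.e.
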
}}

\begin{remark} [Notation]
    The expression $\bm{w} := \jaOm
    (\bm{u}- \jc)$ denotes a matrix-vector product. The
    result, $\bm{w}$, is a vector consisting of $(n + 1)$
    vector fields in $\RR^m$. Its $\alpha$-th component is:
    \begin{equation} w^{\alpha} = \sum_{\beta = 1}^{n + 1} \Omega^{\alpha \beta} (u^{\beta}
       -c^{\beta}) \in \RR^m . 
    \end{equation}
   The operator $\divg$ acts on this vector
    $\bm{w}$ component-wise. That is, the $\alpha$-th
    component of the RHS of \eqref{eq:helein_div} is $\mathrm{div}
    (w^{\alpha})$, where $\divr$ is the standard divergence in $\RR^m$. This
    gives the expression in \eqref{eq:CH1.9}.
    For any generic vector $\bm{v}$ consisting of
    $(n + 1)$ vector fields in $\RR^m$, the following Leibniz rule holds:
    \begin{equation}
      \divg (\jaOm \bm{v}) = (\divg  \jaOm)
      \bm{v}+ \jaOm \cdot \nabla
      \bm{v} \label{eq:Leibniz}
    \end{equation}
    where $\divg  \jaOm$ is taken entrywise and the dot product denotes the
    contraction of the second index $\beta$, that is, $\left( \jaOm \cdot
    \nabla \bm{v} \right)^{\alpha} := \sum_{\beta = 1}^{n
    + 1} \Omega^{\alpha \beta} \cdot \nabla v^{\beta}$. Recall that $\jaOm$ is
    an $(n + 1) \times (n + 1)$ matrix of $\RR^m$-valued vector fields
    (entries in $\RR^m$) and the divergence operator $\divg$ gives an $(n + 1)
    \times (n + 1)$ matrix whose entries are scalars, the divergence of the
    $\RR^m$-valued vector fields. Applying $\jaOm$ to a vector
    $\bm{u} \in \RR^{n + 1}$ yields $\RR^m$-valued vector
    fields, in number of $(n + 1)$. Again, $\divg (\jaOm
    \bm{v})$ is the vector obtained by applying the
    $m$-dimensional divergence operator to these $(n + 1)$ vector fields.
    Therefore, the result will be an element of $\RR^{(n + 1)}$.
\end{remark}

\begin{proof}
  We break the proof into three easily verifiable steps.
  
  {\noindent}{{\em Step 1: Rewriting the non-linearity.}} Starting from the
  almost harmonic map equation $- \Delta u^{\alpha} = u^{\alpha} | \nabla
  \bm{u}|^2 + f^{\alpha}$ we compute the nonlinear term
  using the orthogonality $(\nabla \bm{u})
  \bm{u}=\ensuremath{\boldsymbol{0}}$:
  \begin{equation} u^{\alpha} | \nabla \bm{u}|^2 = \sum_{\beta = 1}^{n
     + 1} \sum_{k = 1}^m (u^{\alpha} \partial_k u^{\beta} - u^{\beta}
     \partial_k u^{\alpha}) \partial_k u^{\beta} = \sum_{\beta = 1}^{n + 1}
     \Omega^{\alpha \beta} \cdot \nabla u^{\beta} = \left( \jaOm \cdot \nabla
     \bm{u} \right)^{\alpha} \end{equation}
  Hence, the original almost-harmonic map can be rewritten as
  \begin{equation}
    - \Delta \bm{u}= \jaOm \cdot \nabla
    \bm{u}+\bm{f} \label{eq:step1}
  \end{equation}
  {{\em Step 2: Applying the Leibniz rule.}} Applying the Leibniz rule
  \eqref{eq:Leibniz} with
  $\bm{v}=\bm{u}- \jc$ gives:
  \begin{equation} \jaOm \cdot \nabla \bm{u} \eqs \jaOm \cdot \nabla
     \left( \bm{u}- \jc \right) \eqs \divg \left( \jaOm
     \left( \bm{u}- \jc \right) \right) - (\divg  \jaOm)
     \left( \bm{u}- \jc \right) \end{equation}
  Substituting this back into \eqref{eq:step1} yields:
  \begin{equation}
    - \Delta \bm{u}= \divg \left( \jaOm \left(
    \bm{u}- \jc \right) \right) - (\divg  \jaOm) \left(
    \bm{u}- \jc \right) +\bm{f}.
    \label{eq:step2}
  \end{equation}
  {{\em Step 3: Evaluating the divergence of $\jaOm$.}} For an {{\em
  almost}} harmonic map $\bm{u}$, which satisfies
  \eqref{eq:step1}, i.e., $\Delta u^{\alpha} = - u^{\alpha} | \nabla
  \bm{u}|^2 - f^{\alpha}$, the divergence of
  $\Omega^{\alpha \beta}$ is
  \begin{eqnarray}
    \ensuremath{\operatorname{div}} (\Omega^{\alpha \beta}) & \eqs & (\nabla
    u^{\alpha} \cdot \nabla u^{\beta} - \nabla u^{\beta} \cdot \nabla
    u^{\alpha}) + (u^{\alpha} \Delta u^{\beta} - u^{\beta} \Delta u^{\alpha})
    \nonumber\\
    & \eqs & u^{\alpha} \Delta u^{\beta} - u^{\beta} \Delta u^{\alpha}
    \nonumber\\
    & \overset{\text{\eqref{eq:step1}}}{\eqs} & - \left| \grad
    \bm{u} \right|^2 u^{\alpha} u^{\beta} + \left| \grad
    \bm{u} \right|^2 u^{\alpha} u^{\beta} - u^{\alpha}
    f^{\beta} + u^{\beta} f^{\alpha} \nonumber\\
    & = & f^{\alpha} u^{\beta} - f^{\beta} u^{\alpha} \nonumber\\
    & = & (\bm{f} \wedge
    \bm{u})^{\alpha \beta}, \nonumber
  \end{eqnarray}
  with $\bm{f} \wedge
  \bm{u}=\bm{f} \otimes
  \bm{u}-\bm{u} \otimes
  \bm{f}$. Thus $\divg  \jaOm
  =\bm{f} \wedge \bm{u}$, which is
  the claimed identity.
\end{proof}

\subsection{Harmonic Approximation and Caccioppoli-type estimates}
From \eqref{eq:CH1.9} we know that if $\bm{u}$ is an
\text{{\itshape{almost harmonic map}}} (i.e., satisfying $- \Delta
\bm{u}=\bm{u}| \nabla
\bm{u}|^2 +\bm{f}$), then for every
constant vector $\jc \in \RR^{n + 1}$, \ the following identity holds in the
ball $B_{\rr}$:
\begin{equation}
  - \Delta \bm{u}= \divg (\jaOm
  (\bm{u}- \jc)) + (\bm{u} \wedge
  \bm{f}) \left( \bm{u}- \jc \right)
  +\bm{f} \label{eq:CH1.11}
\end{equation}
The following result plays a central role in our argument. It establishes a
control of the $L^{\jqq}$-norm of the gradient deviation in terms of the
$L^{\pHold}$-norm of the map itself. This ``reverse'' nature (bounding
derivatives by values) is typical of elliptic systems with critical growth.

\begin{lemma} [Caccioppoli-type
  estimate] \label{lemma:reverse_sobolev} Let $\jqq
  \in (1, 2)$ be the fixed integrability exponent of the source term
  $\bm{f}$, and let $\pHold \in (2, \infty)$ be the
  corresponding oscillation exponent determined by the relation
  \begin{equation}
    \frac{1}{\jqq} = \frac{1}{2} + \frac{1}{\pHold}, \quad \text{that is,}
    \quad \text{} \pHold = \frac{2 \jqq}{2 - \jqq} .
  \end{equation}
  Let $\bm{u} \in H^1 (B_1, \Stwo^n)$ be a solution to
  the almost harmonic map equation:
  \begin{equation}
    - \Delta \bm{u}= \divg \left( \jaOm
    (\bm{u}- \jc) \right) + (\bm{u}
    \wedge \bm{f}) \left( \bm{u}-
    \jc \right) +\bm{f}. \label{eq:newAHME}
  \end{equation}
  Let $\rr \in (0, 1)$ be a fixed radius. Let
  $\ensuremath{\boldsymbol{h}}_{\rr} \in H^1 (B_{\rr}, \RR^{n + 1})$ denote
  the unique harmonic extension of $\bm{u}_{\restr
  \partial B_{\rr}}$ to the ball $B_{\rr}$:
  \begin{equation}
    - \Delta \ensuremath{\boldsymbol{h}}_{\rr} =\ensuremath{\boldsymbol{0}}
    \hspace{1.2em} \text{ in } B_{\rr}, \qquad
    \ensuremath{\boldsymbol{h}}_{\rr} =\bm{u}_{\restr
    \partial B_{\rr}} \hspace{1.2em} \text{ on } \partial B_{\rr} .
  \end{equation}
  With these exponents, the following estimate holds:
  \begin{equation}
    \label{eq:reverse_sobolev_bound} \| \grad
    (\bm{u}-\ensuremath{\boldsymbol{h}}_{\rr})\|_{L^{\jqq}
    (B_{\rr})} \leqslant C_{\jqq} \cdot \| \nabla
    \bm{u}\|_{L^2 (B_{\rr})}  \left( \rr^m
    \hspace{0.17em} \Osc \left( \bm{u}, B_{\rr} \right)
    \right)^{1 / \pHold} + C_{\jqq}  \rr
    \|\bm{f}\|_{L^{\jqq} (B_{\rr})},
  \end{equation}
  where $C_{\jqq}$ depends only on $\jqq$ {\opt}and, in general, on the base
  dimension $m$; here $m = 2${\cpt}. In particular $C_{\jqq}$ is independent
  of $\rr$ and $\bm{u}$.
\end{lemma}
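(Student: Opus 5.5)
The plan is to write $\bm{w} := \bm{u} - \bm{h}_{\rr} \in H^1_0(B_{\rr}, \RR^{n+1})$ and estimate $\nabla\bm{w}$ by linear $L^{\jqq}$ theory for the Dirichlet Laplacian. Subtracting the equation for $\bm{h}_{\rr}$ from \eqref{eq:newAHME}, $\bm{w}$ solves, for every constant $\jc\in\RR^{n+1}$,
\begin{equation*}
-\Delta \bm{w} = \divg\big(\jaOm(\bm{u}-\jc)\big) + (\bm{u}\wedge\bm{f})(\bm{u}-\jc) + \bm{f} \quad \text{in } B_{\rr}, \qquad \bm{w}=\bm{0} \text{ on } \partial B_{\rr}.
\end{equation*}
By linearity I will split $\bm{w} = \bm{w}_1 + \bm{w}_2$. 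Here $\bm{w}_1 \in W^{1,\jqq}_0$ solves $-\Delta\bm{w}_1 = \divg(\jaOm(\bm{u}-\jc))$, and $\bm{w}_2 \in W^{1,\jqq}_0$ solves $-\Delta\bm{w}_2 = \bm{g}$ with $\bm{g} := (\bm{u}\wedge\bm{f})(\bm{u}-\jc)+\bm{f}$. To make sure the $L^{\jqq}$ solutions coincide with the $H^1_0$ solution, I will first argue with smooth approximations, or appeal to uniqueness of $W^{1,\jqq}_0$ very weak solutions for $\jqq>1$ on a ball.

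For $\bm{w}_1$, I will use the global Calder\'on--Zygmund estimate for divergence-form data on $B_1$, $\|\nabla\bm{w}_1\|_{L^{\jqq}} \le C_{\jqq}\|\jaOm(\bm{u}-\jc)\|_{L^{\jqq}}$. It transfers to $B_{\rr}$ with the same constant, since both sides scale identically under $x\mapsto \rr x$. Then \eqref{eq:normcompatibility}, \eqref{eq:normasympot} and H\"older's inequality with $1/\jqq = 1/2+1/\pHold$ give
\begin{equation*}
\|\nabla\bm{w}_1\|_{L^{\jqq}(B_{\rr})} \le \sqrt2\, C_{\jqq}\|\nabla\bm{u}\|_{L^2(B_{\rr})}\Big(\int_{B_{\rr}}|\bm{u}-\jc|^{\pHold}\Big)^{1/\pHold}.
\end{equation*}
Next I take the infimum over $\jc$. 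By Lemma~\ref{lemma:osc_infimum_bound} it suffices to take $|\jc|\le 1$, and a minimizer exists by compactness. This replaces the last factor with $(|B_{\rr}|\,\Osc(\bm{u},B_{\rr}))^{1/\pHold}$, which is $\lesssim (\rr^m\Osc)^{1/\pHold}$. This fixes $\jc$ for the rest of the argument, and $|\jc|\le1$ will be used below.

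For $\bm{w}_2$, the datum is bounded pointwise by $|\bm{g}| \le |\bm{f}|(1+2|\bm{u}-\jc|) \le 5|\bm{f}|$, since $|\bm{u}\wedge\bm{f}|\le 2|\bm{f}|$ and $|\bm{u}-\jc|\le 2$. I will write $\bm{g}$ as a divergence: solve $-\Delta\bm{\phi}=\bm{g}$ on $B_{\rr}$, or use a Newtonian potential, so that $\bm{g} = \divg(\bm{G})$ with $\bm{G} = -\nabla\bm{\phi}$. The Sobolev/Calder\'on--Zygmund bound then gives $\|\bm{G}\|_{L^{\jqq}(B_{\rr})} \le C\rr\|\bm{g}\|_{L^{\jqq}(B_{\rr})}$, where the factor $\rr$ comes from scaling: on $B_1$, with $\jqq<2$, $W^{2,\jqq}\hookrightarrow W^{1,\jqq}$ for bounded domains, and rescaling produces exactly one power of $\rr$. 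The divergence estimate then yields $\|\nabla\bm{w}_2\|_{L^{\jqq}}\le C_{\jqq}\rr\|\bm{f}\|_{L^{\jqq}(B_{\rr})}$. An alternative route is duality: test against $W^{1,\jqq'}_0$ functions and use Poincar\'e with its $\rr$-scaling.

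Summing the two bounds gives \eqref{eq:reverse_sobolev_bound}. I expect the main obstacle to be the justification step, namely the identification of the $H^1_0$ solution $\bm{w}$ with the $W^{1,\jqq}$ solutions and the scale invariance of the constants. The actual estimates are routine once the constant $\jc$ has been chosen inside the unit ball.
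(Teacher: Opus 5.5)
Your proposal is correct and takes the same route as the paper. You subtract the harmonic extension and split $\bm{w}$ by linearity into a divergence-form piece, which you bound by Calder\'on--Zygmund, H\"older with $1/\jqq=1/2+1/\pHold$, and $|\jaOm|\le\sqrt{2}\,|\nabla\bm{u}|$, plus lower-order source pieces, which you bound by scaled elliptic estimates using $|\jc|\le 1$ from Lemma~\ref{lemma:osc_infimum_bound}, and then you take the infimum over $\jc$. The differences are only cosmetic: you merge the paper's pieces $\bm{w}_2,\bm{w}_3$ into one, you justify the factor $\rr$ through a divergence representation (as the paper itself does in Lemma~\ref{lem:reverse_sobolev_aniso}), and you make explicit the identification of the $H^1_0$ and $W^{1,\jqq}_0$ solutions, which the paper leaves implicit.
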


\begin{proof}
  Put $\bm{w}_{\rr} :=
  \bm{u}-\ensuremath{\boldsymbol{h}}_{\rr}$. Since $-
  \Delta \ensuremath{\boldsymbol{h}}_{\rr} = 0$, the function
  $\bm{w}_{\rr}$ satisfies the same Poisson equation as
  $\bm{u}$ but with zero boundary data
  ({cf.}~\eqref{eq:CH1.11}):
  \begin{equation}
    \left\{\begin{array}{ll}
      - \Delta \bm{w}_{\rr} = \divg \left( \jaOm
      (\bm{u}- \jc) \right) +
      (\bm{u} \wedge \bm{f}) \left(
      \bm{u}- \jc \right) +\bm{f} &
      \text{in } B_{\rr},\\
      \bm{w}_{\rr} =\ensuremath{\boldsymbol{0}} &
      \text{on } \partial B_{\rr},
    \end{array}\right.
  \end{equation}
  for any fixed $\jc \in \RR^{n + 1}$. Here, we recall, $\jaOm = \jaOm
  (\bm{u}) := \bm{u} \wedge \grad
  \bm{u}$ is the antisymmetric potential defined in
  \eqref{eq:defantsympot}.
  
  If we set $\bm{g}_1 := \divg \left( \jaOm
  (\bm{u}- \jc) \right)$, $\bm{g}_2
  := (\bm{u} \wedge \bm{f}) \left(
  \bm{u}- \jc \right)$, and
  $\bm{g}_3 := \bm{f}$, by the
  linearity of the Laplacian operator, we can decompose the error into three
  parts, $\bm{w}=\bm{w}_1
  +\bm{w}_2 +\bm{w}_3$, with $-
  \lapl \bm{w}_i =\bm{g}_i$ in
  $B_{\rr}$ and $\bm{w}_i =\ensuremath{\boldsymbol{0}}$
  on $\partial B_{\rr}$.
  
  {\noindent}{{\em Step 1: Elliptic Estimates}}. By standard elliptic
  estimates scaled to the ball $B_{\rr}$, for every $1 < \jqq < 2$, we have
  \begin{equation}
    \| \nabla \bm{w}_2 \|_{L^{\jqq} (B_{\rr})} \leqslant
    C_{\jqq} \rr \| (\bm{u} \wedge
    \bm{f}) \left( \bm{u}- \jc
    \right) \|_{L^{\jqq} (B_{\rr})}, \quad \| \nabla
    \bm{w}_3 \|_{L^{\jqq} (B_{\rr})} \leqslant C_{\jqq}
    \rr \|\bm{f}\|_{L^{\jqq} (B_{\rr})} .
  \end{equation}
  Because $\bm{u}$ takes values in the unit sphere,
  $|\bm{u}| \equiv 1$, we can safely restrict our
  attention to constant vectors $\jc$ belonging to the closed unit ball in
  $\RR^{n + 1}$ (see Lemma~\ref{lemma:osc_infimum_bound}). Therefore, we may
  assume $\left| \jc \right| \leqslant 1$. Thus, we have the universal
  pointwise bound $|\bm{u}- \jc | \leqslant 2$.
  Furthermore, the operator norm of the wedge product gives $|
  (\bm{u} \wedge \bm{f})
  (\bm{u}- \jc) | \leqslant |\bm{f}|
  |\bm{u}- \jc | \leqslant 2
  |\bm{f}|$. Therefore, the source terms are trivially
  bounded directly in $L^{\jqq}$:
  \begin{equation}
    \| \nabla \bm{w}_2 \|_{L^{\jqq} (B_{\rr})} + \|
    \nabla \bm{w}_3 \|_{L^{\jqq} (B_{\rr})} \leqslant 3
    C_{\jqq} \rr \|\bm{f}\|_{L^{\jqq} (B_{\rr})} .
    \label{eq:tempfirstestforHolder0}
  \end{equation}
  It remains to bound $\| \nabla \bm{w}_1 \|_{L^{\jqq}
  (B_{\rr})}$. For that, observe that the term $\jaOm
  (\bm{u}- \jc)$ belongs to $L^2 \left( B_{\rr}, \RR^{n + 1}
  \otimes \RR^m \right)$. In particular, for every $1 < \jqq < 2$, applying
  Calderón--Zygmund estimates~\cite[Theorems 7.1 and 7.2]{Giaquinta_2012} we get that
  \begin{equation}
    \| \nabla \bm{w}_1 \|_{L^{\jqq} (B_{\rr})} \leqslant
    C_{\jqq} \| \jaOm (\bm{u}- \jc)\|_{L^{\jqq}
    (B_{\rr})}, \label{eq:tempfirstestforHolder}
  \end{equation}
  valid for the stated range of $\jqq$. The constant $C_{\jqq}$ depends only
  on the base dimension $m$, the exponent $\jqq$, and on the shape of the
  domain; because the domain is a ball the estimate is scale invariant, so
  $C_{\jqq}$ is independent of $\rr, \bm{u},
  \jc$.{\smallskip}
  
  {\noindent}{{\em Step 2: Hölder's Inequality.}} We estimate the terms in
  $L^{\jqq}$ using Hölder's inequality. By the definition of the matrix-vector
  action, $| \jaOm (\bm{u}- \jc) | \leqslant | \jaOm | 
  |\bm{u}- \jc |$. Since $\jaOm,
  \bm{f} \in L^2 \left( B_{\rr} \right)$, for $\pHold >
  1$ such that $1 / \jqq = 1 / 2 + 1 / \pHold$, Hölder inequality gives
  \begin{equation}
    \| \jaOm (\bm{u}- \jc)\|_{L^{\jqq} (B_{\rr})}
    \leqslant \| \jaOm \|_{L^2 (B_{\rr})}  \hspace{0.17em}
    \|\bm{u}- \jc \|_{L^{\pHold} (B_{\rr})} .
    \label{eq:tempfirstestforHolder2}
  \end{equation}
  {\noindent}{{\em Step 3: Conclusion.}} Overall, by
  \eqref{eq:tempfirstestforHolder0} and \eqref{eq:tempfirstestforHolder2} we
  get that
  \begin{eqnarray}
    \| \grad \left(
    \bm{u}-\ensuremath{\boldsymbol{h}}_{\rr} \right)
    \|_{L^{\jqq} (B_{\rr})} & \leqslant & \| \nabla
    \bm{w}_1 \|_{L^{\jqq} (B_{\rr})} + \| \nabla
    \bm{w}_2 \|_{L^{\jqq} (B_{\rr})} + \| \nabla
    \bm{w}_3 \|_{L^{\jqq} (B_{\rr})} \nonumber\\
    & \leqslant & C_{\jqq} \| \jaOm \|_{L^2 (B_{\rr})} \cdot
    \|\bm{u}- \jc \|_{L^{\pHold} (B_{\rr})} + 3 C_{\jqq}
    \rr \|\bm{f}\|_{L^{\jqq} (B_{\rr})} . \nonumber
  \end{eqnarray}
  Recalling that $\jaOm =\bm{u} \wedge \nabla
  \bm{u}$ and $|\bm{u}| \equiv 1$,
  as shown in \eqref{eq:normasympot}, we have the pointwise estimate $| \jaOm
  | \leqslant \sqrt{2} | \nabla \bm{u}|$. Hence
  \begin{equation} \| \grad \left(
     \bm{u}-\ensuremath{\boldsymbol{h}}_{\rr} \right)
     \|_{L^{\jqq} (B_{\rr})} \leqslant C_{\jqq} \sqrt{2} \| \nabla
     \bm{u}\|_{L^2 (B_{\rr})} \cdot
     \|\bm{u}- \jc \|_{L^{\pHold} (B_{\rr})} + 3 C_{\jqq}
     \rr \|\bm{f}\|_{L^{\jqq} (B_{\rr})} . \end{equation}
  The previous estimate holds for every $\left| \jc \right| \leqslant 1$.
  Taking the infimum over $\left| \jc \right| \leqslant 1$ yields (see
  Lemma~\ref{lemma:osc_infimum_bound})
  \begin{equation}
    \| \nabla
    (\bm{u}-\ensuremath{\boldsymbol{h}}_{\rr})\|_{L^{\jqq}
    (B_{\rr})} \leqslant C_{\jqq} \cdot \| \nabla
    \bm{u}\|_{L^2 (B_{\rr})}  \left( \rr^m
    \hspace{0.17em} \Osc \left( \bm{u}, B_{\rr} \right)
    \right)^{1 / \pHold} + C_{\jqq} \rr
    \|\bm{f}\|_{L^{\jqq} (B_{\rr})},
  \end{equation}
  with the harmless abuse of notation of redefining $C_{\jqq} := \max \left(
  \sqrt{2} \omega_m^{1 / \pHold}, 3 C_{\jqq} \right)$. This concludes the
  proof.
\end{proof}

The preceding PDE estimate and the following purely real-analytic bound for
harmonic extensions will play a key role in our analysis. We now establish
this harmonic approximation, which relies  on the properties of the
Poisson kernel and requires no PDE hypotheses.

{\myprop{\begin{lemma}[Harmonic approximation]
  \label{lemma:harmonic_approx}Let $\bm{u} \in H^1 (B_1,
  \RR^{n + 1})$. For any $\pHold > 1$, there exists a radius $\rr \in [1 / 2,
  1]$ and a harmonic function $\ensuremath{\boldsymbol{h}}: B_{\rr} \to \RR^{n
  + 1}$ satisfying the boundary trace
  $\ensuremath{\boldsymbol{h}}=\bm{u}$ on $\partial
  B_{\rr}$, such that the interior gradient satisfies:
  \begin{equation}
    \sup_{x \in B_{1 / 4}} | \nabla \ensuremath{\boldsymbol{h}}(x) | \leqslant
    C_m \cdot \Osc^{1 / \pHold} (\bm{u}, B_1),
    \label{eq:harmapproxosc}
  \end{equation}
  where $C_m > 0$ is a dimensional constant.
\end{lemma}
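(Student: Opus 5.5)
Fix $\jc\in\RR^{n+1}$ (later a minimizer, or near-minimizer, of $\jav{B_1}|\bm{u}-\jc|^{\pHold}$). Since constants are harmonic, $\ensuremath{\boldsymbol{h}}-\jc$ is the harmonic extension of $(\bm{u}-\jc)_{\restr\partial B_{\rr}}$. So it suffices to bound $\nabla\ensuremath{\boldsymbol{h}}$ on $B_{1/4}$ in terms of the boundary trace of $\bm{u}-\jc$ on a well-chosen sphere $\partial B_{\rr}$.

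\emph{Selection of a good radius.} Because $\bm{u}\in H^1(B_1)$, Fubini in polar coordinates shows that for a.e.\ $\rr\in(0,1)$ the restriction $\bm{u}_{\restr\partial B_{\rr}}$ lies in $H^1(\partial B_{\rr})$ and coincides with the Sobolev trace. Polar coordinates also give
\[
\int_{1/2}^{1}\Big(\int_{\partial B_{\rr}}|\bm{u}-\jc|^{\pHold}\,\mathrm{d}\sigma\Big)\mathrm{d}\rr\le\int_{B_1}|\bm{u}-\jc|^{\pHold}\,\mathrm{d}x=\omega_m\jav{B_1}|\bm{u}-\jc|^{\pHold}.
\]
By a Chebyshev/mean-value argument on $[1/2,1]$, which has length $1/2$, the set of radii with
\[
\int_{\partial B_{\rr}}|\bm{u}-\jc|^{\pHold}\,\mathrm{d}\sigma\le 4\,\omega_m\jav{B_1}|\bm{u}-\jc|^{\pHold}
\]
has positive measure. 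We pick such an $\rr$ that is also a good trace radius. This yields the $L^{\pHold}(\partial B_{\rr})$ bound on the boundary datum, and it is the only place where $\bm{u}\in H^1$ enters: it guarantees that the harmonic extension exists and agrees with the trace.

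\emph{Poisson kernel estimate.} Represent
\[
\ensuremath{\boldsymbol{h}}(x)-\jc=\int_{\partial B_{\rr}}P_{\rr}(x,y)\,(\bm{u}(y)-\jc)\,\mathrm{d}\sigma(y),
\qquad
P_{\rr}(x,y)=\frac{\rr^2-|x|^2}{m\,\omega_m\,\rr\,|x-y|^m}.
\]
Differentiate under the integral. For $x\in B_{1/4}$ and $y\in\partial B_{\rr}$ with $\rr\ge1/2$, we have $|x-y|\ge1/4$, so $|\nabla_xP_{\rr}(x,y)|\le C_m$ uniformly. Hence
\[
|\nabla\ensuremath{\boldsymbol{h}}(x)|\le C_m\int_{\partial B_{\rr}}|\bm{u}-\jc|\,\mathrm{d}\sigma\le C_m|\partial B_{\rr}|^{1-1/\pHold}\Big(\int_{\partial B_{\rr}}|\bm{u}-\jc|^{\pHold}\Big)^{1/\pHold}.
\]
Here $|\partial B_{\rr}|\le|\partial B_1|$ is bounded. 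Inserting the good-radius bound gives
\[
\sup_{B_{1/4}}|\nabla\ensuremath{\boldsymbol{h}}|\le C_m\Big(\jav{B_1}|\bm{u}-\jc|^{\pHold}\Big)^{1/\pHold}.
\]

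\emph{Choice of $\jc$ and the main obstacle.} The infimum defining $\Osc(\bm{u},B_1)$ is attained, since the functional is coercive and continuous in $\jc$; we may in any case take an $\varepsilon$-minimizer. The subtle point is that the radius must be chosen \emph{after} $\jc$, because the good-radius set depends on $\jc$. This is harmless, since $\jc$ is fixed first. Taking $\jc$ a minimizer then gives \eqref{eq:harmapproxosc} with a purely dimensional constant, independent of $\pHold$ up to the factor $(4|\partial B_1|^{\pHold-1}\omega_m)^{1/\pHold}\le C_m$. If a minimizer were unavailable, we would apply the bound with $(1+\varepsilon)$-near-minimizers and use that for each such $\jc$ some admissible $\rr$ exists; the constant absorbs the factor $1+\varepsilon$. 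The only genuinely delicate step is the trace/good-radius selection, i.e.\ ensuring that the boundary values of $\bm{u}$ on the chosen sphere are controlled by the bulk $L^{\pHold}$ oscillation. Averaging in $\rr$ handles this directly.
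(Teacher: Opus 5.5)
Your proof is correct and follows the paper's argument. You fix a minimizing $\jc$, select a good radius $\rr\in[1/2,1]$ by averaging in polar coordinates, and bound $\nabla\ensuremath{\boldsymbol{h}}$ on $B_{1/4}$ through the Poisson kernel. The differences are cosmetic: the paper chooses the radius with an $L^1(\partial B_{\rr})$ bound and applies H\"older on $B_1$ at the end, whereas you choose it in $L^{\pHold}(\partial B_{\rr})$ and apply H\"older on the sphere, and you also make explicit the a.e.-radius trace identification that the paper leaves implicit.
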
}}

\begin{proof}
  {\noindent}{{\em Step 1: Selection of a good radius.}} By the coercivity and strict convexity of $\jc \mapsto \int_{B} |\bm{u}-\jc|^\pHold$ for $\pHold>1$, the infimum in the definition of the
  oscillation is attained. Let $\jc \in \RR^{n + 1}$ be a constant vector that
  realizes this infimum in $\Osc (\bm{u}, B_1)$
  \begin{equation}
    \jav{B_1} |\bm{u}- \jc |^{\pHold} = \Osc
    (\bm{u}, B_1) .
  \end{equation}
  By a standard consequence of Fubini's theorem (or the mean value theorem
  applied to radial slices), there exists a radius $\rr \in [1 / 2, 1]$, which
  depends on the function $\bm{u}- \jc$, such that the
  boundary integral is controlled by the bulk integral:
  \begin{equation}
    \int_{\zeta \in \partial B_{\rr}} \left| \bm{u}
    (\zeta) - \jc \right| \mathrm{d} \zeta \leqslant 2 \int_{B_1} \left|
    \bm{u} (x) - \jc \right| \mathrm{d} x.
    \label{eq:boundtodom}
  \end{equation}
  Indeed, integrating in polar coordinates we have
  \begin{equation}
    \int_{1 / 2}^1 \left( \int_{\partial B_{\rr}}
    |\bm{u}(\zeta) - \jc | \hspace{0.17em} \mathrm{d}
    \zeta \right) \mathrm{d} \rr = \int_{B_1 \setminus B_{1 / 2}}
    |\bm{u}- \jc | \hspace{0.17em} \mathrm{d} x \leqslant
    \int_{B_1} |\bm{u}- \jc | \hspace{0.17em} \mathrm{d}
    x,
  \end{equation}
  so that the mean value theorem yields a radius $\rr \in [1 / 2, 1]$ for
  which \eqref{eq:boundtodom} holds.
  
  Note that $\rr$ depends on the optimal $\jc$, and that the optimal $\jc$
  depends only on $\bm{u}$ and $\pHold$.
  
  {\noindent}{{\em Step 2: Harmonic extension and interior estimates.}} We
  fix the radius $\rr$ found in Step~1. Let $\ensuremath{\boldsymbol{h}}$ be
  the unique harmonic function in $B_{\rr}$ satisfying the boundary condition
  $\ensuremath{\boldsymbol{h}}=\bm{u}$ on $\partial
  B_{\rr}$. Since $\jc$ is a constant, the function
  $\ensuremath{\boldsymbol{h}}- \jc$ is also harmonic in $B_{\rr}$. We
  estimate $\ensuremath{\boldsymbol{h}}- \jc$ inside the ball using its
  Poisson Integral Formula (see Proposition~\ref{prop:Poissongradest} in the
  Appendix in Section \ref{sec:app1}). By standard estimates on the gradient
  of the Poisson kernel $P_r (x, \zeta)$, for any $x \in B_{\rr / 2}$, we have
  (since $\ensuremath{\boldsymbol{h}}=\bm{u}$ on the
  boundary $\partial B_{\rr}$):
  \begin{equation}
    | \nabla \ensuremath{\boldsymbol{h}}(x) | = | \nabla
    (\ensuremath{\boldsymbol{h}}(x) - \jc) | \leqslant \frac{\tilde{C}_m
    }{\rr^m}  \int_{\zeta \in \partial B_{\rr}}
    |\ensuremath{\boldsymbol{h}}(\zeta) - \jc |  \hspace{0.17em} \mathrm{d}
    \zeta = \frac{\tilde{C}_m }{\rr^m}  \int_{\zeta \in \partial B_{\rr}}
    \left| \bm{u} (\zeta) - \jc \right| \mathrm{d} \zeta
    . \label{eq:Poissonbound}
  \end{equation}
  We now restrict the domain. Since $\rr \geqslant 1 / 2$, we have $B_{1 / 4}
  \subset B_{\rr / 2}$ and $1 / \rr^m \leqslant 2^m$. Combining this with the
  radius estimate \eqref{eq:boundtodom} we get:
  \begin{eqnarray}
    \sup_{x \in B_{1 / 4}} | \nabla \ensuremath{\boldsymbol{h}}(x) | &
    \leqslant & \sup_{x \in B_{\rr / 2}} | \nabla
    \ensuremath{\boldsymbol{h}}(x) | \;
    \overset{\eqref{eq:Poissonbound}}{\leqslant} \; \frac{\tilde{C}_m }{\rr^m}
    \int_{\zeta \in \partial B_{\rr}} \left| \bm{u}
    (\zeta) - \jc \right| \mathrm{d} \zeta \nonumber\\
    & \overset{\eqref{eq:boundtodom}}{\leqslant} & \tilde{C}_m 2^m  \left( 2
    \int_{B_1} |\bm{u}- \jc | \right) \leqslant 2^{m + 1}
    \tilde{C}_m \omega_m  \left( \jav{B_1} \left| \bm{u}-
    \jc \right|^{\pHold} \right)^{1 / \pHold}, 
  \end{eqnarray}
  where $\omega_m$ is the volume of the unit ball in $\RR^m$. Because $\jc$
  was specifically chosen to realize the minimal oscillation, the right-hand
  side is  proportional to $\Osc^{1 / \pHold}
  (\bm{u}, B_1)$. Setting $C_m := 2^{m + 1}  \tilde{C}_m
  \omega_m$ yields the gradient estimate \eqref{eq:harmapproxosc}.{\smallskip}
\end{proof}

Having established the existence of a Caccioppoli-type estimate
(Lemma~\ref{lemma:reverse_sobolev}) and a controlled harmonic approximation
(Lemma~\ref{lemma:harmonic_approx}), we can now combine these results to
obtain the following result which will be the main tool in the derivation of
the decay estimate for almost harmonic maps.

{\myprop{\begin{lemma}[Coupled Caccioppoli-type estimate]
  \label{lemma:reverse_sobolev_bound}Let $\bm{u} \in H^1
  (B_1, \Stwo^n)$ be a solution to the almost harmonic map equation:
  \begin{equation}
    - \Delta \bm{u}= \divg \left( \jaOm
    (\bm{u}- \jc) \right) + (\bm{u}
    \wedge \bm{f}) \left( \bm{u}-
    \jc \right) +\bm{f},
  \end{equation}
  with source term $\bm{f} \in L^{\jqq} (B_1, \RR^{n +
  1})$ having integrability exponent $\jqq \in (1, 2)$. Let
  $\pHold := 2 \jqq / \left( 2 - \jqq \right) \in (2,
  \infty)$ be the corresponding oscillation exponent.
  
  Then, there exists a harmonic function $\ensuremath{\boldsymbol{h}}: B_{1 /
  4} \to \RR^{n + 1}$ such that the following gradient estimate holds on $B_{1
  / 4}$:
  \begin{equation}
    \| \nabla
    (\bm{u}-\ensuremath{\boldsymbol{h}})\|_{L^{\jqq}
    (B_{1 / 4})} \leqslant C_{\jqq}  \| \nabla
    \bm{u}\|_{L^2 (B_1)} \Osc^{1 / \pHold}
    (\bm{u}, B_1) + C_{\jqq}
    \|\bm{f}\|_{L^{\jqq} (B_1)},
    \label{eq:reverse_sobolev_bound2}
  \end{equation}
  where $C_{\jqq} > 0$ depends only on the dimension $m = 2$ and the exponent
  $\jqq$. Moreover,
  \begin{equation}
    \sup_{x \in B_{1 / 4}} | \nabla \ensuremath{\boldsymbol{h}}(x) | \leqslant
    C_m \cdot \Osc^{1 / \pHold} (\bm{u}, B_1),
    \label{eq:harmapproxosc2}
  \end{equation}
  where $C_m > 0$ is a dimensional constant.
\end{lemma}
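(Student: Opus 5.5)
The plan is to combine Lemma~\ref{lemma:harmonic_approx} with Lemma~\ref{lemma:reverse_sobolev}, using the \emph{same} radius for both. First apply Lemma~\ref{lemma:harmonic_approx} to $\bm{u}$ with the exponent $\pHold = 2\jqq/(2-\jqq)$. This produces a good radius $\rr \in [1/2,1]$ and the harmonic extension $\ensuremath{\boldsymbol{h}} := \ensuremath{\boldsymbol{h}}_{\rr}$ of $\bm{u}_{\restr \partial B_{\rr}}$ to $B_{\rr}$. Its restriction to $B_{1/4}$ is the harmonic function in the statement, and \eqref{eq:harmapproxosc2} is exactly \eqref{eq:harmapproxosc}. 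So the second claim needs no further work, and the task reduces to the gradient estimate \eqref{eq:reverse_sobolev_bound2}.

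Next, since $\ensuremath{\boldsymbol{h}}$ is precisely the harmonic extension on $B_{\rr}$, I would invoke Lemma~\ref{lemma:reverse_sobolev} on this ball. Should $\rr = 1$ occur, I would either use the trace on $\partial B_1$ directly or simply choose the good radius in $[1/2, 3/4]$ by the same Fubini argument, changing only constants. Monotonicity of the $L^{\jqq}$ norm on $B_{1/4} \subset B_{\rr}$ then gives
\[
\| \nabla (\bm{u}-\ensuremath{\boldsymbol{h}})\|_{L^{\jqq}(B_{1/4})} \leqslant C_{\jqq} \| \nabla \bm{u}\|_{L^2 (B_{\rr})} \left( \rr^2 \Osc (\bm{u}, B_{\rr}) \right)^{1/\pHold} + C_{\jqq}\, \rr\, \|\bm{f}\|_{L^{\jqq}(B_{\rr})}.
\]
It remains to replace every $B_{\rr}$ by $B_1$. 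For the energy and source factors this is immediate from $\rr \leqslant 1$ and $B_{\rr} \subset B_1$. For the oscillation factor, Lemma~\ref{lemma:monotonBPsi}(iii) with $\rr_1 = \rr \geqslant 1/2$ and $\rr_2 = 1$ gives $\rr^2 \Osc(\bm{u},B_{\rr}) \leqslant \rr^2 \rr^{-2}\Osc(\bm{u},B_1) = \Osc(\bm{u},B_1)$. More directly, $\rr^2 \Osc(\bm{u},B_{\rr}) = \pi^{-1}\inf_{\jc}\int_{B_{\rr}}|\bm{u}-\jc|^{\pHold}$ is nondecreasing in $\rr$, which yields the same bound up to the factor $\pi^{-1}$ absorbed into $C_{\jqq}$. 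Collecting constants gives \eqref{eq:reverse_sobolev_bound2}, with $C_{\jqq}$ depending only on $\jqq$ (and $m=2$).

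The main obstacle is the compatibility of the two lemmas: the radius of Lemma~\ref{lemma:harmonic_approx} depends on $\bm{u}$. One must therefore check that the constant in Lemma~\ref{lemma:reverse_sobolev} is uniform in $\rr$, which that lemma asserts through the scale invariance of the Calder\'on--Zygmund estimate on balls. One must also confirm that the oscillation scaling loses only a bounded factor, which holds because $\rr \geqslant 1/2$. With these two checks the lemma follows without further analysis.
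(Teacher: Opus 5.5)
Your proposal is correct and follows the paper's proof: take the good radius $\rr\in[1/2,1]$ and harmonic extension from Lemma~\ref{lemma:harmonic_approx}, apply Lemma~\ref{lemma:reverse_sobolev} on that same ball $B_{\rr}$, and pass to $B_1$ using $\rr\leqslant 1$ and the monotonicity $\rr^2\Osc(\bm{u},B_{\rr})\leqslant\Osc(\bm{u},B_1)$. Your extra care about the endpoint $\rr=1$ is a welcome refinement. Two small corrections: the monotonicity step needs no $\pi^{-1}$ factor, and it holds for every $\rr\leqslant 1$; the condition $\rr\geqslant 1/2$ is only needed for the inclusion $B_{1/4}\subset B_{\rr/2}$ in the harmonic gradient bound.
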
}}

\begin{proof}
  By Lemma \ref{lemma:harmonic_approx}, there exists a good radius $\rr \in [1
  / 2, 1]$ and a harmonic approximation $\ensuremath{\boldsymbol{h}}$ of the
  almost harmonic map $\bm{u}$ on $B_{\rr}$, such that
  \eqref{eq:harmapproxosc2} holds. Because $\ensuremath{\boldsymbol{h}}$ is a
  solution of the problem
  \begin{equation}
    - \Delta \ensuremath{\boldsymbol{h}}=\ensuremath{\boldsymbol{0}}
    \hspace{1.2em} \text{ in } B_{\rr}, \qquad
    \ensuremath{\boldsymbol{h}}=\bm{u} \hspace{1.2em}
    \text{ on } \partial B_{\rr},
  \end{equation}
  we can apply Lemma~\ref{lemma:reverse_sobolev} over the ball $B_{\rr}$ to
  get (recall that $B_{1 / 4} \subset B_{\rr}$) from
  \eqref{eq:reverse_sobolev_bound}
  \begin{equation}
    \| \nabla
    (\bm{u}-\ensuremath{\boldsymbol{h}})\|_{L^{\jqq}
    (B_{1 / 4})} \leqslant C_{\jqq} \cdot \| \nabla
    \bm{u}\|_{L^2 (B_{\rr})} \cdot \left( \rr^m
    \hspace{0.17em} \Osc \left( \bm{u}, B_{\rr} \right)
    \right)^{1 / \pHold} + C_{\jqq} \rr
    \|\bm{f}\|_{L^{\jqq} (B_1)} .
  \end{equation}
  Using the fact that $\rr \leqslant 1$, extending the domains of integration
  to $B_1$, and by the monotonicity of the oscillation functional $\rr^m \Osc
  \left( \bm{u}, B_{\rr} \right) \leqslant \Osc
  (\bm{u}, B_1)$, we obtain
  \eqref{eq:reverse_sobolev_bound2}. Overall, the restriction of
  $\ensuremath{\boldsymbol{h}}$ to $B_{1 / 4}$ gives
  \eqref{eq:reverse_sobolev_bound2} and \eqref{eq:harmapproxosc2}. 
\end{proof}

\subsection{Conclusion of the Proof of the Fundamental
{Lemma~\ref{lem:decay}}: The Decay Estimate}
We are now in the position to show that for the specific exponent $\pHold = 2
\jqq / \left( 2 - \jqq \right)$, arising from the Hölder inequality, the decay
estimate ({\jax}~\ref{ax:decay}) holds. Specifically, we want to prove that if
the gradient energy is small enough, then for a fixed small radius $\jth \in
(0, 1 / 4)$:
\begin{equation}
  \Osc (\bm{u}, B_{\jth}) \leqslant \jgam \Osc
  (\bm{u}, B_1) + \kappa \| \bm{f}
  \|_{L^{\jqq} (B_1)}^{\pHold}, \quad \jgam := \frac{1}{2}
\end{equation}
where $\kappa$ is a constant depending on $\jth, \pHold$, and the dimensions,
but independent of $\bm{u}$. Explicitly, we want to prove
that if the energy is small enough, then for a fixed small radius $\jth \in
(0, 1 / 4)$:
\begin{equation} \inf_{\jc \in \RR^{n + 1}} \left( \jav{B_{\jth}}
   |\bm{u}- \jc |^{\pHold} \right) \leqslant \frac{1}{2}
   \inf_{\jc \in \RR^{n + 1}} \left( \jav{B_1} |\bm{u}-
   \jc |^{\pHold} \right) + \kappa \left( \int_{B_1} |
   \bm{f} |^{\jqq} \right)^{\pHold / \jqq} . \end{equation}
{\noindent}\text{{\bfseries{Proof.}}} By
Lemma~\ref{lemma:reverse_sobolev_bound}, there exists a harmonic function
$\ensuremath{\boldsymbol{h}}: B_{1 / 4} \to \RR^{n + 1}$ satisfying the
gradient estimates \eqref{eq:reverse_sobolev_bound2} and
\eqref{eq:harmapproxosc2}. Let $\jth \in (0, 1 / 4)$. We estimate the
oscillation of $\bm{u}$ on $B_{\jth}$ by comparing
$\bm{u}$ with $\ensuremath{\boldsymbol{h}}$. Using the
triangle inequality for the $L^{\pHold}$-oscillation seminorm, we split the
error:
\begin{equation}
  \Osc^{1 / \pHold} (\bm{u}, B_{\jth}) \leqslant \Osc^{1
  / \pHold} (\bm{u}-\ensuremath{\boldsymbol{h}},
  B_{\jth}) + \Osc^{1 / \pHold} (\ensuremath{\boldsymbol{h}}, B_{\jth}) = : I
  + I  I. \label{eq:splitPsi}
\end{equation}
{\noindent}{{\em Step 1: Estimate of term $I$.}} Recall that
$\bm{u}-\ensuremath{\boldsymbol{h}}$ was constructed to
vanish on the larger boundary $\partial B_{\rr}$ \ for some $\rr \geqslant 1 /
2$. Consequently, it does not necessarily vanish on the much smaller internal
boundary $\partial B_{\jth}$. Because
$\bm{u}-\ensuremath{\boldsymbol{h}} \notin W^{1, \jqq}_0
\left( B_{\jth} \right)$, we cannot rely on the zero-trace Sobolev embedding.
Instead, we apply the standard Poincaré-Sobolev inequality on the ball
$B_{\jth}$:
\begin{align}
  I := \Osc^{1 / \pHold}
  (\bm{u}-\ensuremath{\boldsymbol{h}}, B_{\jth}) &
  \leqslant  \left( \jav{B_{\jth}} |
  (\bm{u}-\ensuremath{\boldsymbol{h}}) - \langle
  \bm{u}-\ensuremath{\boldsymbol{h}} \rangle_{B_{\jth}}
  |^{\pHold} \right)^{1 / \pHold} \\
  & \eqs  \frac{1}{\left( \omega_m  \jth^m \right)^{1 / \pHold}}  \left\|
  (\bm{u}-\ensuremath{\boldsymbol{h}}) - \langle
  \bm{u}-\ensuremath{\boldsymbol{h}} \rangle_{B_{\jth}}
  \right\|_{L^{\pSob} \left( B_{\jth} \right)} \nonumber\\
  &   \qquad \qquad \qquad \qquad \qquad \leqslant \frac{C_S}{\left(
  \omega_m  \jth^m \right)^{1 / \pHold}} \| \nabla
  (\bm{u}-\ensuremath{\boldsymbol{h}})\|_{L^{\jqq}
  (B_{\jth})} . 
\end{align}
Now we substitute the Caccioppoli-type estimate
\eqref{eq:reverse_sobolev_bound2} in the previous expression: Since $B_{\jth}
\subset B_{1 / 4}$, the $L^{\jqq}$ norm over $B_{\jth}$ is bounded by the
$L^{\jqq}$ norm over $B_{1 / 4}$:
\begin{equation}
  \| \nabla
  (\bm{u}-\ensuremath{\boldsymbol{h}})\|_{L^{\jqq}
  (B_{\jth})} \leqslant \| \nabla
  (\bm{u}-\ensuremath{\boldsymbol{h}})\|_{L^{\jqq} (B_{1
  / 4})} \overset{\eqref{eq:reverse_sobolev_bound2}}{\leqslant} C_{\jqq} \cdot
  \hspace{0.17em} \| \nabla \bm{u}\|_{L^2 (B_1)}  \Osc^{1
  / \pHold} (\bm{u}, B_1) + C_{\jqq}
  \|\bm{f}\|_{L^{\jqq} (B_1)} .
\end{equation}
Substituting this back into $I$ we get:
\begin{equation}
  \label{eq:term1_bound} I := \Osc^{1 / \pHold}
  (\bm{u}-\ensuremath{\boldsymbol{h}}, B_{\jth})
  \leqslant \frac{C_{\jqq}}{\jth^{m / \pHold}} \cdot \| \nabla
  \bm{u}\|_{L^2 (B_1)} \cdot \Osc^{1 / \pHold}
  (\bm{u}, B_1) + \frac{C_{\jqq}}{\jth^{m / p}}
  \|\bm{f}\|_{L^{\jqq} (B_1)} .
\end{equation}
Here, by an harmless abuse of notation, the constant $C_{\jqq}$ has been
redefined to absorb the Poincaré-Sobolev constant $C_S$ and geometric
constants. It remains to estimate $I  I := \Osc^{1 / \pHold}
(\ensuremath{\boldsymbol{h}}, B_{\jth})$.

{\noindent}{{\em Step 2: Estimate of Term $I  I$.}} Since the oscillation is
an infimum over all constants, we can bound it from above by choosing the
specific constant $\jc =\ensuremath{\boldsymbol{h}} (0)$. Also, since
$\ensuremath{\boldsymbol{h}}$ is harmonic and smooth, we apply the mean value
theorem inside $B_{1 / 4}$ to infer:
\begin{eqnarray}
  I  I := \Osc^{1 / \pHold} (\ensuremath{\boldsymbol{h}}, B_{\jth}) &
  \leqslant & \left( \jav{B_{\jth}} |\ensuremath{\boldsymbol{h}}(x)
  -\ensuremath{\boldsymbol{h}}(0) |^{\pHold}  \hspace{0.17em} \mathrm{d} x
  \right)^{1 / \pHold} \; \leqslant \; \left( \jav{B_{\jth}} |x|^{\pHold}
  \left( \sup_{y \in B_{\jth}} | \nabla \ensuremath{\boldsymbol{h}}(y) |^p
  \right)  \hspace{0.17em} \mathrm{d} x \right)^{1 / \pHold} \nonumber\\
  &  & \qquad \qquad \qquad \qquad \qquad \qquad \qquad \qquad \qquad
  \leqslant \jth \sup_{x \in B_{1 / 4}} | \nabla \ensuremath{\boldsymbol{h}}
  (x) | . 
\end{eqnarray}
Applying the gradient bound \eqref{eq:harmapproxosc2}, we conclude that
\begin{equation}
  \label{eq:term2_bound} I  I \leqslant C_m  \jth  \Osc^{1 / \pHold}
  (\bm{u}, B_1) .
\end{equation}
{\noindent}{{\em Step 3: Collecting the estimates of $I$ and $I  I$.}}
Combining the estimates \eqref{eq:term1_bound} and \eqref{eq:term2_bound} for
$I$ and $I  I$, and factoring out $\Osc^{1 / \pHold}
(\bm{u}, B_1)$, with the assignment $C_{m, \jqq} := \max
\left( C_m, C_{\jqq} \right)$, we obtain:
\begin{equation}
  \Osc^{1 / \pHold} (\bm{u}, B_{\jth}) \leqslant C_{m,
  \jqq} \left( \frac{\| \nabla \bm{u}\|_{L^2
  (B_1)}}{\jth^{m / p}} + \jth \right)  \Osc^{1 / \pHold}
  (\bm{u}, B_1) + \frac{C_{m, \jqq}}{\jth^{m / p}}
  \|\bm{f}\|_{L^{\jqq} (B_1)} .
\end{equation}
Raising both sides to the power $\pHold$ (using the standard algebraic
inequality $(a + b)^{\pHold} \leqslant 2^{\pHold - 1}  (a^{\pHold} +
b^{\pHold})$), we rewrite this purely in terms of the Dirichlet energy
$\Energy (\bm{u}) := \| \nabla
\bm{u}\|_{L^2 (B_1)}^2$:
\begin{equation}
  \label{eq:decay_master} \Osc (\bm{u}, B_{\jth})
  \leqslant C_{m, \jqq}^{\pHold} \left( \frac{\Energy
  (\bm{u})^{\pHold / 2}}{\jth^m} + \jth^{\pHold} \right)
  \Osc (\bm{u}, B_1) + \frac{C_{m,
  \jqq}^{\pHold}}{\jth^m} \|\bm{f}\|_{L^{\jqq}
  (B_1)}^{\pHold},
\end{equation}
where, as usual, the constant $C_{m, \jqq}^{\pHold}$ has been harmlessly
redefined to absorb the $2^{\pHold - 1}$ factors that arise.{\smallskip}

{\noindent}{{\em Step 4: Choice of the parameters.}} To achieve the decay
factor $\jgam = 1 / 2$, we choose $\jth$ and $\varepsilon_{\ast}$
sequentially. First, choose the radius $\jth \in (0, 1 / 4)$ small enough such
that the harmonic term dominates:
\begin{equation}
  C_{m, \jqq}^{\pHold}  \jth^{\pHold} \leqslant \frac{1}{4} \quad
  \Longleftrightarrow \quad \jth \leqslant \left( \frac{1}{4} \right)^{1 /
  \pHold} \frac{1}{C_{m, \jqq}} .
\end{equation}
Second, with $\jth$ fixed, we choose the Dirichlet energy threshold
$\varepsilon_{\ast}$ small enough such that if $\Energy
(\bm{u}) < \varepsilon_{\ast}$, the nonlinear error term
is  controlled:
\begin{equation}
  C_{m, \jqq}^{\pHold}  \frac{\varepsilon_{\ast}^{\pHold / 2}}{\jth^m}
  \leqslant \frac{1}{4} .
\end{equation}
Notice that this smallness condition depends \text{{\itshape{only}}} on the
Dirichlet energy; no size restriction is placed on
$\bm{f}$. With these choices, the scaling bracket in
\eqref{eq:decay_master} \ sums to at most $1 / 4 + 1 / 4 = 1 / 2$. Defining
$\kappa (\jth) := C_{m, \jqq}^{\pHold} / \jth^m$, we arrive at the final
Campanato iteration step:
\begin{equation}
  \Osc (\bm{u}, B_{\jth}) \leqslant \frac{1}{2} \Osc
  (\bm{u}, B_1) + \kappa
  \|\bm{f}\|_{L^{\jqq} (B_1)}^{\pHold} .
\end{equation}
This establishes the exact decay estimate formulated in
{\jax}{\textbf{~\ref{ax:decay}}}, concluding the proof of
Lemma~\ref{lem:decay}.{\hspace*{\fill}}$\square$

\section{Generalization: Regularity of non-antisymmetric systems}\label{sec:sec4}In the classical regularity theory of harmonic maps, the assumption that the field $\bm{u}$ takes values in a closed target manifold $E \subset\RR^{n+1}$ naturally yields a connection matrix $\jaOm$  that is geometrically antisymmetric. However, a fundamental consequence of the classical elliptic approach developed in this paper is that this strict geometric requirement can be entirely dispensed with.

By replacing the geometric hypothesis of antisymmetry with an analytic integrability condition on the divergence, we seamlessly extend our Campanato and Caccioppoli-type framework to generic, non-antisymmetric systems. While the landmark results of Rivière \cite{Riviere2007} and Müller and Schikorra \cite{Mueller2009} fundamentally rely on the compensated compactness afforded by an antisymmetric $\jaOm$ , our method demonstrates that classical linear elliptic estimates are robust enough to handle general systems, provided the divergence is analytically controlled.

The primary objective of this section is to rigorously establish this generalization. We emphasize throughout that $\jaOm$  is \emph{not} assumed to be antisymmetric, distinguishing our setting from the frameworks of \cite{Riviere2007} and \cite{Mueller2009}:

{\noindent}\textbf{Theorem~\ref{thm:generalized_regularity}.}
(Regularity of generalized systems) {\itshape Let
$B_1 \subset \RR^2$. Let $\bm{u} \in H^1 \cap L^{\infty}
(B_1, \RR^{n + 1})$ be a weak solution to the generalized system
\begin{equation}
  - \Delta \bm{u}= \jaOm \cdot \nabla
  \bm{u}+\bm{f} \quad \text{in } B_1
  .
\end{equation}
Assume that the structural matrix and the source terms satisfy the purely
analytic hypotheses for some $\jqq > 1$:
\begin{itemize}
  \item $\jaOm \in L^2 (B_1, \RR^{(n + 1) \times (n + 1)} \otimes \RR^2)$,
  \item $\bm{f} \in L^{\jqq} (B_1, \RR^{n + 1})$,
  \item $\divg \jaOm \in L^{\jqq} (B_1, \RR^{(n + 1) \times (n + 1)})$.
\end{itemize}
Then, $\bm{u}$ is locally Hölder continuous in $B_1$.
Specifically, there exists an exponent $\jeta \in (0, 1)$ such that
$\bm{u} \in C^{0,
\jeta}_{\ensuremath{\operatorname{loc}}}  (B_1, \RR^{n +1})$.}

\begin{remark}
  In what follows, we may assume without loss of generality that $1 < \jqq <
  2$. The reason for this has already been explained in
  Remark~\ref{rmk:redsourceintq}.
\end{remark}

The proof of Theorem~\ref{thm:generalized_regularity} will be given in
subsection~\ref{sec:genregproof}. To this end, we first introduce a slight
generalization of the abstract regularity principle stated in
Theorem~\ref{thm:abstract_reg} by defining an abstract class of
configurations. We then formulate and prove the second abstract
regularity principle (Theorem~\ref{thm:gen_abstract_reg}) in arbitrary base
dimension $m \geqslant 2$, although the framework will ultimately be applied
to the regularity theory of generalized systems in the two-dimensional case $m
= 2$ to conclude the proof of Theorem~\ref{thm:generalized_regularity}.

\subsection{The second abstract regularity principle}
Let $\class$ be a class of triplets $(\bm{u}, \jaOm,
\bm{f})$ belonging to the space
\begin{equation}
  H^1 \cap L^{\infty} (B_1, \RR^{n + 1}) \times L^2 (B_1, \RR^{(n + 1) \times
  (n + 1)} \otimes \RR^m) \times L^{\jqq} (B_1, \RR^{n + 1}),
\end{equation}
which obey the following two axioms:

{\myprop{\begin{axiom}[Locality and Closure]
  \label{ax:closuregen}The class $\class$ is closed under rescaling.
  Specifically, if $(\bm{u}, \jaOm,
  \bm{f}) \in \class$, then for any ball $B_{\rr} (x_0)
  \subset B_1$, the rescaled triplet $(\bm{u}_{x_0, \rr},
  \overline{\jaOm}_{x_0, \rr}, \tilde{\bm{f}}_{x_0,
  \rr})$ defined by
  \begin{equation}
    \bm{u}_{x_0, \rr} (y) := \bm{u}
    (x_0 + \rr  y), \quad \overline{\jaOm}_{x_0, \rr} (y) := \rr \jaOm (x_0 +
    \rr  y), \quad \tilde{\bm{f}}_{x_0, \rr} (y) := \rr^2
    \bm{f} (x_0 + \rr  y),
  \end{equation}
  is still in $\class$ over the unit ball $B_1 \subseteq \RR^m$.
\end{axiom}}}

\begin{remark}[Dimensional Scaling and Closure]
  Unlike the $m = 2$ case, where the Dirichlet energy is conformally
  invariant, the energy in dimension $m > 2$ scales super-critically: $\Energy
  (\bm{u}_{x_0, \rr}, B_1) = \rr^{2 - m} \Energy
  (\bm{u}, B_{\rr} (x_0))$. Because $\rr < 1$ and $2 - m
  < 0$, the energy of the rescaled map can arbitrarily magnify, meaning a
  ``small energy'' hypothesis cannot be preserved under inward scaling. The
  generalized framework bypasses this entirely by anchoring the closure to the
  amplitude bound. The uniform bound is  scale-invariant:
  $\|\bm{u}_{x_0, \rr} \|_{L^{\infty} (B_1)} =
  \|\bm{u}\|_{L^{\infty} (B_{\rr} (x_0))} \leqslant M$.
  Furthermore, the forcing terms scale sub-critically: The $L^{\jqq}$ norm of
  the rescaled source yields:
  \begin{equation}
    \| \tilde{\bm{f}}_{x_0, \rr} \|_{L^{\jqq} (B_1)} =
    \left( \int_{B_1} \rr^{2 \jqq} |\bm{f}(x_0 + \rr y)
    |^{\jqq} \mathrm{d} y \right)^{1 / \jqq} = \rr^{2 - {m}/{\jqq}}
    \|\bm{f}\|_{L^{\jqq} (B_{\rr} (x_0))} .
  \end{equation}
  Because we  assume $\jqq > m / 2$, the exponent $2 - m / \jqq > 0$.
  This guarantees that the rescaled source data inherently decays as $\rr \to
  0$. Thus, the zooming process intrinsically preserves the axioms without any
  reliance on the $L^2$ gradient energy.
\end{remark}

{\myprop{\begin{axiom}[The Decay Property]
  \label{ax:decaygen}There exist structural constants $\jth, \jgam \in (0,
  1)$, $\pHold \geqslant 1$, an amplitude bound $M > 0$, and $\kappa > 0$,
  such that for any triplet $(\bm{u}, \jaOm,
  \bm{f}) \in \class$ satisfying
  $\|\bm{u}\|_{L^{\infty} (B_1)} \leqslant M$, the
  oscillation strictly contracts at the interior scale:
  \begin{equation}
    \Osc (\bm{u}, B_{\jth}) \leqslant \jgam \cdot \Osc
    (\bm{u}, B_1) + \kappa \left( \|
    \bm{f} \|_{L^{\jqq} (B_1)} + M \left\| \divg \jaOm
    \right\|_{L^{\jqq} (B_1)} \right)^{\pHold} .
  \end{equation}
\end{axiom}}}

Proceeding as we did for Theorem~\ref{thm:abstract_reg} one gets

{\myprop{\begin{theorem}[Second Abstract Regularity Principle in general
dimension]
  \label{thm:gen_abstract_reg}Let $\class$ be a class of triplets satisfying
  {{\em {\jaxs}~{\textbf{\ref{ax:closuregen}}}}} and {{\em
  {\textbf{\ref{ax:decaygen}}}}}, on the unit ball $B_1 \subset \RR^m$. If
  $(\bm{u}, \jaOm, \bm{f}) \in
  \class$ with $\|\bm{u}\|_{L^{\infty} (B_1)} \leqslant
  M$, and the integrability exponent  satisfies $\jqq > m / 2$, then
  $\bm{u}$ is locally Hölder continuous in $B_1$.
  Specifically, $\bm{u} \in C^{0,
  \jeta}_{\ensuremath{\operatorname{loc}}} (B_1)$ for every Hölder exponent
  $\jeta$ satisfying:
  \begin{equation}
    \jeta < \min \left( \frac{\ln \jgam}{\pHold \ln \jth}, 2 - \frac{m}{\jqq}
    \right) .
  \end{equation}
  For any compact subset $K \subset B_1$, the corresponding Hölder seminorm
  depends only on the chosen exponent $\jeta$, the structural parameters
  $\jth, \jgam, \kappa, \pHold, M, \| \divg \jaOm \|_{L^{\jqq} (B_1)}$,
  $\|\bm{f}\|_{L^{\jqq} (B_1)}$, and the distance
  $\mathrm{dist} (K, \partial B_1)$.
\end{theorem}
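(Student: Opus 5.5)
The proof follows the five-step template of the proof of Theorem~\ref{thm:abstract_reg}, with the $m$-dimensional scaling and the $L^\infty$ amplitude replacing the two-dimensional energy argument.

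\emph{Master inequality.} Fix $x_0 \in B_{1/2}$ and $\rr_0 = 1/2$, and set
\[
\psi(\rr) := \rr^m \Osc(\bm{u}, B_\rr(x_0)),
\]
which is non-decreasing by Lemma~\ref{lemma:monotonBPsi}. By Axiom~\ref{ax:closuregen}, the rescaled triplet $(\bm{u}_{x_0,\rr}, \overline{\jaOm}_{x_0,\rr}, \tilde{\bm{f}}_{x_0,\rr})$ lies in $\class$. Its amplitude is unchanged, so it still obeys $\|\bm{u}_{x_0,\rr}\|_{L^\infty(B_1)} \leqslant M$, and Axiom~\ref{ax:decaygen} applies to it. The scaling identity \eqref{eq:scalingPsi} converts the oscillations back to physical coordinates. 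For the data we need two scalings. The source satisfies $\|\tilde{\bm{f}}_{x_0,\rr}\|_{L^\jqq(B_1)} = \rr^{2-m/\jqq}\|\bm{f}\|_{L^\jqq(B_\rr(x_0))}$. For the connection, $\divg \overline{\jaOm}_{x_0,\rr}(y) = \rr^2 (\divg\jaOm)(x_0+\rr y)$, so its $L^\jqq(B_1)$ norm equals $\rr^{2-m/\jqq}\|\divg\jaOm\|_{L^\jqq(B_\rr(x_0))}$. This yields
\[
\psi(\jth\rr) \leqslant \jth^m\jgam\,\psi(\rr) + b\,\rr^{\beta}, \qquad \beta := m + \pHold\,(2 - m/\jqq),
\]
with $b := \kappa\jth^m\bigl(\|\bm{f}\|_{L^\jqq(B_1)} + M\|\divg\jaOm\|_{L^\jqq(B_1)}\bigr)^\pHold$. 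The hypothesis $\jqq > m/2$ is exactly what makes $\beta > m$.

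\emph{Iteration.} Exactly as in Step~2 of the earlier proof, replace $\jgam$ by $\jgam_\varepsilon := \max(\jgam, \jth^{\beta-m-\varepsilon})$. Then $\jalfa_\varepsilon := \ln(\jth^m\jgam_\varepsilon)/\ln\jth$ satisfies
\[
\jalfa_\varepsilon - m = \min\Bigl(\frac{\ln\jgam}{\ln\jth},\, \pHold\,(2-m/\jqq) - \varepsilon\Bigr) < \beta - m .
\]
Lemma~\ref{lem:giusti_iteration} therefore applies and gives
\[
\Osc(\bm{u}, B_\rr(x_0)) \leqslant c_\varepsilon\bigl[\rr_0^{m-\jalfa_\varepsilon}\Osc(\bm{u}, B_{\rr_0}(x_0)) + b\,\rr_0^{\beta-\jalfa_\varepsilon}\bigr]\,\rr^{\jalfa_\varepsilon - m}.
\]

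\emph{Initial oscillation.} This is the only genuine modification, and the step requiring care. The two-dimensional proof used the Poincar\'e--Sobolev inequality \eqref{eq:usePS2D}, which fails for large $\pHold$ when $m \geqslant 3$. Here we instead use the amplitude: taking $\jc = 0$ gives $\Osc(\bm{u}, B_{\rr_0}(x_0)) \leqslant \|\bm{u}\|_{L^\infty}^\pHold \leqslant M^\pHold$, uniformly in $x_0$. This produces a uniform Campanato bound $\Osc(\bm{u}, B_\rr(x_0)) \leqslant C\rr^{\pHold\jeta_\varepsilon}$ with
\[
\jeta_\varepsilon = \min\Bigl(\frac{\ln\jgam}{\pHold\ln\jth},\, 2 - \frac{m}{\jqq} - \frac{\varepsilon}{\pHold}\Bigr).
\]
We must check that $\jeta_\varepsilon \in (0,1]$ before invoking Campanato's characterization \cite[Theorem 1.3]{Giaquinta1983}. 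Positivity is immediate, since $\jgam, \jth < 1$ and $\jqq > m/2$. Upper bound: if the exponent exceeded $1$, the decay would force $\bm{u}$ to be constant, and in any case one may simply cap the exponent at $1$. This gives $\bm{u} \in C^{0,\jeta_\varepsilon}(B_{1/2})$ with seminorm controlled by $\jth, \jgam, \kappa, \pHold, M$ and the two data norms.

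\emph{Extension to compact sets.} Step~5 carries over verbatim. For $x_0 \in K$, rescale by $R = \mathrm{dist}(K, \partial B_1)$; the rescaled data norms only shrink, since $R^{2-m/\jqq} \leqslant 1$. The seminorm scaling factor is $R^{-\jeta}$, and a finite covering of $K$ concludes. Letting $\varepsilon \to 0$ gives every $\jeta$ below the stated threshold.
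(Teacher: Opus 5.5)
Your proposal is correct and follows the paper's proof essentially step for step. It uses the same master inequality, with $\beta = m + \pHold(2 - m/\jqq)$ coming from the $\rr^{2-m/\jqq}$ scaling of both $\tilde{\bm{f}}_{x_0,\rr}$ and $\divg \overline{\jaOm}_{x_0,\rr}$, the same $\jgam_\varepsilon$-modification feeding Lemma~\ref{lem:giusti_iteration}, the amplitude bound $\Osc(\bm{u}, B_{\rr_0}(x_0)) \leqslant M^{\pHold}$ (via $\jc = \bm{0}$) in place of the two-dimensional Poincar\'e--Sobolev step, and the same rescaling-and-covering extension to compact sets. Your extra check that $\jeta_\varepsilon \in (0,1]$ before invoking Campanato's characterization is a harmless refinement that the paper's proof leaves implicit.
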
}}

\begin{proof}[Proof of the Second Abstract Regularity Principle,
Theorem~\ref{thm:gen_abstract_reg}]
  We divide the proof in five steps.{\smallskip}
  
  {\noindent}{{\em Step 1: The Master Iteration Inequality.}} Let
  $(\bm{u}, \jaOm, \bm{f}) \in
  \class$. Our goal is to verify the Campanato condition for
  $\bm{u}$ uniformly on $B_{1 / 2}$. Fix an arbitrary
  center point $x_0 \in B_{1 / 2}$ and a macroscopic starting radius $\rr_0 :=
  1 / 2$ so that $B_{\rr} (x_0) \subset B_1$ for every $x_0 \in B_{1 / 2}$ and
  every $\rr \in (0, \rr_0]$. For any radius $\rr \in (0, \rr_0]$, we set
  $\psi (\rr) := \rr^m \Osc (\bm{u}, B_{\rr} (x_0))$. We
  wish to establish an algebraic iteration inequality for $\psi$.
  
  By {\jax}~\ref{ax:closuregen}, the rescaled triplet
  $(\bm{u}_{x_0, \rr}, \overline{\jaOm}_{x_0, \rr},
  \tilde{\bm{f}}_{x_0, \rr})$ belongs to $\class$. We
  apply the contractive decay property ({\jax}~\ref{ax:decaygen}) to this
  rescaled triplet:
  \begin{equation}
    \Osc (\bm{u}_{x_0, \rr}, B_{\jth}) \leqslant \jgam
    \cdot \Osc (\bm{u}_{x_0, \rr}, B_1) + \kappa \left(
    \| \tilde{\bm{f}}_{x_0, \rr} \|_{L^{\jqq} (B_1)} +
    M\| \divg_y \overline{\jaOm}_{x_0, \rr} \|_{L^{\jqq} (B_1)}
    \right)^{\pHold} . \label{eq:ittemp}
  \end{equation}
  We translate the oscillations back to the physical coordinates on $B_1$.
  Using the scaling identity \eqref{eq:scalingPsi} for the oscillation (mean
  integral), we get:
  \begin{eqnarray}
    \text{LHS:} \quad \Osc (\bm{u}_{x_0, \rr}, B_{\jth})
    & \overset{\eqref{eq:scalingPsi}}{=} & \Osc (\bm{u},
    B_{\jth \rr} (x_0)) = \left( \jth \rr \right)^{- m} \psi (\jth \rr), \\
    \text{RHS:\quad$\Osc (\bm{u}_{x_0, \rr_0}, B_1)$} &
    \overset{\eqref{eq:scalingPsi}}{=} & \Osc (\bm{u},
    B_{\rr} (x_0)) = \rr^{- m} \psi (\rr) . 
  \end{eqnarray}
  Therefore, \eqref{eq:ittemp} can be rewritten as
  \begin{equation}
    \psi (\jth \rr) \leqslant \jth^m \jgam \cdot \psi (\rr) + \kappa \jth^m
    \rr^m \left( \| \tilde{\bm{f}}_{x_0, \rr}
    \|_{L^{\jqq} (B_1)} + M \| \divg_y \overline{\jaOm}_{x_0, \rr}
    \|_{L^{\jqq} (B_1)} \right)^{\pHold} . \label{eq:decaypropforOscgen}
  \end{equation}
  We calculate the $L^{\jqq}$ norms of the rescaled data. Since
  $\tilde{\bm{f}}_{x_0, \rr} (y) := \rr^2
  \bm{f} (x_0 + \rr  y)$, we have $\|
  \tilde{\bm{f}}_{x_0, \rr} \|_{L^{\jqq} (B_1)}^{\jqq} =
  \rr^{2 \jqq - m} \|\bm{f}\|_{L^{\jqq} (B_{\rr}
  (x_0))}^{\jqq} \leqslant \rr^{2 \jqq - m}
  \|\bm{f}\|_{L^{\jqq} (B_1)}^{\jqq}$. Hence
  \begin{equation}\label{eq:scale_ftemp}
    \| \tilde{\bm{f}}_{x_0, \rr} \|_{L^{\jqq}
    (B_1)}^{\pHold} \leqslant \rr^{\pHold \left( 2 - m / \jqq \right)}
    \|\bm{f}\|_{L^{\jqq} (B_1)}^{\pHold} .
  \end{equation}
  Similarly, because $\divg_y \overline{\jaOm}_{x_0, \rr} (y) = \rr^2 (\divg_x
  \jaOm)  (x_0 + \rr y)$, we obtain:
  \begin{equation}\label{eq:scale_divOmegatemp}
    \| \divg_y \overline{\jaOm}_{x_0, \rr} \|_{L^{\jqq} (B_1)} = \rr^{2 - m /
    \jqq} \| \divg_x \jaOm \|_{L^{\jqq} (B_r (x_0))} \leqslant \rr^{2 - m /
    \jqq} \| \divg_x \jaOm \|_{L^{\jqq} (B_1)} .
  \end{equation}
  Substituting the previous bound into the decay property
  \eqref{eq:decaypropforOscgen}, we obtain the master iteration inequality:
  \begin{equation}
    \psi (\jth \rr) \leqslant \jth^m \jgam \cdot \psi (\rr) + b \rr^{\beta} 
    \quad \text{for all } \rr \in (0, \rr_0],
  \end{equation}
  where $\beta := m + \pHold \left( 2 - m / \jqq \right)$, and $b := \kappa
  \jth^m \left( \|\bm{f}\|_{L^{\jqq} (B_1)} + M\| \divg_x
  \jaOm \|_{L^{\jqq} (B_1)} \right)^{\pHold}$ is a constant acting as a
  universal bound: Indeed, since $b$ depends  on the fixed global data
  over the full unit ball $B_1$, it remains completely independent of the
  moving center $x_0$ and the shrinking radius $\rr$.
  
  {\noindent}{{\em Step 2: Continuous Algebraic Decay via the Iteration
  Lemma.}} To extract a continuous algebraic decay rate from this discrete
  bound, we wish to apply the algebraic iteration lemma
  (Lemma~\ref{lem:giusti_iteration}). This is possible because $\psi$ is
  non-decreasing. Arguing as in the proof of Theorem~\ref{thm:abstract_reg},
  we fix an arbitrary small $\varepsilon > 0$ and consider the iteration
  inequality
  \begin{equation}
    \psi (\jth \rr) \leqslant \jth^m \jgam_{\varepsilon} \cdot \psi (\rr) + b
    \rr^{\beta}  \quad \text{for all } \rr \in (0, \rr_0],
  \end{equation}
  with $\jgam_{\varepsilon} := \max (\jgam, \jth^{\beta - m - \varepsilon})$,
  so that, by construction, for $\varepsilon$ sufficiently small, we have
  ({cf.} \eqref{eq:alfaeps1}) $\jgam_{\varepsilon} \in (0, 1)$, $\beta - m -
  \varepsilon > 0$, and ({cf.} \eqref{eq:alfaeps2})
  \begin{equation}
    \beta - \jalfa_{\varepsilon} \geqslant \varepsilon > 0 \quad \text{with}
    \quad \jalfa_{\varepsilon} := m + \min \left( \frac{\ln \jgam}{\ln \jth},
    \beta - m - \varepsilon \right) .
  \end{equation}
  Because $\jalfa_{\varepsilon} < \beta$, the iteration lemma
  (Lemma~\ref{lem:giusti_iteration}) gives the existence of a purely
  structural constant $c_{\varepsilon} > 0$ such that for all $\rr \in (0,
  \rr_0]$ there holds ({cf.}~\eqref{eq:usePS2D0}):
  \begin{equation}
    \Osc (\bm{u}, B_{\rr} (x_0)) = \rr^{- m} \psi \left(
    \rr \right) \leqslant c_{\varepsilon}  \left[ \frac{\Osc
    (\bm{u}, B_{\rr_0}
    (x_0))}{\rr_0^{\jalfa_{\varepsilon} - m}} + b \rr_0^{\beta -
    \jalfa_{\varepsilon}} \right]  \rr^{\jalfa_{\varepsilon} - m},
    \label{eq:usePS2D00}
  \end{equation}
  where, we recall, $b := \kappa \jth^m \left(
  \|\bm{f}\|_{L^{\jqq} (B_1)} + M\| \divg_x \jaOm
  \|_{L^{\jqq} (B_1)} \right)^{\pHold}$.{\smallskip}
  
  {\noindent}{{\em Step 3: Uniform bound via the condition
  $\|\bm{u}\|_{L^{\infty} (B_1)} \leqslant M$.}} Given
  that $\|\bm{u}\|_{L^{\infty} (B_1)} \leqslant M$,
  choosing the test vector $\jc =\ensuremath{\boldsymbol{0}}$ in the infimum
  trivially yields $\Osc (\bm{u}, B_{\rr_0} (x_0))
  \leqslant M^{\pHold}$. Therefore, \eqref{eq:usePS2D00} gives the uniform
  bound
  \begin{equation}
    \Osc (\bm{u}, B_{\rr} (x_0)) \leqslant M_{\ast} 
    \rr^{\jalfa_{\varepsilon} - m}, \label{eq:unifcampnatogen}
  \end{equation}
  where $M_{\ast}$ is a universal constant (independent of $x_0 \in B_{1 / 2}$
  and $\rr \in \left( 0, \rr_0 \right]$) and ({cf.}~\eqref{eq:alfaepsm2})
  \begin{equation}
    \jalfa_{\varepsilon} - m = \min \left( \frac{\ln \jgam}{\ln \jth}, \pHold
    \left( 2 - m / \jqq \right) - \varepsilon \right) .
  \end{equation}
  {\noindent}{{\em Step 4: The Campanato Condition and Hölder Continuity on
  $B_{1 / 2}$.}} Since inequality \eqref{eq:unifcampnatogen} holds uniformly
  for every $x_0 \in B_{1 / 2}$, it satisfies precisely the Campanato
  condition with exponent $\left( \jalfa_{\varepsilon} - m \right)$. Therefore, $\bm{u} \in
  C^{0, \jeta_{\varepsilon}} (B_{1 / 2})$ with $\jeta_{\varepsilon} = \left(
  \jalfa_{\varepsilon} - m \right) / \pHold$. Substituting our exact choice
  for $\left( \jalfa_{\varepsilon} - m \right)$ directly yields:
  \begin{equation}
    \jeta_{\varepsilon} = \min \left( \frac{\ln \jgam}{\pHold \, \ln \jth}, 2
    - \frac{m}{\jqq} - \frac{\varepsilon}{\pHold} \right),
  \end{equation}
  matching the formula in the theorem statement for $\varepsilon
  \rightarrow 0$. Finally, note that the Hölder seminorm $[\bm{u}]_{C^{0,
  \jeta_{\varepsilon}} (B_{1 / 2})}$ depends only on $\jth, \jgam, \kappa,
  \pHold, M, \| \divg_x \jaOm \|_{L^{\jqq} (B_1)}$, and
  $\|\bm{f}\|_{L^{\jqq} (B_1)}$.
  
  {\noindent}{{\em Step 5: Extension to arbitrary compact subsets of
  $B_1$.}} Step 5 follows the same scheme as in the proof of
Theorem~\ref{thm:abstract_reg}, with the energy bound
$\mathcal{E}(\bm{v }, B_1) \leqslant
\mathcal{E}(\bm{u}, B_1)$ replaced by the
(scale-invariant) amplitude bound $\| \bm{v}
\|_{L^{\infty} (B_1)} \leqslant M$ and the sub-critical scaling identities
\eqref{eq:scale_ftemp}--\eqref{eq:scale_divOmegatemp}.
\end{proof}

\subsection{Caccioppoli-type estimate for generalized systems in dimension \texorpdfstring{$m
= 2$}{two}}
To apply the second abstract regularity principle, we must now
demonstrate that our generalized system \eqref{eq:generalized_riviere}
inherently controls the oscillation of the solution at small scales. We
achieve this by locally restructuring the equation in divergence form,
allowing us to tame the critical gradient term and establish a rigorous
Caccioppoli-type estimate specific to dimension $m = 2$.

{\myprop{\begin{lemma}[Generalized Caccioppoli-type estimate in dimension $m =
2$]
  \label{lem:reverse_sobolev}Let $\jqq \in (1, 2)$ be the fixed integrability
  exponent of the source term $\bm{f}$, and let $\pHold
  \in (2, \infty)$ be the corresponding oscillation exponent determined by the
  relation $\text{} \pHold = 2 \jqq / \left( 2 - \jqq \right)$. Assume that
  $\jaOm \in L^2 (B_1, \RR^{(n + 1) \times (n + 1)} \otimes \RR^2)$, $\divg
  \jaOm \in L^{\jqq} (B_1, \RR^{(n + 1) \times (n + 1)})$, and
  $\bm{f} \in L^{\jqq} (B_1, \RR^{n + 1})$. Let
  $\bm{u} \in H^1 \cap L^{\infty} (B_1, \RR^{n + 1})$ be
  a bounded weak solution to
  \begin{equation}
    - \Delta \bm{u}= \jaOm \cdot \nabla
    \bm{u}+\bm{f}
    \label{eq:newRiviere}
  \end{equation}
  with global bound $\|\bm{u}\|_{L^{\infty} (B_1)}
  \leqslant M$.
  
  Let $\rr \in (0, 1)$ be a fixed radius, and let
  $\ensuremath{\boldsymbol{h}}_{\rr} \in H^1 (B_{\rr}, \RR^{n + 1})$ denote
  the unique harmonic extension of $\bm{u}_{\restr
  \partial B_{\rr}}$ to the ball $B_{\rr}$.
  
  With these exponents, the following estimate holds:
  \begin{equation}
    \| \nabla
    (\bm{u}-\ensuremath{\boldsymbol{h}}_{\rr})\|_{L^{\jqq}
    (B_{\rr})} \leqslant C_{\jqq} \| \jaOm \|_{L^2 (B_{\rr})} \left( \rr^m
    \Osc (\bm{u}, B_{\rr}) \right)^{1 / \pHold} +
    C_{\jqq}  \rr \left( \|\bm{f}\|_{L^{\jqq} (B_{\rr})}
    + M\| \divg \jaOm \|_{L^{\jqq} (B_{\rr})} \right),
  \end{equation}
  where $C_{\jqq}$ depends only on $\jqq$ {\opt}and, in general, on the base
  dimension $m$; here $m = 2${\cpt}. In particular $C_{\jqq}$ is independent
  of $\rr$ and $\bm{u}$.
\end{lemma}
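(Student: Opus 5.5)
The argument mirrors the proof of Lemma~\ref{lemma:reverse_sobolev}, with the exact identity $\divg\jaOm=\bm{f}\wedge\bm{u}$ replaced by the analytic hypothesis $\divg\jaOm\in L^{\jqq}$. Fix an arbitrary constant $\jc\in\RR^{n+1}$. By Lemma~\ref{lemma:osc_infimum_bound} we may restrict to $|\jc|\leqslant M$, so that $|\bm{u}-\jc|\leqslant 2M$ a.e. Since $\nabla\jc=0$, the Leibniz rule \eqref{eq:Leibniz} gives
\begin{equation*}
\jaOm\cdot\nabla\bm{u}=\jaOm\cdot\nabla(\bm{u}-\jc)=\divg\bigl(\jaOm(\bm{u}-\jc)\bigr)-(\divg\jaOm)(\bm{u}-\jc).
\end{equation*}
This identity is justified distributionally: $\bm{u}-\jc\in H^1\cap L^\infty$, $\jaOm\in L^2$, and $\divg\jaOm\in L^{\jqq}$. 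The rigorous way to see it is to test against $\varphi\in C_c^\infty$ and approximate $\bm{u}$ by mollifications that converge in $H^1$ and weak-$*$ in $L^\infty$. Consequently $\bm{w}_\rr:=\bm{u}-\bm{h}_\rr$ solves
\begin{equation*}
-\Delta\bm{w}_\rr=\divg\bigl(\jaOm(\bm{u}-\jc)\bigr)-(\divg\jaOm)(\bm{u}-\jc)+\bm{f}\ \text{in } B_\rr,\qquad \bm{w}_\rr=0 \text{ on }\partial B_\rr .
\end{equation*}

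Next, split $\bm{w}_\rr=\bm{w}_1+\bm{w}_2+\bm{w}_3$ by linearity, with the three right-hand sides $\bm{g}_1=\divg(\jaOm(\bm{u}-\jc))$, $\bm{g}_2=-(\divg\jaOm)(\bm{u}-\jc)$, and $\bm{g}_3=\bm{f}$, each with zero boundary data. For $\bm{w}_1$, the Calderón--Zygmund estimate for divergence-form data, which is scale invariant on balls, gives $\|\nabla\bm{w}_1\|_{L^{\jqq}(B_\rr)}\leqslant C_{\jqq}\|\jaOm(\bm{u}-\jc)\|_{L^{\jqq}(B_\rr)}$. Hölder's inequality with $1/\jqq=1/2+1/\pHold$ and the norm compatibility \eqref{eq:normcompatibility} then bound this by $C_{\jqq}\|\jaOm\|_{L^2(B_\rr)}\|\bm{u}-\jc\|_{L^{\pHold}(B_\rr)}$.

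For $\bm{w}_2$ and $\bm{w}_3$, use the scaled $W^{1,\jqq}$ estimate for $L^{\jqq}$ data, $\|\nabla\bm{w}_i\|_{L^{\jqq}(B_\rr)}\leqslant C_{\jqq}\rr\|\bm{g}_i\|_{L^{\jqq}(B_\rr)}$, exactly as in Step 1 of Lemma~\ref{lemma:reverse_sobolev}. Together with $|\bm{g}_2|\leqslant 2M|\divg\jaOm|$, this gives the bound $C_{\jqq}\rr\bigl(\|\bm{f}\|_{L^{\jqq}(B_\rr)}+2M\|\divg\jaOm\|_{L^{\jqq}(B_\rr)}\bigr)$. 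The factor $\rr$ should be verified by rescaling to $B_1$. On the unit ball, the estimate is a consequence of $W^{1,\jqq}_0$ solvability together with the embedding $L^{\jqq}\hookrightarrow W^{-1,\jqq}$, which holds in 2D because $\jqq^\ast>\jqq'$ when $\jqq>1$. Scaling this back produces the factor $\rr^{1}$.

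Finally, add the three bounds and take the infimum over $|\jc|\leqslant M$. By Lemma~\ref{lemma:osc_infimum_bound} this infimum equals the unrestricted one, and $\inf_{\jc}\|\bm{u}-\jc\|_{L^{\pHold}(B_\rr)}=(\omega_m\rr^m\Osc(\bm{u},B_\rr))^{1/\pHold}$. Absorbing $\omega_m^{1/\pHold}$ and the factor $2$ into $C_{\jqq}$ yields the claim.

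The main obstacle is the first step: justifying the Leibniz splitting when $\jaOm$ is merely $L^2$ and is \emph{not} antisymmetric. The product $(\divg\jaOm)(\bm{u}-\jc)$ is fine because $\bm{u}\in L^\infty$. The delicate point is that $\divg(\jaOm(\bm{u}-\jc))$ is well defined and coincides with $\jaOm\cdot\nabla\bm{u}+(\divg\jaOm)(\bm{u}-\jc)$. I would prove this by approximating $\bm{u}$ through mollification and passing to the limit in each term, using $\jaOm\in L^2$ against $\nabla\bm{u}_\varepsilon\to\nabla\bm{u}$ in $L^2$, and $\divg\jaOm\in L^{\jqq}$ against $\bm{u}_\varepsilon\to\bm{u}$ boundedly a.e.
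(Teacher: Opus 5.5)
Your proposal is correct and follows essentially the paper's route: you use the Leibniz rewriting with $|\jc|\leqslant M$, Calder\'on--Zygmund plus H\"older with $1/\jqq=1/2+1/\pHold$ on the flux $\jaOm(\bm{u}-\jc)$, the scaled estimate with factor $\rr$ on the $L^{\jqq}$ remainder, and then the infimum over $\jc$; the only difference is that the paper lumps your $\bm{g}_2,\bm{g}_3$ into a single source $\bm{F}=\bm{f}-(\divg\jaOm)(\bm{u}-\jc)$. One small slip: the embedding $L^{\jqq}\hookrightarrow W^{-1,\jqq}$ holds trivially on bounded domains by H\"older and $W^{1,\jqq'}_0\subset L^{\jqq'}$, with no dimensional input, whereas the reason you give, $\jqq^\ast>\jqq'$, is actually false for $\jqq\leqslant 4/3$.
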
}}

\begin{proof}
  Since $\jaOm \in L^2$, $\divg \jaOm \in L^{\jqq}$, and
  $\bm{u} \in L^{\infty}$, we invoke the standard
  distributional Leibniz rule. For any constant vector $\jc \in \RR^{n + 1}$,
  the system \eqref{eq:generalized_riviere} can be rewritten locally as:
  \begin{equation}
    - \Delta \bm{u}= \divg (\jaOm
    (\bm{u}- \jc)) +
    \underbrace{\bm{f}- (\divg \jaOm) 
    (\bm{u}- \jc)}_{:= \bm{F}} .
  \end{equation}
  Due to the global bound $\|\bm{u}\|_{L^{\infty}}
  \leqslant M$, by Lemma~\ref{lemma:osc_infimum_bound} we can focus on
  constants $\jc \in \RR^{n + 1}$ such that $\left| \jc \right| \leqslant M$,
  i.e., to constants ensuring pointwise $|\bm{u}- \jc |
  \leqslant 2 M$. By the triangle inequality, the local effective source
  $\bm{F}$ is bounded by:
  \begin{equation}
    \|\bm{F}\|_{L^{\jqq} (B_{\rr})} \leqslant
    \|\bm{f}\|_{L^{\jqq} (B_{\rr})} + 2 M \| \divg \jaOm
    \|_{L^{\jqq} (B_{\rr})} .
  \end{equation}
  We proceed like in Lemma~\ref{lemma:reverse_sobolev}. We decompose
  $\bm{w}_{\rr} :=
  \bm{u}-\ensuremath{\boldsymbol{h}}_{\rr}
  =\bm{w}_1 +\bm{w}_2 \in H^1_0
  (B_{\rr})$, where $- \Delta \bm{w}_1 = \divg (\jaOm
  (\bm{u}- \jc))$ and $- \Delta
  \bm{w}_2 =\bm{F}$. Applying
  standard elliptic estimates and Hölder's inequality ($1 / \jqq = 1 / 2 + 1 /
  \pHold$) we get
  \begin{equation}
    \| \nabla \bm{w}_1 \|_{L^{\jqq} (B_{\rr})} \leqslant
    C_{\jqq} \| \jaOm \|_{L^2 (B_{\rr})}  \|\bm{u}- \jc
    \|_{L^{\pHold} (B_{\rr})} .
  \end{equation}
  For the subcritical source term involving $\bm{F}$,
  classical estimates scaled to $B_{\rr}$ yield:
  \begin{equation}
    \| \nabla \bm{w}_2 \|_{L^{\jqq} (B_{\rr})} \leqslant
    C_{\jqq}  \rr \|\bm{F}\|_{L^{\jqq} (B_{\rr})} .
  \end{equation}
  The conclusion follows by the triangle inequality, summing the bounds, and
  absorbing the involved absolute constants into $C_{\jqq}$.
\end{proof}

Having established (in Lemma~\ref{lem:reverse_sobolev})  the existence of a generalized Caccioppoli-type estimate  and the controlled harmonic
approximation (Lemma~\ref{lemma:harmonic_approx}), we can now combine these
results to obtain the following result which will be the main tool in the
derivation of the decay estimate for almost harmonic maps.

{\myprop{\begin{lemma}[Generalized coupled Caccioppoli-type estimate]
  \label{lemma:reverse_sobolev_bound3}Let $\jqq \in (1, 2)$ be the fixed
  integrability exponent of the source term $\bm{f}$, and
  let $\pHold \in (2, \infty)$ be the corresponding oscillation exponent
  determined by the relation $\text{} \pHold = 2 \jqq / \left( 2 - \jqq
  \right)$. Assume that $\jaOm \in L^2 (B_1, \RR^{(n + 1) \times (n + 1)}
  \otimes \RR^2)$, $\divg \jaOm \in L^{\jqq} (B_1, \RR^{(n + 1) \times (n +
  1)})$, and $\bm{f} \in L^{\jqq} (B_1, \RR^{n + 1})$.
  Let $\bm{u} \in H^1 \cap L^{\infty} (B_1, \RR^{n + 1})$
  be a bounded weak solution to
  \begin{equation}
    - \Delta \bm{u}= \jaOm \cdot \nabla
    \bm{u}+\bm{f},
  \end{equation}
  with global bound $\|\bm{u}\|_{L^{\infty} (B_1)}
  \leqslant M$.
  
  Then, there exists a harmonic function $\ensuremath{\boldsymbol{h}}: B_{1 /
  4} \to \RR^{n + 1}$ such that the following gradient estimate holds on $B_{1
  / 4}$:
  \begin{equation}
    \| \nabla
    (\bm{u}-\ensuremath{\boldsymbol{h}})\|_{L^{\jqq}
    (B_{1 / 4})} \leqslant C_{\jqq}  \| \jaOm \|_{L^2 (B_1)} \Osc^{1 / \pHold}
    (\bm{u}, B_1) + C_{\jqq} \left(
    \|\bm{f}\|_{L^{\jqq} (B_1)} + M\| \divg \jaOm
    \|_{L^{\jqq} (B_1)} \right), \label{eq:reverse_sobolev_bound3}
  \end{equation}
  where $C_{\jqq} > 0$ depends only on the dimension $m = 2$ and the exponent
  $\jqq$. Moreover,
  \begin{equation}
    \sup_{x \in B_{1 / 4}} | \nabla \ensuremath{\boldsymbol{h}}(x) | \leqslant
    C_m \cdot \Osc^{1 / \pHold} (\bm{u}, B_1),
    \label{eq:harmapproxosc3}
  \end{equation}
  where $C_m > 0$ is a dimensional constant.
\end{lemma}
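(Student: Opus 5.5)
The proof will mirror that of Lemma~\ref{lemma:reverse_sobolev_bound}, with Lemma~\ref{lem:reverse_sobolev} replacing Lemma~\ref{lemma:reverse_sobolev}. First, apply the harmonic approximation Lemma~\ref{lemma:harmonic_approx} to $\bm{u}\in H^1(B_1,\RR^{n+1})$ with the exponent $\pHold=2\jqq/(2-\jqq)>2$. This lemma needs no PDE hypothesis and no sphere constraint. It yields a good radius $\rr\in[1/2,1]$ and the harmonic extension $\ensuremath{\boldsymbol{h}}=\ensuremath{\boldsymbol{h}}_{\rr}$ of $\bm{u}_{\restr\partial B_{\rr}}$ to $B_{\rr}$. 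It also gives the gradient bound
\[
\sup_{B_{1/4}}|\nabla\ensuremath{\boldsymbol{h}}|\leqslant C_m\,\Osc^{1/\pHold}(\bm{u},B_1),
\]
which is exactly \eqref{eq:harmapproxosc3}. We then restrict $\ensuremath{\boldsymbol{h}}$ to $B_{1/4}\subset B_{\rr/2}$.

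Second, since $\ensuremath{\boldsymbol{h}}$ is precisely the harmonic extension on $B_{\rr}$ used in Lemma~\ref{lem:reverse_sobolev}, apply that lemma on $B_{\rr}$. The hypotheses there ($\jaOm\in L^2$, $\divg\jaOm\in L^{\jqq}$, $\bm{f}\in L^{\jqq}$, $\|\bm{u}\|_{L^\infty}\leqslant M$) restrict to $B_{\rr}$ unchanged. This gives
\[
\|\nabla(\bm{u}-\ensuremath{\boldsymbol{h}})\|_{L^{\jqq}(B_{\rr})}\leqslant C_{\jqq}\|\jaOm\|_{L^2(B_{\rr})}\bigl(\rr^m\Osc(\bm{u},B_{\rr})\bigr)^{1/\pHold}+C_{\jqq}\rr\bigl(\|\bm{f}\|_{L^{\jqq}(B_{\rr})}+M\|\divg\jaOm\|_{L^{\jqq}(B_{\rr})}\bigr).
\]

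Third, pass from $B_{\rr}$ to the unit-scale quantities. The left side only decreases when restricted to $B_{1/4}\subset B_{\rr}$. On the right, use $\rr\leqslant 1$ and monotonicity of the norms under enlargement of the domain to replace $B_{\rr}$ by $B_1$. For the oscillation, use $\rr^m\Osc(\bm{u},B_{\rr})\leqslant\Osc(\bm{u},B_1)$, which is \eqref{eq:monotonBPsi} with $\rr_1=\rr$, $\rr_2=1$. Any factors of $\omega_m$ are absorbed into $C_{\jqq}$. Together these give \eqref{eq:reverse_sobolev_bound3}.

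There is no genuine obstacle. The only point requiring care is that the \emph{same} harmonic function $\ensuremath{\boldsymbol{h}}$ must serve both estimates. This holds because Lemma~\ref{lemma:harmonic_approx} selects $\rr$ purely from $\bm{u}$, and Lemma~\ref{lem:reverse_sobolev} is valid for every radius $\rr\in(0,1)$. The case $\rr=1$ can be avoided by choosing $\rr<1$ in the mean-value selection, since the admissible set of radii has positive measure.
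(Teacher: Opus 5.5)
Your proposal is correct and follows essentially the same route as the paper: Lemma~\ref{lemma:harmonic_approx} fixes the good radius $\rr\in[1/2,1]$ and the harmonic extension, Lemma~\ref{lem:reverse_sobolev} is then applied on $B_{\rr}$, and the estimate is closed by restricting to $B_{1/4}$ and using $\rr\leqslant 1$ together with $\rr^m\Osc(\bm{u},B_{\rr})\leqslant\Osc(\bm{u},B_1)$. Your additional remark about choosing $\rr<1$, so as to match the hypothesis $\rr\in(0,1)$ of Lemma~\ref{lem:reverse_sobolev}, is a small point of care that the paper passes over.
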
}}

\begin{proof}
  The argument is structurally identical to the proof of
  Lemma~\ref{lemma:reverse_sobolev_bound}, with the following two
  substitutions. First, Lemma~\ref{lem:reverse_sobolev} (rather than
  Lemma~\ref{lemma:reverse_sobolev}) is invoked on the good radius $\rr \in [1
  / 2, 1]$, providing
  \begin{equation}
    \| \nabla
    (\bm{u}-\ensuremath{\boldsymbol{h}}_{\rr})\|_{L^q
    (B_{\rr})} \leqslant C_{\jqq}  \| \jaOm \|_{L^2 (B_{\rr})} (\rr^m \Osc
    (\bm{u}, B_{\rr}))^{1 / \pHold} + C_{\jqq}  \rr
    \left( \|\bm{f}\|_{L^{\jqq} (B_{\rr})} + M\| \divg
    \jaOm \|_{L^{\jqq} (B_{\rr})} \right) .
  \end{equation}
  Second, the bound on the harmonic extension $\ensuremath{\boldsymbol{h}}$
  (Lemma~\ref{lemma:harmonic_approx}) is unchanged. Restricting the LHS to
  $B_{1 / 4} \subset B_{\rr}$ and using $\rr \leqslant 1$ together with the
  monotonicity of $\rr \mapsto \rr^m \Osc$
  yields~\eqref{eq:reverse_sobolev_bound3} and~\eqref{eq:harmapproxosc3}.
\end{proof}

\subsection{The Generalized Decay Estimate}
We now prove that the structural matrix $\jaOm$ dictates the decay of the
oscillation provided its $L^2$ energy is sufficiently small. Specifically, the
content of the next result is to prove that the class of almost harmonic maps
 satisfies the decay property formulated in {\jax}~\ref{ax:decaygen}.

{\myprop{\begin{lemma}[Generalized Decay Property]
  \label{lem:gen_decay}For any integrability exponent $\jqq \in (1, 2)$, let
  $\pHold := 2 \jqq / \left( 2 - \jqq \right)$ be its corresponding
  oscillation exponent. There exist structural constants $\jth, \jgam \in (0,
  1)$, $\kappa > 0$, and an $\jaOm$-energy threshold $\varepsilon_{\ast} > 0$
  such that the following holds. Let $\bm{u}$ be a
  bounded weak solution on the unit ball $B_1$ to the equation {\opt}see {{\em
  \eqref{eq:generalized_riviere}}}{\cpt}:
  \begin{equation}
    - \Delta \bm{u}= \jaOm \cdot \nabla
    \bm{u}+\bm{f} \quad \text{in }
    B_1,
  \end{equation}
  satisfying the global bound $\|\bm{u}\|_{L^{\infty}
  (B_1)} \leqslant M$ for some $M > 0$. If the coefficient $\jaOm$ satisfies
  the smallness condition $\| \jaOm \|_{L^2 (B_1)} < \varepsilon_{\ast}$, then
  the oscillation of $\bm{u}$ decays according to the
  estimate:
  \begin{equation}
    \Osc (\bm{u}, B_{\jth}) \leqslant \jgam \cdot \Osc
    (\bm{u}, B_1) + \kappa \left(  \|
    \bm{f} \|_{L^{\jqq} (B_1)} + M \left\| \divg \jaOm
    \right\|_{L^{\jqq} (B_1)} \right)^{\pHold} .
  \end{equation}
  In fact, one can always take $\jgam = 1 / 2$.
\end{lemma}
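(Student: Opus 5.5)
The plan is to repeat the proof of Lemma~\ref{lem:decay} almost verbatim, with Lemma~\ref{lemma:reverse_sobolev_bound3} in place of Lemma~\ref{lemma:reverse_sobolev_bound}. The only change is that $\|\jaOm\|_{L^2(B_1)}$ now plays the role of $\|\nabla\bm{u}\|_{L^2(B_1)}$ as the small quantity. The effective source $\|\bm{f}\|_{L^{\jqq}} + M\|\divg\jaOm\|_{L^{\jqq}}$ replaces $\|\bm{f}\|_{L^{\jqq}}$.

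First, I apply Lemma~\ref{lemma:reverse_sobolev_bound3} to obtain a harmonic $\bm{h}$ on $B_{1/4}$ satisfying \eqref{eq:reverse_sobolev_bound3} and \eqref{eq:harmapproxosc3}. For $\jth\in(0,1/4)$ I split, using the seminorm property from Lemma~\ref{lemma:monotonBPsi},
\[
\Osc^{1/\pHold}(\bm{u},B_{\jth}) \leqslant \Osc^{1/\pHold}(\bm{u}-\bm{h},B_{\jth}) + \Osc^{1/\pHold}(\bm{h},B_{\jth}) =: I + II .
\]
For $I$, I use the mean-value Poincar\'e--Sobolev inequality on $B_{\jth}$ with $W^{1,\jqq}\hookrightarrow L^{\pHold}$. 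In dimension two this is exactly the Sobolev exponent $\pHold = 2\jqq/(2-\jqq)$, and the constant is scale invariant. This gives $I\leqslant C\jth^{-2/\pHold}\|\nabla(\bm{u}-\bm{h})\|_{L^{\jqq}(B_{1/4})}$, which I then bound by \eqref{eq:reverse_sobolev_bound3}. For $II$, I take $\jc=\bm{h}(0)$ and use the mean value theorem together with \eqref{eq:harmapproxosc3}, which gives $II\leqslant C_m\jth\,\Osc^{1/\pHold}(\bm{u},B_1)$.

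Collecting the two bounds and raising to the power $\pHold$ with $(a+b)^{\pHold}\leqslant 2^{\pHold-1}(a^{\pHold}+b^{\pHold})$ yields
\[
\Osc(\bm{u},B_{\jth}) \leqslant C^{\pHold}\Bigl(\frac{\|\jaOm\|_{L^2(B_1)}^{\pHold}}{\jth^{2}} + \jth^{\pHold}\Bigr)\Osc(\bm{u},B_1) + \frac{C^{\pHold}}{\jth^{2}}\bigl(\|\bm{f}\|_{L^{\jqq}(B_1)} + M\|\divg\jaOm\|_{L^{\jqq}(B_1)}\bigr)^{\pHold}.
\]
I then fix $\jth$ so that $C^{\pHold}\jth^{\pHold}\leqslant 1/4$. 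Next I choose $\varepsilon_\ast$ so that $C^{\pHold}\varepsilon_\ast^{\pHold}\jth^{-2}\leqslant 1/4$. This gives $\jgam=1/2$ and $\kappa=C^{\pHold}\jth^{-2}$.

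The main obstacle is that the constants $C$ must not depend on $M$, $\bm{u}$, or $\jaOm$, so that $\jth$ and $\varepsilon_\ast$ are truly structural. I expect this to hold because $M$ enters only linearly in the source term, through the restriction $|\jc|\leqslant M$ from Lemma~\ref{lemma:osc_infimum_bound}. The only other point to check is that the constant $C_{\jqq}$ in Lemma~\ref{lemma:reverse_sobolev_bound3} depends only on $\jqq$, and this is part of its statement.
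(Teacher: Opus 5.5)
Your proposal is correct and follows the paper's proof step for step. You use the same splitting $I+II$ built on Lemma~\ref{lemma:reverse_sobolev_bound3}, the scale-invariant Poincar\'e--Sobolev bound at the critical exponent $p=2q/(2-q)$ for $I$, the mean value bound for $II$, and the same sequential choice of $\theta$ then $\varepsilon_\ast$, giving $\gamma=1/2$ and $\kappa=C^{p}\theta^{-2}$. Your observation that $M$ enters only linearly through the effective source, via the restriction $|\bm{c}|\leqslant M$, is exactly why $\theta$, $\varepsilon_\ast$ and $\kappa$ are independent of $M$ in the paper as well.
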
}}

\begin{proof}
  To distinguish the exponent originating from $L^2$-integrability from the
  spatial dimension, we will temporarily denote the domain dimension by $m$
  (where $m = 2$).
  
  By Lemma~\ref{lemma:reverse_sobolev_bound3}, there exists a harmonic
  function $\ensuremath{\boldsymbol{h}}: B_{1 / 4} \rightarrow \RR^{n + 1}$
  satisfying the gradient estimates \eqref{eq:reverse_sobolev_bound3} and
  \eqref{eq:harmapproxosc3}. For $\jth \in (0, 1 / 4)$, the triangle
  inequality gives:
  \begin{equation}
    \Osc^{1 / \pHold} (\bm{u}, B_{\jth}) \leqslant
    \Osc^{1 / \pHold}
    (\bm{u}-\ensuremath{\boldsymbol{h}}, B_{\jth}) +
    \Osc^{1 / \pHold} (\ensuremath{\boldsymbol{h}}, B_{\jth}) = : I + I  I.
  \end{equation}
  For $I$, applying the Poincaré-Sobolev inequality (noting the exact matching
  of the Sobolev conjugate exponent $\pHold$ in dimension two), the coupled
  Caccioppoli-type estimate (see Lemma~\ref{lemma:reverse_sobolev_bound3}), and
  using that $\rr^2 \Osc (\bm{u}, B_{\rr}) \leqslant \Osc
  (\bm{u}, B_1)$ because of the monotonicity of $\rr^2
  \Osc (\bm{u}, B_{\rr})$, we deduce that:
  \begin{eqnarray}
    I & \leqslant & \frac{C_S}{\left( \omega_m  \jth^m \right)^{1 / \pHold}} 
    \| \nabla
    (\bm{u}-\ensuremath{\boldsymbol{h}})\|_{L^{\jqq}
    (B_{\jth})} \\
    & \leqslant & \frac{C_{\jqq}}{\jth^{m / \pHold}} \| \jaOm \|_{L^2 (B_1)}
    \Osc^{1 / \pHold} (\bm{u}, B_1) +
    \frac{C_{\jqq}}{\jth^{m / \pHold}} \left(
    \|\bm{f}\|_{L^{\jqq} (B_1)} + M\| \divg \jaOm
    \|_{L^{\jqq} (B_1)} \right) . 
  \end{eqnarray}
  Here, the constant $C_{\jqq}$ has been harmlessly redefined to absorb the
  Poincaré-Sobolev constant $C_S$ and geometric constants.
  
  It remains to estimate $I  I := \Osc^{1 / \pHold}
  (\ensuremath{\boldsymbol{h}}, B_{\jth})$. For that, the mean value theorem
  and the gradient bound for $\ensuremath{\boldsymbol{h}}$ yield
  \begin{equation}
    I  I \leqslant \jth \sup_{B_{1 / 4}} | \nabla \ensuremath{\boldsymbol{h}}|
    \leqslant C_m  \jth \Osc^{1 / \pHold} (\bm{u}, B_1)
  \end{equation}
  Combining the estimates for $I$ and $I  I$, and raising to the power
  $\pHold$, we obtain the master decay inequality purely in terms of the
  original PDE data:
  \begin{equation}
    \label{eq:gen_decay_master} \Osc (\bm{u}, B_{\jth})
    \leqslant C_{m, \jqq}^{\pHold}  \left( \frac{\| \jaOm \|_{L^2
    (B_1)}^p}{\jth^m} + \jth^{\pHold} \right) \Osc
    (\bm{u}, B_1) + \frac{C_{m, \jqq}^{\pHold}}{\jth^m}
    \left( \|\bm{f}\|_{L^{\jqq} (B_1)} + M\| \divg \jaOm
    \|_{L^{\jqq} (B_1)} \right)^{\pHold},
  \end{equation}
  with $C_{m, \jqq}$ a positive constant that depends only on $m$ and $\jqq$.
  
  We fix the geometric decay factor $\jgam = 1 / 2$. First, select $\jth \in
  (0, 1 / 4)$ sufficiently small such that $C_{m, \jqq}^{\pHold} 
  \jth^{\pHold} \leqslant 1 / 4$. With $ \jth$ fixed, we  define the
  threshold parameter $\varepsilon_{\ast} > 0$ such that $C_{m, \jqq}^{\pHold}
  \varepsilon_{\ast}^p / \jth^m \leqslant 1 / 4$. Provided $\| \jaOm \|_{L^2
  (B_1)} < \varepsilon_{\ast}$, the scaling brackets in
  \eqref{eq:gen_decay_master} sum to at most $1 / 2$. Setting $\kappa := C_{m,
  \jqq}^{\pHold} / \jth^m$ concludes the proof.
\end{proof}

\subsection{Proof of the Main Regularity Theorem
(Theorem~\ref{thm:generalized_regularity})}\label{sec:genregproof}
Let $\bm{u}$ be an arbitrary bounded weak solution to the
generalized system. We fix its global amplitude bound $M :=
\|\bm{u}\|_{L^{\infty} (B_1)}$. Let $\varepsilon_{\ast}$
be the critical threshold from Lemma~\ref{lem:gen_decay}. To invoke
Theorem~\ref{thm:gen_abstract_reg}, we define the admissible class $\class$ as
the set of triplets $(\bm{u}, \jaOm,
\bm{f}) \in \mathcal{X}$ satisfying the original system,
the amplitude bound $M$, and the strict energy threshold:
\begin{equation}
  \class := \left\{ (\bm{u}, \jaOm,
  \bm{f}) \in \mathcal{X} \st - \Delta
  \bm{u}= \jaOm \cdot \nabla
  \bm{u}+\bm{f}, \; \;
  \|\bm{u}\|_{L^{\infty} (B_1)} \leqslant M \; \;
  \text{and} \; \; \| \jaOm \|_{L^2 (B_1)} < \varepsilon_{\ast} \right\},
\end{equation}
where the ambient space explicitly includes the divergence regularity:
\begin{equation}
  \mathcal{X} := \left( H^1 \cap L^{\infty} (B_1, \RR^{n + 1}) \right) \times
  \left\{ \jaOm \in L^2 (B_1) \st \divg \jaOm \in L^{\jqq} (B_1) \right\}
  \times L^{\jqq} (B_1, \RR^{n + 1}) .
\end{equation}
We need to verify {\jaxs}~\ref{ax:closuregen} and \ref{ax:decaygen} required
by Theorem~\ref{thm:gen_abstract_reg}.

{\noindent}{{\em Step 1: Verification of {{\em {\jax}~\ref{ax:closuregen}}}
{\opt}closure under rescaling{\cpt}.}} Let $(\bm{u},
\jaOm, \bm{f}) \in \class$ and let $B_{\rr} (x_0) \subset
B_1$. Consider the rescaled triple $(\bm{u}_{x_0, \rr},
\overline{\jaOm}_{x_0, \rr}, \tilde{\bm{f}}_{x_0, \rr})$.
We need to show that this triple remains in $\class$. Simple scaling shows
that
\begin{equation}
  - \lapl \bm{u}_{x_0, \rr} = \overline{\jaOm}_{x_0, \rr}
  \cdot \nabla \bm{u}_{x_0, \rr} +
  \tilde{\bm{f}}_{x_0, \rr} .
\end{equation}
Furthermore, the global bound is trivially preserved
($\|\bm{u}_{x_0, \rr} \|_{L^{\infty} (B_1)} \leqslant
M$), and the conformal invariance of the $L^2$ norm in dimension two
implies $\| \overline{\jaOm}_{x_0, r} \|_{L^2 (B_1)} = \| \jaOm \|_{L^2
(B_{\rr} (x_0))} \leqslant \| \jaOm \|_{L^2 (B_1)} < \varepsilon_{\ast}$.
Thus, the class $\class$ is closed under rescaling.

{\noindent}{{\em Step 2: Verification of {{\em {\jax} \ref{ax:decaygen}}}
{\opt}the decay property{\cpt}. }}{\jax}~\ref{ax:decaygen} is immediately
satisfied because of Lemma~\ref{lem:gen_decay}.

{\noindent}{{\em Step 3: Conclusion and Removal of the small $\jaOm$-energy
Hypothesis.}} Having verified the two axioms, the abstract regularity
principle (Theorem~\ref{thm:gen_abstract_reg}) dictates that any map
$\bm{u}$ belonging to a triple in $\class$ is locally
Hölder continuous in $B_1$. To conclude the proof of
Theorem~\ref{thm:generalized_regularity}, it remains to explain why this
regularity holds for {{\em any}} weak solution of the generalized system
\eqref{eq:generalized_riviere}, not just those with small initial
$\jaOm$-energy.

Let $\bm{u} \in H^1 (B_1, \RR^{n + 1})$ be an arbitrary
solution of \eqref{eq:generalized_riviere}. This map
$\bm{u}$ might have a large total $\jaOm$-energy, meaning
$(\bm{u}, \jaOm, \bm{f}) \not\in
\class$. However, let $x_0 \in B_1$ be an arbitrary point, and let $R := 1 -
|x_0 | > 0$ denote its distance to the boundary. Because $\jaOm \in L^2
(B_1)$, by the absolute continuity of the Lebesgue integral we can always find
a sufficiently small radius $\rr \in (0, R)$ such that $\left\| \jaOm
\right\|_{L^2 (B_{\rr} (x_0))} < \varepsilon_{\ast}$.

If we now construct the rescaled pair $(\bm{u}_{x_0,
\rr}, \overline{\jaOm}_{x_0, \rr}, \tilde{\bm{f}}_{x_0,
\rr})$, it is defined on the unit ball $B_1$, solves the required equation,
and  satisfies the small-energy threshold $\left\|
\overline{\jaOm}_{x_0, \rr} \right\|_{L^2 (B_1)} < \varepsilon_{\ast}$.
Consequently, this rescaled triple belongs to our class $\class$. By
Theorem~\ref{thm:gen_abstract_reg}, the rescaled map
$\bm{u}_{x_0, \rr}$ is locally Hölder continuous in
$B_1$. Scaling back to the original spatial coordinates, this implies that the
original map $\bm{u}$ is Hölder continuous in the
neighborhood $B_{\rr / 2} (x_0)$. Since the point $x_0 \in B_1$ was chosen
arbitrarily, the map $\bm{u}$ is locally Hölder
continuous everywhere in $B_1$, completing the proof.

\section{Regularity of anisotropic systems with general connection
forms}\label{sec:aniso}
In the preceding section, we showed that the antisymmetry condition on the
connection matrix $\jaOm$ can be replaced by the analytic assumption
$\divg \jaOm \in L^{\jqq}$. The principal operator was still the Laplacian,
corresponding to the identity coefficient matrix. We now study the anisotropic
case and determine the regularity assumptions on the coefficients required for
the Campanato decay argument.
Anisotropic Dirichlet energies of the form
\begin{equation}
  \Energy (\bm{u}, B_1) := \frac{1}{2}  \int_{B_1} A
  \nabla \bm{u} : \nabla \bm{u}
  \hspace{0.17em} \mathrm{d} x,
\end{equation}
arise in models of inhomogeneous media. Their critical points satisfy the equation $- \divg (A \nabla \bm{u}) =
  \jaOm \cdot \nabla \bm{u}+\bm{f}$, where $A(x)$ is symmetric and uniformly elliptic, and $\jaOm$ describes the
geometric structure of the target.
For constant $A$, the system reduces, after a linear change of variables, to
the previous setting. If $\det A$ is constant, a uniformization argument
reduces the problem to a harmonic-map-type system. This approach does not
apply when $\det A$ varies, which requires a direct analysis.

The goal of this section is to prove the following generalization, where
$\jaOm$ is not assumed to be antisymmetric and the principal operator is not
restricted to the Laplacian.

{\noindent}\textbf{Theorem~\ref{thm:aniso_regularity}.} (Regularity of anisotropic systems) 
{\itshape{ Let
$B_1 \subset \RR^2$. Let $\bm{u} \in H^1 \cap L^{\infty}
(B_1, \RR^{n + 1})$ be a weak solution of
\begin{equation}
  \label{eq:aniso_main} - \divg (A \nabla \bm{u}) = \jaOm
  \cdot \nabla \bm{u}+\bm{f} \qquad
  \text{in } B_1 .
\end{equation}
Assume the structural data satisfy, for some $\jqq > 1$ and some $\gamma_0 \in
(0, 1]$:
\begin{itemize}
  \item[\text{{\upshape{(A1)}}}] $A : B_1 \to \RR^{2 \times 2}$ is symmetric,
  uniformly elliptic, and Hölder continuous:
  \begin{equation} \lambda | \xi |^2 \leqslant A (x) \xi \cdot \xi \leqslant \Lambda | \xi
     |^2 \qquad \forall x \in B_1, \xi \in \RR^2, \quad 0 < \lambda \leqslant
     \Lambda < + \infty, \end{equation}
  and $A \in C^{0, \gamma_0} (\overline{B_1})$;
  \item[\text{{\upshape{(A2)}}}] $\jaOm \in L^2 (B_1, \RR^{(n + 1) \times (n +
  1)} \otimes \RR^2)$;
  \item[\text{{\upshape{(A3)}}}] $\bm{f} \in L^{\jqq}
  (B_1, \RR^{n + 1})$;
  \item[\text{{\upshape{(A4)}}}] $\divg \jaOm \in L^{\jqq} (B_1, \RR^{(n + 1)
  \times (n + 1)})$.
\end{itemize}
Then $\bm{u}$ is locally Hölder continuous in $B_1$.
Specifically, there exists an exponent $\eta \in (0, 1)$ such that
$\bm{u} \in C^{0, \eta}_{\ensuremath{\operatorname{loc}}}
(B_1, \RR^{n + 1})$.}}

\begin{remark}
  In what follows, we may assume without loss of generality that $1 < \jqq <
  2$. The reason is identical to that explained in
  Remark~\ref{rmk:redsourceintq}.
\end{remark}
\begin{remark}[Necessity of H\"older Continuous Coefficients]
The H\"older continuity of the principal part $A$ is used to control the freezing error $\tilde{A}\nabla \bm{u}$ in the master iteration inequality \eqref{eq:master_aniso}. To apply the algebraic iteration lemma (Lemma~\ref{lem:giusti_iteration}) and obtain local H\"older continuity of $\bm{u}$, the coefficient error $\|\tilde{A}\|_{L^\infty(B_r)}$ must decay at a power rate as $r\to0$. The assumption $A\in C^{0,\gamma_0}$ gives precisely this estimate:
\begin{equation}
  \label{eq:tildeA_bound} 
  \| \tilde{A} \|_{L^{\infty} (B_r)} \leqslant r^{\gamma_0} [A]_{C^{0, \gamma_0} (\overline{B_1})}
  \qquad \forall r \in (0,1].
\end{equation}
Hence, the freezing contribution decays as $r^{\gamma_0}$ and can be incorporated into the iteration. Uniform continuity of $A$ would be sufficient for continuity of $\bm{u}$, but the H\"older assumption provides the algebraic decay needed for the H\"older estimate.
\end{remark}

The proof of Theorem~\ref{thm:aniso_regularity} is given in
Subsection~\ref{ssec:aniso_proof_main}. To this end, we first introduce a
slight extension of the abstract regularity principle of
Theorem~\ref{thm:gen_abstract_reg} to accommodate the additional structural
datum $A$.

\subsection{The anisotropic abstract regularity
principle}\label{ssec:aniso_abstract} We now introduce a second, distinct
abstract regularity framework designed specifically for anisotropic systems.
It is important to emphasize that this result does not subsume the generalized
abstract regularity principle of the previous sections as a special case;
rather, it stands as a  complementary result.

%

Let $\class$ be a class of quadruples $(\bm{u}, \jaOm,
\bm{f}, A)$ belonging to the space
\begin{eqnarray}
  \mathcal{Y} & := & \left( H^1 \cap L^{\infty} (B_1, \RR^{n + 1}) \right)
  \times L^2 (B_1, \RR^{(n + 1) \times (n + 1)} \otimes \RR^m) \nonumber\\
  &  & \qquad \qquad \qquad \qquad \times L^{\jqq} (B_1, \RR^{n + 1}) \times
  C^{0, \gamma_0} (\overline{B_1}, \RR^{m \times m}_{\mathrm{sym}}), 
  \label{eq:defmathcalY}
\end{eqnarray}
where $A$ is uniformly elliptic with constants $0 < \lambda \leqslant \Lambda
< \infty$ and $\divg \jaOm \in L^{\jqq} (B_1)$. For any ball $B_{\rr} (x_0)
\subset B_1$, we define the rescaled quadruple on $B_1$ by
\begin{eqnarray}
  \bm{u}_{x_0, \rr} (y) := \bm{u}
  (x_0 + \rr y), &  & \overline{\jaOm}_{x_0, \rr} (y) := \rr \jaOm (x_0 + \rr
  y), \\
  \tilde{\bm{f}}_{x_0, \rr} (y) := \rr^2
  \bm{f} (x_0 + \rr y), &  & A_{x_0, \rr} (y) := A (x_0 +
  \rr y) .  \label{eq:aniso_rescaling}
\end{eqnarray}
\begin{remark}[Scaling identities]
  \label{rmk:aniso_scaling}The rescaling \eqref{eq:aniso_rescaling} preserves
  the ellipticity bounds of $A$ identically and obeys the algebraic
  identities:
  \begin{eqnarray}
    \|\bm{u}_{x_0, \rr} \|_{L^{\infty} (B_1)} & = &
    \|\bm{u}\|_{L^{\infty} (B_{\rr} (x_0))}, 
    \label{eq:scale_uinf}\\
    \| \nabla \bm{u}_{x_0, \rr} \|_{L^2 (B_1)} & = &
    \rr^{(2 - m) / 2} \| \nabla \bm{u}\|_{L^2 (B_{\rr}
    (x_0))},  \label{eq:scale_grad}\\
    \| \overline{\jaOm}_{x_0, \rr} \|_{L^2 (B_1)} & = & \rr^{(2 - m) / 2} \|
    \jaOm \|_{L^2 (B_{\rr} (x_0))},  \label{eq:scale_Omega}\\
    \| \divg_y \overline{\jaOm}_{x_0, \rr} \|_{L^{\jqq} (B_1)} & = & \rr^{2 -
    m / \jqq} \| \divg_x \jaOm \|_{L^{\jqq} (B_{\rr} (x_0))}, 
    \label{eq:scale_divOmega}\\
    \| \tilde{\bm{f}}_{x_0, \rr} \|_{L^{\jqq} (B_1)} & =
    & \rr^{2 - m / \jqq} \|\bm{f}\|_{L^{\jqq} (B_{\rr}
    (x_0))},  \label{eq:scale_f}\\
    {}[A_{x_0, \rr}]_{C^{0, \gamma_0} (\overline{B_1})} & = & \rr^{\gamma_0}
    [A]_{C^{0, \gamma_0} (\overline{B_{\rr} (x_0)})} . 
    \label{eq:scale_Holder}
  \end{eqnarray}
  In dimension $m = 2$, the gradient $L^2$-norm and the connection $L^2$-norm
  are  scale-invariant (\eqref{eq:scale_grad}--\eqref{eq:scale_Omega});
  the source norms decay with exponent $2 - m / \jqq > 0$ (whenever $\jqq > m
  / 2$); and the Hölder seminorm of $A$ contracts with exponent $\gamma_0$.
  These three decay mechanisms are the engine driving the iteration in
  Theorem~\ref{thm:aniso_abstract_reg} below.
\end{remark}

The class is required to obey the two axioms below.

{\mydef{\begin{axiom}[Locality and closure]
  \label{ax:closure_aniso}The class $\class$ is closed under the rescaling
  {{\em \eqref{eq:aniso_rescaling}}}: for any $(\bm{u},
  \jaOm, \bm{f}, A) \in \class$ and any $B_{\rr} (x_0)
  \subset B_1$, the rescaled quadruple $(\bm{u}_{x_0,
  \rr}, \overline{\jaOm}_{x_0, \rr}, \tilde{\bm{f}}_{x_0,
  \rr}, A_{x_0, \rr})$ is again in $\class$ over the unit ball $B_1 \subseteq
  \RR^m$.
\end{axiom}}}

{\mydef{\begin{axiom}[Decay property]
  \label{ax:decay_aniso}There exist structural constants $\jth \in (0, 1)$,
  $\gamma \in (0, 1)$, $\pHold \geqslant 1$, an amplitude bound $M > 0$, and
  $\kappa > 0$, such that for any quadruple $(\bm{u},
  \jaOm, \bm{f}, A) \in \class$ satisfying
  $\|\bm{u}\|_{L^{\infty} (B_1)} \leqslant M$, the
  oscillation contracts at the interior scale:
  \begin{equation}
    \label{eq:axiom_decay_aniso} \Osc (\bm{u}, B_{\jth})
    \leqslant \jgam \cdot \Osc (\bm{u}, B_1) + \kappa
    \left( D_{\mathrm{src}}^{\pHold} + D_{\mathrm{frz}}^{\pHold} \right),
  \end{equation}
  where the source and freezing data are
  \begin{equation}
    \label{eq:def_Dsrc_Dfrz} D_{\mathrm{src}} :=
    \|\bm{f}\|_{L^{\jqq} (B_1)} + M \| \divg \jaOm
    \|_{L^{\jqq} (B_1)}, \qquad D_{\mathrm{frz}} := \| \tilde{A}
    \|_{L^{\infty} (B_1)}  \| \nabla \bm{u}\|_{L^2
    (B_1)},
  \end{equation}
  with $\tilde{A} := A - A (0)$.
\end{axiom}}}

Proceeding as in the proof of Theorem~\ref{thm:gen_abstract_reg}, one obtains
the following abstract result.

{\myprop{\begin{theorem}[Anisotropic abstract regularity principle]
  \label{thm:aniso_abstract_reg}Let $\class$ be a class of quadruples
  satisfying \text{{\upshape{{\jaxs}~\ref{ax:closure_aniso}}}} and
  \text{{\upshape{\ref{ax:decay_aniso}}}} on the unit ball $B_1 \subset
  \RR^m$. Let $(\bm{u}, \jaOm,
  \bm{f}, A) \in \class$ with
  $\|\bm{u}\|_{L^{\infty} (B_1)} \leqslant M$. Assume
  that $A$ is Hölder continuous of exponent $\gamma_0 \in (0, 1]$ satisfying
  $\gamma_0 > \frac{m - 2}{2}$ and that $\jqq > m / 2$. Then
  $\bm{u}$ is locally Hölder continuous in $B_1$:
  $\bm{u} \in C^{0,
  \jeta}_{\ensuremath{\operatorname{loc}}} (B_1)$ for every exponent $\jeta$
  satisfying
  \begin{equation}
    \label{eq:aniso_holder_exponent} \jeta < \min \left( \frac{\ln
    \jgam}{\pHold \ln \jth}, 2 - \frac{m}{\jqq}, \gamma_0 + \frac{2 - m}{2}
    \right) .
  \end{equation}
  For any compact $K \subset B_1$, the corresponding Hölder seminorm depends
  only on $\jeta$, the structural parameters $\jth, \jgam, \kappa, \pHold, M$,
  the data norms $\| \divg \jaOm \|_{L^{\jqq} (B_1)}$,
  $\|\bm{f}\|_{L^{\jqq} (B_1)}$, $\| \nabla
  \bm{u}\|_{L^2 (B_1)}$, $[A]_{C^{0, \gamma_0}
  (\overline{B_1})}$, and the distance $\ensuremath{\operatorname{dist}} (K,
  \partial B_1)$.
\end{theorem}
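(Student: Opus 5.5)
The proof follows the scheme of Theorem~\ref{thm:gen_abstract_reg}; the only new ingredient is the freezing term $D_{\mathrm{frz}}$, which has to be rescaled and given a power of $\rr$. Fix $x_0\in B_{1/2}$ and $\rr_0:=1/2$, and set $\psi(\rr):=\rr^m\Osc(\bm{u},B_\rr(x_0))$, which is non-decreasing. By \jax~\ref{ax:closure_aniso}, the rescaled quadruple $(\bm{u}_{x_0,\rr},\overline{\jaOm}_{x_0,\rr},\tilde{\bm f}_{x_0,\rr},A_{x_0,\rr})$ lies in $\class$. Its amplitude is at most $M$ by \eqref{eq:scale_uinf}, so \jax~\ref{ax:decay_aniso} applies to it. Translating back with \eqref{eq:scalingPsi}, exactly as in Step~1 of the proof of Theorem~\ref{thm:gen_abstract_reg}, gives
\begin{equation*}
\psi(\jth\rr)\leqslant \jth^m\jgam\,\psi(\rr)+\kappa\jth^m\rr^m\bigl(D_{\mathrm{src}}(\rr)^{\pHold}+D_{\mathrm{frz}}(\rr)^{\pHold}\bigr),
\end{equation*}
where $D_{\mathrm{src}}(\rr)$ and $D_{\mathrm{frz}}(\rr)$ denote the data of the rescaled quadruple.

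The source term is handled as before. By \eqref{eq:scale_divOmega}--\eqref{eq:scale_f}, $D_{\mathrm{src}}(\rr)\leqslant \rr^{2-m/\jqq}\bigl(\|\bm f\|_{L^\jqq(B_1)}+M\|\divg\jaOm\|_{L^\jqq(B_1)}\bigr)$.

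For the freezing term, note that $\tilde A_{x_0,\rr}(y)=A(x_0+\rr y)-A(x_0)$, so $\|\tilde A_{x_0,\rr}\|_{L^\infty(B_1)}\leqslant \rr^{\gamma_0}[A]_{C^{0,\gamma_0}(\overline{B_1})}$, as in \eqref{eq:tildeA_bound}. Combining this with \eqref{eq:scale_grad} yields
\begin{equation*}
D_{\mathrm{frz}}(\rr)\leqslant \rr^{\gamma_0+(2-m)/2}\,[A]_{C^{0,\gamma_0}}\|\nabla\bm u\|_{L^2(B_1)}.
\end{equation*}
The exponent $\gamma_0+(2-m)/2$ is positive precisely under the hypothesis $\gamma_0>(m-2)/2$. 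This is the step I expect to require the most care. One must not try to keep $\|\nabla\bm u\|_{L^2(B_\rr(x_0))}$ small or decaying, since for $m>2$ it carries a negative power of $\rr$. Instead the power loss is simply absorbed into the H\"older gain of $A$, and the global energy $\|\nabla\bm u\|_{L^2(B_1)}$ appears as a constant. This is exactly why the energy enters the dependencies of the seminorm.

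Since $\rr\leqslant 1$, the two inhomogeneous terms combine into $b\,\rr^{\beta}$ with
\begin{equation*}
\beta:=m+\pHold\min\Bigl(2-\tfrac{m}{\jqq},\,\gamma_0+\tfrac{2-m}{2}\Bigr)>m,
\end{equation*}
where $b$ depends only on $\kappa,\jth,\pHold,M$, the data norms, $[A]_{C^{0,\gamma_0}}$ and $\|\nabla\bm u\|_{L^2(B_1)}$, and not on $x_0$ or $\rr$.

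Steps 2--5 then repeat verbatim. Replace $\jgam$ by $\jgam_\varepsilon:=\max(\jgam,\jth^{\beta-m-\varepsilon})$ so that $\jalfa_\varepsilon<\beta$, and apply Lemma~\ref{lem:giusti_iteration}. Bound $\Osc(\bm u,B_{\rr_0}(x_0))\leqslant M^{\pHold}$ using $\jc=\bm 0$. This gives the uniform Campanato estimate $\Osc(\bm u,B_\rr(x_0))\leqslant M_\ast\rr^{\jalfa_\varepsilon-m}$ on $B_{1/2}$, where
\begin{equation*}
(\jalfa_\varepsilon-m)/\pHold=\min\Bigl(\tfrac{\ln\jgam}{\pHold\ln\jth},\,2-\tfrac m\jqq-\tfrac{\varepsilon}{\pHold},\,\gamma_0+\tfrac{2-m}{2}-\tfrac{\varepsilon}{\pHold}\Bigr).
\end{equation*}
Campanato's characterization then yields $C^{0,\jeta}$ regularity on $B_{1/2}$ for every $\jeta$ as in \eqref{eq:aniso_holder_exponent}, after letting $\varepsilon\to0$.

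The extension to a compact $K$ is done by rescaling on $B_R(x_0)$ with $R=\operatorname{dist}(K,\partial B_1)$ and covering, exactly as in Step 5 of Theorem~\ref{thm:abstract_reg}. The amplitude is preserved, the H\"older seminorm of $A$ only contracts by \eqref{eq:scale_Holder}, and the source norms contract. The rescaled gradient norm satisfies $\|\nabla\bm u_{x_0,R}\|_{L^2(B_1)}=R^{(2-m)/2}\|\nabla\bm u\|_{L^2(B_R(x_0))}$, which is bounded in terms of $\|\nabla\bm u\|_{L^2(B_1)}$ and $\operatorname{dist}(K,\partial B_1)$; this accounts for the stated dependence of the seminorm.
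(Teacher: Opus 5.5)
Your proposal is correct and follows the paper's proof essentially step for step. You derive the same master inequality, with $\beta_{\mathrm{src}}=m+\pHold(2-m/\jqq)$ and $\beta_{\mathrm{frz}}=m+\pHold\bigl(\gamma_0+\tfrac{2-m}{2}\bigr)$ obtained from \eqref{eq:tildeA_bound} and \eqref{eq:scale_grad}, merge them into $b\,\rr^{\beta}$ with $\beta=\min(\beta_{\mathrm{src}},\beta_{\mathrm{frz}})>m$, and then run the unchanged iteration-lemma, amplitude-bound, Campanato and covering steps; your explicit tracking of the factor $R^{(2-m)/2}$ in the rescaled gradient norm during the covering step is a detail the paper leaves implicit under ``identical to Step~5''.
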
}}

\begin{remark}
  Notice that when $m = 2$, the term $\frac{2 - m}{2}$ vanishes, and
  we get that $\bm{u} \in C^{0,
  \jeta}_{\ensuremath{\operatorname{loc}}} (B_1)$ for every exponent $\jeta$
  satisfying
  \begin{equation}
    \jeta < \min \left( \frac{\ln \jgam}{\pHold \ln \jth}, 2 - \frac{2}{\jqq},
    \gamma_0 \right) .
  \end{equation}
  That is, the third entry reduces  to $\gamma_0$, completely
  independent of the integrability exponent $\pHold$.
\end{remark}


\begin{proof}
  We follow the structure of the proof of Theorem~\ref{thm:gen_abstract_reg}
  in five steps, indicating only the modifications required by the additional
  freezing datum.{\smallskip}
  
  {\noindent}\text{{\itshape{Step~1: Master iteration inequality.}}} Fix $x_0
  \in B_{1 / 2}$ and set $\rr_0 := 1 / 2$ so that $B_{\rr} (x_0) \subset B_1$
  for every $\rr \in (0, \rr_0]$. Define, for $\rr \in (0, \rr_0]$, $\psi
  (\rr) := \rr^m \Osc (\bm{u}, B_{\rr} (x_0))$. By
  {\jax}~\ref{ax:closure_aniso}, the rescaled quadruple
  $(\bm{u}_{x_0, \rr}, \overline{\jaOm}_{x_0, \rr},
  \tilde{\bm{f}}_{x_0, \rr}, A_{x_0, \rr})$ belongs to
  $\class$. Note that $\|\bm{u}_{x_0, \rr} \|_{L^{\infty}
  (B_1)} = \|\bm{u}\|_{L^{\infty} (B_{\rr} (x_0))}
  \leqslant M$. Applying {\jax}~\ref{ax:decay_aniso} to the rescaled
  quadruple,
  \begin{equation}
    \label{eq:aniso_decay_pulled_back} \Osc (\bm{u}_{x_0,
    \rr}, B_{\jth}) \leqslant \jgam \Osc (\bm{u}_{x_0,
    \rr}, B_1) + \kappa \left( \bar{D}_{\mathrm{src}, \rr}^{\pHold} +
    \bar{D}_{\mathrm{frz}, \rr}^{\pHold} \right),
  \end{equation}
  where $\bar{D}_{\mathrm{src}, \rr}$ and $\bar{D}_{\mathrm{frz}, \rr}$ denote
  the source and freezing data computed for the rescaled quadruple.
  
Translating the oscillation estimates back to the original coordinates via the standard scaling identity $\Osc (\bm{u}{x_0, \rr}, B{\jth \rr}) = \Osc (\bm{u}, B_{\jth \rr} (x_0))$, we obtain
  \begin{eqnarray}
    \Osc (\bm{u}_{x_0, \rr}, B_{\jth}) & = & (\jth
    \rr)^{- m} \psi (\jth \rr), \\
    \Osc (\bm{u}_{x_0, \rr}, B_1) & = & \rr^{- m} \psi
    (\rr) . \nonumber
  \end{eqnarray}
  For the source term, by \eqref{eq:scale_f} and \eqref{eq:scale_divOmega},
  \begin{equation}
    \bar{D}_{\mathrm{src}, \rr} \leqslant \rr^{2 - m / \jqq} \left(
    \|\bm{f}\|_{L^{\jqq} (B_1)} + M\| \divg \jaOm
    \|_{L^{\jqq} (B_1)} \right) = \rr^{2 - m / \jqq} D_{\mathrm{src}} .
  \end{equation}
  For the freezing term, by \eqref{eq:scale_grad} and \eqref{eq:tildeA_bound},
  \begin{equation}
    \bar{D}_{\mathrm{frz}, \rr} = \| \tilde{A}_{x_0, \rr} \|_{L^{\infty}
    (B_1)}  \| \nabla \bm{u}_{x_0, \rr} \|_{L^2 (B_1)}
    \leqslant \rr^{\gamma_0 + (2 - m) / 2} [A]_{C^{0, \gamma_0}}  \| \nabla
    \bm{u}\|_{L^2 (B_1)},
  \end{equation}
  where in the rescaled quadruple $\tilde{A}_{x_0, \rr} = A_{x_0, \rr} -
  A_{x_0, \rr} (0) = A (x_0 + \rr \cdot) - A (x_0)$. Substituting the previous
  estimates into \eqref{eq:aniso_decay_pulled_back} and multiplying by $(\jth
  \rr)^m$, we obtain
  \begin{equation}
    \label{eq:aniso_master_psi} \psi (\jth \rr) \leqslant \jth^m \jgam \psi
    (\rr) + b_{\mathrm{src}} \rr^{\beta_{\mathrm{src}}} + b_{\mathrm{frz}}
    \rr^{\beta_{\mathrm{frz}}},
  \end{equation}
  with exponents
  \begin{equation}
    \beta_{\mathrm{src}} := m + \pHold (2 - m / \jqq), \quad
    \beta_{\mathrm{frz}} := m + \pHold \left( \gamma_0 + \frac{2 - m}{2}
    \right),
  \end{equation}
  and constants
  \begin{equation}
    b_{\mathrm{src}} := \kappa \jth^m D_{\mathrm{src}}^{\pHold}, \quad
    b_{\mathrm{frz}} := \kappa \jth^m [A]_{C^{0, \gamma_0}
    (\overline{B_1})}^{\pHold}  \| \nabla \bm{u}\|_{L^2
    (B_1)}^{\pHold},
  \end{equation}
  depending only on the global data $D_{\mathrm{src}}, D_{\mathrm{frz}}$,
  $\kappa$, $\jth$, $M$, $[A]_{C^{0, \gamma_0}}$. Since both exponents satisfy
  $\beta_{\mathrm{src}} > m$ (because $\jqq > m / 2$) and
  $\beta_{\mathrm{frz}} > m$ (because we  assumed $\gamma_0 > (m - 2)
  / 2$; in particular for $m = 2$, this reduces to $\gamma_0 > 0$), setting
  $\beta := \min (\beta_{\mathrm{src}}, \beta_{\mathrm{frz}})$ and $b :=
  b_{\mathrm{src}} + b_{\mathrm{frz}}$ we obtain the master iteration
  inequality
  \begin{equation}
    \label{eq:aniso_master_inequality} \psi (\jth \rr) \leqslant \jth^m \jgam
    \psi (\rr) + b \rr^{\beta} \qquad \forall \rr \in (0, \rr_0] .
  \end{equation}
  {\noindent}\text{{\itshape{Step~2: Continuous algebraic decay via the
  iteration lemma.}}} Identical to Step~2 of the proof of
  Theorem~\ref{thm:gen_abstract_reg}. Fix an arbitrarily small $\varepsilon >
  0$ and set $\jgam_{\varepsilon} := \max (\jgam, \jth^{\beta - m -
  \varepsilon})$. With $\jalfa_{\varepsilon} := m + \min (\ln \jgam / \ln
  \jth, \beta - m - \varepsilon)$, we have $\beta - \jalfa_{\varepsilon}
  \geqslant \varepsilon > 0$, and by Lemma~\ref{lem:giusti_iteration} there
  exists a structural constant $c_{\varepsilon} > 0$ such that for all $\rr
  \in (0, \rr_0]$,
  \begin{equation}
    \label{eq:aniso_continuous_decay} \Osc (\bm{u},
    B_{\rr} (x_0)) \leqslant c_{\varepsilon}  \left[ \frac{\Osc
    (\bm{u}, B_{\rr_0}
    (x_0))}{\rr_0^{\jalfa_{\varepsilon} - m}} + b \rr_0^{\beta -
    \jalfa_{\varepsilon}} \right] \rr^{\jalfa_{\varepsilon} - m} .
  \end{equation}
  {\noindent}\text{{\itshape{Step~3: Uniform bound via
  $\|\bm{u}\|_{L^{\infty}} \leqslant M$.}}} Choosing $\jc
  =\ensuremath{\boldsymbol{0}}$ in the infimum defining the oscillation $\Osc
  (\bm{u}, B_{\rr_0} (x_0))$ yields $\Osc
  (\bm{u}, B_{\rr_0} (x_0)) \leqslant M^{\pHold}$.
  Substituting into~\eqref{eq:aniso_continuous_decay} produces
  \begin{equation}
    \Osc (\bm{u}, B_{\rr} (x_0)) \leqslant M_{\ast}
    \rr^{\jalfa_{\varepsilon} - m},
  \end{equation}
  for a constant $M_{\ast}$ independent of $x_0 \in B_{1 / 2}$ and $\rr \in
  (0, \rr_0]$, with
  \begin{equation}
    \jalfa_{\varepsilon} - m = \min \left( \frac{\ln \gamma}{\ln \jth}, \pHold
    (2 - m / \jqq) - \varepsilon, \pHold (\gamma_0 + (2 - m) / 2) -
    \varepsilon \right) .
  \end{equation}
  {\noindent}\text{{\itshape{Step~4: Campanato condition and Hölder continuity
  on $B_{1 / 2}$.}}} By Campanato's theorem, $\bm{u} \in
  C^{0, \jeta_{\varepsilon}} (B_{1 / 2})$ with $\jeta_{\varepsilon} =
  (\jalfa_{\varepsilon} - m) / \pHold$. Using our formula for
  $(\jalfa_{\varepsilon} - m)$ yields:
  \begin{equation}
    \jeta_{\varepsilon} = \min \left( \frac{\ln \gamma}{\pHold \ln \jth}, 2 -
    \frac{m}{\jqq} - \frac{\varepsilon}{\pHold}, \gamma_0 + \frac{2 - m}{2} -
    \frac{\varepsilon}{\pHold} \right) .
  \end{equation}
  Sending $\varepsilon \to 0^+$ recovers~\eqref{eq:aniso_holder_exponent} with
  the third entry $\gamma_0 + \frac{2 - m}{2}$ for the general-dimension
  statement.
  
  {\noindent}\text{{\itshape{Step~5: Extension to arbitrary compact
  subsets.}}} Identical to Step~5 in the proof of
  Theorem~\ref{thm:gen_abstract_reg}.
\end{proof}

\subsection{Anisotropic Caccioppoli-type estimate in dimension \texorpdfstring{$m
= 2$}{two}}\label{ssec:aniso_caccioppoli} To apply the abstract regularity principle
(Theorem~\ref{thm:aniso_abstract_reg}) we must show that solutions of
\eqref{eq:aniso_main} inherently control the oscillation at small scales. We
achieve this by locally restructuring the equation in divergence form, with a
coefficient-freezing step that converts $- \divg (A \nabla \cdot)$ into the
constant-coefficient operator $- \divg (A_0 \nabla \cdot)$ at the cost of an
explicit freezing flux.

Fix $x_0 \in B_1$ (which we may translate to the origin). Set $A_0 := A (x_0)$
and $\tilde{A} (x) := A (x) - A_0$. Then $A_0$ is symmetric, $\lambda I
\leqslant A_0 \leqslant \Lambda I$, and \eqref{eq:tildeA_bound} holds. For any
constant vector $\jc \in \RR^{n + 1}$, the Leibniz rule gives
\begin{equation}
  \jaOm \cdot \nabla \bm{u}= \jaOm \cdot \nabla
  (\bm{u}- \jc) = \divg \left( \jaOm
  (\bm{u}- \jc) \right) - (\divg \jaOm) 
  (\bm{u}- \jc) .
\end{equation}
Substituting into \eqref{eq:aniso_main} and adding/subtracting $\divg (A_0
\nabla \bm{u})$, we rewrite the equation as
\begin{equation}
  \label{eq:rewritten} - \divg (A_0 \nabla \bm{u}) =
  \divg \underbrace{\left( \tilde{A} \nabla \bm{u}+ \jaOm
  (\bm{u}- \jc) \right)}_{= :
  \bm{J}} + \underbrace{\bm{f}-
  (\divg \jaOm)  (\bm{u}- \jc)}_{= :
  \bm{F}} .
\end{equation}
The flux $\bm{J}$ has two natural sub-contributions: the
\text{{\itshape{convective}}} term $\jaOm (\bm{u}- \jc)$,
already present in the isotropic case, and the new
\text{{\itshape{freezing}}} term $\tilde{A} \nabla
\bm{u}$, which vanishes algebraically as $r \to 0$ by
\eqref{eq:tildeA_bound}.

{\myprop{\begin{lemma}[Anisotropic Caccioppoli-type estimate, dimension $m =
2$]
  \label{lem:reverse_sobolev_aniso}Let $\jqq \in (1, 2)$, and let $\pHold := 2
  \jqq / (2 - \jqq) \in (2, \infty)$ be the corresponding oscillation
  exponent. Assume \text{{\upshape{(A1)}}}--\text{{\upshape{(A4)}}}. Let
  $\bm{u} \in H^1 \cap L^{\infty} (B_1, \RR^{n + 1})$ be
  a weak solution of {{\em \eqref{eq:aniso_main}}} with
  $\|\bm{u}\|_{L^{\infty} (B_1)} \leqslant M$. Fix $\rr
  \in (0, 1]$, and let $\ensuremath{\boldsymbol{h}}_r \in H^1 (B_{\rr}, \RR^{n
  + 1})$ be the unique $A_0$-harmonic extension of
  $\bm{u}_{\restr \partial B_{\rr}}$:
  \begin{equation} - \divg (A_0 \nabla \ensuremath{\boldsymbol{h}}_{\rr})
     =\ensuremath{\boldsymbol{0}} \quad \text{in } B_r, \qquad
     \ensuremath{\boldsymbol{h}}_{\rr} =\bm{u} \quad
     \text{on } \partial B_{\rr} . \end{equation}
  Set $A_0 := A (0)$ and $\tilde{A} := A - A_0$. Then
  \begin{align}
    \| \nabla
    (\bm{u}-\ensuremath{\boldsymbol{h}}_r)\|_{L^{\jqq}
    (B_{\rr})} & \leqslant  C_0 \| \jaOm \|_{L^2 (B_{\rr})}  \rr^{2 / \pHold}
    \Osc^{1 / \pHold} (\bm{u}, B_{\rr}) + C_0 \|
    \tilde{A} \|_{L^{\infty} (B_{\rr})}  \rr^{2 / \pHold}  \| \nabla
    \bm{u}\|_{L^2 (B_{\rr})} \nonumber\\
    &   \qquad + C_0  \rr \left( \|\bm{f}\|_{L^{\jqq}
    (B_{\rr})} + 2 M\| \divg \jaOm \|_{L^{\jqq} (B_{\rr})} \right), 
    \label{eq:rs_aniso}
  \end{align}
  where $C_0 = C_0  (m, \jqq, \lambda, \Lambda)$ depends only on the
  dimension, the integrability exponent, and the ellipticity bounds. In
  particular, $C_0$ does not depend on the modulus of continuity of $A$, on
  $\rr$, or on $\bm{u}$.
\end{lemma}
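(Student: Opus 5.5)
The plan is to follow the proof of Lemma~\ref{lem:reverse_sobolev} verbatim, replacing the Laplacian by the frozen constant-coefficient operator $-\divg(A_0\nabla\cdot)$ and carrying along the extra freezing flux. We start from the rewritten equation \eqref{eq:rewritten}, valid for every constant $\jc$. By Lemma~\ref{lemma:osc_infimum_bound} we restrict to $|\jc|\leqslant M$, so that $|\bm{u}-\jc|\leqslant 2M$ pointwise and hence $\|\bm{F}\|_{L^{\jqq}(B_\rr)}\leqslant \|\bm{f}\|_{L^{\jqq}(B_\rr)}+2M\|\divg\jaOm\|_{L^{\jqq}(B_\rr)}$. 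Since $\bm{h}_\rr$ is $A_0$-harmonic with the same trace as $\bm{u}$, the difference $\bm{w}:=\bm{u}-\bm{h}_\rr\in H^1_0(B_\rr)$ solves $-\divg(A_0\nabla\bm{w})=\divg\bm{J}+\bm{F}$. We split $\bm{w}=\bm{w}_1+\bm{w}_2+\bm{w}_3$ with zero boundary data and right-hand sides $\divg(\jaOm(\bm{u}-\jc))$, $\divg(\tilde A\nabla\bm{u})$ and $\bm{F}$, respectively.

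The linear input is the $L^{\jqq}$ Calder\'on--Zygmund theory for the constant-coefficient symmetric elliptic operator $-\divg(A_0\nabla\cdot)$ on a ball with homogeneous Dirichlet data. In divergence form it gives $\|\nabla\bm{w}_i\|_{L^{\jqq}(B_\rr)}\leqslant C\|\bm{G}_i\|_{L^{\jqq}(B_\rr)}$ for the fluxes $\bm{G}_1=\jaOm(\bm{u}-\jc)$ and $\bm{G}_2=\tilde A\nabla\bm{u}$. For the non-divergence source it gives $\|\nabla\bm{w}_3\|_{L^{\jqq}(B_\rr)}\leqslant C\rr\|\bm{F}\|_{L^{\jqq}(B_\rr)}$; this is obtained by scaling from the unit ball, combined with $W^{1,\jqq'}_0\hookrightarrow L^\infty$ duality (or $W^{2,\jqq}$ estimates plus Poincar\'e) on $B_1$. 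The point to check is that $C$ depends only on $m,\jqq,\lambda,\Lambda$. Two facts give this. First, the constants are invariant under dilation, so they do not depend on $\rr$. Second, for a constant matrix $A_0$ one may change variables $y=A_0^{-1/2}x$, which reduces to the Laplacian on an ellipse whose eccentricity is controlled by $\Lambda/\lambda$. Equivalently, the constant-coefficient CZ theory has bounds depending only on the ellipticity ratio. Crucially, no modulus of continuity of $A$ enters, since $A_0$ is frozen.

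The flux terms are then estimated by H\"older's inequality with $1/\jqq=1/2+1/\pHold$. For the first,
\begin{equation*}
\|\jaOm(\bm{u}-\jc)\|_{L^{\jqq}(B_\rr)}\leqslant\|\jaOm\|_{L^2(B_\rr)}\|\bm{u}-\jc\|_{L^{\pHold}(B_\rr)},
\end{equation*}
and after taking the infimum over $|\jc|\leqslant M$ we get $\|\bm{u}-\jc\|_{L^{\pHold}(B_\rr)}=(\omega_2\rr^2)^{1/\pHold}\Osc^{1/\pHold}(\bm{u},B_\rr)$. For the second,
\begin{equation*}
\|\tilde A\nabla\bm{u}\|_{L^{\jqq}(B_\rr)}\leqslant\|\tilde A\|_{L^\infty(B_\rr)}\|\nabla\bm{u}\|_{L^2(B_\rr)}|B_\rr|^{1/\pHold},
\end{equation*}
which produces the factor $\rr^{2/\pHold}$. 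Summing the three contributions with the triangle inequality and absorbing $\omega_2^{1/\pHold}$ and the CZ constants into $C_0$ yields \eqref{eq:rs_aniso}.

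The main obstacle is the middle step: justifying that the constant-coefficient CZ and source estimates hold uniformly in the frozen matrix $A_0$ and in $\rr$. I would handle it by the linear change of variables to the Laplacian together with scaling. A second, minor point is the well-posedness of the splitting: $\bm{J}\in L^{\jqq}$ and $\bm{F}\in L^{\jqq}$, so each $\bm{w}_i$ is the unique $W^{1,\jqq}_0$ solution. Uniqueness in $W^{1,\jqq}_0$ for $\jqq<2$ then identifies $\bm{w}_1+\bm{w}_2+\bm{w}_3$ with the $H^1_0$ function $\bm{w}$.
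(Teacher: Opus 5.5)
Your proposal is correct and follows the paper's proof essentially step by step. Both freeze $A_0$, write $\bm{w}=\bm{u}-\bm{h}_\rr$ as the $H^1_0$ solution of $-\divg(A_0\nabla\bm{w})=\divg\bm{J}+\bm{F}$, apply constant-coefficient Calder\'on--Zygmund to the flux part, and use H\"older with $1/\jqq=1/2+1/\pHold$ (together with the factor $|B_\rr|^{1/\pHold}$) on the convective and freezing pieces. The differences are cosmetic: the paper keeps $\bm{J}$ as a single flux and turns $\bm{F}$ into a divergence $\bm{G}=\nabla\bm{\varphi}$ via $W^{2,\jqq}$ estimates and Poincar\'e--Wirtinger directly on $B_\rr$, which is your second option, while your remarks on the uniformity of the constants in $A_0$ and on identifying the $W^{1,\jqq}_0$ pieces with $\bm{w}$ spell out what the paper leaves implicit.
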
}}

\begin{proof}
  Since $\|\bm{u}\|_{L^{\infty}} \leqslant M$, by
  Lemma~\ref{lemma:osc_infimum_bound} we may restrict to constants $\jc \in
  \RR^{n + 1}$ with $| \jc | \leqslant M$. Fix such an optimal $\jc =
  \jc_{\ast}$ in $\Osc (\bm{u}, B_r)$; then
  $|\bm{u}- \jc | \leqslant 2 M$ pointwise, and
  $\|\bm{u}- \jc \|_{L^{\pHold} (B_r)} = \omega_m^{1 /
  \pHold}  \rr^{2 / \pHold} \Osc^{1 / \pHold} (\bm{u},
  B_r)$.
  
  Set $\bm{w} :=
  \bm{u}-\ensuremath{\boldsymbol{h}}_{\rr} \in H^1_0
  (B_{\rr}, \RR^{n + 1})$. Since $\divg (A_0 \nabla
  \ensuremath{\boldsymbol{h}}_{\rr}) =\ensuremath{\boldsymbol{0}}$ ($A_0$
  constant, $\ensuremath{\boldsymbol{h}}_{\rr}$ is $A_0$-harmonic),
  subtracting from \eqref{eq:rewritten} yields the constant-coefficient
  Dirichlet problem
  \begin{equation}
    \left\{ \begin{array}{lcl}
      - \divg (A_0 \nabla \bm{w}) = \divg
      \bm{J}+\bm{F} &  & \text{in }
      B_{\rr},\\
      \bm{w}=\bm{0} &  & \text{on }
      \partial B_{\rr},
    \end{array} \right.
  \end{equation}
  with $\bm{J}, \bm{F}$ as in
  \eqref{eq:rewritten}.
  By linearity we can decompose
  $\bm{w}=\bm{w}_{\bm{J}}
  +\bm{w}_{\bm{F}}$, with $\bm{w}_{\bm{J}}$ and $
  \bm{w}_{\bm{F}}$ the unique solutions of the Poisson problems $- \divg
  (A_0 \nabla \bm{w}_{\bm{J}}) =
  \divg \bm{J}$ and $- \divg (A_0 \nabla
  \bm{w}_{\bm{F}})
  =\bm{F}$, both with zero boundary data on $\partial
  B_{\rr}$.{\smallskip}
  
  {\noindent}{\itshape{Step 1: Calderón--Zygmund bound for the flux
  part.}} By the divergence-form Calderón--Zygmund estimate for the
  constant-coefficient operator $- \divg (A_0 \nabla \cdot)$ on the ball
  $B_{\rr}$ (which is scale-invariant in the form of the source),
  \begin{equation}
    \label{eq:CZ_J} \| \nabla
    \bm{w}_{\bm{J}} \|_{L^{\jqq}
    (B_{\rr})} \leqslant C_q  (m, \jqq, \lambda, \Lambda)
    \|\bm{J}\|_{L^{\jqq} (B_{\rr})} .
  \end{equation}
  {\noindent}{\itshape{Step 2: Source-to-divergence representation.}}
  We re-express $\bm{F} \in L^{\jqq}$ as a divergence:
  solve $- \Delta
  \bm{\varphi}=\bm{F}$ in $B_{\rr}$
  with $\bm{\varphi}=\ensuremath{\boldsymbol{0}}$ on
  $\partial B_{\rr}$. Standard $W^{2, \jqq}$-theory gives $\|D^2
  \bm{\varphi}\|_{L^{\jqq} (B_{\rr})} \leqslant C
  \|\bm{F}\|_{L^{\jqq} (B_{\rr})}$. Because
  $\bm{\varphi}$ vanishes on the boundary, the divergence
  theorem guarantees that its gradient has a mean of zero over $B_{\rr}$.
  Applying the Poincaré--Wirtinger inequality to $\nabla
  \bm{\varphi}$ yields $\| \nabla
  \bm{\varphi}\|_{L^{\jqq} (B_{\rr})} \leqslant C \rr 
  \|D^2 \bm{\varphi}\|_{L^{\jqq} (B_{\rr})}$.
  Setting
  $\bm{G} := \nabla \bm{\varphi}$,
  we have $\divg \bm{G}= -\bm{F}$
  with
  \begin{equation}
    \label{eq:G_bound} \|\bm{G}\|_{L^{\jqq} (B_{\rr})}
    \leqslant C \rr \|\bm{F}\|_{L^{\jqq} (B_{\rr})} .
  \end{equation}
  Applying~\eqref{eq:CZ_J} to the divergence-form problem $- \divg (A_0 \nabla
  \bm{w}_{\bm{F}}) = \divg
  (-\bm{G})$,
  \begin{equation}
    \label{eq:CZ_F} \| \nabla
    \bm{w}_{\bm{F}} \|_{L^{\jqq}
    (B_{\rr})} \leqslant C_q  \|-\bm{G}\|_{L^{\jqq}
    (B_{\rr})} \leqslant C \rr \|\bm{F}\|_{L^{\jqq}
    (B_{\rr})} .
  \end{equation}
  {\noindent}\text{{\itshape{Step 3: Bounds on the flux pieces.}}} Combining
  \eqref{eq:CZ_J} and \eqref{eq:CZ_F},
  \begin{equation}
    \label{eq:CZ_constcoeff} \| \nabla \bm{w}\|_{L^{\jqq}
    (B_{\rr})} \leqslant C \left[ \| \tilde{A} \nabla
    \bm{u}\|_{L^{\jqq} (B_{\rr})} +\| \jaOm
    (\bm{u}- \jc)\|_{L^{\jqq} (B_{\rr})} + \rr
    \|\bm{F}\|_{L^{\jqq} (B_{\rr})} \right] .
  \end{equation}
  We bound each piece via Hölder's inequality. By Hölder with exponents $(2,
  \pHold)$, $1 / \jqq = 1 / 2 + 1 / \pHold$:
  \begin{equation}
    \label{eq:flux_convective} \| \jaOm (\bm{u}-
    \jc)\|_{L^{\jqq} (B_{\rr})} \leqslant \| \jaOm \|_{L^2 (B_{\rr})} 
    \|\bm{u}- \jc \|_{L^{\pHold} (B_{\rr})} = \omega_m^{1
    / \pHold} \| \jaOm \|_{L^2 (B_{\rr})}  \rr^{2 / \pHold} \Osc^{1 / \pHold}
    (\bm{u}, B_{\rr}) .
  \end{equation}
  A second Hölder with the constant function $1$ and exponents $(2 / \jqq, 2 /
  (2 - \jqq))$ (since $\jqq < 2$), together with $|B_{\rr} |^{1 / \pHold} =
  \omega_m^{1 / \pHold}  \rr^{2 / \pHold}$, gives
  \begin{equation}
    \| \nabla \bm{u}\|_{L^{\jqq} (B_{\rr})} \leqslant \|
    \nabla \bm{u}\|_{L^2 (B_{\rr})} |B_{\rr} |^{1 /
    \pHold} = \omega_m^{1 / \pHold}  \rr^{2 / \pHold}  \| \nabla
    \bm{u}\|_{L^2 (B_{\rr})} .
  \end{equation}
  Hence
  \begin{equation}
    \label{eq:flux_freezing} \| \tilde{A} \nabla
    \bm{u}\|_{L^{\jqq} (B_{\rr})} \leqslant \omega_m^{1 /
    \pHold} \| \tilde{A} \|_{L^{\infty} (B_{\rr})}  \rr^{2 / \pHold}  \|
    \nabla \bm{u}\|_{L^2 (B_{\rr})} .
  \end{equation}
  Pointwise $|\bm{F}| \leqslant
  |\bm{f}| + 2 M | \divg \jaOm |$, hence
  \begin{equation}
    \label{eq:flux_source} \|\bm{F}\|_{L^{\jqq}
    (B_{\rr})} \leqslant \|\bm{f}\|_{L^{\jqq} (B_{\rr})}
    + 2 M \| \divg \jaOm \|_{L^{\jqq} (B_{\rr})} .
  \end{equation}
  Substituting \eqref{eq:flux_convective}--\eqref{eq:flux_source} into
  \eqref{eq:CZ_constcoeff} and absorbing the universal factor $\omega_m^{1 /
  \pHold}$ into the constant $C_0 = C_0  (m, \jqq, \lambda, \Lambda)$ yields
  \eqref{eq:rs_aniso}.
\end{proof}

We now extend the harmonic approximation lemma previously established in the
antisymmetric setting (Lemma~\ref{lemma:harmonic_approx}) to the class of
$A_0$-harmonic functions. To this end, we rely on estimates for the
anisotropic Poisson kernel $P_{A_0} (x, \zeta)$, established in
Proposition~\ref{prop:aniso_poisson_kernel} of the Appendix, Section
\ref{sec:app1}.

{\myprop{\begin{lemma}[Anisotropic harmonic approximation]
  \label{lemma:harmonic_approx_aniso}Let $A_0 \in \RR^{m \times m}$ be a
  constant symmetric matrix satisfying the uniform ellipticity condition
  $\lambda I \leqslant A_0 \leqslant \Lambda I$. Let
  $\bm{u} \in H^1 (B_1, \RR^{n + 1})$.
  
  For any $p > 1$, there exists a radius $\rr \in [1 / 2, 1]$ and an
  $A_0$-harmonic function $\ensuremath{\boldsymbol{h}}: B_{\rr} \to \RR^{n +
  1}$ satisfying the boundary trace
  $\ensuremath{\boldsymbol{h}}=\bm{u}$ on $\partial
  B_{\rr}$, such that the interior gradient satisfies:
  \begin{equation}
    \sup_{x \in B_{1 / 4}} | \nabla \ensuremath{\boldsymbol{h}}(x) | \leqslant
    C_A \cdot \Osc^{1 / p} (\bm{u}, B_1),
    \label{eq:harmapproxosc_aniso}
  \end{equation}
  where $C_A > 0$ is a dimensional constant depending  on $m$,
  $\lambda$, and $\Lambda$.
\end{lemma}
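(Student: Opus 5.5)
We will mirror the proof of Lemma~\ref{lemma:harmonic_approx} step by step. The only new ingredient is the gradient estimate for the anisotropic Poisson kernel $P_{A_0}(x,\zeta)$ from Proposition~\ref{prop:aniso_poisson_kernel}.

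\emph{Step 1: selection of a good radius.} By strict convexity and coercivity of $\jc\mapsto\int_{B_1}|\bm{u}-\jc|^p$ for $p>1$, the infimum defining $\Osc(\bm{u},B_1)$ is attained at some $\jc$. The choice of $\jc$ involves only $\bm{u}$ and $p$, not $A_0$. Integrating in polar coordinates over the annulus $B_1\setminus B_{1/2}$ and applying the mean value theorem gives a radius $\rr\in[1/2,1]$ with $\bm{u}|_{\partial B_\rr}\in L^1$ (indeed in $H^{1/2}$ for a.e.\ $\rr$) and
\begin{equation*}
\int_{\partial B_\rr}|\bm{u}-\jc|\,\mathrm{d}\zeta\leqslant 2\int_{B_1}|\bm{u}-\jc|\,\mathrm{d}x.
\end{equation*}
If needed, we discard a null set of radii so that the trace is well defined. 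We then let $\ensuremath{\boldsymbol{h}}$ be the unique $A_0$-harmonic function in $B_\rr$ with trace $\bm{u}$ on $\partial B_\rr$. It exists by Lax--Milgram, since $A_0$ is uniformly elliptic.

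\emph{Step 2: interior gradient bound.} Because $A_0$ is constant, $\ensuremath{\boldsymbol{h}}-\jc$ is again $A_0$-harmonic with boundary data $\bm{u}-\jc$. We represent it through the anisotropic Poisson kernel of $B_\rr$. The input we need from Proposition~\ref{prop:aniso_poisson_kernel} is the scaled bound
\begin{equation*}
\sup_{x\in B_{\rr/2},\,\zeta\in\partial B_\rr}|\nabla_x P_{A_0}(x,\zeta)|\leqslant \tilde C_A\,\rr^{-m},
\end{equation*}
with $\tilde C_A=\tilde C_A(m,\lambda,\Lambda)$. Its justification: after the linear change of variables $y=A_0^{-1/2}x$ the ball becomes an ellipse with eccentricity controlled by $\lambda,\Lambda$, and the estimate follows by scaling from the case $\rr=1$. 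This yields, for $x\in B_{\rr/2}$,
\begin{equation*}
|\nabla\ensuremath{\boldsymbol{h}}(x)|\leqslant \tilde C_A\,\rr^{-m}\int_{\partial B_\rr}|\bm{u}-\jc|\,\mathrm{d}\zeta.
\end{equation*}
Next we use $B_{1/4}\subset B_{\rr/2}$ and $\rr^{-m}\leqslant 2^m$. Combining with Step 1 and applying Jensen/H\"older to pass from the $L^1$ to the normalized $L^p$ average gives
\begin{equation*}
\sup_{B_{1/4}}|\nabla\ensuremath{\boldsymbol{h}}|\leqslant 2^{m+1}\tilde C_A\,\omega_m\Big(\jav{B_1}|\bm{u}-\jc|^p\Big)^{1/p}=C_A\,\Osc^{1/p}(\bm{u},B_1).
\end{equation*}

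\emph{Main obstacle.} The delicate point is the kernel estimate: the constant must depend only on $m,\lambda,\Lambda$, uniformly in $A_0$, with the correct power $\rr^{-m}$. If Proposition~\ref{prop:aniso_poisson_kernel} is not available in exactly this form, a fallback avoids kernels entirely. Interior estimates for constant-coefficient equations give $\sup_{B_{1/4}}|\nabla\ensuremath{\boldsymbol{h}}|\leqslant C\,\|\ensuremath{\boldsymbol{h}}-\jc\|_{L^1(B_{3\rr/4})}$ (differentiate the equation, then use $L^\infty$--$L^1$ mean-value bounds for $A_0$-harmonic functions, transported from the Laplacian via $A_0^{1/2}$). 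It then remains to bound $\|\ensuremath{\boldsymbol{h}}-\jc\|_{L^1(B_{3\rr/4})}$ by $\|\bm{u}-\jc\|_{L^1(\partial B_\rr)}$, which is again a Poisson-kernel (or maximum-principle plus duality) statement. For that reason the kernel route is the cleanest, and we will follow it.
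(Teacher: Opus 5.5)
Your proposal is correct and follows the paper's proof essentially step for step: you pick an optimal constant $\jc$, select a good radius $\rr\in[1/2,1]$ by integrating in polar coordinates, take the $A_0$-harmonic extension, apply the anisotropic Poisson-kernel bound of Proposition~\ref{prop:aniso_poisson_kernel} (with $\theta=1/2$) to $\ensuremath{\boldsymbol{h}}-\jc$, and conclude via $B_{1/4}\subset B_{\rr/2}$, $\rr^{-m}\leqslant 2^m$ and H\"older, which yields the same constant $C_A=2^{m+1}\tilde C_A\,\omega_m$. Your kernel-level bound $|\nabla_x P_{A_0}|\leqslant \tilde C_A\rr^{-m}$ is simply the pointwise form of that proposition, and your added remarks (avoiding a null set of radii so the trace is well defined, existence of the extension via Lax--Milgram) are harmless refinements.
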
}}

\begin{proof}
  {{\em Step 1: Selection of a good radius.}} Let $\jc \in \RR^{n + 1}$ be a
  constant vector that realizes this minimum for $\Osc
  (\bm{u}, B_1)$:
  \begin{equation}
    \jav{B_1} |\bm{u}- \jc |^p = \Osc
    (\bm{u}, B_1) .
  \end{equation}
  As explained in the proof of Lemma~\ref{lemma:harmonic_approx}, Fubini's
  theorem in polar coordinates guarantees the existence of a radius $\rr \in
  [1 / 2, 1]$ such that the boundary integral is controlled by the bulk
  integral:
  \begin{equation}
    \int_{\partial B_{\rr}} \left| \bm{u}(\zeta) - \jc
    \right| \mathrm{d} \zeta \leqslant 2 \int_{B_1} \left|
    \bm{u}(x) - \jc \right| \mathrm{d} x.
    \label{eq:boundtodom_aniso}
  \end{equation}
  Note that $\rr$ depends on the optimal $\jc$, which in turn depends 
  on $\bm{u}$ and $p$.
  
  {\noindent}{{\em Step 2: Anisotropic harmonic extension and interior
  estimates.}} We fix the radius $\rr$ found in Step 1. Let
  $\ensuremath{\boldsymbol{h}}$ be the unique weak solution to $\divg (A_0
  \nabla \ensuremath{\boldsymbol{h}}) = 0$ in $B_{\rr}$ satisfying the
  boundary condition $\ensuremath{\boldsymbol{h}}=\bm{u}$
  on $\partial B_{\rr}$. Since $\jc$ is a constant vector, the translated
  function $\ensuremath{\boldsymbol{h}}- \jc$ is also $A_0$-harmonic in
  $B_{\rr}$. We estimate $\ensuremath{\boldsymbol{h}}- \jc$  inside
  the ball using the Poisson integral representation for the Euclidean ball
  $B_{\rr}$ associated with the anisotropic operator. By elliptic estimates
  for the anisotropic Poisson kernel $P_{A_0} (x, \zeta)$ evaluated at
  interior points $x \in B_{\rr / 2}$  (see Proposition~\ref{prop:aniso_poisson_kernel}) we get
  \begin{equation}
    | \nabla \ensuremath{\boldsymbol{h}}(x) | = | \nabla
    (\ensuremath{\boldsymbol{h}}(x) - \jc) | \leqslant
    \frac{\tilde{C}_A}{\rr^m} \int_{\zeta \in \partial B_{\rr}} \left|
    \bm{u}(\zeta) - \jc \right| \mathrm{d} \zeta,
    \label{eq:Poissonbound_aniso}
  \end{equation}
  with $\tilde{C}_A := C_{\theta, \lambda, \Lambda, m}$ being the constant in
  Proposition~\ref{prop:aniso_poisson_kernel} (applied with $\jth = 1 / 2$).
  
  We now restrict the domain to the fixed interior ball. Since $\rr \geqslant
  1 / 2$, we have $B_{1 / 4} \subset B_{\rr / 2}$ and that $1 / \rr^m
  \leqslant 2^m$. Combining these geometric bounds with the radius estimate
  \eqref{eq:boundtodom_aniso}, we obtain:
  
  \begin{align}
    \sup_{x \in B_{1 / 4}} | \nabla \ensuremath{\boldsymbol{h}}(x) | &
    \leqslant \sup_{x \in B_{\rr / 2}} | \nabla \ensuremath{\boldsymbol{h}}(x)
    | \overset{\eqref{eq:Poissonbound_aniso}}{\leqslant}
    \frac{\tilde{C}_A}{\rr^m}  \int_{\zeta \in \partial B_{\rr}} \left|
    \bm{u}(\zeta) - \jc \right| \mathrm{d} \zeta
    \nonumber\\
    & \overset{\eqref{eq:boundtodom_aniso}}{\leqslant} \tilde{C}_A 2^m 
    \left( 2 \int_{B_1} |\bm{u}- \jc | \hspace{0.17em}
    \mathrm{d} x \right) \leqslant 2^{m + 1}  \tilde{C}_A \omega_m \left(
    \jav{B_1} \left| \bm{u}- \jc \right|^p \mathrm{d} x
    \right)^{1 / p}, 
  \end{align}
  
  where $\omega_m$ denotes the volume of the unit ball in $\RR^m$.
  
  Since the constant $\jc$ was chosen precisely to realize the minimum in the
  definition of the oscillation, the right-hand side is exactly $\Osc^{1 / p}
  (\bm{u}, B_1)$. Setting $C_A := 2^{m + 1}  \tilde{C}_A
  \omega_m$ formally yields the gradient estimate
  \eqref{eq:harmapproxosc_aniso}, concluding the proof.
\end{proof}

We now combine the anisotropic Caccioppoli estimate
(established in Lemma~\ref{lem:reverse_sobolev_aniso}) with the anisotropic harmonic
approximation result (Lemma~\ref{lemma:harmonic_approx_aniso}) to derive the
following estimate.

{\myprop{\begin{lemma}[Anisotropic coupled Caccioppoli-type estimate]
  \label{lem:coupled_aniso}Under the assumptions and notation of Lemma~{{\em
  \ref{lem:reverse_sobolev_aniso}}}, there exists an $A_0$-harmonic function
  $\ensuremath{\boldsymbol{h}}: B_{1 / 4} \to \RR^{n + 1}$ such that
  \begin{align}
    \| \nabla
    (\bm{u}-\ensuremath{\boldsymbol{h}})\|_{L^{\jqq}
    (B_{1 / 4})} & \leqslant  C_0 \| \jaOm \|_{L^2 (B_1)} \Osc^{1 / \pHold}
    (\bm{u}, B_1) + C_0 \| \tilde{A} \|_{L^{\infty}
    (B_1)}  \| \nabla \bm{u}\|_{L^2 (B_1)} \nonumber\\
    &  \quad  \qquad \qquad \qquad + C_0 \left(
    \|\bm{f}\|_{L^{\jqq} (B_1)} + 2 M\| \divg \jaOm
    \|_{L^{\jqq} (B_1)} \right),  \label{eq:coupled_aniso}
  \end{align}
  with $C_0 = C_0  (m, \jqq, \lambda, \Lambda)$. Moreover,
  \begin{equation}
    \label{eq:harm_grad_aniso} \sup_{x \in B_{1 / 4}} | \nabla
    \ensuremath{\boldsymbol{h}}(x) | \leqslant C_A \Osc^{1 / \pHold}
    (\bm{u}, B_1),
  \end{equation}
  where $C_A > 0$ is a constant depending only on $m$, $\lambda$, and
  $\Lambda$.
\end{lemma}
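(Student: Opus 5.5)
The plan mirrors the proof of Lemma~\ref{lemma:reverse_sobolev_bound} (and its generalized version, Lemma~\ref{lemma:reverse_sobolev_bound3}), with the isotropic ingredients replaced by their anisotropic counterparts. We first invoke Lemma~\ref{lemma:harmonic_approx_aniso} with $p=\pHold$ and the frozen constant matrix $A_0=A(0)$. This gives a good radius $\rr\in[1/2,1]$ and the $A_0$-harmonic extension $\bm{h}$ of $\bm{u}_{\restr\partial B_{\rr}}$ to $B_{\rr}$. The gradient bound \eqref{eq:harmapproxosc_aniso} restricted to $B_{1/4}\subset B_{\rr/2}$ is then exactly \eqref{eq:harm_grad_aniso}. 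Note that $\bm{h}$ is defined on $B_{\rr}\supset B_{1/4}$; we simply restrict it.

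Next, since $\bm{h}$ coincides with the $A_0$-harmonic extension $\bm{h}_{\rr}$ used in Lemma~\ref{lem:reverse_sobolev_aniso} at this same radius, we apply \eqref{eq:rs_aniso} on $B_{\rr}$ and shrink the left-hand domain to $B_{1/4}$. The right-hand side is then simplified term by term.
\begin{itemize}
\item \emph{Convective term.} Use the monotonicity $\rr^m\Osc(\bm{u},B_{\rr})\leqslant\Osc(\bm{u},B_1)$ from \eqref{eq:monotonBPsi}. Since $m=2$, this gives $\rr^{2/\pHold}\Osc^{1/\pHold}(\bm{u},B_{\rr})\leqslant\Osc^{1/\pHold}(\bm{u},B_1)$, and we enlarge $\|\jaOm\|_{L^2(B_{\rr})}$ to $\|\jaOm\|_{L^2(B_1)}$.
\item \emph{Freezing term.} Use $\rr^{2/\pHold}\leqslant1$ together with $\|\tilde A\|_{L^\infty(B_{\rr})}\leqslant\|\tilde A\|_{L^\infty(B_1)}$ and $\|\nabla\bm{u}\|_{L^2(B_{\rr})}\leqslant\|\nabla\bm{u}\|_{L^2(B_1)}$.
\item \emph{Source term.} Use $\rr\leqslant1$ and enlarge all norms to $B_1$.
\end{itemize}
Collecting these gives \eqref{eq:coupled_aniso}, with the constant $C_0$ unchanged.

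The only point requiring care is consistency: the good radius and the optimal constant $\jc$ selected in Lemma~\ref{lemma:harmonic_approx_aniso} must be compatible with the Caccioppoli estimate. They are, because Lemma~\ref{lem:reverse_sobolev_aniso} holds for every $\rr\in(0,1]$, with its internal constant $\jc$ chosen independently and bounded by $M$ via Lemma~\ref{lemma:osc_infimum_bound}. The freezing point is the centre $0$ in both lemmas, so the same $A_0$ is used throughout.

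We must also check that the constant $C_0$ depends only on $(m,\jqq,\lambda,\Lambda)$ and not on $\rr$. This holds because the Calderón--Zygmund constant for $-\divg(A_0\nabla\cdot)$ on balls is scale invariant and depends only on the ellipticity bounds. The constant $C_A$ is inherited directly from Proposition~\ref{prop:aniso_poisson_kernel}.
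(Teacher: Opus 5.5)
Your proposal is correct and follows essentially the same route as the paper. The paper also takes the good radius $\rr_\ast\in[1/2,1]$ and the $A_0$-harmonic extension from Lemma~\ref{lemma:harmonic_approx_aniso}, then applies Lemma~\ref{lem:reverse_sobolev_aniso} on $B_{\rr_\ast}$. It concludes, as you do, from the monotonicity of the data norms and of $\rr^m\Osc(\bm{u},B_{\rr})$, after restricting the left-hand side to $B_{1/4}$.
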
}}

\begin{proof}
  By the Anisotropic Harmonic Approximation
  (Lemma~\ref{lemma:harmonic_approx_aniso}) applied on the unit ball with
  exponent $\pHold$, there exists a radius $\rr_{\ast} \in [1 / 2, 1]$ and an
  $A_0$-harmonic extension $\ensuremath{\boldsymbol{h}}: B_{\rr_{\ast}} \to
  \RR^{n + 1}$ such that
  $\ensuremath{\boldsymbol{h}}=\bm{u}$ on $\partial
  B_{\rr_{\ast}}$. This extension satisfies the interior gradient bound
  \eqref{eq:harm_grad_aniso} since $B_{1 / 4} \subset B_{\rr_{\ast} / 2}$.
  Also, we may apply the estimate from Lemma~\ref{lem:reverse_sobolev_aniso}
  directly on the ball $B_{\rr_{\ast}}$. This yields an intermediate constant
  $C_0$ such that:
  
  \begin{align}
    \| \nabla
    (\bm{u}-\ensuremath{\boldsymbol{h}})\|_{L^{\jqq}
    (B_{\rr_{\ast}})} & \leqslant C_0  \rr_{\ast}^{2 / \pHold} \| \jaOm
    \|_{L^2 (B_{\rr_{\ast}})} \Osc^{1 / \pHold} (\bm{u},
    B_{\rr_{\ast}}) + C_0 \| \tilde{A} \|_{L^{\infty} (B_{\rr_{\ast}})}  \|
    \nabla \bm{u}\|_{L^2 (B_{\rr_{\ast}})} \nonumber\\
    & \quad + C_0 \left( \|\bm{f}\|_{L^{\jqq}
    (B_{\rr_{\ast}})} + 2 M\| \divg \jaOm \|_{L^{\jqq} (B_{\rr_{\ast}})}
    \right) .  \label{eq:intermediate_caccioppoli}
  \end{align}
  
  To bridge \eqref{eq:intermediate_caccioppoli} to our target estimate
  \eqref{eq:coupled_aniso}, we systematically bound the terms on the
  right-hand side using the monotonicity of the data norms (e.g., $\| \jaOm
  \|_{L^2 (B_{\rr_{\ast}})} \leqslant \| \jaOm \|_{L^2 (B_1)}$ and likewise
  for $\| \tilde{A} \|_{L^{\infty}}$, $\| \nabla
  \bm{u}\|_{L^2}$,
  $\|\bm{f}\|_{L^{\jqq}}$, $\| \divg \jaOm
  \|_{L^{\jqq}}$), and the monotonicity $\rr^m \Osc
  (\bm{u}, B_{\rr}) \leqslant \Osc
  (\bm{u}, B_1)$ of the rescaled oscillation. Restricting
  the LHS from $\| \nabla
  (\bm{u}-\ensuremath{\boldsymbol{h}})\|_{L^{\jqq}
  (B_{\rr_{\ast}})}$ to $\| \nabla
  (\bm{u}-\ensuremath{\boldsymbol{h}})\|_{L^{\jqq} (B_{1
  / 4})}$ (since $B_{1 / 4} \subset B_{\rr_{\ast}}$) yields
  \eqref{eq:coupled_aniso}.
\end{proof}

\subsection{The anisotropic decay estimate}\label{ssec:aniso_decay} We now show
that the structural matrix $\jaOm$ dictates the decay of the oscillation,
provided its $L^2$-energy is sufficiently small. The freezing flux enters as a
fixed additive contribution and \text{{\itshape{does not}}} require a
smallness threshold; its smallness on small balls is automatic from
\eqref{eq:tildeA_bound} and is exploited in the global iteration of
Subsection~\ref{ssec:aniso_proof_main}.

{\myprop{\begin{lemma}[Anisotropic decay property]
  \label{lem:decay_aniso}For any integrability exponent $\jqq \in (1, 2)$, let
  $\pHold := 2 \jqq / (2 - \jqq)$. There exist structural constants $\jth \in
  (0, 1 / 4)$, $\jgam = 1 / 2$, $\kappa > 0$, and a threshold
  $\varepsilon_{\ast} > 0$ {\opt}depending only on $m, \jqq, \lambda,
  \Lambda$, with $\kappa$ also depending on $M${\cpt} such that the following
  holds. Let $\bm{u} \in H^1 \cap L^{\infty} (B_1, \RR^{n
  + 1})$ be a weak solution of \eqref{eq:aniso_main} under
  \text{{\upshape{(A1)}}}--\text{{\upshape{(A4)}}} with
  $\|\bm{u}\|_{L^{\infty} (B_1)} \leqslant M$ for some $M
  > 0$. If
  \begin{equation}
    \label{eq:smallness_aniso} \| \jaOm \|_{L^2 (B_1)} < \varepsilon_{\ast},
  \end{equation}
  then
  \begin{equation}
    \label{eq:decay_aniso} \Osc (\bm{u}, B_{\jth})
    \leqslant \jgam \Osc (\bm{u}, B_1) + \kappa \left(
    D_{\mathrm{src}}^{\pHold} + D_{\mathrm{frz}}^{\pHold} \right),
  \end{equation}
  where $D_{\mathrm{src}}$ and $D_{\mathrm{frz}}$ are defined in {{\em
  \eqref{eq:def_Dsrc_Dfrz}}}.
\end{lemma}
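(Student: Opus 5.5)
The argument follows the proofs of Lemma~\ref{lem:decay} and Lemma~\ref{lem:gen_decay}, now with the freezing flux carried as one more additive datum. Lemma~\ref{lem:coupled_aniso} provides an $A_0$-harmonic $\bm{h}$ on $B_{1/4}$ that satisfies \eqref{eq:coupled_aniso} and \eqref{eq:harm_grad_aniso}, with $\Osc$ taken in $L^{\pHold}$ for $\pHold=2\jqq/(2-\jqq)$. For $\jth\in(0,1/4)$, we use the seminorm property (Lemma~\ref{lemma:monotonBPsi}) to split
\[
\Osc^{1/\pHold}(\bm{u},B_\jth)\leqslant \Osc^{1/\pHold}(\bm{u}-\bm{h},B_\jth)+\Osc^{1/\pHold}(\bm{h},B_\jth)=:I+II .
\]

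For $I$, we apply the Poincar\'e--Sobolev inequality on $B_\jth$, where the two-dimensional Sobolev conjugate of $\jqq$ is exactly $\pHold$. This gives $I\leqslant C_S(\omega_2\jth^2)^{-1/\pHold}\|\nabla(\bm{u}-\bm{h})\|_{L^\jqq(B_{1/4})}$, and inserting \eqref{eq:coupled_aniso} yields
\[
I\leqslant \frac{C}{\jth^{2/\pHold}}\Bigl(\|\jaOm\|_{L^2(B_1)}\Osc^{1/\pHold}(\bm{u},B_1)+D_{\mathrm{frz}}+2D_{\mathrm{src}}\Bigr),
\]
where the factor $2M\|\divg\jaOm\|$ is absorbed into $D_{\mathrm{src}}$ up to a factor $2$. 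For $II$, we take $\jc=\bm{h}(0)$ and use the mean value theorem. Since $\bm{h}$ is smooth (it is $A_0$-harmonic with constant coefficients), \eqref{eq:harm_grad_aniso} gives $II\leqslant \jth\, C_A\Osc^{1/\pHold}(\bm{u},B_1)$.

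Next we add the two bounds and raise to the power $\pHold$ using $(a+b+c+d)^\pHold\leqslant 4^{\pHold-1}(a^\pHold+\dots)$. This gives
\[
\Osc(\bm{u},B_\jth)\leqslant C^\pHold\Bigl(\tfrac{\|\jaOm\|_{L^2(B_1)}^\pHold}{\jth^2}+\jth^\pHold\Bigr)\Osc(\bm{u},B_1)+\frac{C^\pHold}{\jth^2}\bigl(D_{\mathrm{src}}^\pHold+D_{\mathrm{frz}}^\pHold\bigr),
\]
with $C=C(m,\jqq,\lambda,\Lambda)$. We then choose parameters in order. First we fix $\jth$ so small that $C^\pHold\jth^\pHold\leqslant 1/4$. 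Then we fix $\varepsilon_*$ so that $C^\pHold\varepsilon_*^\pHold/\jth^2\leqslant1/4$. This makes the bracket at most $1/2=\jgam$, and we set $\kappa:=C^\pHold/\jth^2$. The freezing term needs no smallness, because it appears only additively.

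The main point to check is the bookkeeping of constants. The Calder\'on--Zygmund constant for $-\divg(A_0\nabla\cdot)$ and the Poisson-kernel constant $C_A$ must depend only on $\lambda,\Lambda$ (and $m,\jqq$), not on $A_0$ itself, so that $\jth,\varepsilon_*$ are uniform over all frozen matrices. This is exactly what the statements of Lemma~\ref{lem:reverse_sobolev_aniso} and Lemma~\ref{lemma:harmonic_approx_aniso} assert. The dependence of $\kappa$ on $M$ enters only through the factor of $2$ in the $M\|\divg\jaOm\|$ term and can be absorbed.
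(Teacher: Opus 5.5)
Your proposal is correct and follows the paper's proof essentially verbatim. It uses the same split $I+II$, handles $I$ by Sobolev--Poincar\'e on $B_\theta$ combined with \eqref{eq:coupled_aniso}, and handles $II$ by the mean value bound with \eqref{eq:harm_grad_aniso}; it then makes the same sequential choice of $\theta$ and then $\varepsilon_{\ast}$, with the freezing term carried only additively. The only cosmetic difference is that you raise a single four-term sum to the power $p$, whereas the paper's constants $K_1, K_2$ come from applying $(a_1+\dots+a_j)^p\leqslant j^{p-1}\sum a_i^p$ in two stages.
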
}}

\begin{remark}[Asymmetry between the two smallness mechanisms]
  Only the smallness of the norm $\| \jaOm \|_{L^2 (B_1)}$ is required. The freezing
  data $D_{\mathrm{frz}} = \| \tilde{A} \|_{L^{\infty} (B_1)}  \| \nabla
  \bm{u}\|_{L^2 (B_1)}$ enters as fixed additive
  data: no smallness threshold is imposed. Structurally, in the master
  inequality \eqref{eq:master_aniso} below, $\| \jaOm \|_{L^2}^{\pHold}$
  multiplies $\Osc (\bm{u}, B_1)$ (so it must be small
  for the contraction to operate), whereas $\| \tilde{A}
  \|_{L^{\infty}}^{\pHold}  \| \nabla
  \bm{u}\|_{L^2}^{\pHold}$ appears only as an additive
  contribution. The eventual smallness of $\| \tilde{A} \|_{L^{\infty}}$ on
  small balls, needed in the global iteration of
  Subsection~\ref{ssec:aniso_proof_main}, comes for free from
  \eqref{eq:tildeA_bound} and the rescaling \eqref{eq:scale_Holder}.
\end{remark}

\begin{proof}
  To distinguish the exponent originating from $L^2$-integrability from the
  spatial dimension, we temporarily denote the latter by $m$ (here $m = 2$).
  By Lemma~\ref{lem:coupled_aniso}, there exists a harmonic
  $\ensuremath{\boldsymbol{h}}: B_{1 / 4} \to \RR^{n + 1}$ satisfying
  \eqref{eq:coupled_aniso} and \eqref{eq:harm_grad_aniso}. For $\jth \in (0, 1
  / 4)$, the triangle inequality for the seminorm $\Osc^{1 / \pHold}$ yields
  \begin{equation}
    \label{eq:split_aniso} \Osc^{1 / \pHold} (\bm{u},
    B_{\jth}) \leqslant \Osc^{1 / \pHold}
    (\bm{u}-\ensuremath{\boldsymbol{h}}, B_{\jth}) +
    \Osc^{1 / \pHold} (\ensuremath{\boldsymbol{h}}, B_{\jth}) =: I + I  I.
  \end{equation}
  {\noindent}\text{{\itshape{Bound on $I  I$.}}} Choosing the constant
  $\ensuremath{\boldsymbol{h}} (0)$ in the infimum and using the fundamental
  theorem of calculus on the segment $[0, x] \subset B_{\jth}$,
  \begin{equation} I  I \leqslant \left( \jav{B_{\jth}} |\ensuremath{\boldsymbol{h}}(x)
     -\ensuremath{\boldsymbol{h}}(0) |^{\pHold} \mathrm{d} x \right)^{1 /
     \pHold} \leqslant \jth \sup_{B_{1 / 4}} | \nabla
     \ensuremath{\boldsymbol{h}}|
     \overset{\eqref{eq:harm_grad_aniso}}{\leqslant} C_A \jth  \Osc^{1 /
     \pHold} (\bm{u}, B_1) . \end{equation}
  {\noindent}\text{{\itshape{Bound on $I$.}}} Since
  $\bm{u}-\ensuremath{\boldsymbol{h}}$ does not vanish on
  $\partial B_{\jth}$, we apply the (mean-subtracted) Sobolev--Poincaré
  inequality on $B_{\jth}$ at the critical exponents $1 / \pHold = 1 / \jqq -
  1 / 2$, which is scale-invariant in dimension $m = 2$:
  \begin{equation}
    I \leqslant \frac{C_S}{(\omega_m \jth^m)^{1 / \pHold}}  \| \nabla
    (\bm{u}-\ensuremath{\boldsymbol{h}})\|_{L^{\jqq}
    (B_{\jth})} .
  \end{equation}
  Bounding $\| \nabla
  (\bm{u}-\ensuremath{\boldsymbol{h}})\|_{L^{\jqq}
  (B_{\jth})} \leqslant \| \nabla
  (\bm{u}-\ensuremath{\boldsymbol{h}})\|_{L^{\jqq} (B_{1
  / 4})}$ and invoking \eqref{eq:coupled_aniso}, we obtain (after harmlessly
  redefining the constant $C_0$ to absorb $C_S$ and $\omega_m^{- 1 /
  \pHold}$):
  \begin{eqnarray}
    I & \leqslant & \frac{C_0}{\jth^{m / \pHold}} \| \jaOm \|_{L^2 (B_1)}
    \Osc^{1 / \pHold} (\bm{u}, B_1) + \frac{C_0}{\jth^{m
    / \pHold}} \| \tilde{A} \|_{L^{\infty} (B_1)}  \| \nabla
    \bm{u}\|_{L^2 (B_1)} \nonumber\\
    &  & \qquad \qquad \qquad \qquad \qquad + \frac{C_0}{\jth^{m / \pHold}}
    \left( \|\bm{f}\|_{L^{\jqq} (B_1)} + 2 M\| \divg
    \jaOm \|_{L^{\jqq} (B_1)} \right) .  \label{eq:Ibound_aniso}
  \end{eqnarray}
  {\noindent}\text{{\itshape{Master inequality.}}} Combining
  \eqref{eq:split_aniso} with the bounds on $I$ and $I  I$ and raising to the
  $\pHold$-th power and using that $(a_1 + \ldots + a_j)^{\pHold} \leqslant
  j^{\pHold - 1}  \sum_{i = 1}^j a_i^{\pHold}$, we obtain the master decay
  inequality
  \begin{align}
    \Osc (\bm{u}, B_{\jth}) & \leqslant K_1
    \jth^{\pHold} \Osc (\bm{u}, B_1) + \frac{K_2}{\jth^m}
    \| \jaOm \|_{L^2 (B_1)}^{\pHold} \Osc (\bm{u}, B_1)
    \nonumber\\
    &   \quad + \frac{K_2}{\jth^m}  \left[ \| \tilde{A} \|_{L^{\infty}
    (B_1)}^{\pHold} \| \nabla \bm{u}\|_{L^2
    (B_1)}^{\pHold} + \left( \|\bm{f}\|_{L^{\jqq} (B_1)}
    + 2 M\| \divg \jaOm \|_{L^{\jqq} (B_1)} \right)^{\pHold} \right], 
    \label{eq:master_aniso}
  \end{align}
  with $K_1 = 2^{\pHold - 1} C_A^{\pHold}$ and $K_2 = 2^{\pHold - 1} \cdot
  4^{\pHold - 1} C_0^{\pHold}$ depending only on $m, \jqq, \lambda, \Lambda,
  \pHold$.
  
  {\noindent}\text{{\itshape{Choice of parameters.}}} The contractive
  coefficient of $\Osc (\bm{u}, B_1)$ in
  \eqref{eq:master_aniso} is given by $K_1 \jth^{\pHold} + (K_2 / \jth^m) \| \jaOm
  \|_{L^2 (B_1)}^{\pHold}$. We make it less than or equal to $1 / 2$ in two
  steps. First, choose $\jth \in (0, 1 / 4)$ small enough that $K_1
  \jth^{\pHold} \leqslant 1 / 4$. Second, with this $\jth$ fixed, choose
  $\varepsilon_{\ast} > 0$ such that $K_2 \varepsilon_{\ast}^{\pHold} / \jth^m
  \leqslant 1 / 4$. Under \eqref{eq:smallness_aniso}, the contractive
  coefficient is at most $1 / 2$. The remaining additive terms are bounded by
  $K_2 \jth^{- m} (D_{\mathrm{frz}}^{\pHold} + (2
  D_{\mathrm{src}})^{\pHold})$, recalling
  $\|\bm{f}\|_{L^{\jqq} (B_1)} + 2 M \| \divg \jaOm
  \|_{L^{\jqq} (B_1)} \leqslant 2 D_{\mathrm{src}}$. Setting $\kappa :=
  2^{\pHold} K_2 \jth^{- m}$ yields \eqref{eq:decay_aniso}.
\end{proof}

\subsection{Proof of the main regularity theorem
(Theorem~\ref{thm:aniso_regularity})}\label{ssec:aniso_proof_main} Let
$\bm{u}$ be an arbitrary weak solution of
\eqref{eq:aniso_main} under (A1)--(A4) with $\bm{u} \in
H^1 \cap L^{\infty} (B_1, \RR^{n + 1})$. We fix the global amplitude bound $M
:= \max (1, \|\bm{u}\|_{L^{\infty} (B_1)})$.

Let $\varepsilon_{\ast}$ be the critical threshold from
Lemma~\ref{lem:decay_aniso}. To invoke Theorem~\ref{thm:aniso_abstract_reg},
we define the admissible class $\class$ as the set of quadruples
$(\bm{u}, \jaOm, \bm{f}, A) \in
\mathcal{Y}$, $\mathcal{Y}$ defined in \eqref{eq:defmathcalY}, satisfying the
original system, the amplitude bound $M$, and the smallness threshold:
\begin{equation}
  \class := \left\{ (\bm{u}, \jaOm,
  \bm{f}, A) \in \mathcal{Y} \st
   \begin{array}{l}
    - \divg (A \nabla \bm{u}) = \jaOm \cdot \nabla
    \bm{u}+\bm{f},\\
    \|\bm{u}\|_{L^{\infty} (B_1)} \leqslant M, \| \jaOm
    \|_{L^2 (B_1)} < \varepsilon_{\ast}
  \end{array} \right\} .
\end{equation}
We need to verify {\jaxs}~\ref{ax:closure_aniso} and \ref{ax:decay_aniso}
required by Theorem~\ref{thm:aniso_abstract_reg}.

{\noindent}{{\em Step 1: Verification of {{\em {\jax}
\ref{ax:closure_aniso}}} {\opt}closure under rescaling{\cpt}}}. Let
$(\bm{u}, \jaOm, \bm{f}, A) \in
\class$ and $B_{\rr} (x_0) \subset B_1$. Consider the rescaled tuple
$(\bm{u}_{x_0, \rr}, {\overline{\jaOm}_{x_0, \rr}} ,
\tilde{\bm{f}}_{x_0, \rr}, A_{x_0, \rr})$. We need to
show that this triple remains in $\class$. By the rescaling
\eqref{eq:aniso_rescaling}, we have
\begin{equation}
  - \divg (A_{x_0, \rr} \nabla \bm{u}_{x_0, \rr}) =
  {\overline{\jaOm}_{x_0, \rr}}  \cdot \nabla
  \bm{u}_{x_0, \rr} +
  \tilde{\bm{f}}_{x_0, \rr},
\end{equation}
so the equation is invariant. Furthermore, the amplitude bound is preserved by
\eqref{eq:scale_uinf}; the ellipticity constants of $A$ are preserved
identically; and the $L^2$-norm of $\jaOm$ is conformally invariant in
dimension $m = 2$ (cf. \eqref{eq:scale_Omega}), so
\begin{equation} \| {\overline{\jaOm}_{x_0, \rr}}  \|_{L^2 (B_1)} = \| \jaOm \|_{L^2
   (B_{\rr} (x_0))} \leqslant \| \jaOm \|_{L^2 (B_1)} < \varepsilon_{\ast} .
\end{equation}
Finally, $\divg_y {\overline{\jaOm}_{x_0, \rr}}  \in L^{\jqq}$ and $A_{x_{0,
\rr}} \in C^{0, \gamma_0}$ by \eqref{eq:scale_divOmega} and
\eqref{eq:scale_Holder}. Thus, the class $\class$ is closed under rescaling.
\smallskip

{\noindent}{{\em Step 2: Verification of {{\em {\jax}
\ref{ax:decay_aniso}}}}}. Immediate from Lemma~\ref{lem:decay_aniso}: any
quadruple in $\class$ satisfies the smallness assumption
\eqref{eq:smallness_aniso}, and the decay estimate \eqref{eq:decay_aniso} is
exactly \eqref{eq:axiom_decay_aniso} with $\jgam = 1 / 2$.
\smallskip

{\noindent}{{\em Step 3: Conclusion and Removal of the small $\jaOm$-energy
condition}}. Having verified the two axioms, the abstract regularity
principle (Theorem~\ref{thm:aniso_abstract_reg}) dictates that any map
$\bm{u}$ belonging to a quadruple in $\class$ is locally
Hölder continuous in $B_1$. To conclude the proof of
Theorem~\ref{thm:aniso_regularity}, it remains to explain why this regularity
holds for any weak solution of the generalized anisotropic system, not just
those with small initial $\jaOm$-energy.

The arbitrary solution $\bm{u}$ may not lie in $\class$
(the $L^2$-energy of $\jaOm$ may be too large). Fix $x_0 \in B_1$ and let $R
:= 1 - | x_0 | > 0$ denote its distance to the boundary. Since $| \jaOm |^2
\in L^1 (B_1)$, the absolute continuity of the Lebesgue integral yields a
radius $\rr \in (0, R)$ with $\| \jaOm \|_{L^2 (B_{\rr} (x_0))} <
\varepsilon_{\ast}$. By the conformal invariance of the $L^2$-norm in 2D, the
rescaled quadruple $(\bm{u}_{x_0, \rr},
{\overline{\jaOm}_{x_0, \rr}} , \tilde{\bm{f}}_{x_{0,
\rr}}, A_{x_0, \rr})$ is defined on the unit ball $B_1$, solves the rescaled
equation, and satisfies $\| {\overline{\jaOm}_{x_0, \rr}}  \|_{L^2 (B_1)} <
\varepsilon_{\ast}$ together with the amplitude bound. Hence this rescaled
quadruple lies in $\class$. By Theorem~\ref{thm:aniso_abstract_reg}, the
rescaled map $\bm{u}_{x_0, \rr}$ is locally Hölder
continuous on $B_{1 / 2}$, with exponent $\jeta$ satisfying
\eqref{eq:aniso_holder_exponent}. Scaling back, $\bm{u}$
is Hölder continuous on $B_{\rr / 2} (x_0)$. Since $x_0 \in B_1$ was
arbitrary, $\bm{u} \in C^{0,
\jeta}_{\ensuremath{\operatorname{loc}}} (B_1, \RR^{n + 1})$, completing the
proof of Theorem~\ref{thm:aniso_regularity}.

\section{Smoothness of continuous Almost Harmonic Maps}\label{sec:sec6}

{\noindent}\textbf{Theorem~\ref{lemma:Bootstrap}.} (Bootstrap Regularity for Almost Harmonic
Maps)
{\itshape{ Let $U \subset \RR^m$ be an open domain, and let
$\bm{u} \in W^{1, 2}_{\mathrm{loc}} (U, \Stwo^n) \cap C^0
(U, \Stwo^n)$ be a weakly almost harmonic map solving the Euler--Lagrange
equation:
\begin{equation}
  - \Delta \bm{u}= | \nabla
  \bm{u}|^2
  \bm{u}+\bm{f} \quad \text{in }
  \mathcal{D}' (U),
\end{equation}
where the source term initially satisfies $\bm{f} \in
L^{\jqq}_{\mathrm{loc}} (U, \RR^{n + 1})$.

{\noindent}If the source integrability $\jqq$ satisfies $2 \leqslant \jqq
\leqslant m / 2$ {\opt}which can only occur in dimensions $m \geqslant
4${\cpt}, we additionally assume the map is already locally Hölder continuous:
$\bm{u} \in C^{0, \gamma}_{\mathrm{loc}} (U, \Stwo^n)$
for some $\gamma \in (0, 1]$.

{\noindent}Then, the regularity of $\bm{u}$ 
depends on the regularity of $\bm{f}$:
\smallskip

\begin{enumerate}
  \item If $\bm{f} \in L^{\jqq}_{\mathrm{loc}} (U, \RR^{n
  + 1})$ and $2 \leqslant \jqq \leqslant m$ then the regularity is bounded by
  the critical Sobolev embedding threshold:
  \begin{equation}
    \bm{u} \in W^{1, \jqq^{\ast}}_{\mathrm{loc}} (U,
    \Stwo^n)  \quad \text{with } \jqq^{\ast} = \frac{m \jqq}{m - \jqq},
  \end{equation}
  with the usual understanding that $\jqq^{\ast}$ (necessarily greater than
  $1$) can be any finite real exponent if $m = \jqq$. In particular, if $m =
  \jqq$ then $\bm{u} \in C^{0, \alpha}_{\mathrm{loc}} (U,
  \Stwo^n)$ for all $0 < \alpha < 1$, while if $\jqq = 2$ and $m = 3$ then
  $\bm{u} \in C^{0, 1 / 2}_{\mathrm{loc}} (U, \Stwo^n)$.
  \smallskip

  \item If $\bm{f} \in L^{\jqq}_{\mathrm{loc}} (U, \RR^{n
  + 1})$ with $\jqq > m$, then
  \begin{equation}
    \bm{u} \in C^{1, \eta}_{\mathrm{loc}} (U, \Stwo^n),
    \quad \text{where } \eta := 1 - m / \jqq . \label{eq:HighHold1reg}
  \end{equation}
  Note that $\eta$ does not depend on any intermediate fractional exponent. In
  particular, if $\bm{f} \in L^{\infty}_{\mathrm{loc}}
  (U, \RR^{n + 1})$ then $\bm{u} \in C^{1,
  \alpha}_{\mathrm{loc}} (U, \Stwo^n)$ for all $0 < \alpha < 1$.
  \smallskip

  \item For any integer $k \geqslant 1$ and exponent $\beta \in (0, 1)$, if
  $\bm{f} \in C^{k - 1, \beta}_{\mathrm{loc}} (U, \RR^{n
  + 1})$ then $\bm{u} \in C^{k + 1, \beta}_{\mathrm{loc}}
  (U, \Stwo^n)$. In particular, if $\bm{f} \in C^{\infty}
  (U, \RR^{n + 1})$, then $\bm{u} \in C^{\infty} (U,
  \Stwo^n)$. Again, note that $\eta$ does not play any role in the higher
  regularity class we end up in.
\end{enumerate}}}

\begin{remark}
  The true analytical power of case \text{{\itshape{i.}}} lies in the upgraded
  \text{{\itshape{Sobolev}}} regularity: specifically, the higher Lebesgue
  integrability of the gradient, $\bm{u} \in W^{1,
  q^{\ast}}_{\mathrm{loc}}$. Securing these strong $W^{1, q^{\ast}}$ bounds is
  paramount when transitioning to more advanced, non-linear settings where the
  source term depends on the solution itself (e.g., treated as a frozen
  quantity $\bm{f} (x) \equiv \bm{g}
  (x, \bm{u}(x))$). In such scenarios, one must feed the
  improved gradient integrability back into the PDE to re-evaluate
  the integrability of the frozen source, thereby closing the bootstrap loop
  and unlocking higher regularity. We will explicitly see this self-improving
  mechanism in action in the next section (see
  Section~\ref{sec:appsmicromag}), where we treat the micromagnetic case.
\end{remark}

\begin{remark}
  One must be careful about the initial hypothesis
  $\bm{u} \in C^0 $. In dimension $m = 2$, as we already
  proved, two-dimensional geometry yields sufficient critical cancellations to
  ensure that every weakly almost harmonic map $\bm{u}
  \in W^{1, 2} (U, \Stwo^n)$ is automatically continuous. However, in
  dimension $m \geqslant 3$ continuity fails in general. Because the Dirichlet
  energy scales differently in higher dimensions, topological singularities
  may have finite energy. A classical example is the radial map
  $\bm{u} (x) = x / |x|$ from the unit ball in $\RR^3$
  into $\Stwo^2$, which is weakly harmonic but has a point singularity at the
  origin. More strikingly, Rivière~{\cite{Rivi_re_1995}}, in 1995, constructed
  weakly harmonic maps in dimension three into $\Stwo^2$ that are
  \text{{\itshape{discontinuous}}} everywhere. Therefore, for $m \geqslant 3$
  the algebraic bootstrap that upgrades a locally continuous solution to
  $C^{\infty}$ remains valid, but it applies only at regular points, i.e., at
  points where the map is already locally continuous.
\end{remark}

\begin{proof}[Proof of Theorem~\ref{lemma:Bootstrap}]
  We divide the proof into nine steps.
  
{\noindent}{\itshape{Step 1: Localization and the Broken Conservation
  Law.}} Let $x_0 \in U$ be an arbitrary point. Since
  $\bm{u}$ is continuous, we can choose a sufficiently
  small radius $R > 0$ such that the oscillation of
  $\bm{u}$ on the ball $B_R (x_0) \subset U$ is strictly
  less than $1 / 2$. By applying a constant rotation to the target sphere, we
  can assume without loss of generality that $\bm{u}
  (x_0) =\ensuremath{\boldsymbol{e}}_{n + 1}$ is the north pole. By
  continuity, the image $\bm{u}(B_R (x_0))$ is strictly
  contained in the upper hemisphere; in particular, $u^{n + 1} (x) :=
  \bm{u}(x) \cdot \ensuremath{\boldsymbol{e}}_{n + 1}
  \geqslant 1 / 2$ for all $x \in B_R (x_0)$.
  
  For almost harmonic maps into
  $\Stwo^n$, the antisymmetric
  matrix of 1-forms defined by $\Omega^{\alpha \beta} = u^{\alpha} \nabla
  u^{\beta} - u^{\beta} \nabla u^{\alpha}$ is no longer
  divergence-free. Computing the weak divergence and substituting the PDE
  yields a perfect cancellation of the critical gradient terms, leaving only
  the source term:
  \begin{equation}
    \divr (u^{\alpha} \nabla u^{\beta} - u^{\beta} \nabla u^{\alpha}) =
    u^{\beta} f^{\alpha} - u^{\alpha} f^{\beta}  \quad \text{in } \mathcal{D}'
    (B_R (x_0)) .
  \end{equation}
  {\noindent}{\itshape{Step 2: The Gnomonic Projection.}} Because $u^{n
  + 1} \geqslant 1 / 2$, we can smoothly project the map from the center of
  the sphere onto the affine tangent plane at the north pole. We define the
  $\mathbb{R}^n$-valued function $\bm{w}= (w^1, \ldots,
  w^n) \in W^{1, 2}_{\mathrm{loc}} \cap C^0$ by $w^{\alpha} := u^{\alpha} /
  u^{n + 1}$ for $\alpha = 1, \ldots, n$. Applying the quotient rule for weak
  derivatives gives the  algebraic identity:
  \begin{equation}
    (u^{n + 1})^2 \nabla w^{\alpha} = u^{n + 1} \nabla u^{\alpha} - u^{\alpha}
    \nabla u^{n + 1} .
  \end{equation}
  {\noindent}{\itshape{Step 3: Linearizing the Equation.}} Taking the
  divergence of both sides, and applying the broken conservation law from Step
  1 (setting $\beta = n + 1$), we obtain:
  \begin{equation}
    \divr ((u^{n + 1})^2 \nabla w^{\alpha}) = u^{\alpha} f^{n + 1} - u^{n + 1}
    f^{\alpha} = : g^{\alpha} (x)  \quad \text{for } \alpha = 1, \ldots, n.
  \end{equation}
  Note that the quadratic gradient terms $| \nabla
  \bm{u}|^2$ have been completely eliminated. The
  critical nonlinear system has decoupled into a system of $n$ linear
  divergence-form elliptic equations:
  \begin{equation}
    \divg (A \nabla \bm{w}) =\bm{g},
    \label{eq:inhomogeneousPoisson}
  \end{equation}
  where the scalar coefficient $A = (\bm{u}\cdot
  \ensuremath{\boldsymbol{e}}_{n + 1})^2$ is strictly positive ($A (x)
  \geqslant 1 / 4$), and the new right-hand side $\bm{g}$
  depends linearly on $\bm{f}$. Since
  $|\bm{u}| = 1$ and $\bm{f} \in
  L^2_{\mathrm{loc}}$, it follows that
  $\bm{g} \in L^2_{\mathrm{loc}}$.{\smallskip}
  
  {\noindent}{\itshape{Step 4: Upgrading $C^0$ to $C^{0, \gamma}$ via De
  Giorgi--Nash--Moser.}} Because we  assumed only that
  $\bm{u}\in C^0 (U)$, the coefficient $A$ is merely
  continuous, which momentarily stalls classical Schauder theory. We cross
  this gap by invoking the  De Giorgi--Nash--Moser theorem on our
  decoupled system \eqref{eq:inhomogeneousPoisson}.
  
  Since $A \geqslant 1 / 4$, the coefficient is bounded and uniformly
  elliptic. The theorem (see {\cite[Theorem~4.13, p.83]{Han2011}}) guarantees
  that weak solutions gain local Hölder continuity provided the right-hand
  side is integrable enough. Specifically, if $\bm{g} \in
  L^q_{\mathrm{loc}}$ for some $q > m / 2$, then $\bm{w}
  \in C^{0, \gamma}_{\mathrm{loc}}$ for some $0 < \gamma < 1$.
  
  Because $g^{\alpha} = u^{\alpha} f^{n + 1} - u^{n + 1} f^{\alpha}$, the
  source $\bm{g}$  inherits the
  $L^q_{\mathrm{loc}}$ integrability of $\bm{f}$. The
  strict requirement to initiate the bootstrap is therefore $q > m / 2$.
  Because we assume a baseline integrability of $q \geqslant 2$, this
  condition is automatically satisfied for domains of dimension $m \leqslant
  3$. If $q \leqslant m / 2$, instead, our initial hypothesis explicitly
  assumed $\bm{u}\in C^{0, \gamma}_{\mathrm{loc}}$,
  bypassing this step entirely.
  
  Algebraic inversion gives $\bm{u}\cdot
  \ensuremath{\boldsymbol{e}}_{n + 1} = (1 +
  |\bm{w}|^2)^{- 1 / 2}$. Since Hölder spaces are closed
  under smooth compositions, $\bm{w} \in C^{0,
  \gamma}_{\mathrm{loc}}$  forces the vertical component to be
  Hölder continuous. Consequently, the coefficient $A =
  (\bm{u}\cdot \ensuremath{\boldsymbol{e}}_{n + 1})^2$
  is officially upgraded to $C^{0, \gamma}_{\mathrm{loc}}$, clearing the
  temporary roadblock.{\smallskip}
  
  {\noindent}{\itshape{Step 5: Linear Theory, the Flux Representation,
  and the Integrability Ceiling.}} Because $\bm{u}\in
  C^{0, \gamma}_{\mathrm{loc}}$ (from Step 4), the uniformly elliptic
  coefficient $A \in C^{0, \gamma}_{\mathrm{loc}}$. By classical linear
  elliptic theory for divergence-form equations, the regularity of
  $\bm{w}$ now depends on the source term
  $\bm{g} \in L^2_{\mathrm{loc}}$. To precisely determine
  the Sobolev space of $\bm{w}$, we must express the
  source term in divergence form so that Calderón--Zygmund theory can be
  applied. On the ball $B_R (x_0)$ defined in Step 1, let
  $\ensuremath{\boldsymbol{v}} \in W^{1, 2}_0 (B_R (x_0))$ be the unique weak
  solution to the Dirichlet problem $\Delta
  \ensuremath{\boldsymbol{v}}=\bm{g}$ with zero boundary
  conditions. By standard $L^p$ elliptic regularity up to the boundary, since
  $\bm{g} \in L^2 (B_R (x_0))$, we have
  $\ensuremath{\boldsymbol{v}} \in W^{2, 2} (B_R (x_0))$. Setting
  $\bm{G} := \nabla \ensuremath{\boldsymbol{v}}$, we
  rewrite the linearized equation on $B_R (x_0)$ as:
  \begin{equation}
    \divg (A \nabla \bm{w}) = \divg
    \bm{G}, \quad \text{where }
    \bm{G} \in W^{1, 2} (B_R (x_0)) .
    \label{eq:inhomogeneousPoissonflux}
  \end{equation}
  {\itshape{Proof of \text{{\upshape{i.}}} The $L^q$ Bottleneck and
  Dimensional Dependency.}} Provided the coefficient $A$ is at least
  uniformly continuous, Calderón--Zygmund theory guarantees that the gradient
  of the solution $\nabla \bm{w}$  inherits the
  \text{{\itshape{Lebesgue}}} integrability of the flux
  $\bm{G}$. In general, if $\bm{f}
  \in L^q_{\mathrm{loc}}$, the flux satisfies $\bm{G} \in
  W^{1, q}_{\mathrm{loc}}$. This integrability is  governed by the
  classical Sobolev embedding theorem, which introduces a hard dependency on
  the dimension $m$: if $\bm{G} \in W^{1,
  q}_{\mathrm{loc}}$ with $q \leqslant m$, then $\bm{G}
  \in L^{q^{\ast}}_{\mathrm{loc}}$, where $q^{\ast} = m  q / (m - q)$.
  Therefore, by Calderón--Zygmund, $\nabla \bm{w} \in
  L^{q^{\ast}}_{\mathrm{loc}}$, meaning that $\bm{w} \in
  W^{1, q^{\ast}}_{\mathrm{loc}}$. (As usual, if $q = m$, $q^{\ast}$ can be
  any finite real exponent). If $q^{\ast} > m$ (which is algebraically
  equivalent to $q > m / 2$), Morrey's inequality yields
  $\bm{w} \in C^{0, \alpha}_{\mathrm{loc}}$ with $\alpha
  = 1 - m / q^{\ast}$. In particular:
  \begin{itemize}
    \item If $q = m$, then $q^{\ast}$ is arbitrarily large, yielding
    $\bm{w} \in C^{0, \alpha}_{\mathrm{loc}} \cap W^{1,
    p}_{\mathrm{loc}}$ for all $\alpha < 1$ and all $1 \leqslant p < \infty$.
    The regularity becomes arbitrarily close to $C^1$, but  stalls
    just short of it.
      \smallskip

    \item If $q = 2$ and $m = 3$, then $q^{\ast} = 6$, yielding
    $\bm{w} \in C^{0, 1 / 2}_{\mathrm{loc}} \cap W^{1,
    6}_{\mathrm{loc}}$.
      \smallskip

    \item If $q \leqslant m / 2$ (e.g., $q = 2, m \geqslant 4$), we cannot
    even guarantee continuity from the integrability of the source alone.
  \end{itemize}
  Finally, because the algebraic inversion from $\bm{w}$
  back to $\bm{u}$ is smooth,
  $\bm{u}$  inherits these $W^{1,
  q^{\ast}}_{\mathrm{loc}}$ and $C^{0, \alpha}_{\mathrm{loc}}$ spaces. This
  proves assertion \text{{\itshape{i.}}}
  
  {\noindent}{\itshape{Proof of \text{{\upshape{ii.}}} Step 6: Pushing
  the Bootstrap forward via Morrey and Campanato.}} To push the bootstrap
  forward, we must cross the boundary from Lebesgue integrability ($L^p$) into
  classical Hölder continuity ($C^{1, \eta_0}_{\mathrm{loc}}$). The
  mathematical bridge between these two realms  requires the
  right-hand side of our PDE, $\bm{g}$, to cross the
  dimensional threshold $q > m$. Recalling the algebraic definition
  $g^{\alpha} = u^{\alpha} f^{n + 1} - u^{n + 1} f^{\alpha}$ and the geometric
  bound $|\bm{u}| = 1$, we observe that
  $\bm{g}$  inherits the Lebesgue integrability
  of the original source $\bm{f}$. Therefore, we 
  assume higher integrability on our original source:
  $\bm{f} \in L^q_{\mathrm{loc}}$ for some $q > m$, which
  guarantees $\bm{g} \in L^q_{\mathrm{loc}}$.
  
  Following the exact same flux representation as above, the potential
  $\ensuremath{\boldsymbol{v}}$ (solving $\Delta
  \ensuremath{\boldsymbol{v}}=\bm{g}$) now belongs to
  $W^{2, q} (B_R (x_0))$, which implies our flux $\bm{G}
  := \nabla \ensuremath{\boldsymbol{v}}$ belongs to $W^{1, q} (B_R (x_0))$.
  Because $q > m$, by Morrey's inequality, the flux
  $\bm{G}$ is no longer just an integrable function; it
  is continuous. Specifically, it embeds into a Hölder space:
  \begin{equation}
    \bm{G} \in C^{0, \alpha} (B_R (x_0)), \quad
    \text{where } \alpha := 1 - m / q.
  \end{equation}
  This is the critical transition point in the argument. In the equation
  \eqref{eq:inhomogeneousPoissonflux}, both the coefficient $A \in C^{0,
  \gamma}_{\mathrm{loc}}$ and the flux $\bm{G} \in C^{0,
  \alpha}_{\mathrm{loc}}$ are now locally Hölder continuous. The
  Calderón--Zygmund $L^q$-theory is no longer sufficient, and we instead
  appeal to the classical Campanato regularity theory for divergence-form
  elliptic equations. These estimates (see, for instance, Giaquinta
  {\cite[Chapter III]{Giaquinta1983}} or Giusti {\cite[Theorem
  5.19]{Giusti_2003}}) imply that $\nabla \bm{w}$
  inherits the Hölder continuity of the coefficients and forcing terms, with
  regularity determined by the smaller of the two exponents. More precisely,
  \begin{equation}
    \nabla \bm{w} \in C^{0, \eta_0}_{\mathrm{loc}} (B_R
    (x_0)), \qquad \eta_0 = \min (\gamma, \alpha),
  \end{equation}
  with $\alpha = 1 - m / q$. Consequently, we obtain:
  \begin{equation}
    \bm{w} \in C^{1, \eta_0}_{\mathrm{loc}} (B_R (x_0)),
    \quad \text{where } \eta_0 = \min (\gamma, 1 - m / q) .
  \end{equation}
  {\noindent}{\itshape{Step 7: Bootstrapping back to
  $\bm{u}$ and the Self-Improving Loop.}} Assuming the
  source meets the threshold integrability $\bm{f} \in
  L^q_{\mathrm{loc}}$ ($q > m$), Step 6 guarantees that
  $\bm{w} \in C^{1, \eta_0}_{\mathrm{loc}}$. We now
  algebraically invert the Gnomonic projection to recover the original map
  $\bm{u}$. Since $|\bm{w}|^2 =
  (\bm{u}\cdot \ensuremath{\boldsymbol{e}}_{n + 1})^{-
  2} - 1$, solving for the vertical component $u^{n + 1}
  =\bm{u}\cdot \ensuremath{\boldsymbol{e}}_{n + 1}$
  yields $\bm{u}\cdot \ensuremath{\boldsymbol{e}}_{n +
  1} = (1 + |\bm{w}|^2)^{- 1 / 2}$. Because Hölder spaces
  are Banach algebras and are closed under composition with smooth functions,
  the composition of $\bm{w} \in C^{1,
  \eta_0}_{\mathrm{loc}}$ with the smooth function $y \mapsto (1 + |y|^2)^{- 1
  / 2}$ implies $\bm{u}\cdot
  \ensuremath{\boldsymbol{e}}_{n + 1} \in C^{1, \eta_0}_{\mathrm{loc}} (B_R
  (x_0))$. Consequently, the remaining components are recovered via smooth
  products: $u^{\alpha} = w^{\alpha} u^{n + 1} \in C^{1,
  \eta_0}_{\mathrm{loc}} (B_R (x_0))$. Since the base point $x_0$ was
  arbitrary, we have successfully lifted the entire map to
  $\bm{u}\in C^{1, \eta_0}_{\mathrm{loc}} (U,
  \Stwo^n)$.
  
  {\noindent}{\itshape{Step 8: Unconditional Hölder Exponents
  \text{{\upshape{(}}}Dropping $\gamma$\text{{\upshape{)}}}.}} We can now
  completely eliminate the initial fractional dependency. Because
  $\bm{u}\in C^{1, \eta_0}_{\mathrm{loc}}$, its gradient
  is locally continuous and thus locally bounded
  ($L^{\infty}_{\mathrm{loc}}$). We return to the original harmonic map
  system, treating the right-hand side as a fixed source for the
  constant-coefficient Laplacian: $- \Delta
  \bm{u}=\ensuremath{\boldsymbol{\varphi}} (x) := |
  \nabla \bm{u}|^2
  \bm{u}+\bm{f}$. Since $\nabla
  \bm{u}\in L^{\infty}_{\mathrm{loc}}$, the nonlinear
  term $| \nabla \bm{u}|^2 \bm{u}
  \in L^{\infty}_{\mathrm{loc}} \subset L^q_{\mathrm{loc}}$. Therefore, the
  entire right-hand side $\ensuremath{\boldsymbol{\varphi}}$ belongs to
  $L^q_{\mathrm{loc}}$. Applying standard Calderón--Zygmund estimates for the
  constant-coefficient Laplacian yields $\bm{u}\in W^{2,
  q}_{\mathrm{loc}}$. By Morrey's inequality, this  embeds into $C^{1,
  \eta}_{\mathrm{loc}}$ with the pure exponent $\eta = 1 - m / q$, completely
  forgetting the initial $\gamma$.
  
  In particular, if $\bm{f} \in
  L^{\infty}_{\mathrm{loc}}$ (which corresponds to $q \to \infty$), the
  right-hand side is in $L^p_{\mathrm{loc}}$ for all $p < \infty$, yielding
  $\bm{u}\in C^{1, \alpha}_{\mathrm{loc}}$ for all $0 <
  \alpha < 1$. This completes the proof of \text{{\itshape{ii}}}.{\smallskip}
  
  {\noindent}{\itshape{Step 9: Proof of \text{{\upshape{iii.}}} The
  Standard Schauder Bootstrap \text{{\upshape{(}}}$C^{1, \beta} \to
  C^{\infty}$\text{{\upshape{)}}}}} Once the gradient $\nabla
  \bm{u}$ is Hölder continuous, the geometric criticality
  of the PDE no longer obstructs the iteration. We treat the system as a standard
  Poisson equation (\text{{\scshape{pe}}}):
  \begin{equation}
    - \Delta \bm{u}=\bm{\varphi} :=
    | \nabla \bm{u}|^2
    \bm{u}+\bm{f} (x) .
  \end{equation}
  The regularity of $\bm{u}$ now depends exclusively upon
  the smoothness of the external source $\bm{f}$. We
  proceed by induction using standard Schauder estimates
  (\text{{\scshape{se}}}) for the Laplacian.
  \begin{itemize}
    \item {\itshape{Base Step \text{{\upshape{(}}}$k =
    1$\text{{\upshape{)}}}:}} Assume $\bm{f} \in C^{0,
    \beta}_{\mathrm{loc}}$. Since $\bm{f}$ is locally
    bounded ($\bm{f} \in L^{\infty}_{\mathrm{loc}}$), the
    acquired point \text{{\itshape{ii.}}} of the theorem guarantees that
    $\bm{u}\in C^{1, \alpha}_{\mathrm{loc}}$ for
    \text{{\itshape{all}}} $\alpha < 1$. By choosing $\alpha = \beta$, we have
    $\nabla \bm{u}\in C^{0, \beta}_{\mathrm{loc}}$.
    Consequently, the nonlinear term satisfies $| \nabla
    \bm{u}|^2 \bm{u}\in C^{0,
    \beta}_{\mathrm{loc}}$. The total source
$\bm{\varphi}$ is therefore in $C^{0,\beta}_{\mathrm{loc}}$. Applying Schauder estimates to the Poisson
    equation yields:
    \begin{equation}
      \Delta \bm{u}\in C^{0, \beta}_{\mathrm{loc}}
      \hspace{1.2em} \overset{\text{{\scshape{se}}}}{\Longrightarrow}
      \hspace{1.2em} \bm{u}\in C^{2,
      \beta}_{\mathrm{loc}} .
    \end{equation}
    \item{\itshape{Inductive Step:}} Assume that for some integer $k
    \geqslant 2$, we have $\bm{u}\in C^{k,
    \beta}_{\mathrm{loc}}$ and we are given a smoother source
    $\bm{f} \in C^{k - 1, \beta}_{\mathrm{loc}}$. Because
    $\bm{u}\in C^{k, \beta}_{\mathrm{loc}}$, its
    gradient is $\nabla \bm{u}\in C^{k - 1,
    \beta}_{\mathrm{loc}}$. Since Hölder spaces $C^{k - 1,
    \beta}_{\mathrm{loc}}$ are Banach algebras, the product $| \nabla
    \bm{u}|^2 \bm{u}$ belongs to
    $C^{k - 1, \beta}_{\mathrm{loc}}$. Therefore, the total source is
    $\bm{\varphi} \in C^{k - 1, \beta}_{\mathrm{loc}}$.
    The bootstrap pushes the solution up by two derivatives relative to the
    source:
    \begin{equation}
      \Delta \bm{u}\in C^{k - 1, \beta}_{\mathrm{loc}}
      \hspace{1.2em} \overset{\text{{\scshape{se}}}}{\Longrightarrow}
      \hspace{1.2em} \bm{u}\in C^{k + 1,
      \beta}_{\mathrm{loc}} .
    \end{equation}
  \end{itemize}
  Overall, if $\bm{f} \in C^{k - 1,
  \beta}_{\mathrm{loc}}$ and $\bm{u}\in C^{0,
  \eta}_{\mathrm{loc}}$ then $\bm{u}\in C^{k + 1,
  \beta}$. Again, note that $\eta$ does not play any role in the higher
  regularity class we end up in.
  
  By applying this inductive loop recursively, we see that if
  $\bm{f} \in C^{\infty}$, then
  $\bm{u}\in C^{k, \beta}_{\mathrm{loc}}$ for all
  integers $k \geqslant 0$. Thus, we conclude that
  $\bm{u}\in C^{\infty} (U,
  \Stwo^n)$.
\end{proof}

\section{Applications to Micromagnetics: Smoothness of Magnetic
Skyrmions}\label{sec:appsmicromag}

We now illustrate the power of the Bootstrap Lemma~\ref{lemma:Bootstrap} by
applying it to the variational theory of micromagnetics. This macroscopic
continuum theory is of paramount importance in modern condensed matter physics
and spintronics, particularly for the rigorous analysis of complex topological
spin textures such as magnetic skyrmions. Skyrmions are localized,
particle-like chiral spin configurations characterized by a non-trivial
topological winding number, which grants them inherent stability against
continuous deformations (topological protection). Because they can be
nucleated, manipulated, and driven by ultra-low electrical currents, skyrmions
are currently at the forefront of next-generation solid-state technologies,
including high-density non-volatile magnetic storage (such as racetrack memory
architectures), logic devices, and neuromorphic computing
paradigms~{\cite{Fert_2013}}. A rigorous understanding of the analytical
properties, such as the maximal regularity, of these magnetic configurations
is therefore fundamentally necessary to validate both theoretical predictions
and numerical simulations.

The state of a rigid ferromagnetic body occupying a domain $U \subset \RR^m$
(with $m = 2$ or $3$) is described by its magnetization vector field
$\bm{u}$. Below the Curie temperature, the magnitude of
the magnetization is assumed to be constant, leading to the non-convex
pointwise saturation constraint $|\bm{u}(x) | = 1$,
meaning $\bm{u} \in H^1 (U, \Stwo^2)$.

The stable magnetic configurations are the local minimizers of the
micromagnetic energy functional
{\cite{Alouges_2015,Bertotti1998,Brown1963,Di_Fratta_2020a}}:
\begin{equation}
  \mathcal{G} (\bm{u}) := \int_U \left[ \frac{1}{2} |
  \nabla \bm{u}|^2 + \kappa \bm{u}
  \cdot \curl \bm{u}+ \varphi
  (\bm{u}) \right]  \hspace{0.17em} \mathrm{d} x
  +\mathcal{E}_{\ensuremath{\operatorname{stray}}}
  (\bm{u}), \quad
  \mathcal{E}_{\ensuremath{\operatorname{stray}}}
  (\bm{u}) := - \frac{1}{2} \int_U
  \ensuremath{\boldsymbol{h}} [\bm{u}] \cdot
  \bm{u}.
\end{equation}
Each term in this functional models a distinct quantum-mechanical or
macroscopic physical interaction:
\begin{itemize}
  \item {{\em Exchange Energy}} ($\frac{1}{2} | \nabla
  \bm{u}|^2$): This is the standard Dirichlet energy,
  which penalizes spatial variations and favors the uniform, parallel
  alignment of magnetic spins.
    \smallskip

  \item {{\em Dzyaloshinskii-Moriya Interaction}} {\opt}DMI{\cpt} ($\kappa
  \bm{u} \cdot \curl
  \bm{u}$): This anti-symmetric exchange term arises from
  spin-orbit coupling in bulk materials or interfaces lacking inversion
  symmetry. Unlike the standard exchange that favors parallel alignment, the
  DMI favors a specific chirality or handedness, promoting the formation of
  complex topological spin textures such as spin helices and magnetic
  skyrmions (see {\cite{Di_Fratta_2023}} for a rigorous mathematical treatment
  of DMI).
    \smallskip

  \item {{\em Magnetocrystalline Anisotropy}} ($\varphi
  (\bm{u})$): This term models the tendency of the
  magnetization to align with specific crystallographic axes. The function
  $\varphi : \Stwo^2 \to \RR$ is typically a smooth (at least $C^k$) function
  that vanishes at a finite set of points (the ``easy axes''). A standard
  example is the uniaxial anisotropy $\varphi (\bm{u}) =
  K (1 - (\bm{u} \cdot e_3)^2)$ for some material
  constant $K > 0$.
  \smallskip
  
  \item {{\em Demagnetizing / Stray Field Energy}}
  ($\mathcal{E}_{\ensuremath{\operatorname{stray}}}
  (\bm{u})$): This accounts for the long-range
  dipole-dipole interactions. In 3D, it is represented via the demagnetizing
  field $\ensuremath{\boldsymbol{h}} [\bm{u}]$ generated
  by $\bm{u}$, which solves the magnetostatic Maxwell
  equations. Mathematically, $\ensuremath{\boldsymbol{h}}= - \nabla \Delta^{-
  1} \divr (\bm{u} \chi_U)$ where
  $\bm{u} \chi_U$ denotes the extension by zero of
  $\bm{u}$ to the whole of $\RR^3$ . Because it is a
  zero-order Calderón-Zygmund singular integral operator, the map
  $\ensuremath{\boldsymbol{h}}: L^2 \rightarrow L^2$ is bounded on $L^p$
  spaces for $1 < p < \infty$ and satisfies the standard elliptic properties
  for any $k \in \NN$, $\eta \in (0, 1)$, and $p \in (1, \infty)$:
  \begin{equation}
    \bm{u} \in L^p \quad \Longrightarrow \quad
    \ensuremath{\boldsymbol{h}} [\bm{u}] \in L^p .
    \label{eq:demagregp}
  \end{equation}
  \begin{equation}
    \bm{u} \in C^{k, \eta}_{\mathrm{loc}} \quad
    \Longrightarrow \quad \ensuremath{\boldsymbol{h}}
    [\bm{u}] \in C^{k, \eta}_{\mathrm{loc}} .
    \label{eq:demagreg}
  \end{equation}
  In 2D the nonlocal stray field operator collapses to a local term of the
  form $- (\bm{u} \cdot \ensuremath{\boldsymbol{e}}_3)
  \ensuremath{\boldsymbol{e}}_3$ and, more generally to $-
  (\bm{u} \cdot \ensuremath{\boldsymbol{n}})
  \ensuremath{\boldsymbol{n}}$ if $U$ is a compact surface in $\RR^3$ (we
  refer to {\cite{Di_Fratta_2020c,Di_Fratta_2023,Di_Fratta_2024}} for the rigorous derivation
  of these reduced thin-film models).
\end{itemize}
Regardless, for what follows, it is sufficient for us to assume that
$\ensuremath{\boldsymbol{h}}: L^2 \rightarrow L^2$ is a linear, possibly
non-local operator satisfying the regularity behavior in \eqref{eq:demagregp}
and \eqref{eq:demagreg}.

The Euler-Lagrange equations associated with the critical points of
$\mathcal{G}$, constrained to the sphere $\Stwo^2$, are known as Brown's
static equations. They take the form:
\begin{equation}
  - \Delta \bm{u}= | \nabla
  \bm{u}|^2
  \bm{u}+\bm{u} \times \left(
  \bm{u} \times \left( 2 \kappa
  \curl \bm{u}+
  \grad_{\bm{u}} \varphi (\bm{u})
  -\ensuremath{\boldsymbol{h}}[\bm{u}] \right) \right) .
\end{equation}
From a PDE regularity perspective, we can absorb the lower-order magnetic
interactions into a single effective source term, rewriting the system
as an almost harmonic map equation:
\begin{equation}
  - \Delta \bm{u}= | \nabla
  \bm{u}|^2
  \bm{u}+\bm{f}, \quad \text{with }
  \bm{f} := \bm{u} \times \left(
  \bm{u} \times \left( 2 \kappa
  \curl \bm{u}+
  \nabla_{\bm{u}} \varphi (\bm{u})
  -\ensuremath{\boldsymbol{h}}[\bm{u}] \right) \right) .
\end{equation}
We now state the main regularity result for micromagnetic configurations,
which follows cleanly from our abstract framework.

{\noindent}\textbf{Theorem~\ref{thm:micro_regularity}.} (Interior Regularity of Micromagnetic Maps)
{\itshape{Let $U \subseteq \RR^m$ {\opt}$m = 2, 3${\cpt} be a bounded open set and
let $\bm{u} \in H^1 \cap C^0 \left( U, \Stwo^2 \right)$
be a continuous critical point of the micromagnetic energy $\mathcal{G}$.
Assume that $\ensuremath{\boldsymbol{h}}: L^2 \left( U, \RR^3 \right)
\rightarrow L^2 \left( U, \RR^3 \right)$ satisfies the regularity hypotheses
{{\em \eqref{eq:demagregp}}} and {{\em \eqref{eq:demagreg}}}.

If the anisotropy density $\varphi$ is of class $C^{k, \beta}$ for $k
\geqslant 1$ and exponent $\beta \in (0, 1)$, then
$\bm{u} \in C^{k + 1, \beta}_{\mathrm{loc}} \left( U,
\Stwo^2 \right)$. In particular, if $\varphi \in C^{\infty}$, then
$\bm{u} \in C^{\infty} \left( U, \Stwo^2 \right)$.

For $m = 2$, the initial continuity assumption is automatically satisfied for
finite-energy solutions.}}

\begin{remark}
  From a geometric perspective, the physical dimensionality of the magnetic
  material dictates the structure of its singular set. In $m = 2$ dimensions,
  finite-energy solutions are a priori known to be continuous everywhere;
  therefore, 2D micromagnetic configurations are unconditionally smooth if
  $\varphi \in C^{\infty}$. In $m = 3$ dimensions, however, topological point
  defects (such as the hedgehog singularity $x / |x|$) may exhibit finite
  energy. Thus, 3D solutions are $C^{\infty}$ smooth  outside of their
  singular set (the specific discrete points where initial continuity fails).
\end{remark}

\begin{proof}
  We apply the Bootstrap Theorem \ref{lemma:Bootstrap} to establish the higher
  regularity of weak critical points. Because the physical dimension is
  restricted to $m \leqslant 3$, the baseline integrability of the effective
  source term ($q = 2$)  satisfies the De Giorgi--Nash--Moser
  non-stalling condition $q > m / 2$. Therefore, the abstract bootstrap is
  unconditionally triggered from mere continuity, and we are not required to
  assume any fractional Hölder regularity a priori.
  
  Because $\bm{u} \in W^{1, 2}$, the differential term
  \text{{\bfseries{curl }}}$\bm{u} \in L^2$. Furthermore,
  the geometric constraint $|\bm{u}| = 1$ ensures
  $\bm{u} \in L^{\infty}$, meaning the singular integral
  operator guarantees (via \eqref{eq:demagregp}) that
  $\ensuremath{\boldsymbol{h}} [\bm{u}] \in
  L^p_{\mathrm{loc}}$ for all finite $p$. It follows that the
  composite effective source term initiates at $\bm{f}
  \in L^2_{\mathrm{loc}}$.
  
  In the following argument, we focus on the physically critical dimension $m
  = 3$, where the baseline integrability is weakest. The proof for $m = 2$
  proceeds identically but crosses the classical regularity thresholds sooner.
 \smallskip
 
  \noindent {{\em First Bootstrap Step {\opt}Sobolev Improvement{\cpt}.}} By
    Theorem~\ref{lemma:Bootstrap}.{{\em i}}, since
    $\bm{f} \in L^2_{\mathrm{loc}}$, we automatically
    deduce that $\bm{u} \in W^{1,
    2^{\ast}}_{\mathrm{loc}} (\Om, \Stwo^2)$. In $m = 3$ dimensions, the
    critical Sobolev exponent is $2^{\ast} = 6$. Consequently, $\nabla
    \bm{u} \in L^6_{\mathrm{loc}}$. It follows that
    $\curl \bm{u} \in
    L^6_{\mathrm{loc}}$ and $\ensuremath{\boldsymbol{h}}
    [\bm{u}] \in L^p_{\mathrm{loc}}$ for all $p <
    \infty$. Overall, our source term  improves to
    $\bm{f} \in L^6_{\mathrm{loc}}$.
        \smallskip
        
   \noindent {{\em Second Bootstrap Step {\opt}Hölder Gradients{\cpt}.}} We
    have now established $\bm{f} \in L^q_{\mathrm{loc}}$
    with $q = 6$. Because $q > m$ (i.e., $6 > 3$), we invoke
    Theorem~\ref{lemma:Bootstrap}.{{\em ii}} to cross the threshold into
    classical differentiability. We deduce that $\bm{u}
    \in C^{1, \eta}_{\mathrm{loc}}$ with $\eta := 1 - m / q = 1 / 2$.
    \smallskip
    
    \noindent {{\em Higher Regularity.}} Because $\bm{u}
    \in C^{1, \eta}_{\mathrm{loc}}$, its first-order derivatives (such as
    $\curl \bm{u}$) belong to
    $C^{0, \eta}_{\mathrm{loc}}$. Concurrently, by \eqref{eq:demagreg}, the
    zero-order stray field $\ensuremath{\boldsymbol{h}}
    [\bm{u}]$ is $C^{1, \eta}_{\mathrm{loc}}$. The
    regularity of the gradient-dependent source term
    $\bm{f}$ is dictated by its least regular component,
    ensuring $\bm{f} \in C^{0, \eta}_{\mathrm{loc}}$.
    Defining the full right-hand side as $\ensuremath{\boldsymbol{\psi}} := |
    \nabla \bm{u}|^2
    \bm{u}+\bm{f}$, we see that
    $\ensuremath{\boldsymbol{\psi}} \in C^{0, \eta}_{\mathrm{loc}}$. Standard
    linear elliptic regularity for the Poisson equation then yields
    $\bm{u} \in C^{2, \eta}_{\mathrm{loc}}$. This
    elevated regularity forces $\ensuremath{\boldsymbol{\psi}} \in C^{1,
    \eta}_{\mathrm{loc}}$, subsequently yielding $\bm{u}
    \in C^{3, \eta}_{\mathrm{loc}}$, and so forth.
    
  Iterating this Schauder-type bootstrap, we conclude that the configuration
  $\bm{u}$ precisely inherits the maximal regularity
  permitted by the anisotropy density: $\bm{u} \in C^{k +
  1, \beta}_{\mathrm{loc}} (U, \Stwo^2)$ if $\varphi \in C^{k, \beta}$. In
  particular, $\bm{u} \in C^{\infty} (U, \Stwo^2)$
  whenever the anisotropy $\varphi$ is smooth.
\end{proof}

\section{Appendix: Interior Gradient Estimates for Harmonic
Functions}\label{sec:app1}

In this appendix, we collect classical interior estimates for maps with
harmonic components. These results, which bound the gradient of a harmonic (or
anisotropic harmonic) map in terms of its boundary values, are frequently
invoked throughout the paper and are provided here for completeness.

The section is structured around three main results. First,
Proposition~\ref{prop:Poissongradest} leverages the explicit Poisson kernel on
Euclidean balls to establish a scale-invariant gradient bound controlled
by the $L^1$-norm of the boundary trace, including a precise
characterization of the constant's blow-up rate near the boundary. Second, to
address non-spherical geometries, Lemma~\ref{thm:poisson_gradient} recalls a
potential theory bound by Widman~{\cite{Widman1967}} for the Poisson kernel on
arbitrary $C^{1, 1}$ domains. Finally,
Proposition~\ref{prop:aniso_poisson_kernel} synthesizes these concepts: by
applying an affine transformation and invoking the Widman bound on the
resulting ellipsoid, it establishes the analogous interior gradient estimate
for anisotropic harmonic functions, explicitly tracking the constant's
dependence on the ellipticity ratio.

\subsection{Interior Gradient Estimate for Harmonic Functions}
{\myprop{\begin{proposition}[Interior Gradient Estimate for Harmonic Functions
via Traces]
  \label{prop:Poissongradest}Let $u \in W^{1, 1} (B_r (x_0), \RR)$ be a weakly
  harmonic function in the open ball $B_r (x_0) \subset \RR^m$. Let $u_{\restr
  \partial B_r (x_0)} \in L^1 (\partial B_r (x_0))$ denote its boundary trace.
  Then, for every shrinking factor $0 < \lambda < 1$, $u$ is smooth in the
  interior, and the following interior gradient estimate holds:
  \begin{equation}
    \sup_{x \in B_{\lambda r} (x_0)} \lvert \nabla u (x) \rvert
     \leqslant  \frac{C_{\lambda, m}}{r^m} 
    \int_{\xi \in \partial B_r (x_0)} \lvert u (\xi) \rvert \hspace{0.17em}
    \mathrm{d} \xi, \label{eq:estLinfbdry}
  \end{equation}
  where $C_{\lambda, m} > 0$ depends only on $\lambda$ and $m$.
  
  Furthermore, the constant $C_{\lambda, m}$ satisfies the explicit upper
  bound:
  \begin{equation}
    C_{\lambda, m} \leqslant \frac{2 (m + \lambda)}{\dot{\omega}_m  (1 -
    \lambda)^m},
  \end{equation}
  where $\dot{\omega}_m$ is the surface area of the unit sphere in $\RR^m$.
  Consequently, the optimal constant exhibits a blow-up of order $\mathcal{O}
  ((1 - \lambda)^{- m})$ as $\lambda \to 1^-$.
\end{proposition}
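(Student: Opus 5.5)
We will use the Poisson representation of $u$ in terms of its boundary trace and differentiate the Poisson kernel under the integral sign. By translation and dilation we may take $x_0=0$ and work on $B_r$. The plan has three steps: justify the representation for a $W^{1,1}$ weakly harmonic $u$, bound the gradient of the kernel on $B_{\lambda r}$, and integrate that bound against $|u|$ on the sphere.

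\emph{Step 1: representation.} By Weyl's lemma, $u$ is smooth in $B_r$. For $s<r$, the function $u$ is smooth on $\overline{B_s}$, so
\[
u(x)=\int_{\partial B_s}P_s(x,\xi)\,u(\xi)\,\mathrm{d}\xi, \qquad P_s(x,\xi)=\frac{s^2-|x|^2}{\dot{\omega}_m\, s\,|x-\xi|^m}.
\]
After the change of variables $\xi=s\zeta$, the traces $u(s\,\cdot)$ converge to $u(r\,\cdot)$ in $L^1(\partial B_1)$ as $s\to r^-$. This follows from the $W^{1,1}$ trace theory: $\int_{\partial B_1}|u(r\zeta)-u(s\zeta)|\,\mathrm{d}\zeta\le C\int_{B_r\setminus B_s}|\nabla u|\to0$. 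For fixed $x\in B_{\lambda r}$ the rescaled kernels are uniformly bounded and converge uniformly, so we may pass to the limit. This yields the representation with $P_r$ and the trace $u_{\restr\partial B_r}$.

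\emph{Step 2: kernel gradient bound.} We differentiate under the integral sign, which is legitimate for $|x|\le\lambda r$ because $|x-\xi|\ge(1-\lambda)r>0$. A direct computation gives
\[
\nabla_x P_r(x,\xi)=\frac{1}{\dot{\omega}_m r}\left(\frac{-2x}{|x-\xi|^m}-m\,(r^2-|x|^2)\,\frac{x-\xi}{|x-\xi|^{m+2}}\right).
\]
To bound this, use $|x|\le\lambda r$, $r^2-|x|^2\le r^2$ and $|x-\xi|\ge(1-\lambda)r$:
\[
|\nabla_xP_r|\le\frac{1}{\dot{\omega}_m r}\left(\frac{2\lambda r}{(1-\lambda)^mr^m}+\frac{m r^2}{(1-\lambda)^{m+1}r^{m+1}}\right).
\]
This has the right $r^{-m}$ scaling but an extra factor $(1-\lambda)^{-1}$ in the second term. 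To reach the stated constant $2(m+\lambda)/(\dot{\omega}_m(1-\lambda)^m)$, we refine the second term by writing $r^2-|x|^2=(r-|x|)(r+|x|)\le 2r(r-|x|)\le 2r|x-\xi|$. The second term then becomes at most $2m r^2/|x-\xi|^{m+1}\cdot r^{-1}$. We will then get $|\nabla_xP_r|\le (2\lambda+2m)/(\dot{\omega}_m r^m(1-\lambda)^m)$.

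\emph{Step 3: conclusion.} Integrating the pointwise bound from Step 2 against $|u(\xi)|$ over $\partial B_r$ gives \eqref{eq:estLinfbdry} with the constant above. The $\mathcal{O}((1-\lambda)^{-m})$ blow-up is read off directly from that constant.

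The main obstacle is Step 1: rigorously identifying $u$ with the Poisson integral of a merely $L^1$ trace. The careful $L^1$ convergence of the spherical slices in that step is what makes it work. The factor bookkeeping in Step 2 is routine once $r^2-|x|^2\le2r|x-\xi|$ is used.
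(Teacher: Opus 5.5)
Your proposal is correct and takes essentially the same route as the paper: Poisson representation through the trace, differentiation under the integral sign, and the same two bounds $r^2-|x|^2\le 2r|x-\xi|$ and $|x-\xi|\ge(1-\lambda)r$. The only differences are that you work on $B_r$ directly rather than rescaling to $B_1$, and that your Step~1 proves the Poisson representation for an $L^1$ trace (via interior spheres and the $W^{1,1}$ slice estimate), which the paper merely asserts. One cosmetic slip: in Step~2 the refined second term inside the bracket should be $2mr/|x-\xi|^{m}$, not $2mr^2|x-\xi|^{-(m+1)}r^{-1}$; your final constant $2(m+\lambda)/(\dot{\omega}_m(1-\lambda)^m)$ is nonetheless right.
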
}}

\begin{remark}
  Because the difference of harmonic functions is harmonic, the proposition
   implies that for any constant $a \in \RR$, we have:
  \begin{equation} \sup_{x \in B_{\lambda r} (x_0)} \lvert \nabla u (x) \rvert \leqslant
     \frac{C_{\lambda, m}}{r^m}  \int_{{\xi} \in \partial
     B_r (x_0)} |u (\xi) - a| \hspace{0.17em} {\mathrm{d}
     \xi} . \end{equation}
  This shifted estimate is the key for bounding gradients using the
  $L^1$-oscillation on the boundary, even when the boundary values are only
  understood in the sense of traces.
\end{remark}

\begin{proof}
  By translation and scaling, it suffices to prove the estimate on the unit
  ball centered at the origin ($x_0 = 0$, $r = 1$). We define the rescaled
  function $v (y) := u (x_0 + ry)$. Because $u \in W^{1, 1} (B_r (x_0))$ is
  weakly harmonic, $v \in W^{1, 1} (B_1)$ is weakly harmonic and its trace
  belongs to $L^1 (\partial B_1)$. We aim to show:
  \begin{equation}
    \sup_{|y| \leqslant \lambda} \lvert \nabla v (y) \rvert \leqslant
    C_{\lambda, m}  \int_{\zeta \in \partial B_1} \lvert v
    ({\zeta}) \rvert \hspace{0.17em} \mathrm{d} \zeta .
  \end{equation}
  {\noindent}\text{{\itshape{The Poisson integral formula via traces.}}}
  Because $v$ is weakly harmonic and its boundary values exist in $L^1
  (\partial B_1)$ in the trace sense, classical elliptic regularity theory
  guarantees that $v$ is equivalent to a smooth harmonic function in the
  interior of $B_1$, and it can be recovered  by the Poisson integral
  formula of its trace. Thus, for almost every $y \in B_1$ (and everywhere for
  the smooth representative):
  \begin{equation}
    v (y) = \int_{\zeta \in \partial B_1} P (y, {\zeta}) v
    ({\zeta}) \hspace{0.17em} \mathrm{d} \zeta, \qquad P
    (y, {\zeta}) = \frac{1 - |y|^2}{\dot{\omega}_m 
    \hspace{0.17em} |y - {\zeta} |^m} .
  \end{equation}
  Since $v \in L^1 (\partial B_1)$ and the kernel $P (y, \zeta)$ alongside all
  its spatial derivatives with respect to $y$ are smooth and uniformly bounded
  on any compact sub-domain $|y| \leqslant \lambda < 1$, we may differentiate
  under the integral sign using the Leibniz integral rule (justified by
  Lebesgue's Dominated Convergence Theorem):
  \begin{equation}
    \nabla v (y) = \int_{\zeta \in \partial B_1} \nabla_y P (y,
    {\zeta}) v ({\zeta})
    \hspace{0.17em} \mathrm{d} \zeta .
  \end{equation}
  The kernel map $(y, {\zeta}) \mapsto \nabla_y P (y,
  {\zeta})$ is continuous (hence bounded) on the compact
  set
  \begin{equation}
    K := \{(y, {\zeta}) \in \RR^m
    \times {\Stwo}^{m - 1}\hspace{0.17em} :
    \hspace{0.17em} |y| \leqslant \lambda, \hspace{0.17em} |
    {\zeta} | = 1\} .
  \end{equation}
  Let $M_{\lambda, m} = \sup_K | \nabla_y P (y, {\zeta})
  |$ (finite, depending only on $\lambda$ and $m$). Then
  \begin{equation}
    | \nabla v (y) | \leqslant M_{\lambda, m}  \int_{\zeta \in \partial B_1}
    |v ({\zeta}) | \hspace{0.17em} \mathrm{d} \zeta .
  \end{equation}
  This is the desired estimate for the unit ball with $C_{\lambda, m} =
  M_{\lambda, m}$. To recover the general estimate on $B_r (x_0)$, we reverse
  the scaling and translation. Recall that $v (y) = u (x_0 + ry)$. By the
  chain rule, the gradient transforms as:
  \begin{equation}
    \nabla v (y) = r \nabla u (x_0 + r  y) \quad \Longrightarrow \quad
    \sup_{|y| \leqslant \lambda} | \nabla v (y) | = r \sup_{x \in B_{\lambda
    r} (x_0)} | \nabla u (x) | .
  \end{equation}
  For the right-hand side, we apply the change of variables $\xi = x_0 + r
  \zeta$ to the boundary integral. As $\zeta$ parameterizes the unit sphere
  $\partial B_1 (0)$, the variable $\xi$ parameterizes the sphere $\partial
  B_r (x_0)$. The surface area element transforms as $\mathrm{d}
  \mathcal{H}^{m - 1} (\xi) = r^{m - 1} \mathrm{d} \mathcal{H}^{m - 1}
  (\zeta)$. Therefore:
  \begin{equation}
    \int_{\zeta \in \partial B_1} |v (\zeta) | \hspace{0.17em} \mathrm{d}
    \zeta = \int_{\xi \in \partial B_r (x_0)} |u (\xi) | \frac{1}{r^{m - 1}}
    \hspace{0.17em} \mathrm{d} \xi .
  \end{equation}
  Substituting these two relations back into the unit ball estimate yields:
  \begin{equation}
    r \sup_{x \in B_{\lambda r} (x_0)} | \nabla u (x) | \leqslant M_{\lambda,
    m}  \frac{1}{r^{m - 1}}  \int_{\xi \in \partial B_r (x_0)} |u (\xi) |
    \hspace{0.17em} \mathrm{d} \xi .
  \end{equation}
  Dividing both sides by $r$ produces the exact $1 / r^m$ scaling factor,
  yielding the general result.
  
  {\noindent}\text{{\itshape{The blow-up of $C_{\lambda, m}$ as $\lambda
  \rightarrow 1^-$.}}} Finally, the blow-up $C_{\lambda, m} \to + \infty$ as
  $\lambda \to 1^-$ follows from the explicit form of $P$. Precisely
  \begin{equation}
    \nabla_y P (y, {\zeta}) = \frac{1}{\dot{\omega}_m}
    \nabla_y \left( \frac{1 - |y|^2}{|y - {\zeta} |^m}
    \right) = \frac{1}{\dot{\omega}_m} \left( \frac{- 2 y \hspace{0.17em} |y -
    {\zeta} |^2 - m (1 - |y|^2) (y -
    {\zeta})}{|y - {\zeta} |^{m + 2}}
    \right) .
  \end{equation}
  Taking the norm, we get that
  \begin{equation}
    | \nabla_y P (y, {\zeta}) | \leqslant
    \frac{1}{\dot{\omega}_m}  \left( \frac{2| y|}{|y -
    {\zeta} |^m} + \frac{m (1 - |y|^2)}{|y -
    {\zeta} |^{m + 1}} \right) .
  \end{equation}
  We now restrict our attention to $y \in \overline{B_{\lambda}}$ (so $|y|
  \leqslant \lambda$) and $\zeta \in \partial B_1$ (so $| \zeta | = 1$). We
  utilize the algebraic factorization $1 - |y|^2 = (1 - |y|)  (1 + |y|)$.
  Since $|y| < 1$, we can  bound $1 + |y| < 2$. Furthermore, by the
  reverse triangle inequality, $|y - \zeta | \geqslant 1 - |y|$. Combining
  these gives the  cancellation:
  \begin{equation}
    1 - |y|^2 \leqslant 2 (1 - |y|) \leqslant 2 |y - \zeta | .
  \end{equation}
  Substituting this cancellation into the second term of our bound yields:
  \begin{equation}
    | \nabla_y P (y, \zeta) | \leqslant \frac{1}{\dot{\omega}_m}  \left(
    \frac{2 \lambda}{|y - \zeta |^m} + \frac{2 m}{|y - \zeta |^m} \right) =
    \frac{2 (\lambda + m)}{\dot{\omega}_m  |y - \zeta |^m} .
  \end{equation}
  Finally, applying the lower bound $|y - \zeta | \geqslant 1 - \lambda$, we
  secure the uniform upper bound:
  \begin{equation}
    \sup_{|y| \leqslant \lambda} | \nabla_y P (y, \zeta) | \leqslant \frac{2
    (m + \lambda)}{\dot{\omega}_m  (1 - \lambda)^m} = : C_{\lambda, m} .
  \end{equation}
  This establishes the estimate for the unit ball, explicitly demonstrating
  the $\mathcal{O} ((1 - \lambda)^{- m})$ blow-up.
\end{proof}

\subsection{Interior Gradient Estimate for Anisotropic Harmonic Functions}
Proposition~\ref{prop:Poissongradest} establishes an interior gradient
estimate for classical harmonic functions on Euclidean balls, deriving its
constants directly from the closed-form expression of the Poisson kernel.
However, linearizing variable-coefficient elliptic problems naturally yields
the constant-coefficient equation $\divr (A_0 \nabla h) = 0$, where $A_0$ is a
symmetric, uniformly elliptic matrix. A standard affine transformation reduces
this anisotropic equation to the classical Laplace equation, but it
simultaneously maps Euclidean balls into ellipsoids whose geometry depends on
the spectrum of $A_0$. Because no explicit Poisson kernel formula exists for
arbitrary ellipsoids, the direct computational approach of
Proposition~\ref{prop:Poissongradest} cannot be applied. To overcome this
deficit, we invoke a general gradient estimate for Poisson kernels on bounded
$C^{1, 1}$ domains, originally established by Widman~{\cite{Widman1967}}.

We start with the following preliminary estimate.

\begin{lemma}
  \label{thm:poisson_gradient}Let $U \subset \RR^m$ be a bounded $C^{1, 1}$
  domain, and let $P (y, \zeta)$ denote the Poisson kernel for the Dirichlet
  Laplacian on U. There exists a constant $C > 0$, depending only on $m$ and
  $\partial U$, such that
  \begin{equation}
    | \nabla_y P (y, \zeta) | \leqslant \frac{C}{|y - \zeta |^m}  \qquad
    \text{for all } y \in U, \zeta \in \partial U.
  \end{equation}
\end{lemma}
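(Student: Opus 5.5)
The plan is to write the Poisson kernel through the Green function, $P(y,\zeta) = -\partial_{\nu_\zeta} G(y,\zeta)$, where $G$ is the Dirichlet Green function of $U$ and $\nu$ is the outer normal. With this representation, the claimed bound $|\nabla_y P(y,\zeta)| \leqslant C|y-\zeta|^{-m}$ becomes a pointwise estimate on mixed second derivatives of $G$. Specifically, we need $|\nabla_y \nabla_\zeta G(y,\zeta)| \leqslant C|y-\zeta|^{-m}$ for $y\in U$ and $\zeta$ up to the boundary. This is the route taken by Widman~\cite{Widman1967}, whose estimates we intend to recall rather than reprove in full.

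\emph{Step 1: interior scaling.} Fix $y \in U$ and $\zeta \in \partial U$, and set $d := |y-\zeta|$. For fixed $\zeta$, the function $x \mapsto P(x,\zeta)$ is harmonic in $U$ and vanishes on $\partial U \setminus \{\zeta\}$. We estimate $\nabla_y P$ in two regimes.
\begin{itemize}
\item If $\delta(y) := \operatorname{dist}(y,\partial U) \geqslant d/4$, we apply the interior gradient estimate for harmonic functions on $B_{d/8}(y)$. This gives $|\nabla_y P(y,\zeta)| \leqslant C d^{-1}\sup_{B_{d/8}(y)} P(\cdot,\zeta)$.
\item If $\delta(y) < d/4$, we instead use boundary gradient estimates on the half-ball $B_{d/2}(y)\cap U$. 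There $P(\cdot,\zeta)$ is harmonic and vanishes on the flat-ish $C^{1,1}$ boundary portion, so local boundary Schauder/$C^{1,\alpha}$ estimates after flattening give the same bound $C d^{-1}\sup_{B_{d/2}(y)\cap U} P(\cdot,\zeta)$. Here $C$ depends only on $m$ and the $C^{1,1}$ character of $\partial U$.
\end{itemize}

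\emph{Step 2: size of the kernel.} It remains to show $P(x,\zeta) \leqslant C d^{1-m}$ for $x$ in the relevant region, where $|x-\zeta| \sim d$. This is the classical upper bound $P(x,\zeta) \leqslant C\,\delta(x)\,|x-\zeta|^{-m} \leqslant C|x-\zeta|^{1-m}$ for $C^{1,1}$ domains. To obtain it:
\begin{itemize}
\item We use the Green function bound $G(x,z) \leqslant C\min\bigl(|x-z|^{2-m}, \delta(x)\delta(z)|x-z|^{-m}\bigr)$ for $m\geqslant 3$, with the logarithmic analogue when $m=2$. This follows by comparison with the Green functions of interior and exterior tangent balls, which exist by the $C^{1,1}$ uniform ball condition.
\item We then differentiate in $z$ normally at $\zeta$, using the boundary gradient estimate again, now in the $z$ variable.
\end{itemize}
Combining Steps 1 and 2 yields $|\nabla_y P(y,\zeta)| \leqslant C d^{-1}\cdot d^{1-m} = C d^{-m}$, as claimed.

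The main obstacle is Step 2: the uniform pointwise bounds on $G$ and on its normal derivative near the boundary, with constants depending only on the $C^{1,1}$ geometry. Our first attempt would be to use barriers from the uniform interior and exterior ball conditions, which controls $G$ by $\delta(x)$ linearly. If the barrier argument proves delicate, particularly in handling the two-point dependence, we would simply cite Widman~\cite{Widman1967}, since this lemma is recalled from the literature.
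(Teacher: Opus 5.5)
Your argument is correct, but the gradient step is organized differently from the paper's.

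\emph{The paper's route.} It takes the kernel bound $P(w,\zeta)\leqslant K\operatorname{dist}(w,\partial U)\,|w-\zeta|^{-m}$, obtained from Widman's estimate $|\nabla_x G(y,x)|\leqslant K\operatorname{dist}(y,\partial U)\,|y-x|^{-m}$ by letting $x\to\zeta$. It then uses only the \emph{interior} gradient estimate on $B_\rho(y)$ with $\rho=\frac12\operatorname{dist}(y,\partial U)$. On that ball $\operatorname{dist}(w,\partial U)\leqslant 3\rho$ and $|w-\zeta|\geqslant\frac12|y-\zeta|$, so the factor $\rho$ in the kernel bound cancels the $1/\rho$ of the interior estimate. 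There is no case split and no boundary regularity theory.

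\emph{Your route.} You work at the scale $d=|y-\zeta|$, which forces boundary $C^{1,\alpha}$ gradient estimates when $\delta(y)<d/4$. This is legitimate. When $d$ exceeds the radius $r_0$ of the $C^{1,1}$ charts, you must localize at scale $\min(d,r_0)$, which costs a factor $\operatorname{diam}(U)/r_0$ in $C$. That is harmless: it stays controlled by the tangent-ball radii and the diameter, which is what the paper later needs for its family of ellipsoids.

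\emph{Comparison.} Your Step 2 proves the $\delta$-weighted bound anyway, so the boundary estimate in your Step 1 is redundant: the paper's interior-ball trick handles the near-boundary regime directly. Conversely, your Step 1 only consumes the cruder bound $P(x,\zeta)\lesssim|x-\zeta|^{1-m}$. For $m\geqslant 3$ this already follows from $G(x,\cdot)\leqslant c_m|x-\cdot|^{2-m}$ plus one boundary gradient estimate for $G(x,\cdot)$ at $\zeta$ on a ball of radius comparable to $|x-\zeta|$. So your Step 1 would let you drop the two-point Green bound, and the citation of Widman, entirely. For $m=2$ the analogous crude bound $G(x,z)\leqslant\frac{1}{2\pi}\log(\operatorname{diam}(U)/|x-z|)$ loses a logarithm, so finer input is still needed there.
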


\begin{remark}
  More precisely, the constant $C$ depends on the domain $U$  through
  the dimension $m$ and the radii of its uniform interior and exterior tangent
  balls. This precise geometric dependence is crucial for the proof of
  Proposition~\ref{prop:aniso_poisson_kernel}, as it allows us to deduce a
  uniform bound across a family of ellipsoids, yielding a final constant that
  depends solely on the ellipticity parameters of $A_0$ rather than the
  particular matrix itself.
\end{remark}

\begin{proof}
  We first recall the standard pointwise estimate
  \begin{equation}
    \label{eq:poisson_bound} P (y, \zeta) \leqslant K
    \frac{\ensuremath{\operatorname{dist}} (y, \partial U)}{|y - \zeta |^m},
    \qquad y \in U, \zeta \in \partial U,
  \end{equation}
  where $K > 0$ depends only on $m$ and on the $C^{1, 1}$ geometry of
  $\partial U$. This follows from Widman's interior gradient estimate for the
  Dirichlet Green function $G$ of $- \Delta$ on $U$ (see
  Widman~{\cite[Theorem~2.3(iii)]{Widman1967}}). Namely, for all distinct
  interior points $x, y \in U$,
  \begin{equation}
    | \nabla_x G (y, x) | \leqslant K \hspace{0.17em}
    \frac{\ensuremath{\operatorname{dist}} (y, \partial U)}{|y - x|^m} .
    \label{eq:APKtemp1}
  \end{equation}
  where $K$ depends only on $m$ and $\partial U$. Since the Poisson kernel is
  given by the normal derivative of the Green function,
  \begin{equation}
    P (y, \zeta) := - \frac{\partial G (y, \zeta)}{\partial
    \ensuremath{\boldsymbol{n}}_{\zeta}} = - \lim_{x \to \zeta, x \in U}
    \nabla_x G (y, x) \cdot \ensuremath{\boldsymbol{n}} (\zeta),
    \label{eq:APKtemp2}
  \end{equation}
  where $\ensuremath{\boldsymbol{n}} (\zeta)$ is the outward unit normal at
  $\zeta$, we obtain, for fixed $y \in U$ and $\zeta \in \partial U$,
  \begin{equation} P (y, \zeta) \leqslant \limsup_{x \to \zeta} | \nabla_x G (y, x) |
     \leqslant K \hspace{0.17em} \frac{\ensuremath{\operatorname{dist}} (y,
     \partial U)}{|y - \zeta |^m} . \end{equation}
  Thus \eqref{eq:poisson_bound} holds.
  
  Now fix $\zeta \in \partial U$. It is well known that the function $y
  \mapsto P (y, \zeta)$ is positive and harmonic in $U$. Positivity follows
  from the Hopf boundary point lemma, while harmonicity follows from the fact
  that $P (\cdot, \zeta)$ arises as the local uniform limit of harmonic
  functions.
  
  Let $y \in U$ and set $\rho := \tfrac{1}{2} \ensuremath{\operatorname{dist}}
  (y, \partial U)$. Then $B_{\rho} (y) \subset U$, and for every $w \in
  B_{\rho} (y)$ we have
  \begin{equation}
    \ensuremath{\operatorname{dist}} (w, \partial U) \leqslant
    \ensuremath{\operatorname{dist}} (y, \partial U) + |w - y| \leqslant 2
    \rho + \rho = 3 \rho .
  \end{equation}
  Moreover, since $\zeta \in \partial U$,
  \begin{equation}
    |w - \zeta | \geqslant |y - \zeta | - |w - y| \geqslant |y - \zeta | -
    \rho   \geqslant |y - \zeta | - \tfrac{1}{2} |y - \zeta | =
    \tfrac{1}{2} |y - \zeta | .
  \end{equation}
  Applying \eqref{eq:poisson_bound} at the point $w$ yields
  \begin{equation}
    P (w, \zeta) \leqslant C_1  \hspace{0.17em}
    \frac{\ensuremath{\operatorname{dist}} (w, \partial U)}{|w - \zeta |^m}
    \leqslant C_1  \hspace{0.17em} \frac{3 \rho}{\left( \tfrac{1}{2} |y -
    \zeta | \right)^m} = 3 \cdot 2^m  \hspace{0.17em} C_1  \hspace{0.17em}
    \frac{\rho}{|y - \zeta |^m} .
  \end{equation}
  Since $P (\cdot, \zeta)$ is harmonic in $B_{\rho} (y)$, the standard
  interior gradient estimate gives
  \begin{equation}
    \label{eq:interior_grad} | \nabla_y P (y, \zeta) | \leqslant
    \frac{C_m}{\rho} \sup_{w \in B_{\rho} (y)} P (w, \zeta),
  \end{equation}
  where $C_m > 0$ depends only on the dimension. Combining this with the
  previous bound, we find
  \begin{equation} | \nabla_y P (y, \zeta) | \leqslant \frac{C_m}{\rho} \cdot 3 \cdot 2^m 
     \hspace{0.17em} C_1  \hspace{0.17em} \frac{\rho}{|y - \zeta |^m} =
     \frac{C}{|y - \zeta |^m}, \end{equation}
  with $C := 3 \cdot 2^m  \hspace{0.17em} C_1 C_m$. This proves the lemma.
\end{proof}

\begin{proposition}
  \label{prop:aniso_poisson_kernel}Let $A_0 \in \RR^{m \times m}$ be a
  symmetric matrix satisfying the uniform ellipticity condition $\lambda I
  \leqslant A_0 \leqslant \Lambda I$ for some $0 < \lambda \leqslant \Lambda <
  \infty$. Let $h \in W^{1, 1} (B_{\rr} (x_0))$ be a weakly $A_0$-harmonic
  function on $B_{\rr} (x_0) \subset \RR^m$; that is, $\divr (A_0 \nabla h) =
  0$ in the weak sense in $B_{\rr} (x_0)$. Then $h$ is smooth in the interior
  of $B_{\rr} (x_0)$, and for every $\theta \in (0, 1)$,
  \begin{equation}
    \label{eq:estLinfbdry_aniso} \sup_{x \in B_{\theta \rr} (x_0)} | \nabla h
    (x) | \leqslant \frac{C_{\theta, \lambda, \Lambda, m}}{r^m}  \int_{\zeta
    \in \partial B_{\rr} (x_0)} |h (\xi) | \hspace{0.17em} \mathrm{d} \xi,
  \end{equation}
  where $C_{\theta, \lambda, \Lambda, m} > 0$ depends only on $\theta$, $m$,
  $\lambda$, and $\Lambda$, and satisfies $C_{\theta, \lambda, \Lambda, m}
  =\mathcal{O} ((1 - \theta)^{- m})$ as $\theta \to 1^-$.
\end{proposition}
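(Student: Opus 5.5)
The plan is to reduce the anisotropic equation to the Laplacian by an affine change of variables, apply Lemma~\ref{thm:poisson_gradient} on the resulting ellipsoid, and then transport the estimate back to the ball.

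\emph{Reduction.} By translation and scaling $y=(x-x_0)/\rr$, it suffices to treat $x_0=0$, $\rr=1$. The factor $\rr^{-m}$ is recovered exactly as in the proof of Proposition~\ref{prop:Poissongradest}: one $\rr$ comes from the gradient, and $\rr^{m-1}$ from the surface measure. Write $A_0=Q^2$ with $Q=A_0^{1/2}$ symmetric, so that $\lambda^{1/2}\leqslant Q\leqslant\Lambda^{1/2}$. Set $v(z):=h(Qz)$. A direct computation gives $\Delta v = \divr(Q\nabla h\circ Q\,\cdot)=\divr(A_0\nabla h)\circ Q$ up to the Jacobian factor, so $v$ is weakly harmonic on the ellipsoid $E:=Q^{-1}B_1$. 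Interior smoothness of $h$ then follows from Weyl's lemma applied to $v$. The ellipsoid $E$ contains $B_{\Lambda^{-1/2}}$ and is contained in $B_{\lambda^{-1/2}}$. Its boundary is $C^{1,1}$ (indeed smooth), and its principal curvatures are bounded above and below in terms of $\lambda,\Lambda$ only. Hence $E$ satisfies uniform interior and exterior ball conditions with radii depending only on $\lambda,\Lambda$.

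\emph{Estimate on $E$.} Using the Poisson representation $v(z)=\int_{\partial E}P_E(z,\zeta)v(\zeta)\,\mathrm d\sigma(\zeta)$, justified for $W^{1,1}$ traces as in Proposition~\ref{prop:Poissongradest} by approximation, and differentiating under the integral, Lemma~\ref{thm:poisson_gradient} gives
\[
|\nabla v(z)|\leqslant C\int_{\partial E}\frac{|v(\zeta)|}{|z-\zeta|^m}\,\mathrm d\sigma(\zeta),
\]
with $C$ uniform over the family of ellipsoids by the remark following that lemma. For $x\in B_\theta$ we have $z=Q^{-1}x$ and $\zeta=Q^{-1}\xi$ with $|\xi|=1$, so
\[
|z-\zeta|\geqslant\Lambda^{-1/2}|x-\xi|\geqslant\Lambda^{-1/2}(1-\theta).
\]
This produces the factor $(1-\theta)^{-m}$ together with powers of $\Lambda$.

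\emph{Transport back.} On the gradient side, $\nabla h(x)=Q^{-1}\nabla v(Q^{-1}x)$, which costs a factor $\lambda^{-1/2}$. On the boundary side, the surface measure satisfies $\mathrm d\sigma_E\leqslant c(\lambda,\Lambda)\,\mathrm d\xi$, since the tangential Jacobian of the linear map $Q^{-1}$ is bounded by $\lambda^{-(m-1)/2}$. Combining these yields~\eqref{eq:estLinfbdry_aniso} with $C_{\theta,\lambda,\Lambda,m}\lesssim(1-\theta)^{-m}$.

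\emph{Main obstacle.} The delicate point is the claim that Widman's constant depends on $\partial E$ only through $m$ and the interior/exterior ball radii, so that it is uniform in $A_0$. I would handle this by citing Widman's dependence on the $C^{1,1}$ character. Alternatively, one can invoke compactness: the set of admissible $Q$ is compact, and the constant can be shown to depend continuously on $Q$ after pulling back to $B_1$ as a constant-coefficient operator.
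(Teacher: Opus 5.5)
Your proposal is correct and follows essentially the same route as the paper: the affine change of variables $h\mapsto h(A_0^{1/2}\,\cdot)$ onto the ellipsoid, the Widman-based Poisson-kernel gradient bound made uniform in $A_0$ through curvature (interior/exterior ball) control, the lower bound $|z-\zeta|\geqslant(1-\theta)/\sqrt{\Lambda}$, and the pull-back of both the gradient and the surface measure. The differences are cosmetic: you normalize $r=1$ first, and your tangential-Jacobian bound $\lambda^{-(m-1)/2}$ is slightly sharper than the paper's $\sqrt{\Lambda}\,\lambda^{-m/2}$. Your primary treatment of the uniformity issue is the same as the paper's, so the compactness alternative is not needed.
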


\begin{proof}
  By translation invariance, we may assume $x_0 = 0$. Standard elliptic
  regularity for constant-coefficient operators gives $h \in C^{\infty} (B_r
  (0))$.
  
  \text{{\itshape{Step 1: Affine change of variables.}}} Define the linear map
  \begin{equation}
    \Phi (x) := r^{- 1} A_0^{- 1 / 2} x, \qquad x \in \RR^m,
  \end{equation}
  and set $\tilde{h} (y) := h (r  A_0^{1 / 2} y) = h (\Phi^{- 1} (y))$. Since
  $|A_0^{1 / 2} y| = |x| / r$ whenever $x = rA_0^{1 / 2} y$, the map $\Phi$
  sends $B_r$ bijectively onto the ellipsoid
  \begin{equation}
    E_1 := \left\{ y \in \RR^m \st |A_0^{1 /
    2} y| < 1 \right\} = \left\{ y \in \RR^m
    \st \langle y, A_0 y \rangle < 1 \right\},
  \end{equation}
  whose semi-axes have lengths in $[1 / \sqrt{\Lambda}, \hspace{0.17em} 1 /
  \sqrt{\lambda}]$.
  
  A direct chain-rule computation gives
  \begin{equation}
    \Delta \tilde{h} (y) = r^2 \hspace{0.17em} \divr (A_0 \nabla h) (x) = 0,
    \qquad y = \Phi (x),
  \end{equation}
  so $\tilde{h}$ is classically harmonic on $E_1$. Moreover, the identity
  $\nabla h (x) = r^{- 1} A_0^{- 1 / 2} \nabla \tilde{h} (y)$ together with
  the operator-norm bound $\|A_0^{- 1 / 2} \| \leqslant 1 / \sqrt{\lambda}$
  yelds the pointwise estimate
  \begin{equation}
    \label{eq:grad_relation_aniso_kernel} | \nabla h (x) | \leqslant
    \frac{1}{r \sqrt{\lambda}} \hspace{0.17em} | \nabla \tilde{h} (y) |,
    \qquad y = \Phi (x) .
  \end{equation}
  \text{{\itshape{Step 2: Distance from the inner ellipsoid to $\partial
  E_1$.}}} The image of $B_{\theta r}$ under $\Phi$ is the inner ellipsoid
  \begin{equation}
    E_{\theta} := \left\{ y \in \RR^m \st |A_0^{1 / 2} y| < \theta \right\} .
  \end{equation}
  For every $y \in E_{\theta}$ and every $z \in \partial E_1$, the bound
  $\|A_0^{1 / 2} \| \leqslant \sqrt{\Lambda}$ and the triangle inequality
  yield
  \begin{equation}
    1 - \theta \leqslant |A_0^{1 / 2} z| - |A_0^{1 / 2} y| \leqslant |A_0^{1 /
    2} (z - y) | \leqslant \sqrt{\Lambda}  \hspace{0.17em} |z - y| .
  \end{equation}
  Taking the infimum over all admissible pairs $(y, z)$ produces the uniform
  lower bound
  \begin{equation}
    \label{eq:dist_lower} d := \ensuremath{\operatorname{dist}} (E_{\theta},
    \partial E_1) \geqslant \frac{1 - \theta}{\sqrt{\Lambda}} > 0.
  \end{equation}
  \text{{\itshape{Step 3: Gradient bound for $\tilde{h}$ on $E_1$.}}} The
  function $\tilde{h}$ is harmonic on the smooth bounded domain $E_1$ and has
  an $L^1$ boundary trace: $\tilde{h}_{\restr \partial E_1} \in L^1 (\partial
  E_1)$. The classical Poisson integral representation therefore applies:
  \begin{equation} \tilde{h} (y) = \int_{\zeta \in \partial E_1} P_{E_1} (y, \zeta)
     \hspace{0.17em} \tilde{h} (\zeta) \hspace{0.17em} \mathrm{d} \zeta,
     \qquad y \in E_1, \end{equation}
  and since $P_{E_1}  (\cdot, \zeta)$ is smooth in the interior of $E_1$,
  differentiation under the integral sign gives
  \begin{equation}
    \nabla \tilde{h} (y) = \int_{\zeta \in \partial E_1} \nabla_y P_{E_1} (y,
    \zeta) \hspace{0.17em} \tilde{h} (\zeta) \mathrm{d} \zeta, \qquad y \in
    E_1 .
  \end{equation}
  Now we can apply Lemma~\ref{thm:poisson_gradient} to $E_1$, to get that
  \begin{equation}
    | \nabla_y \tilde{h} (y) | \leqslant C \int_{\zeta \in \partial E_1}
    \frac{| \tilde{h} (\zeta) |}{| y - \zeta |^m} \hspace{0.17em} \mathrm{d}
    \zeta, \qquad y \in E_1 .
  \end{equation}
  for a positive constant $C > 0$, depending only on $m$ and $\partial E_1$.
  However, a priori, $\partial E_1$ depends on the specific form of $A_0$. We
  want to show that the constant provided by the lemma can be chosen depending
  only on $m, \lambda, \Lambda$, and not on the specific matrix $A_0$. The
  boundary $\partial E_1$ is the regular level set $\{f = 1\}$ of $f (\zeta)
  := \langle \zeta, A_0 \zeta \rangle$, whose gradient and Hessian are
  \begin{equation}
    \nabla f (\zeta) = 2 A_0 \zeta, \qquad \nabla^2 f = 2 A_0 .
  \end{equation}
  On $\partial E_1$, the relation $\langle \zeta, A_0 \zeta \rangle = 1$
  combined with the operator inequality $\lambda A_0 \leqslant A_0^2 \leqslant
  \Lambda A_0$ (which follows by applying the spectral theorem to $A_0$, since
  each eigenvalue $\mu$ of $A_0$ satisfies $\lambda \mu \leqslant \mu^2
  \leqslant \Lambda \mu$) yields
  \begin{equation}
    \sqrt{\lambda} \leqslant |A_0 \zeta | \leqslant \sqrt{\Lambda} .
  \end{equation}
  The outward unit normal at $\zeta \in \partial E_1$ is
  $\ensuremath{\boldsymbol{n}} (\zeta) = A_0 \zeta / |A_0 \zeta |$, and the
  shape operator on the tangent space $T_{\zeta} \partial E_1$ takes the form
  \begin{equation}
    S = \frac{P_T \hspace{0.17em} \nabla^2 fP_T}{| \nabla f (\zeta) |} 
    \hspace{0.17em} = \frac{P_T A_0 P_T}{|A_0 \zeta |} \hspace{0.17em},
  \end{equation}
  where $P_T$ denotes orthogonal projection onto $T_{\zeta} \partial_1$.
  Therefore, given that $\lambda I \leqslant A_0 \leqslant \Lambda I$, the
  principal curvatures $\kappa_1, \ldots, \kappa_{m - 1}$ of $\partial E_1$
  are the eigenvalues of the shape operator $S$, we have
  \begin{equation}
    \frac{\lambda}{\sqrt{\Lambda}} \leqslant \kappa_i \leqslant
    \frac{\Lambda}{\sqrt{\lambda}}, \qquad i = 1, \ldots, m - 1.
  \end{equation}
  The upper bound on the curvatures yields a uniform interior tangent-ball
  condition on $E_1$ with radius at least $\sqrt{\lambda} / \Lambda$, while
  convexity of $E_1$ provides the exterior tangent-ball condition trivially.
  Combined with the diameter bound $\ensuremath{\operatorname{diam}}E_1
  \leqslant 2 / \sqrt{\lambda}$, this places $E_1$ in a family of $C^{1, 1}$
  domains whose geometric data are controlled by $\lambda$ and $\Lambda$
  alone. Consequently, Lemma~\ref{thm:poisson_gradient} yields a constant $K =
  K (m, \lambda, \Lambda)$, independent of the particular matrix $A_0$, such
  that
  \begin{equation}
    | \nabla_y P_{E_1} (y, \zeta) | \leqslant \frac{K}{|y - \zeta |^m}, \qquad
    y \in E_1, \zeta \in \partial E_1 .
  \end{equation}
  For $y \in E_{\theta}$ and $\zeta \in \partial E_1$, the lower
  bound~\eqref{eq:dist_lower} gives $|y - \zeta | \geqslant d \geqslant (1 -
  \theta) / \sqrt{\Lambda}$, whence
  \begin{equation}
    \sup_{y \in E_{\theta}, \zeta \in \partial E_1} | \nabla_y P_{E_1} (y,
    \zeta) | \leqslant \frac{K}{d^m} \leqslant \frac{K \hspace{0.17em}
    \Lambda^{m / 2}}{(1 - \theta)^m},
  \end{equation}
  and integrating along $\partial E_1$ gives
  \begin{equation}
    \label{eq:grad_h_tilde} | \nabla \tilde{h} (y) | \leqslant \frac{K
    \hspace{0.17em} \Lambda^{m / 2}}{(1 - \theta)^m}  \int_{\zeta \in \partial
    E_1} | \tilde{h} (\zeta) | \hspace{0.17em} \mathrm{d} \zeta, \qquad y \in
    E_{\theta} .
  \end{equation}
 {\itshape{Step 4: Pull-back of the boundary integral via the area
  formula.}} It remains to express the boundary integral over $\partial E_1$
  in terms of an integral over $\partial B_r$. For a linear map $L : \RR^m \to
  \RR^m$ and a hypersurface $\Sigma$ with unit normal
  $\ensuremath{\boldsymbol{\nu}}$, the change-of-variables formula which links
  the $(m - 1)$-dimensional Hausdorff measure $\mathcal{H}^{m - 1}_{L
  (\Sigma)}$ on $L (\Sigma)$ to the corresponding measure $\mathcal{H}^{m -
  1}_{\Sigma}$ on $\Sigma$ reads as
  \begin{equation} \mathrm{d} \mathcal{H}^{m - 1}_{L (\Sigma)}  (L \xi) = | \det L| \cdot
     |L^{- \top} \ensuremath{\boldsymbol{\nu}}(\xi) | \hspace{0.17em}
     \mathrm{d} \mathcal{H}^{m - 1}_{\Sigma} (\xi) . \end{equation}
  With $L = r^{- 1} A_0^{- 1 / 2}$, $\Sigma = \partial B_r (0)$,
  $\ensuremath{\boldsymbol{\nu}} (\xi) = \xi / r$, we have $\det L = r^{- m}
  \det (A_0)^{- 1 / 2}$ and $L^{- \top} = rA_0^{1 / 2}$, whence $|L^{- \top}
  \ensuremath{\boldsymbol{\nu}}(\xi) | = |A_0^{1 / 2} \xi |$ and
  \begin{equation}
    \mathrm{d} \mathcal{H}^{m - 1}_{\partial E_1} (\zeta) = r^{- m} \det
    (A_0)^{- 1 / 2}  |A_0^{1 / 2} \xi | \hspace{0.17em} \mathrm{d}
    \mathcal{H}^{m - 1}_{\partial B_r} (\xi), \qquad \zeta = \Phi (\xi) .
  \end{equation}
  Moreover, on $\partial B_r$, $|A_0^{1 / 2} \xi | \leqslant \sqrt{\Lambda}
  \hspace{0.17em} | \xi | = r \sqrt{\Lambda}$, and $\det (A_0)^{- 1 / 2}
  \leqslant \lambda^{- m / 2}$, so
  \begin{equation}
    \label{eq:measure_bound} \mathrm{d} \mathcal{H}^{m - 1}_{\partial E_1}
    (\zeta) \leqslant \frac{\sqrt{\Lambda}}{r^{m - 1}  \hspace{0.17em}
    \lambda^{m / 2}} \mathrm{d} \mathcal{H}^{m - 1}_{\partial B_r} (\xi) .
  \end{equation}
  Combining \eqref{eq:grad_relation_aniso_kernel}, \eqref{eq:grad_h_tilde},
  and \eqref{eq:measure_bound}, and using that $\tilde{h} (\zeta) = h (\xi)$
  for $\zeta = \Phi (\xi)$, we obtain for every $x \in B_{\theta r}$,
  \begin{eqnarray}
    | \nabla h (x) | & \leqslant & \frac{1}{r \sqrt{\lambda}} \cdot \frac{K
    \hspace{0.17em} \Lambda^{m / 2}}{(1 - \theta)^m} \cdot
    \frac{\sqrt{\Lambda}}{r^{m - 1}  \hspace{0.17em} \lambda^{m / 2}} 
    \int_{\xi \in \partial B_r} |h (\xi) | \hspace{0.17em} \mathrm{d} \xi
    \nonumber\\
    & \leqslant & \frac{K}{(1 - \theta)^m  \hspace{0.17em} r^m} \left(
    \frac{\Lambda}{\lambda} \right)^{(m + 1) / 2} \int_{\xi \in \partial B_r}
    |h (\xi) | \hspace{0.17em} \mathrm{d} \xi . \nonumber
  \end{eqnarray}
  Setting
  \begin{equation} C_{\theta, \lambda, \Lambda, m} := K (m,
     \lambda, \Lambda) \hspace{0.17em} \left( \frac{\Lambda}{\lambda}
     \right)^{(m + 1) / 2}  (1 - \theta)^{- m} \end{equation}
  establishes~\eqref{eq:estLinfbdry_aniso} and displays the explicit
  $\mathcal{O} ((1 - \theta)^{- m})$ blow-up of the constant as $\theta \to
  1^-$.
\end{proof}

\section*{Declarations}

\noindent
\textit{Conflict of Interest:} The author declares that he has no conflicts of interest.
\smallskip

\noindent
\textit{Data Availability:} No datasets were generated or analyzed during the current study.

\smallskip

\noindent
\textit{Declaration of Generative AI and AI-Assisted Technologies:}
During the preparation of this manuscript, the author used DeepL and Gemini for
proofreading, language refinement, and improving the clarity and readability of
the text. No artificial intelligence tools were used in the mathematical
reasoning, derivation of results, or proofs presented in this manuscript.
The manuscript was initially prepared using TeXmacs and subsequently exported
to LaTeX. Gemini was used to identify and correct translation or conversion
errors arising during this process, to assist with LaTeX formatting, and to
generate TikZ code for a diagram that was not converted during the export.
The author is fully responsible for the accuracy, validity, and integrity of
all mathematical results presented in this work.

\section*{Acknowledgements}
The author is a member of GNAMPA--INdAM. He gratefully acknowledges partial financial support from the GNAMPA Project CUP\_E53C25002010001, from the University of Naples Federico II through the FRA Project-B ``VarMoCry'' (\emph{Variational Analysis and Modeling of Liquid Crystals}), and from the Italian Ministry of University and Research (MUR) through the PRIN 2022 project \emph{Variational Analysis of Complex Systems in Material Science, Physics and Biology} (No.~2022HKBF5C).

The author wishes to thank Tristan Rivière for insightful discussions regarding the historical context of these problems and for bringing the reference \cite{Bethuel_1993} to his attention.

\bibliographystyle{siam}     
\bibliography{AHM2.bib}

@Book{Giusti_2003,
  author    = {Giusti, Enrico},
  publisher = {World Scientific Publishing Company},
  title     = {Direct Methods in the Calculus of Variations},
  year      = {2003},
  address   = {River Edge, NJ},
  isbn      = {9789812795557},
  month     = jan,
  doi       = {10.1142/5002},
}

@Book{Giaquinta1983,
  author    = {Giaquinta, Mariano},
  publisher = {Princeton University Press},
  title     = {Multiple Integrals in the Calculus of Variations and Nonlinear Elliptic Systems},
  year      = {1983},
  address   = {Princeton, NJ},
  series    = {Annals of Mathematics Studies},
  volume    = {105},
}

@Article{Chang_1999,
  author    = {Chang, Sun-Yung A. and Wang, Lihe and Yang, Paul C.},
  journal   = {Communications on Pure and Applied Mathematics},
  title     = {Regularity of harmonic maps},
  year      = {1999},
  issn      = {1097-0312},
  month     = sep,
  number    = {9},
  pages     = {1099--1111},
  volume    = {52},
  doi       = {10.1002/(sici)1097-0312(199909)52:9<1099::aid-cpa3>3.0.co;2-o},
  publisher = {Wiley},
}

@Article{Bethuel_1993,
  author    = {Bethuel, Fabrice},
  journal   = {Manuscripta Mathematica},
  title     = {On the singular set of stationary harmonic maps},
  year      = {1993},
  issn      = {1432-1785},
  month     = dec,
  number    = {1},
  pages     = {417--443},
  volume    = {78},
  doi       = {10.1007/bf02599324},
  publisher = {Springer Science and Business Media LLC},
}

@Article{Coifman1993,
  author    = {Coifman, Ronald and Lions, Pierre-Louis and Meyer, Yves and Semmes, Stephen},
  journal   = {Journal de Math{\'e}matiques Pures et Appliqu{\'e}es. Neuvi{\`e}me S{\'e}rie},
  title     = {Compensated compactness and {H}ardy spaces},
  year      = {1993},
  number    = {3},
  pages     = {247--286},
  volume    = {72},
  publisher = {Elsevier},
}

@Article{Evans_1991,
  author    = {Evans, Lawrence C.},
  journal   = {Archive for Rational Mechanics and Analysis},
  title     = {Partial regularity for stationary harmonic maps into spheres},
  year      = {1991},
  issn      = {1432-0673},
  number    = {2},
  pages     = {101--113},
  volume    = {116},
  doi       = {10.1007/bf00375587},
  publisher = {Springer Science and Business Media LLC},
}

@Article{Helein1990,
  author    = {H{\'e}lein, Fr{\'e}d{\'e}ric},
  journal   = {Comptes Rendus de l'Acad{\'e}mie des Sciences. S{\'e}rie I, Math{\'e}matique},
  title     = {R{\'e}gularit{\'e} des applications faiblement harmoniques entre une surface et une sph{\`e}re},
  year      = {1990},
  number    = {9},
  pages     = {519--524},
  volume    = {311},
  publisher = {Gauthier-Villars},
}

@Article{Helein1991,
  author    = {H{\'e}lein, Fr{\'e}d{\'e}ric},
  journal   = {Comptes Rendus de l'Acad{\'e}mie des Sciences. S{\'e}rie I, Math{\'e}matique},
  title     = {R{\'e}gularit{\'e} des applications faiblement harmoniques entre une surface et une vari{\'e}t{\'e} riemannienne},
  year      = {1991},
  number    = {8},
  pages     = {591--596},
  volume    = {312},
  publisher = {Gauthier-Villars},
}

@Book{H_lein_2002,
  author    = {Hélein, Frédéric},
  publisher = {Cambridge University Press},
  title     = {Harmonic Maps, Conservation Laws and Moving Frames},
  year      = {2002},
  address   = {Paris},
  isbn      = {9780511543036},
  month     = jun,
  doi       = {10.1017/cbo9780511543036},
}

@Article{Hildebrandt_1977,
  author    = {Hildebrandt, Stefan and Kaul, Helmut and Widman, Kjell-Ove},
  journal   = {Acta Mathematica},
  title     = {An existence theorem for harmonic mappings of Riemannian manifolds},
  year      = {1977},
  issn      = {0001-5962},
  number    = {0},
  pages     = {1--16},
  volume    = {138},
  doi       = {10.1007/bf02392311},
  publisher = {International Press of Boston},
}

@Article{Carbou_1997,
  author    = {Carbou, Gilles},
  journal   = {Calculus of Variations and Partial Differential Equations},
  title     = {Regularity for critical points of a non local energy},
  year      = {1997},
  issn      = {1432-0835},
  month     = jul,
  number    = {5},
  pages     = {409--433},
  volume    = {5},
  doi       = {10.1007/s005260050073},
  publisher = {Springer Science and Business Media LLC},
}

@Book{Giaquinta_2012,
  author    = {Giaquinta, Mariano and Martinazzi, Luca},
  publisher = {Scuola Normale Superiore},
  title     = {An Introduction to the Regularity Theory for Elliptic Systems, Harmonic Maps and Minimal Graphs},
  year      = {2012},
  isbn      = {9788876424434},
  doi       = {10.1007/978-88-7642-443-4},
}

@Book{Han2011,
  author    = {Han, Qing and Lin, Fanghua},
  publisher = {American Mathematical Society},
  title     = {Elliptic Partial Differential Equations},
  year      = {2011},
  address   = {Providence, RI},
  series    = {Courant Lecture Notes in Mathematics},
  volume    = {1},
}

@Article{Rivi_re_1995,
  author    = {Rivière, Tristan},
  journal   = {Acta Mathematica},
  title     = {Everywhere discontinuous harmonic maps into spheres},
  year      = {1995},
  issn      = {0001-5962},
  number    = {2},
  pages     = {197--226},
  volume    = {175},
  doi       = {10.1007/bf02393305},
  publisher = {International Press of Boston},
}

@Book{Moser_2005,
  author    = {Moser, Roger},
  publisher = {World Scientific},
  title     = {Partial Regularity for Harmonic Maps and Related Problems},
  year      = {2005},
  isbn      = {9789812701312},
  month     = feb,
  doi       = {10.1142/5691},
}

@Article{Di_Fratta_2020a,
  author    = {Di Fratta, Giovanni and Muratov, Cyrill B. and Rybakov, Filipp N. and Slastikov, Valeriy V.},
  journal   = {SIAM Journal on Mathematical Analysis},
  title     = {Variational Principles of Micromagnetics Revisited},
  year      = {2020},
  issn      = {1095-7154},
  month     = jan,
  number    = {4},
  pages     = {3580--3599},
  volume    = {52},
  doi       = {10.1137/19m1261365},
  publisher = {Society for Industrial \& Applied Mathematics (SIAM)},
}

@Article{Di_Fratta_2024,
  author    = {Di Fratta, Giovanni and Muratov, Cyrill B. and Slastikov, Valeriy V.},
  journal   = {Mathematical Models and Methods in Applied Sciences},
  title     = {Reduced energies for thin ferromagnetic films with perpendicular anisotropy},
  year      = {2024},
  issn      = {1793-6314},
  month     = jul,
  number    = {10},
  pages     = {1861--1904},
  volume    = {34},
  doi       = {10.1142/s0218202524500386},
  publisher = {World Scientific Pub Co Pte Ltd},
}

@Article{Di_Fratta_2023,
  author    = {Di Fratta, Giovanni and Slastikov, Valeriy},
  journal   = {Nonlinear Analysis},
  title     = {Curved thin-film limits of chiral {D}irichlet energies},
  year      = {2023},
  issn      = {0362-546X},
  month     = sep,
  pages     = {113303},
  volume    = {234},
  doi       = {10.1016/j.na.2023.113303},
  publisher = {Elsevier BV},
}

@Article{Alouges_2015,
  author    = {Alouges, François and Di Fratta, Giovanni},
  journal   = {Proceedings of the Royal Society A: Mathematical, Physical and Engineering Sciences},
  title     = {Homogenization of composite ferromagnetic materials},
  year      = {2015},
  issn      = {1471-2946},
  month     = oct,
  number    = {2182},
  pages     = {20150365},
  volume    = {471},
  doi       = {10.1098/rspa.2015.0365},
  publisher = {The Royal Society},
}

@Article{Di_Fratta_2020c,
  author    = {Di Fratta, Giovanni},
  journal   = {Zeitschrift für angewandte Mathematik und Physik},
  title     = {Micromagnetics of curved thin films},
  year      = {2020},
  issn      = {1420-9039},
  month     = jun,
  number    = {4},
  volume    = {71},
  doi       = {10.1007/s00033-020-01336-2},
  publisher = {Springer Science and Business Media LLC},
}

@Book{Brown1963,
  author    = {Brown, William Fuller},
  publisher = {Interscience Publishers},
  title     = {Micromagnetics},
  year      = {1963},
  address   = {New York},
  series    = {Interscience Tracts on Physics and Astronomy},
}

@Book{Bertotti1998,
  author    = {Bertotti, Giorgio},
  publisher = {Elsevier},
  title     = {Hysteresis in Magnetism: For Physicists, Materials Scientists, and Engineers},
  year      = {1998},
  address   = {San Diego},
  isbn      = {9780120932702},
  doi       = {10.1016/b978-0-12-093270-2.x5048-x},
}

@Article{Fert_2013,
  author    = {Fert, Albert and Cros, Vincent and Sampaio, João},
  journal   = {Nature Nanotechnology},
  title     = {Skyrmions on the track},
  year      = {2013},
  issn      = {1748-3395},
  month     = mar,
  number    = {3},
  pages     = {152--156},
  volume    = {8},
  doi       = {10.1038/nnano.2013.29},
  publisher = {Springer Science and Business Media LLC},
}

@Book{Morrey1966,
  author    = {Morrey, Charles B.},
  publisher = {Springer-Verlag},
  title     = {Multiple Integrals in the Calculus of Variations},
  year      = {1966},
  address   = {Berlin},
  note      = {Classic reference for 2D regularity; sometimes cited under 1968 reprint/edition},
}

@InCollection{Schoen1984,
  author    = {Schoen, Richard},
  booktitle = {Seminar on Nonlinear Partial Differential Equations},
  publisher = {Springer-Verlag},
  title     = {Analytic aspects of the harmonic map problem},
  year      = {1984},
  address   = {New York},
  pages     = {321--358},
  series    = {Mathematical Sciences Research Institute Publications},
  volume    = {2},
}

@Article{Helein1991a,
  author    = {Hélein, Frédéric},
  journal   = {Manuscripta Mathematica},
  title     = {Regularity of weakly harmonic maps from a surface into a manifold with symmetries},
  year      = {1991},
  issn      = {1432-1785},
  month     = dec,
  number    = {1},
  pages     = {203--218},
  volume    = {70},
  doi       = {10.1007/bf02568371},
  publisher = {Springer Science and Business Media LLC},
}

@Article{Schoen1982,
  author  = {Schoen, Richard and Uhlenbeck, Karen},
  journal = {Journal of Differential Geometry},
  title   = {A regularity theory for harmonic maps},
  year    = {1982},
  pages   = {307--335},
  volume  = {17},
}

@Article{Changbi_1999,
  author    = {Chang, Sun-Yung A. and Wang, Lihe and Yang, Paul C.},
  journal   = {Communications on Pure and Applied Mathematics},
  title     = {A regularity theory of biharmonic maps},
  year      = {1999},
  issn      = {1097-0312},
  month     = sep,
  number    = {9},
  pages     = {1113--1137},
  volume    = {52},
  doi       = {10.1002/(sici)1097-0312(199909)52:9<1113::aid-cpa4>3.0.co;2-7},
  publisher = {Wiley},
}

@article{Topping2004,
  author       = {Topping, Peter},
  title        = {Repulsion and quantization in almost-harmonic maps, and asymptotics of the harmonic map flow},
  journal      = {Annals of Mathematics},
  volume       = {159},
  number       = {2},
  year         = {2004},
  pages        = {465--534},
  doi          = {10.4007/annals.2004.159.465}
}

@Article{Wang2017,
  author  = {Wang, Wendong and Wei, Dongyi and Zhang, Zhifei},
  journal = {Journal of Functional Analysis},
  title   = {Energy identity for approximate harmonic maps from surfaces to general targets},
  year    = {2017},
  pages   = {776--803},
  volume  = {272},
}

@Article{Moser2005,
  author  = {Moser, Roger},
  journal = {Mathematische Zeitschrift},
  title   = {Energy concentration for almost harmonic maps and the {W}illmore functional},
  year    = {2005},
  pages   = {293--311},
  volume  = {251},
  doi     = {10.1007/s00209-005-0803-z},
}

@Article{Riviere2007,
  author    = {Rivière, Tristan},
  journal   = {Inventiones mathematicae},
  title     = {Conservation laws for conformally invariant variational problems},
  year      = {2006},
  issn      = {1432-1297},
  month     = dec,
  number    = {1},
  pages     = {1--22},
  volume    = {168},
  doi       = {10.1007/s00222-006-0023-0},
  publisher = {Springer Science and Business Media LLC},
}

@Article{Mueller2009,
  author    = {Müller, Frank and Schikorra, Armin},
  journal   = {Analysis},
  title     = {Boundary regularity via {U}hlenbeck–{R}ivière decomposition},
  year      = {2009},
  issn      = {0174-4747},
  month     = jan,
  number    = {2},
  pages     = {199--220},
  volume    = {29},
  doi       = {10.1524/anly.2009.1025},
  publisher = {Walter de Gruyter GmbH},
}

@Article{Sormani_2018,
  author    = {Sormani, Christina and Bray, Hubert L. and Minicozzi, William P. and Eichmair, Michael and Huang, Lan-Hsuan and Yau, Shing-Tung and Uhlenbeck, Karen and Kusner, Rob and Codá Marques, Fernando and Mese, Chikako and Fraser, Ailana},
  journal   = {Notices of the American Mathematical Society},
  title     = {The {M}athematics of {R}ichard {S}choen},
  year      = {2018},
  issn      = {1088-9477},
  month     = dec,
  number    = {11},
  pages     = {1},
  volume    = {65},
  doi       = {10.1090/noti1749},
  publisher = {American Mathematical Society (AMS)},
}

@Article{Hardt1997,
  author     = {Hardt, Robert M.},
  journal    = {Bull. Amer. Math. Soc. (N.S.)},
  title      = {Singularities of harmonic maps},
  year       = {1997},
  issn       = {0273-0979,1088-9485},
  number     = {1},
  pages      = {15--34},
  volume     = {34},
  doi        = {10.1090/S0273-0979-97-00692-7},
  fjournal   = {American Mathematical Society. Bulletin. New Series},
  mrclass    = {58E20},
  mrnumber   = {1397098},
  mrreviewer = {Martin\ Fuchs},
  url        = {https://doi.org/10.1090/S0273-0979-97-00692-7},
}

@Article{Widman1967,
  author  = {Widman, Kjell-Ove},
  journal = {Mathematica Scandinavica},
  title   = {Inequalities for the {G}reen Function and Boundary Continuity of the Gradient of Solutions of Elliptic Differential Equations},
  year    = {1967},
  pages   = {17--37},
  volume  = {21},
}

@Article{Bethuel1992,
  author     = {Bethuel, Fabrice},
  journal    = {C. R. Acad. Sci. Paris S\'er. I Math.},
  title      = {Un r\'esultat de r\'egularit\'e{} pour les solutions de l'\'equation de surfaces \`a{} courbure moyenne prescrite},
  year       = {1992},
  issn       = {0764-4442},
  number     = {13},
  pages      = {1003--1007},
  volume     = {314},
  fjournal   = {Comptes Rendus de l'Acad\'emie des Sciences. S\'erie I. Math\'ematique},
  mrclass    = {53A10 (35B65 35J60)},
  mrnumber   = {1168525},
  mrreviewer = {Dennis\ M.\ DeTurck},
}

\end{document}